\newtheorem{thm}{Theorem}[section]
\newtheorem{lem}[thm]{Lemma}
\newtheorem{prop}[thm]{Proposition}
\newtheorem{cor}[thm]{Corollary}
\newtheorem{dfn}[thm]{Definition}
\newtheorem{ques}[thm]{Question}
\theoremstyle{remark}
\newtheorem{ex}[thm]{Example}
\newtheorem*{rmk}{Remark}
\newtheorem*{rmks}{Remarks}
\newcommand{\bb}[1]{\mathbb{#1}}
\renewcommand{\bf}[1]{\mathbf{#1}}
\renewcommand{\rm}[1]{\mathrm{#1}}
\renewcommand{\cal}[1]{\mathcal{#1}}
\newcommand{\bbC}{\mathbb{C}}
\newcommand{\bbD}{\mathbb{D}}
\newcommand{\bbF}{\mathbb{F}}
\newcommand{\bbN}{\mathbb{N}}
\newcommand{\bbR}{\mathbb{R}}
\newcommand{\bbT}{\mathbb{T}}
\newcommand{\bbZ}{\mathbb{Z}}
\newcommand{\bfU}{\mathbf{U}}
\newcommand{\bfV}{\mathbf{V}}
\newcommand{\rmH}{\mathrm{H}}
\renewcommand{\d}{\mathrm{d}}
\newcommand{\m}{\mathrm{m}}
\newcommand{\A}{\mathcal{A}}
\newcommand{\B}{\mathcal{B}}
\newcommand{\C}{\mathcal{C}}
\newcommand{\F}{\mathcal{F}}
\renewcommand{\P}{\mathcal{P}}
\newcommand{\Z}{\mathcal{Z}}
\newcommand{\scrC}{\mathscr{C}}
\newcommand{\scrH}{\mathscr{H}}
\newcommand{\scrL}{\mathscr{L}}
\newcommand{\scrM}{\mathscr{M}}
\newcommand{\scrN}{\mathscr{N}}
\newcommand{\scrP}{\mathscr{P}}
\newcommand{\G}{\Gamma}
\renewcommand{\L}{\Lambda}
\renewcommand{\a}{\alpha}
\renewcommand{\b}{\beta}
\newcommand{\eps}{\varepsilon}
\newcommand{\g}{\gamma}
\renewcommand{\k}{\kappa}
\newcommand{\s}{\sigma}
\renewcommand{\phi}{\varphi}
\newcommand{\Ag}{\mathrm{Ag}}
\newcommand{\Cnd}{\mathrm{Coind}}
\newcommand{\id}{\mathrm{id}}
\newcommand{\coker}{\mathrm{coker}}
\newcommand{\img}{\mathrm{img}}
\newcommand{\sgn}{\mathrm{sgn}}
\newcommand{\PDE}{{PD$^{\rm{ce}}$E}}
\newcommand{\Hom}{\mathrm{Hom}}
\newcommand{\End}{\mathrm{End}}
\newcommand{\PMod}{\mathsf{PMod}}
\newcommand{\bnabla}{\phantom{}^{\phantom{}_\bullet}\!\!\!\!\nabla}
\renewcommand{\hat}[1]{\widehat{#1}}
\newcommand{\ol}[1]{\overline{#1}}
\newcommand{\into}{\hookrightarrow}
\newcommand{\fin}{\nolinebreak\hspace{\stretch{1}}$\lhd$}
\renewcommand{\t}[1]{\tilde{#1}}
\renewcommand{\to}{\longrightarrow}
\newcommand{\onto}{\twoheadrightarrow}
\newcommand{\actson}{\curvearrowright}
\newcommand{\lfl}{\lfloor}
\newcommand{\rfl}{\rfloor}
\newcommand{\uhr}{\!\upharpoonright}
\newcommand{\llc}{\llcorner}
\begin{document}

\title{Partial difference equations over compact Abelian groups, I: modules of solutions}
\author{Tim Austin\footnote{Research supported by a fellowship from the Clay Mathematics Institute}\\ \\ \small{Courant Institute, New York University}\\ \small{New York, NY 10012, U.S.A.}\\ \small{\texttt{tim@cims.nyu.edu}}}
\date{}

%
%
%
%
%

\maketitle

\begin{abstract}
Consider a compact Abelian group $Z$ and closed subgroups $U_1$, \ldots, $U_k \leq Z$.  Let $\bbT := \bbR/\bbZ$.  This paper examines two kinds of functional equation for measurable functions $Z\to \bbT$.

First, given $f:Z\to\bbT$ and $w \in Z$, the resulting differenced function is
\[d_wf(z) := f(z-w) - f(z).\]
In this notation, we study solutions to the system of difference equations
\[d_{u_1}\cdots d_{u_k}f \equiv 0 \quad \forall u_1 \in U_1,\ u_2 \in U_2,\ \ldots\ u_k \in U_k.\]
Second, we study tuples of measurable functions $f_i:Z\to \bbT$ such that $f_i$ is invariant under translation by $U_i$ and also
\[f_1 + \cdots + f_k = 0.\]

For these equations, the solutions form a subgroup of $\F(Z)$ or $\F(Z)^k$, where $\F(Z)$ is the group of measurable functions $Z\to \bbT$ modulo Haar-a.e. equality.  The subgroup of solutions is closed under convergence in probability and is globally invariant under rotations of $Z$, so it is a complete metrizable $Z$-module.  We will give a recursive description of the structure of this $Z$-module relative to the solution-modules of lower-order equations of the same kind.

These results are obtained as applications of an abstract theory of a special class of $Z$-modules.  Most of our work will go into showing that this class of modules is closed under various natural operations.  Knowing that, the above descriptions follow as easy consequences.

Partial difference equations of the above kind can be seen as an extremal version of the inverse problem for the higher-dimensional, directional analogs of Gowers' uniformity norms. Our methods also give some information about the `stability' version of this inverse problem, which concerns functions whose Gowers norm is sufficiently close to being maximal.
\end{abstract}

\newpage

\parskip 0pt

\setcounter{tocdepth}{1}
\tableofcontents

\section{Introduction}

Let $Z$ be a compact Abelian group, let $U_1$, \ldots, $U_k \leq Z$ be closed subgroups, and let $A$ be an Abelian Lie group.  Let $m_Z$ be the Haar probability measure on $Z$. This paper will study two kinds of functional equation for measurable functions $Z\to A$, up to $m_Z$-a.e. equality, specified in terms of the subgroups $U_i$.

First, given $f:Z\to A$ and an element $w \in Z$, we define the associated \textbf{differenced function} to be
\begin{eqnarray}
d_wf(z) := f(z-w) - f(z).
\end{eqnarray}
This is the obvious discrete analog of a directional derivative.  Given a subgroup $W \leq Z$, we will sometimes write $d^Wf$ for the function
\[W\times Z\to A:(w,z)\mapsto d_wf(z).\]

The two classes of equation to be studied are the following.

\begin{itemize}
\item The \textbf{partial difference equation}, or {\PDE}, associated to the tuple $(U_1,\ldots,U_k)$ is the system
\begin{eqnarray}\label{eq:PDceE}
d_{u_1}\cdots d_{u_k}f = 0 \quad \forall u_1\in U_1,\,u_2 \in U_2,\,\ldots,\,u_k \in U_k
\end{eqnarray}
(since we quotient by functions that vanish a.e., this means formally that for \emph{strictly} every $u_1$, \ldots, $u_k$, the left-hand side is a function $Z\to A$ that vanishes at \emph{almost} every $z$).  The integer $k$ is the \textbf{order} of this {\PDE}.

\item Suppose now that $f_i:Z\to A$, $i = 1,\ldots,k$, are measurable functions such that $f_i$ is $U_i$-invariant (that is, $f(z - u) = f(z)$ for $m_Z$-a.e. $z \in Z$, for all $u \in U_i$) and such that
\begin{eqnarray}\label{eq:zero-sum}
f_1(z) + \cdots + f_k(z) = 0 \quad \quad \hbox{for $m_Z$-a.e.}\ z.
\end{eqnarray}
A tuple $(f_i)_{i=1}^k$ satisfying~(\ref{eq:zero-sum}) will be called a \textbf{zero-sum} tuple of functions, and the problem of describing such tuples will be called a \textbf{zero-sum problem}.
\end{itemize}

Henceforth we refer to $Z$ as the \textbf{ambient group} for either of these problems, and to $\bfU = (U_i)_{i=1}^k$ as the tuple of \textbf{acting subgroups}.

The target Lie group of greatest interest is $A = \bbT$ (or $A = \rm{S}^1$, when there is reason to write the equations multiplicatively). Most of our concrete examples will have target either $\bbT$ or $\bbZ$.

Zero-sum problems are closely related to {\PDE}s.  The most obvious connection is the following.  If $f$ satisfies the {\PDE} associated to $\bfU = (U_1,\ldots,U_k)$, then we may write this as
\[\sum_{e \subseteq [k]}(-1)^{|e|}f\circ q_e = 0,\]
where
\[q_e:U_1\times \cdots \times U_k\times Z \to Z:(u_1,\ldots,u_k,z)\mapsto z - \sum_{i \in e}u_i.\]
This is a zero-sum problem associated to the family
\[(\ker q_e)_{e\subseteq [k]}\]
of $2^k$ subgroups of $U_1\times \cdots \times U_k\times Z$.

On the other hand, if $(f_1,\ldots,f_k)$ is a solution of~(\ref{eq:zero-sum}), then for each $i = 2,3,\ldots,k$ one has $d_{u_i}f_i = 0$ for all $u_i \in U_i$, and so applying several of these operators to~(\ref{eq:zero-sum}) gives
\[d_{u_2}\cdots d_{u_k}f_1 \equiv 0 \quad \forall u_2 \in U_2,\ \ldots,\ u_k\in U_k.\]

Another relation between~(\ref{eq:zero-sum}) and a {\PDE} is the following.  Given $\bfU = (U_i)_{i=1}^k$, the most obvious solutions to the associated {\PDE} are the sums
\begin{eqnarray}\label{eq:sum-soln}
\sum_{i=1}^kf_i
\end{eqnarray}
in which each $f_i$ is $U_i$-invariant.  Thus there is a natural sum map from tuples of $U_i$-invariant functions to {\PDE}-solutions for $\bfU$. The zero-sum tuples are precisely the elements of the kernel of this map: that is, they describe the non-uniqueness of the representation in~(\ref{eq:sum-soln}).  We will return to such issues of uniqueness at length later.

This paper will show that {\PDE}s and zero-sum problems are also related in that their modules of solutions all fall into a more general class of $Z$-modules having a special and fairly explicit structure.  Before formulating those results, it is worth collecting some motivating examples.

\subsection{Some concrete examples}

The above observations allow us to pass quickly between examples of zero-sum tuples and {\PDE}-solutions.  Often examples are easier to find in the former setting, although we shall also discuss some examples of {\PDE}s directly.

Many examples will be written in the form
\[f_1(M_1(\theta_1,\ldots,\theta_d)) + \cdots + f_k(M_k(\theta_1,\ldots,\theta_d)) \equiv 0,\]
where
\begin{itemize}
\item $(\theta_1,\ldots,\theta_d)$ is an argument in $\bbT^d$;
\item each $M_i$ is an $(r_i\times d)$-integer matrix for some $r_i < d$, interpreted as a homomorphism $\bbT^d\to \bbT^{r_i}$;
\item the functions $f_i:\bbT^{r_i}\to A$ are measurable.
\end{itemize}
This is equivalent to asserting that $(f_i\circ M_i)_{i=1}^d$ is a zero-sum tuple on $\bbT^d$ in which the $i^{\rm{th}}$ function is $(\ker M_i)$-invariant.

\begin{ex}\label{ex:Uisequal}
Let us begin with the important special case
\[U_1 = U_2 = \ldots = U_k = Z.\]
When $k=1$, the {\PDE}-solutions are precisely the constant functions, and when $k=2$ they are precisely the affine functions (that is, constants plus homomorphism $Z\to A$).

If $A = \bbR$ and we allow $Z = \bbZ^d$ (ignoring here that it is non-compact), then an easy exercise shows that the solutions to this {\PDE} with $k$ copies of $Z$ are precisely the polynomials of degree at most $k-1$.

In case $Z$ is a vector space over a finite field $\bbF_q$ and $A = \bbT$, one also obtains an identification of functions satisfying this {\PDE} with the classical notion of a polynomial as a sum of monomials (evaluated mod $1$), but with some extra subtleties in the meaning of `degree'.  This phenomenon is the subject of a detailed analysis by Tao and Ziegler in their work~\cite{TaoZie12} on the inverse problem for the Gowers norms over $\bbF_q^d$.  We will return to the connection between our work and Gowers norms a little later.

On the other hand, if $Z =\bbT^d$ and $A = \bbT$, then another simple exercise shows that the solution-module stabilizes at $k=2$: if $f:Z\to A$ is a function such that for some $k$, all $k^{\rm{th}}$ differenced functions of $f$ are zero, then $f$ is actually affine. \fin
\end{ex}

\begin{ex}
Affine functions also appear in more general zero-sum triples. If $\chi_1$, $\chi_2$ and $\chi_3$ are any three characters on $Z$ which sum to zero, then they are a zero-sum triple for the subgroups $U_i := \ker\chi_i$, $i=1,2,3$.  For example, on $Z = \bbT^2$, the equation
\[\theta_1 + \theta_2 + (-\theta_1 - \theta_2) = 0\]
is an example of this kind using the three characters
\[\chi_1(\theta_1,\theta_2) = \theta_1, \quad \chi_2(\theta_1,\theta_2) = \theta_2 \quad\hbox{and}\quad \chi_3(\theta_1,\theta_2) = -\theta_1-\theta_2.\]
\fin
\end{ex}

\begin{ex}
Now suppose that $Z_1 := U_1 + \ldots + U_k$ is a proper subgroup of $Z$.  In this case the {\PDE} and zero-sum problem effectively reduce to those on the subgroup $Z_1$.  Knowing the possible solutions on $Z_1$, one obtains solutions on $Z$ by making a measurable, but otherwise completely arbitrary, selection of solutions on each coset of $Z_1$, and all solutions on $Z$ are clearly of this kind.

For instance, if $Z_1 := U_1 + U_2 + U_3$ is a proper subgroup of $Z$, then one may let $(\chi_{i,\ol{z}})_{i=1}^3$ be a measurable selection of a zero-sum triple of characters on $\hat{Z}_1$ (as in the previous example) indexed by $\ol{z} \in Z/Z_1$, and now obtain a zero-sum triple on $Z$ by letting $\s:Z/Z_1\to Z$ be a measurable cross-section and setting
\[f_i(z) := \chi_{i,z+Z_1}(z - \s(z + Z_1)).\]

This phenomenon will be of great importance in the sequel.  It can be simplified by working on each coset of $U_1 + \ldots + U_k$ separately, which effectively allows us to assume that $Z = U_1 + \ldots + U_k$.  However, this does not evade the phenomenon completely, because our description of {\PDE}-solutions for the tuple $(U_1,\ldots,U_k)$ will be relative to the solutions of the simpler {\PDE}s corresponding to its $(k-1)$-sub-tuples, such as $(U_2,\ldots,U_k)$.  Since one may have $U_2 + \ldots + U_k \lneqq U_1 + \ldots + U_k$, we will need this `measurable-selection' picture for describing the solutions to those simpler equations. \fin
\end{ex}

\begin{ex}\label{ex:Shkredov}
Perhaps the simplest nontrivial example in which the $U_i$s are distinct is the {\PDE} on $\bbT^2$ associated to $U_1 := \bbT\times \{0\}$ and $U_2 := \{0\}\times \bbT$.  In this case $f$ is a solution if and only if
\[d_{(u,0)}d_{(0,v)}f(x,y) = f(x+u,y+v) - f(x,y+v) - f(x+u,y) + f(x,y) = 0\]
almost surely.  Changing variables from $(z,y,u,v)$ to $(z,y,x',y')$ with $x' := x+u$ and $y' := y + v$, this becomes
\[f(x',y') - f(x,y') - f(x',y) + f(x,y) = 0 \quad \hbox{almost surely},\]
and this now re-arranges to
\[f(x,y) := (f(x,y')  - f(x',y')) + f(x',y).\]
By Fubini's Theorem, a.e. choice of $(x',y')$ is such that this re-arranged equation holds for a.e. $(x,y)$, so fixing such a choice of $(x',y')$, the right-hand side is manifestly a sum of a $U_1$-invariant function (i.e., depending only on $y$) and a $U_2$-invariant function (depending only on $x$). \fin
\end{ex}

For the {\PDE} associated to a general tuple $(U_i)_{i=1}^k$, the most obvious solutions are the generalization of the above: functions of the form $\sum_{i=1}^kf_i$ in which each $f_i$ is $U_i$-invariant.  The above change-of-variables trick may be performed whenever the subgroups $U_1$, \ldots, $U_k$ are \textbf{linearly independent}, meaning that for $(u_i)_{i=1}^k \in \prod_{i=1}^kU_i$ one has
\[\sum_iu_i = 0 \quad \Longrightarrow \quad u_1 = u_2 = \ldots = u_k = 0.\]
In a sense, this is the extreme opposite of the case considered in Example~\ref{ex:Uisequal}. The change-of-variables leads to a simple solution of the {\PDE} in the linearly independent case: see Subsection~\ref{subs:lin-ind}, where it is shown that the `obvious' solutions are the only ones.

Corresponding to this, the most obvious solutions to the zero-sum problem are those of the form
\[(0,\ldots,0,f,0,\ldots,0,-f,0,\ldots,0),\]
where $f:Z\to A$ is invariant under $(U_i + U_j)$ for some $i < j$, and the non-zeros in this tuple are in the $i^\rm{th}$ and $j^\rm{th}$ positions. Further examples may then be obtained as sums of these for different pairs $(i,j)$.

The above examples already give a large supply of {\PDE}-solutions, and we can of course produce more examples by adding these together.  They are all still rather simple, characterized by being `polynomial' (perhaps actually invariant) on the cosets of some relevant subgroup. However, there is worse to come.

Let $\lfl\cdot\rfl$ be the integer-part function on $\bbR$, and for $\theta \in \bbT = \bbR/\bbZ$ let $\{\theta\}$ be its unique representative in $[0,1)$.  A little abusively, we will call $\{\cdot\}$ the `fractional-part map'.

\begin{ex}
Define $f:\bbT \times (\bbZ/2\bbZ) \to \bbT$ by
\[f(t,n) = \left\{\begin{array}{ll} \frac{1}{2}\{t\} \mod 1 & \quad \hbox{if}\ n=0\\
-\frac{1}{2}\{t\} \mod 1 & \quad \hbox{if}\ n=1\end{array}\right.\]
Let $U_1 := U_2 := \bbT\times \{0\}$ and $U_3 := \{0\}\times (\bbZ/2\bbZ)$.  One computes easily that
\[d_{(0,1)}f(t,n) = (-1)^{n-1}\{t\} \mod 1\ \ \  = (-1)^{n-1}t,\]
and hence that
\[d^{U_1}d^{U_2}d^{U_3}f = 0.\]

This $f$ is a square-root of a character on $\bbT\times (\bbZ/2\bbZ)$ (that is, $2f$ is a character), but it is not a character, nor does its restriction agree with a character on any nonempty open subset of $Z$. It is also not invariant under any nontrivial subgroup of $\bbT\times (\bbZ/2\bbZ)$. \fin
\end{ex}

\begin{ex}\label{ex:not-Shkredov}
The simplest example lying between the `polynomial' and linearly-independent cases is the {\PDE}
\begin{eqnarray}\label{eq:sq-count}
d_{(u,0)}d_{(0,v)}d_{(w,w)}f(x,y) = 0
\end{eqnarray}
for functions $f:\bbT^2\to A$.  The relevant subgroups here are $U_1$ and $U_2$ as in Example~\ref{ex:Shkredov}, together with
\[U_3 = \{(w,w)\,|\ w \in \bbT\}.\]
In this case, there is no simple change of variables from which one may read off the structure of $f$, because the $U_i$s are linearly dependent: of course, $(w,w) = (w,0) + (0,w)$.

When we return to this example in Subsection~\ref{subs:almost-lin-ind}, it will illustrate several of the methods introduced on route.  It corresponds to the first unresolved case of the higher-dimensional Gowers-norm inverse problem, to be described shortly.  One indication of its delicacy is that the answer depends on the target group $A$.

If $A = \bbT$, then the only solutions to our {\PDE} are sums of functions invariant under one of the $U_i$s, as in Example~\ref{ex:Shkredov}.  This fact will be among the calculations of Subsection~\ref{subs:almost-lin-ind}.

However, if $A = \bbZ$ then one finds a new solution:
\[f(x,y) := \lfl \{x\} + \{-y\}\rfl.\]
To verify this, observe that among $\bbR$-valued functions we may write
\[f(x,y) = \{x\} + \{-y\} - \{x-y\},\]
which \emph{is} a sum of pieces that are individually invariant under the subgroups $U_1$, $U_2$ and $U_3$.  This illustrates that the distinction between `trivial' and `non-trivial' solutions can change according to the choice of target group.

The function $f$ not only solves the above {\PDE}, but it actually satisfies the equation
\[f(x,y) - f(z,y+z) + f(z+y,z) - f(y,z) = 0,\]
from which the above {\PDE} may be obtained by repeated differencing, as explained earlier.  This equation has an important life of its own: it is the equation for a $2$-cocycle in the inhomogeneous bar resolution of Moore's measurable group cohomology.  Moreover, standard calculations in that theory (as, for example, in~\cite{AusMoo--cohomcty}) show that this $f$ is a generator for $\rmH^2_\m(\bbT,\bbZ) \cong \bbZ$.  Since, on the other hand, one can show that any solution to our simpler sub-equations would have to define a coboundary in the present setting, it follows that this function $\s$ is not a sum of examples of those simpler kinds. It represents a new kind of solution that our theory must also be able to account for.

These facts will be proved in Subsection~\ref{subs:almost-lin-ind}. Even the absence of non-obvious solutions when $A = \bbT$ seems to rely on the cohomological calculation $\rmH^2_\m(\bbT,\bbT) = 0$, which rules out a $\bbT$-valued cohomological example analogous to the above. This vanishing of cohomology, in turn, requires some modestly heavy machinery: I do not know of an elementary proof. \fin
\end{ex}

\begin{ex}\label{ex:not-Shkredov-2}
Define $\s:\bbT^3\to \bbT$ by
\[\s(\theta_1,\theta_2,\theta_3) := \lfl \{\theta_1\} + \{\theta_2\}\rfl\cdot \theta_3.\]
Then one can verify directly that
\begin{multline}\label{eq:not-Shkredov-2}
\s(\theta_1,\theta_2,\theta_3) - \s(\theta_1,\theta_2,\theta_3 + \theta_4) + \s(\theta_1,\theta_2 + \theta_3,\theta_4)\\
-\s(\theta_1+\theta_2,\theta_3,\theta_4) + \s(\theta_2,\theta_3,\theta_4) = 0 \quad \forall \theta_1,\theta_2,\theta_3,\theta_4.
\end{multline}

Let $Z := \bbT^4$, and for $i=1,\ldots,5$ let $\s_i(\theta_1,\theta_2,\theta_3,\theta_4)$ be the $i^{\rm{th}}$ function appearing in the above alternating sum.  Then the functions $\s_i$ are respectively invariant under the following one-dimensional subgroups of $Z$:
\[\begin{array}{cc} U_1 := (0,0,0,1)\cdot\bbT, & \quad U_2 := (0,0,-1,1)\cdot\bbT,\\ 
U_3 := (0,-1,1,0)\cdot \bbT, & \quad U_4 := (-1,1,0,0)\cdot\bbT,\\
U_5 := (1,0,0,0)\cdot\bbT. & \end{array}\]
Here we interpret each of these $4$-vectors as a $(1\times 4)$-matrix, so the above notation means that
\[U_1 = \{(0,0,0,\theta)\,|\ \theta \in \bbT\},\]
and similarly.

This is also an example of cohomological origin.  This time, $\s$ is a $3$-cocycle in the inhomogeneous bar resolution for $\rmH^3_\m(\bbT,\bbT)\cong \bbZ$, and it turns out to be a generator for that cohomology group.  As before, this will imply that it cannot be decomposed into a sum of solutions to simpler equations. \fin
\end{ex}

Examples~\ref{ex:not-Shkredov} and~\ref{ex:not-Shkredov-2} suggest a link to group cohomology.  In general, any non-trivial class in $\rmH^p_\m(\bbT,\bbT)$ (which is nonzero when $p$ is odd~\cite{AusMoo--cohomcty}) gives a zero-sum tuple with $p+2$ elements.  In Subsection~\ref{subs:almost-lin-ind} we will fit these into a general class, and show that they never decompose into solutions of simpler equations: see Lemma~\ref{lem:almost-lin-ind}.

A selection of more complicated examples will be offered in Subsection~\ref{subs:extra-egs}.

\begin{rmk}
It is worth noting that the above examples, and also those to come in Subsection~\ref{subs:extra-egs}, all take the form of `step-polynomials'.  In case $Z = \bbT^d$, these are functions obtained by first imposing a `coordinate-system $\bbT^d\to [0,1)^d$' using the fractional-part map; then decomposing $\bbT^d$ into regions according to some linear inequalities among these fractional parts; and finally taking a different polynomial function of those fractional parts on each of the regions.

This is not at all a coincidence.  It turns out that solutions to {\PDE}s and zero-sum problems can always be decomposed, in a certain sense, into `basic solutions' that are functions of this `semi-algebraic' kind.  This feature will be the subject of a future paper. \fin
\end{rmk}

\subsection{Modules of solutions}\label{subs:mod-soln}

The examples above and in Subsection~\ref{subs:extra-egs} exhibit considerable variety as individual functions.  However, it will turn out that the \emph{global} structure of the solution-modules admits a relatively simple `recursive' description.  In proving this, we will see that group cohomology is not just a source of examples: it will be the key tool for teasing this structure apart.  The main structural results will be formulated next.

First recall that for any compact Abelian $Z$, any Borel function from $Z$ to a separable metric space must be lifted from some metrizable quotient of $Z$.  For our problems of interest, we will therefore lose no generality if we assume that $Z$ itself is metrizable (equivalently, second-countable).  This assumption is to be understood throughout the rest of the paper, and will usually not be remarked explicitly.

Let $\F(Z,A)$ denote the space of measurable functions $f:Z\to A$ modulo agreement $m_Z$-a.e., equipped with the topology of convergence in probability.  This becomes a topological $Z$-module when $Z$ acts by translation: for $w \in Z$, we denote the translation operator by
\[R_w:\F(Z,A)\to \F(Z,A), \quad R_wf(z) := f(z-w).\]
We will usually abbreviate $\F(Z,\bbT) =: \F(Z)$.

Because we are assuming $Z$ is metrizable, the topology of $\F(Z,A)$ is Polish: that is, it can be generated by a complete, separable, translation-invariant metric.  This is the key point at which we need the metrizability of $Z$: without it, the function space $\F(Z,A)$ would not be separable, and so would fall outside the domain of some tools that we will need later.

If $d$ is a complete group metric on $A$, than a suitable choice of metric on $\F(Z,A)$ is offered by the conventional metric describing convergence in probability:
\[d_0(f,g) := \inf\big\{\eps>0\,\big|\ m_Z\{z\in Z\,|\ d(f(z),g(z)) > \eps\} < \eps\big\}.\]
When we need an explicit metric on $\F(Z,A)$ in the sequel, it will always be of this kind.  In case $A = \bbT$, the metric on $A$ itself will usually be $|\cdot|$, inherited from the Euclidean distance on $\bbR$.

Given $k \in \bbN$, we will write $[k] := \{1,2,\ldots,k\}$.

Now let $Z$ and its subgroups $U_1$, \ldots, $U_k$ be as before. For a subset $e \subseteq [k]$, we will always set
\[U_e := \sum_{i\in e}U_i,\]
where this is understood to be $\{0\}$ in case $e = \emptyset$.

Consider the {\PDE} associated to $\bfU$ for $A$-valued measurable functions.  If one knows how to solve this {\PDE} in case the ambient group is $U_{[k]}$, then for general $Z \geq U_{[k]}$ one may simply make an independent (measurable) selection of solutions on every coset of $U_{[k]}$.  Therefore the description of solutions in general reduces to the description of solutions in case $Z = U_{[k]}$.

For each $e = \{i_1,\ldots,i_\ell\} \subseteq \{1,2,\ldots,k\}$, let
\[M_e := \big\{f \in \F(Z,A)\,\big|\,d_{u_{i_1}}\cdots d_{u_{i_\ell}}f \equiv 0\ \forall u_{i_1} \in U_{i_1},\,u_{i_2} \in U_{i_2},\ldots,\,u_{i_\ell} \in U_{i_\ell}\big\}.\]
This is a family of closed $Z$-submodules of $\F(Z,A)$, and they clearly satisfy
\[a \subseteq e \quad \Longrightarrow \quad M_a \subseteq M_e\]
and $M_\emptyset = \{0\}$.  The largest module, $M_{[k]}$, consists of the solutions to our {\PDE}.  Within it, we will sometimes refer to the elements of $\sum_{e\subsetneqq [k]}M_e$ as the submodule of \textbf{degenerate} solutions: these are sums of solutions to the different nontrivial simplifications of our {\PDE}.  In Examples~\ref{ex:not-Shkredov} and~\ref{ex:not-Shkredov-2}, the function exhibited is interesting because it is not degenerate, as will be proved later.

However, it turns out that, in a certain sense, the full solution-module $M_{[k]}$ cannot be too much larger than the submodule of degenerate solutions.  In case $Z = U_{[k]}$, we will find that $\sum_{e\subsetneqq [k]}M_e$ is relatively open-and-closed inside $M_{[k]}$.  This means that there are only countably many classes of solutions modulo the degenerate solutions, and that these classes are all separated by at least some positive $d_0$-distance $\eps$.  In case $Z \gneqq U_{[k]}$, this picture still obtains on every coset of $U_{[k]}$ individually.  In case $Z$ is a Lie group and $Z = U_{[k]}$, we will also find that the resulting discrete quotient group $M/(\sum_{e\subsetneqq [k]}M_e)$ is finitely generated, so that there is a finite list of `basic' non-degenerate solutions that generates all the others, modulo degenerate solutions.

In order to prove these facts, we will need a more complete structural picture of the module of degenerate solutions themselves.  That module is the image of the 
homomorphism
\[\bigoplus_{i=1}^kM_{[k]\setminus i} \to M_{[k]}\]
given by the sum of the obvious inclusions.  We will want to describe the module of degenerate solutions in terms of the individual modules $M_{[k]\setminus i}$, expecting that these have already been `understood' in the course of an induction on $k$.  However, to use this idea one must also describe the \emph{kernel} of the above sum over inclusions: that is, describe the possible non-uniqueness in the representation of a degenerate solution as a sum of solutions to simpler equations.

At this point, one notices that this kernel has its own `degenerate' elements.  If $f \in M_{[k]\setminus \{i,j\}}$ for some $i < j$, then from this one obtains the zero-sum tuple
\[(0,0,\ldots,0,f,0,\ldots,0,-f,0,\ldots,0) \in \bigoplus_{i=1}^kM_{[k]\setminus i},\]
where the nonzero entries are in positions $i$ and $j$ (similarly to Example~\ref{ex:Shkredov} above).  One may now add together such examples for different pairs $\{i,j\}$ to produce further examples.

Similarly to the situation with degenerate solutions to the original {\PDE}, we will find that sums of these degenerate examples are `most' of the possible zero-sum tuples in $\bigoplus_{i=1}^kM_{[k]\setminus i}$.

These sums of degenerate examples are the image of a homomorphism
\[\bigoplus_{i < j}M_{[k]\setminus \{i,j\}}\to \bigoplus_{i=1}^kM_{[k]\setminus i}.\]
In order to analyze the degenerate examples of zero-sum tuples, we will now need to describe the kernel of \emph{this} homomorphism.

The following structure emerges from repeating this line of enquiry.  For each $\ell \in \{1,\ldots,k-1\}$, there is a natural map
\[\partial_{\ell+1}:\bigoplus_{a \in \binom{[k]}{\ell}}M_a \to \bigoplus_{e \in \binom{[k]}{\ell+1}}M_e\]
defined by
\[\big(\partial_{\ell+1}((m_a)_a)\big)_{\{i_1 < \ldots < i_{\ell+1}\}} = \sum_{j=1}^{\ell+1}(-1)^{j-1}m_{\{i_1 < \ldots < i_{\ell+1}\}\setminus \{i_j\}}.\]

These are obviously relatives of the boundary maps in simplicial cohomology, and just as in that theory one computes easily that $\partial_{\ell+1}\partial_\ell = 0$ for every $\ell$.  We have therefore constructed a complex of $Z$-modules
\begin{eqnarray}\label{eq:cplx-for-PDceE}
0 \stackrel{\partial_1}{\to} \bigoplus_{i=1}^kM_i \stackrel{\partial_2}{\to} \bigoplus_{i < j}M_{ij} \stackrel{\partial_3}{\to} \cdots \stackrel{\partial_{k-1}}{\to} \bigoplus_{|e| = k-1}M_e \stackrel{\partial_k}{\to} M_{[k]} \stackrel{\partial_{k+1}}{\to} 0
\end{eqnarray}
(where of course $\partial_1$ and $\partial_{k+1}$ are both the zero homomorphism).

In terms of this picture, we can finally formulate our main theorem relating $M_{[k]}$ to the modules $M_e$ for $e\subsetneqq [k]$.

\vspace{7pt}

\noindent\textbf{Theorem A}\quad \emph{Fix $k\geq 2$ and let $(M_e)_{e\subseteq [k]}$ be as above.  If $Z = U_{[k]}$, then there is some $\eps > 0$ such that for all $\ell \in \{2,\ldots,k\}$, $\img\,\partial_\ell$ is relatively open-and-closed in $\ker\partial_{\ell+1}$, with any distinct cosets of this submodule separated by at least $\eps$ in the metric $d_0$.}

\emph{If, in addition, $Z$ is a Lie group and $A$ is compactly generated, then each quotient $\ker \partial_{\ell+1}/\rm{img}\,\partial_\ell$ for $\ell \geq 2$ is finitely generated.}

\emph{If $Z\geq U_{[k]}$, the structure described above obtains upon restricting to any individual coset of $U_{[k]}$.
}

\vspace{7pt}

The key to Theorem A will be an abstract class of families of modules $(M_e)_e$ for which the complex above admits such a structural description.  Note that $\ker \partial_2$ is omitted from the statement of Theorem A: we will also obtain a description of that kernel for this class of module-families, but not always as a discrete module.

In connection with the finite-generation part of this theorem, we will find that if $A = \bbT$ and $Z$ is a Lie group, then the rank of the quotients $\ker \partial_{\ell+1}/\rm{img}\,\partial_\ell$ can depend heavily on $Z$: for instance, it can be arbirarily large for finite groups $Z$ of rank $3$, all of which may be seen as subgroups of $\bbT^3$.  This will be seen when we analyze some relatives of Examples~\ref{ex:not-Shkredov} and~\ref{ex:not-Shkredov-2} in Subsection~\ref{subs:almost-lin-ind} (see, in particular, the discussion following Lemma~\ref{lem:almost-lin-ind}).

A similar structure obtains in the case of zero-sum tuples.  To describe this, now for each $e\subseteq [k]$ let
\[N_e := \Big\{(f_i)_{i=1}^k \in \bigoplus_{i=1}^k\F(Z,A)^{U_i}\,\Big|\ \sum_{i=1}^kf_i = 0\ \hbox{and}\ f_i = 0\ \forall i \in [k]\setminus e\Big\},\]
where $\F(Z,A)^{U_i}$ denotes the $U_i$-invariant members of $\F(Z,A)$. Once again, $N_a \subseteq N_e$ whenever $a \subseteq e$, and this time $N_a = \{0\}$ whenever $|a| \leq 1$.

Now the same definition as above gives homomorphisms
\[\partial_{\ell+1}:\bigoplus_{|a| = \ell}N_a\to \bigoplus_{|e| = \ell+1}N_e \quad \hbox{for each}\ \ell \in \{2,\ldots,k-1\}\]
which fit into the complex
\[0 \stackrel{\partial_2 = 0}{\to} \bigoplus_{i < j}^k N_{ij} \stackrel{\partial_3}{\to} \bigoplus_{|a| = 3}N_a \stackrel{\partial_4}{\to} \cdots \stackrel{\partial_{k-1}}{\to} \bigoplus_{|e| = k-1}N_e \stackrel{\partial_k}{\to} N_{[k]} \stackrel{\partial_{k+1} = 0}{\to} 0.\]

The following is the analog of Theorem A for zero-sum tuples.

\vspace{7pt}

\noindent\textbf{Theorem B}\quad \emph{If $Z = U_{[k]}$, then there is some $\eps > 0$ such that in the above complex, for all $\ell \in \{3,\ldots,k\}$, $\img\,\partial_\ell$ is relatively open-and-closed in $\ker\partial_{\ell+1}$, with any distinct cosets of this submodule separated by at least $\eps$ in the metric $d_0$.}

\emph{If, in addition, $Z$ is a Lie group and $A$ is compactly generated, then each quotient $\ker \partial_{\ell+1}/\rm{img}\,\partial_\ell$ for $\ell \geq 3$ is finitely generated.}

\emph{If $Z\geq U_{[k]}$, the structure described above obtains upon restricting to any individual coset of $U_{[k]}$.
}

\vspace{7pt}

In addition, we will prove the following.

\vspace{7pt}

\noindent\textbf{Theorem A$'$ (resp. B$'$)}\quad \emph{In Theorem A (resp. B), there is a choice of $\eps > 0$ that depends only on $k$, not on $Z$ or $\bfU$.}

\vspace{7pt}

We shall first prove versions of Theorems A and B in which $\eps$ may also depend on the data $Z$ and $\bfU$, but then show that this dependence can be removed by a compactness argument.  Owing to this use of compactness, we will not obtain an explicit estimate for $\eps$ in terms of $k$, although presumably one could be extracted by making the intermediate steps of the earlier proofs quantitative.

In case the target module $A$ is a Euclidean space, the results of Theorems A and B simplify considerably.

\vspace{7pt}

\noindent\textbf{Corollary A$''$ (resp. B$''$)}\quad \emph{In Theorem A (resp. B), if $A$ is a Euclidean space with a $Z$-action, then $\ker \partial_{\ell+1} = \rm{img}\,\partial_\ell$ for all $\ell \geq 2$ (resp. $\ell \geq 3$).}

\emph{In particular, every $A$-valued solution $f$ of the {\PDE} associated to $Z$ and $\bfU = (U_1,\ldots,U_k)$ takes the form $f = \sum_{i=1}^kf_i$, where $f_i$ is $U_i$-invariant; and every $A$-valued zero-sum tuple $(f_i)_{i=1}^k$ associated to $\bfU$ may be written as
\[(f_i)_{i=1}^k = \sum_{1 \leq i < j \leq k}(0,0,\ldots,h_{ij},\ldots, -h_{ij},0),\]
for some functions $h_{ij} \in \F(Z,A)^{U_i + U_j}$, where $h_{ij}$ appears in the $i^{\rm{th}}$ position of this tuple and $-h_{ij}$ appears in the $j^{\rm{th}}$ position.}

\vspace{7pt}

Although this Euclidean case is rather simple, it is worth noting, because the study of {\PDE}s in case $A = \bbR$ is an obvious relative of the classical theory of linear, constant-coefficient PDEs.  It corresponds to systems of such linear PDEs consisting of the vanishing of finite lists of multiple directional derivatives.

Linear, constant-coefficient PDEs are one of the most classical subjects in PDE theory.  Their analysis was essentially completed during the 1960s in work of H\"ormander, Ehrenpreis, Palamodov, Tr\`eves and others: see, for instance, the classic books of H\"ormander~\cite{Hor63}, Tr\`eves~\cite{Tre61} and especially Palamodov~\cite{Pal70}.  A nice introduction to the main points is also given by Bj\"ork in~\cite[Chapter 8]{Bjo79}.  Those older works rely on representing the relevant PDE solutions by Fourier analysis.  Since we allow arbitrary measurable functions $Z\to \bbR$, without any assumption of integrability, the Fourier transform is unavailable in our setting.  Curiously, I do not see how to prove Corollary A$''$ or B$''$ without using the machinery developed in the present paper, at least to the extent that it connects with the problem of computing $\bbR$-valued cohomology groups, which mostly turn out to vanish~(\cite[Theorem A]{AusMoo--cohomcty}). It would be interesting to know whether any other parts of the study of linear, constant-coefficient PDE could be developed in our setting, but I have not pursued this idea very far. 

Let us next give an informal sketch of our approach to Theorem A.  Theorem B will be obtained from the general theory in much the same way.

The starting point is the simple observation that if $f \in M_{[k]}$ then $d_uf \in M_{[k-1]}$ for every $u \in U_k$, and similarly under differencing by elements of the other $U_i$s.  Thus, if one has already obtained enough information about the elements of $M_{[k-1]}$ as part of some inductive hypothesis, one can try to obtain from this a description of the function
\[U_k\to M_{[k-1]}:u \mapsto d_uf,\]
and hence recover something of the structure of $f$.

The key to this strategy is an \emph{a priori} description of all maps $U_k \to M_{[k-1]}$ that could arise in this way, before determining the structure of $M_{[k]}$. The key piece of structure that makes this possible is the relation
\[d_{u + u'}f(z) = d_uf(z) + d_{u'}f(z - u).\]
This follows immediately from the definition of $d_u$.  In the terminology of group cohomology, it asserts that the function $u\mapsto d_u$ is an $M_{[k-1]}$-valued $1$-cocycle.  The machinery of group cohomology (specifically, the measurable version of group cohomology developed for locally compact groups and Polish modules by Calvin Moore~\cite{Moo64(gr-cohomI-II),Moo76(gr-cohomIII),Moo76(gr-cohomIV)}) makes it possible to describe the space of these $1$-cocycles, provided one knows enough about the structure of the module $M_{[k-1]}$.  This description will involve the whole of the complex appearing in Theorem A.

To use this strategy, we will define an abstract class of families of $Z$-modules, and then show that it is closed under several natural operations, such as forming kernels, quotients, extensions and cohomology.  The members of this class are the `almost modest $\P$-modules', which will be defined in Section~\ref{sec:delta-mods} after several preparations have been made. We will then show how the family of modules $(M_e)_e$ can be assembled inductively out of simpler module-families using those basic operations.  This reconstruction of $(M_e)_e$ amounts to a more abstract presentation of the differencing idea above.  Since those simpler ingredients can be shown to be almost modest $\P$-modules, the closure properties of this class imply the same of $(M_e)_e$, and the conclusions of Theorem A are contained in this fact.  A similar argument will give Theorem B.

In addition to proving Theorems A and B, in principle the theory of $\P$-modules offers a procedure for computing the quotients $\ker \partial_{\ell+1}/\rm{img}\,\partial_\ell$ in those theorems explicitly.  This procedure will be the basis of the further worked examples in Section~\ref{sec:calc-egs}.  It rests on the ability to calculate cohomology groups of the form $\rmH^p_\m(Y,A)$, where $Y$ is a compact Abelian group and $A$ is a Lie $Y$-module, but this ability is more-or-less standard: see Appendix~\ref{app:exp-calc}. In particular, in the setting of Theorem A, if $k\geq 2$ and $Z = U_1 + \ldots + U_k$, then one can explicitly compute the group $M_{[k]}/(\sum_{i=1}^kM_{[k]\setminus i})$ of {\PDE}-solutions modulo degenerate solutions, in the form of a list of generators in $M_{[k]}$ and their relations modulo $\sum_{i=1}^kM_{[k]\setminus i}$, and similarly for zero-sum tuples.

These calculations become very lengthy even for simple {\PDE}s, and for low-dimensional Lie groups $Z$.  However, in practice one can often bypass calculating all of the quotients $\ker \partial_{\ell+1}/\rm{img}\,\partial_\ell$, and switch directly to a more `efficient' presentation of the quotient $M_{[k]}/(\sum_{i=1}^kM_{[k]\setminus i})$, which is usually the object of interest.  I do not know general ways to find such short-cuts, but Section~\ref{sec:calc-egs} will include some examples.

\subsection{Extremal inverse problems for Gowers norms}\label{subs:Gowers-inverse}

Although Theorem A (along with its proof) is mostly algebraic in nature, it corresponds to the extremal case of a much more analytic problem from arithmetic combinatorics: the inverse problem for directional Gowers norms.

The background to this problem begins with the following famous result of Szemer\'edi~\cite{Sze75}: for any $\delta > 0$ and $k \in \bbN$, if $N$ is sufficiently large and $E \subseteq \bbZ/N\bbZ$ has $|E| \geq \delta N$, then $E$ contains some $k$ distinct points in arithmetic progression.  This theorem has a long history, and a number of different proofs are now known.  Using an ergodic-theoretic approach due to Furstenberg~\cite{Fur77}, Furstenberg and Katznelson~\cite{FurKat78} gave a higher-dimensional generalization: for any $\delta > 0$, $d \in \bbN$ and distinct $\bf{v}_1,\ldots,\bf{v}_k \in \bbZ^d$, if $N$ is sufficiently large and $E \subseteq (\bbZ/N\bbZ)^d$ has $|E| \geq \delta N^d$, then
\begin{eqnarray}\label{eq:multiSzem}
E \supseteq \{\bf{a}+r\bf{v}_1,\ldots,\bf{a}+r\bf{v}_k\}
\end{eqnarray}
for some $\bf{a} \in (\bbZ/N\bbZ)^d$ and $r \in \bbZ/N\bbZ$ such that the $k$ points on the right are all distinct.  A much more complete introduction to these theorems can be found, for example, in the book~\cite{TaoVu06} of Tao and Vu.

Let us now introduce some notation.  Suppose that $f_1,\ldots,f_k$ are bounded functions $(\bbZ/N\bbZ)^d \to \bbD$, the closed unit disk in $\bbC$.  Then we define
\[S(f_1,\ldots,f_k) := \frac{1}{N^{d+1}}\sum_{\bf{a} \in (\bbZ/N\bbZ)^d,\,r\in \bbZ/N\bbZ}f_1(\bf{a} + r\bf{v}_1)\cdots f_k(\bf{a} + r\bf{v}_k).\]
When $f_1 = f_2 = \ldots = f_k = 1_E$, this is simply the fraction of all patterns of the kind in~(\ref{eq:multiSzem}) that are contained in $E$.  Most approaches to the Szemer\'edi or Furstenberg-Katznelson Theorems actually show that there is some constant $c = c(k,d,\delta) > 0$ such that
\[S(1_E,1_E,\ldots,1_E) \geq c \quad \hbox{whenever} \quad |E| \geq \delta N^d.\]
On the other hand, the proportion of patterns for which the $k$ points in~(\ref{eq:multiSzem}) are not distinct tends to $0$ as $N\to\infty$, so once $N$ is large enough this implies that $E$ must contain some non-degenerate patterns as well.

Our connection to these ideas is made by Gowers' proof of Szemer\'edi's Theorem from~\cite{Gow98,Gow00}.  Building on an older idea of Roth~\cite{Rot53}, Gowers' theory distinguishes between functions $\bbZ/N\bbZ\to \bbD$ that are `structured' and `random'\footnote{This terminology was introduced later by Tao.}.  He then shows that if one could find a set $E \subseteq \bbZ/N\bbZ$ with $|E|\geq \delta N$ for which $S(1_E,1_E,\ldots,1_E)$ is too small, then one could decompose $1_E$ as $f + g$ for some `structured' function $f$ and `random' function $g$ so that $g$ can effectively be ignored in the expression $S$:
\[S(1_E,1_E,\ldots,1_E) \approx S(f,f,\ldots,f).\]
Hence $f$ would also have a very small value for $S(f,f,\ldots,f)$.  Using the special `structure' of $f$, one can then extract another instance of the original problem with a smaller value of $N$ and a subset having substantially larger density in the ambient group, but still having too few patterns inside it; iterating this procedure eventually leads to a contradiction.

This argument requires setting up a suitable notion of `randomness' for functions $\bbZ/N\bbZ\to \bbD$.  The new tool that one needs is a certain family of (semi)norms on such functions, now referred to as the `Gowers uniformity norms'.  This part of Gowers' work is easily generalized to the following setting. Let $Z$ be any compact Abelian group and let $\bfU = (U_1,U_2,\ldots,U_k)$ be a tuple of closed subgroups of $Z$.  If $f:Z\to \bbD$ is measurable, then the \textbf{directional Gowers uniformly norm of $f$ over $\bf{U}$} is the quantity
\[\|f\|_{\rm{U}(\bf{U})} := \Big(\int_Z\int_{U_1} \cdots \int_{U_k} \bnabla_{u_1}\bnabla_{u_2}\cdots \bnabla_{u_k}f(z)\ \d u_k\,\d u_{k-1}\,\cdots\,\d u_1\,\d z\Big)^{2^{-k}},\]
where
\[\bnabla_u f(z) := f(z-u)\cdot \ol{f(z)}\]
(so this is a multiplicative analog of $d_u$).  In Gowers' paper, one has $U_1 = \dots = U_k = Z = \bbZ/N\bbZ$.  For a possible application to the Furstenberg-Katznelson Theorem, one would need several of these norms, all for $Z = (\bbZ/N\bbZ)^d$, and drawing a $(k-1)$-tuple of subgroups $\bfU$ from among the subgroups generated by the differences $\bf{v}_i - \bf{v}_j$.

Having introduced these norms, the technical key to their usefulness is a description of those functions $f$ for which $\|f\|_{\rm{U}(\bfU)}$ is not very small.  Obtaining such a description is referred to as the \textbf{inverse problem} for this Gowers norm.  In one dimension a fairly complete answer is now known, starting from the work of Gowers, and now developed into a rich theory by Green, Tao and Ziegler~\cite{GreTaoZie11,GreTaoZie12} and Szegedy~\cite{Sze10,Sze12}.  However, the analogous question for the general case remains mostly open (the papers~\cite{Shk05,Shk06} of Shkredov make progress in some of the simplest cases in two dimensions).  It is this lacuna that currently prevents a generalization of Gowers' work to a proof of the Furstenberg-Katznelson Theorem.  Such a generalization would be highly desirable, not only because Gowers' approach in one dimension gives much the best-known bounds on $N$, but also because it gives a much clearer picture of what features of a set $E$ are responsible for the number of patterns that it contains. 

Our work below bears on the extremal version of this inverse problem.  Clearly, if $f:Z\to \bbD$, then $\|f\|_{\rm{U}(\bfU)} \leq 1$.  Since $|f|\leq 1$ everywhere, this norm is equal to $1$ if and only if
\[|\ \bnabla_{u_1}\bnabla_{u_2}\cdots \bnabla_{u_k}f(z)| \equiv 1.\]
Since the average over $z$, $u_1$, \ldots, $u_k$ must be nonnegative and real, it must actually equal $1$, so we seek to describe those $f:Z\to\rm{S}^1$ for which
\[\bnabla_{u_1}\bnabla_{u_2}\cdots \bnabla_{u_k}f(z) = 1 \quad \hbox{a.s.}\]

If we now identify $\rm{S}^1$ with $\bbT$ and write the above question additively, it becomes precisely the partial difference equation~(\ref{eq:PDceE}).  Thus, our Theorem A contains the beginning of a description of those functions that would be relevant for a Gowers-type proof of the Furstenberg-Katznelson Theorem.

Theorem A describes only those functions for which the directional Gowers norm is strictly maximal, and it seems likely that the general inverse problem for the directional Gowers norms will involve considerable complexity beyond those.  However, the methods developed here do also provide a stability result for `almost-Gowers-maximal' functions:

\vspace{7pt}

\noindent\textbf{Theorem C}\quad\emph{For all $k\geq 1$ and $\eps > 0$ there is a $\delta > 0$ for which the following holds.
\begin{quote}
Let $\bfU$ be a $k$-tuple of subgroups of a compact Abelian group $Z$, and let $M$ be the module of solutions to the associated {\PDE}.  If $f\in \F(Z)$ is such that
\[d_0(0,d^{U_1}\cdots d^{U_k}f) < \delta \quad \hbox{in}\ \F(U_1\times \cdots \times U_k\times Z),\]
then there is some $g \in M$ such that $d_0(f,g) < \eps$ in $\F(Z)$.
\end{quote}}

\vspace{7pt}

(That is, approximate {\PDE}-solutions lie close to exact solutions.)

In the setting of $\rm{S}^1$-valued functions, this has the following simple corollary.

\vspace{7pt}

\noindent\textbf{Corollary C$'$}\quad \emph{For all $k\geq 1$ and $\eps > 0$ there is a $\delta > 0$ for which the following holds.
\begin{quote}
If $Z$ and $\bfU$ are as before and $f:Z\to\bbD$ has the property that $\|f\|_{\rm{U}(\bfU)} > 1 - \delta$, then there is an exact solution $g:Z\to\rm{S}^1$ to the {\PDE} associated to $\bfU$ such that $\|f - g\|_1 < \eps$.
\end{quote}}

\vspace{7pt}

Our connection to the directional-Gowers-norms inverse problem is well-illustrated in the cases that correspond to Examples~\ref{ex:Shkredov} and~\ref{ex:not-Shkredov} above.  Those were formulated on $\bbT^2$, but the discussion carries over without much change to $(\bbZ/N\bbZ)^2$.

Example~\ref{ex:Shkredov} corresponds to the inverse problem for the directional Gowers norm
\[\frac{1}{N^4}\sum_{(z_1,z_2) \in (\bbZ/N\bbZ)^2}\sum_{n_1,n_2 \in \bbZ/N\bbZ}\bnabla_{(n_1,0)}\bnabla_{(0,n_2)}F(z_1,z_2)\]
over functions $F:Z\to \rm{S}^1$.  Like the {\PDE} itself, this inverse problem may be solved easily by a change of variables.  This is the first (and easiest) step in Shkredov's recent work~\cite{Shk05,Shk06} on obtaining improved bounds in the problem of finding `corners' in dense subsets of $(\bbZ/N\bbZ)^2$, the simplest case of the two-dimensional Szemer\'edi Theorem.

On the other hand, Example~\ref{ex:not-Shkredov} corresponds to maximizing the directional Gowers norm
\[\frac{1}{N^5}\sum_{(z_1,z_2) \in (\bbZ/N\bbZ)^2}\sum_{n_1,n_2,n_3 \in \bbZ/N\bbZ}\bnabla_{(n_1,0)}\bnabla_{(0,n_2)}\bnabla_{(n_3,n_3)}F(z_1,z_2)\]
over functions $F:Z\to \rm{S}^1$.  This, in turn, is the directional Gowers norm that corresponds to the Szemer\'edi-type problem of finding a positive-density set of upright squares (that is, sets of the form
\[\{(z_1,z_2),(z_1+h,z_2),(z_1,z_2+h),(z_1+h,z_2+h)\}\] for some $z_1,z_2,h \in \bbZ/N\bbZ$ with $h\neq 0$) inside a positive-density subset of $(\bbZ/N\bbZ)^2$.

A good inverse description is not known for this directional Gowers norm.  As explained in Example~\ref{ex:not-Shkredov}, the only exact $\bbT$-valued solutions to the {\PDE} are functions of the form
\[f(z_1,z_2) = f_1(z_1) + f_2(z_2) + f_3(z_1 - z_2):\]
that is, sums of solutions to the simpler sub-equations of~(\ref{eq:sq-count}).  However, the proof we give for this depends on the cohomological vanishing $\rmH^2(\bbZ/N\bbZ,\bbT) = 0$.  The result can be made a little robust in virtue of Corollary C$'$, but I do not know what kind of weak quantitative analog of this cohomological result would be needed to give a solution to the full inverse problem.

\begin{rmk}
In case the vectors $\bf{v}_1,\ldots,\bf{v}_k$ are linearly independent, the directional-Gowers-norm inverse problem is much simpler.  Its solution gives enough information to complete a different approach to the Furstenberg-Katznelson Theorem which uses hypergraph regularity: see~\cite{TaoVu06} for more discussion.  However, this proof leads to bounds on $N$ that are of tower-type, much worse than Gowers' proof in one dimension.  An improvement along Gowers' lines seems to require a much more detailed picture of the various functions involved, hence the need for the general inverse theory.  The reason why one cannot focus only on the linearly independent case is discussed a little further in the closing remarks of~\cite{Aus--ErgRoth}. \fin
\end{rmk}

\subsection{Outline of the paper}

The methods of this paper rely heavily on the general theory of Polish modules for compact Abelian groups, and especially on the cohomology groups defined by measurable cocycles into such modules.  Necessary background from these theories will occupy Sections~\ref{sec:backgd1} and~\ref{sec:backgd2}, together with some more specialized results that are most easily presented at this early stage.  Most of this material is a straightforward modification of classical module theory and homological algebra.  Our conventions largely follow Moore's papers~\cite{Moo64(gr-cohomI-II),Moo76(gr-cohomIII),Moo76(gr-cohomIV)} and also~\cite{AusMoo--cohomcty}, which develop the necessary cohomology theory.  Further background can also be found there, especially in~\cite{Moo76(gr-cohomIII)}.  Many readers may wish to skip these sections at first, and then refer back to them as they are cited later.

Section~\ref{sec:delta-mods} introduces the central innovation of the present paper: $\P$-modules.  These are families of modules over a fixed compact Abelian group $Z$, tied together by a family of connecting homomorphisms (and some other data) satisfying various axioms.  After introducing $\P$-modules and $\P$-morphisms, Section~\ref{sec:delta-mods} also examines the `structure complex' of a $\P$-module, which captures an essential part of its structure.  The section finishes with several examples.

Section~\ref{sec:agg-res-red} introduces a variety of useful constructions for converting one $\P$-module into another.

Section~\ref{sec:take-cohom} analyzes the application of cohomology functors to a $\P$-module.  This operation is more complicated than those studied in Section~\ref{sec:delta-mods}.  The most technical work of this paper lies in the proof that the the sub-families of `almost modest' and `modest' $\P$-modules, which effectively capture the structure described in Theorems A and B, are closed under this formation of cohomology groups.

Section~\ref{sec:PDceE} shows how special cases of these general results can be applied to $\P$-modules comprised of {\PDE}-solutions or zero-sum tuples, which leads to the proofs of Theorems A and B.  It also contains the deduction of Corollaries A$''$ and B$''$ from those theorems.

Section~\ref{sec:first-quant} combines those results with some simple compactness arguments to prove the quantitative Theorems A$^\prime$, B$^\prime$ and C.

Section~\ref{sec:calc-egs} works through several examples, including those already introduced, to illustrate the computational aspects of the theory and some of the phenomena that can occur.

Finally, Section~\ref{sec:further-ques} lays out a variety of possible directions for further investigation.

\section{Compact Abelian groups, Polish modules and complexes}\label{sec:backgd1}

Most of the work in this paper will involve manipulating modules over compact Abelian groups and their cohomology.  This section and the next give the necessary generalities.  Much of this material is either classical (see, for instance,~\cite[Sections 1--3]{Moo76(gr-cohomIII)}), or an obvious translation of classical ideas from the setting of discrete groups and modules.  The results of Subsection~\ref{subs:almost-discrete} are less standard, but follow easily.

\subsection{Compact Abelian groups and Abelian Lie groups}

For any compact Abelian group $Z$, we write $\hat{Z}$ for its Pontrjagin dual, and $\A(Z)$ for its \textbf{affine group}, containing those members of $\F(Z)$ that consist of a constant plus a character.  As $Z$-modules, these ingredients fit into the short exact sequence
\[0 \to \bbT \to \A(Z) \to \hat{Z} \to 0,\]
where the $Z$-actions on kernel and image are both trivial, but the action on $\A(Z)$ is not: identifying $\A(Z)$ with the Cartesian product $\bbT\times \hat{Z}$, translation by $z$ corresponds to the automorphism
\[(\theta,\chi)\mapsto (\theta + \chi(z),\chi)\]
of $\bbT\times \hat{Z}$.

We adopt the convention that an \textbf{Abelian Lie group} is second-countable and locally Euclidean, but it need not be connected or compactly generated.  When those hypotheses are needed, they will be listed separately.

A compact Abelian group is \textbf{finite-dimensional} if it is also a Lie group, according to this convention.

\subsection{Topological and Polish modules}

If $Z$ is a compact metrizable Abelian group, then a \textbf{topological $Z$-module} is a topological Abelian group $A$ equipped with a jointly continuous action of $Z$ by automorphisms.  A \textbf{morphism} from one topological $Z$-module $M$ to another $N$ is a continuous homomorphism $\phi:M\to N$ which intertwines the $Z$-actions.  As a rule, the terms `morphism' and `homomorphism' will be interchangeable in this paper.

A topological Abelian group is \textbf{Polish} (some references use `polonais') if its topology can be generated by a complete, separable, translation-invariant metric. With $Z$ as above, a \textbf{Polish $Z$-module} is a topological $Z$-module which is Polish as a topological Abelian group.  The examples of greatest importance below will all be Polish.  Polish $Z$-modules and morphisms together define the \textbf{category $\PMod(Z)$ of Polish $Z$-modules}.

Whenever $M$ is a $Z$-module, it can also be interpreted as a $W$-module for any closed $W \leq Z$ in the obvious way.  Formally, this interpretation is called the \textbf{restriction} of $M$ to $W$, but we will generally leave this understanding to the reader, and freely treat such an $M$ as either a $Z$- or a $W$-module.

If $M$ is a Polish Abelian group, then $\F(Z,M)$ will denote the Abelian group of Haar-a.e. equivalence classes of Borel measurable functions $Z\to M$.  This is also a Polish Abelian group when given the topology of convergence in probability.  We shall henceforth commit the standard abuse of referring to `functions' rather than `classes of functions'; the correct meaning will be clear in all cases. If $M$ is a Polish $Z$-module, then $\F(Z,M)$ becomes another $Z$-module under the \textbf{diagonal $Z$-action}, which we often denote by $R$:
\[R_w f(z) := w\cdot \big(f(z - w)\big).\]

The category $\PMod(Z)$ carries the functor $(-)^Z$, which selects the closed submodule of $Z$-invariant elements.  For example, if $U \leq Z$ and $M \in \PMod(U)$, then $\F(Z,M)$ is also a $U$-module with the diagonal $U$-action, and $\F(Z,M)^U$ contains those $f \in \F(Z,M)$ which satisfy
\[f(z + u) = u\cdot f(z) \quad \hbox{for a.e.}\ z,\ \forall u \in U.\]
This important example is the \textbf{co-induced $Z$-module of $M$}, and is usually denoted by $\Cnd_U^Z M$. It is given the action of $Z$ by translation:
\[(z\cdot f)(z') := f(z' - z).\]
(This is not the same as the diagonal action, which may not make sense for the whole of $Z$ here, since the target $M$ has only an action of the subgroup $U$).

If $M$ is Polish and the action of $W$ on $M$ is trivial, then the Measurable Selector Theorem gives an obvious identification
\[\Cnd_W^Z M = \{f\circ q\,|\ f \in \F(Z/W,M)\},\]
where $q:Z\to Z/W$ is the quotient map.  This holds as an equality of Polish Abelian groups even if $W$ acts on $M$ nontrivially: see~\cite[Proposition 17]{Moo76(gr-cohomIII)}.  From this, one easily checks the standard relation
\begin{eqnarray}\label{eq:compose-coind}
\Cnd_U^ZM \cong \Cnd_V^Z\Cnd_U^VM \quad \hbox{whenever}\ U \leq V \leq Z.
\end{eqnarray}
If $\phi:M\to N$ is a morphism of Polish $U$-modules, then the \textbf{co-induced morphism of $\phi$} is the morphism $\Cnd_U^Z\phi:\Cnd_U^ZM\to \Cnd_U^ZN$ defined by $(\Cnd_U^Z\phi)(f) := \phi\circ f$ for $f \in \Cnd_U^Z M \subseteq \F(Z,M)$.  With this construction for morphisms, $\Cnd_W^Z$ becomes a functor $\PMod(W)\to \PMod(Z)$.  The obvious analog of~(\ref{eq:compose-coind}) also holds for co-inductions of morphisms (see~\cite[Proposition 18]{Moo76(gr-cohomIII)}).

If a $Z$-module (resp. morphism) is the co-induced $Z$-module (resp. morphism) of some $U$-module (resp. morphism), then we will write simply that it is \textbf{co-induced over $U$}.  In view of~(\ref{eq:compose-coind}), if a $Z$-module is co-induced over $U \leq Z$, then it is also co-induced over any $V$ such that $U \leq V \leq Z$, and similarly for morphisms.

For any inclusion $U \leq Z$ and any $M \in \PMod(U)$, general results from measure theory give an isomorphism
\begin{eqnarray}\label{eq:cnd-of-fnl}
\begin{array}{ccccc}
\F(Z,M) & \stackrel{\cong}{\to} & \F(Z\times U,M)^U & \stackrel{\cong}{\to} & \Cnd_U^Z \F(U,M)\\
f & \mapsto & \hat{f}(z,u):= u\cdot f(z) & \mapsto & z\mapsto \hat{f}(z,\cdot),
\end{array}
\end{eqnarray}
where $\F(U,M)$ is given the diagonal $U$-action and $\F(Z,M)$ is given the rotation $Z$-action.

The following lemma sometimes gives a useful description of a co-induced module upon restriction of the acting group.

\begin{lem}\label{lem:coind-and-coind}
Suppose that $Y,W \leq Z$ are closed subgroups of a compact Abelian group such that $Y + W = Z$, and that $M \in \PMod(Y)$. Then \emph{as $W$-modules} one has a continuous isomorphism
\[\Cnd_Y^Z M \cong \Cnd_{Y\cap W}^W M.\]
\end{lem}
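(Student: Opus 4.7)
The plan is to build an explicit $W$-equivariant topological isomorphism by hand, exploiting the canonical short exact sequence $0 \to Y \cap W \to W \to Z/Y \to 0$ that follows from the hypothesis $Y + W = Z$ (surjectivity of $W \to Z/Y$ is the only place this hypothesis enters).

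First I would invoke the Measurable Selector Theorem to pick a Borel section $\s: Z/Y \to W$ of the projection $w\mapsto w+Y$. For $z \in Z$, set $\s_z := \s(z+Y) \in W$ and $y_z := z - \s_z \in Y$, so that $z = \s_z + y_z$ is a Borel decomposition with factors in $W$ and $Y$ respectively. Given $f \in \Cnd_{Y\cap W}^W M$, define the candidate isomorphism by
\[\Phi(f)(z) := y_z \cdot f(\s_z).\]
A short calculation using the defining equivariance $f(w + v) = v\cdot f(w)$ for $v \in Y\cap W$ shows that $\Phi(f)$ is independent of the choice of section (any two sections differ by a Borel map into $Y\cap W$, whose shift is absorbed by the equivariance of $f$), and that $\Phi(f) \in \Cnd_Y^Z M$ (using $\s_{z+y_0} = \s_z$ and $y_{z+y_0} = y_z + y_0$ for $y_0 \in Y$). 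Borel measurability of $\Phi(f)$ is immediate.

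The key point is the $W$-equivariance of $\Phi$, which would otherwise be obscured by the non-canonical nature of $\s$, since $W$ does not preserve the section. The trick is that for $w_0 \in W$, the element $\s_z - w_0$ lies in $W$ and represents the coset $(z-w_0) + Y$, hence is itself a legitimate section value for that coset. Using the section-independence of the formula, one computes
\[(w_0\cdot \Phi(f))(z) = \Phi(f)(z - w_0) = y_z \cdot f(\s_z - w_0) = y_z\cdot (w_0\cdot f)(\s_z) = \Phi(w_0\cdot f)(z).\]

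Finally I would construct the inverse $\Psi$: every $w \in W$ decomposes uniquely as $w = \s(w+Y) + v_w$ with $v_w \in Y\cap W$, so given $g \in \Cnd_Y^Z M$ one sets $\Psi(g)(w) := v_w \cdot g(\s(w+Y))$. Its $(Y\cap W)$-equivariance is a line, and the identities $\Phi\circ \Psi = \mathrm{id}$ and $\Psi\circ \Phi = \mathrm{id}$ follow from the $Y$-equivariance $g(z) = y_z\cdot g(\s_z)$ of $g$. Continuity of both maps is automatic because they are given by Borel formulas and therefore preserve convergence in probability. The only real obstacle is the bookkeeping in checking $W$-equivariance, and this is surmounted by the section-independence observation above, which is the entire point of the non-canonical definition.
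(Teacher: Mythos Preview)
Your proof is correct and follows essentially the same route as the paper's, just fleshed out in detail. The paper invokes the standard identification $\Cnd_Y^Z M \cong \F(Z/Y,M)$ (quoting Moore), composes with the group isomorphism $Z/Y \cong W/(Y\cap W)$ coming from $Y+W=Z$, and then simply declares the check of $W$-equivariance ``routine''; your explicit section $\sigma:Z/Y\to W$ and the section-independence trick are precisely what make that routine check transparent. One simplification: your inverse $\Psi$ is literally restriction $g\mapsto g|_W$, since $g(w)=v_w\cdot g(\sigma(w+Y))$ already holds by the $Y$-equivariance of $g$.
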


(This isomorphism usually does not respect the action of the whole of $Z$ --- indeed, in general the right-hand side above does not carry an action of the whole of $Z$.)

\begin{proof}
Because $W + Y = Z$, a simple appeal to the Measurable Selector Theorem shows that as Polish Abelian groups one has
\begin{multline*}
\Cnd_Y^Z M = \{f\in \F(Z,M)\,|\ f(z-y) = y\cdot f(z)\ \forall z\in Z,\ y\in Y\}\\
\cong \F(Z/Y,M) \cong \F(W/(W\cap Y),M) \cong \Cnd_{W\cap Y}^WM.
\end{multline*}
It is routine to check that the resulting isomorphism respects the action of $W$.
\end{proof}

Another topical example is the following.

\begin{ex}
Let $Z$ be an ambient group and $\bfU$ a tuple of subgroups, let $M$ be the module of solutions in $\F(Z,A)$ to the {\PDE} associated to $\bfU$, and let $M'$ be the module of solutions in $\F(U_{[k]},A)$ to the same {\PDE}.  Then
\[M = \Cnd_{U_{[k]}}^ZM'.\]

Similarly, if $N$ is the module of zero-sum tuples in $\bigoplus_{i=1}^k\F(Z,A)^{U_i}$ and $N'$ is the module of zero-sum tuples in $\bigoplus_{i=1}^k\F(U_{[k]},A)^{U_i}$, then
\[N = \Cnd_{U_{[k]}}^ZN'.\]

For either problem, this is a formal expression of the fact that solutions on $Z$ are just arbitrary measurable selections of solutions on each coset of $U_{[k]}$. \fin
\end{ex}

We shall always denote the identity in an Abelian group by $0$.  A sequence $(m_n)_n$ in any topological Abelian group is \textbf{null} if it converges to $0$.

A continuous homomorphism $\phi:M\to N$ is \textbf{closed} if its image $\phi(M)$ is a closed subgroup of $N$.  This property may fail for some morphisms even if the modules $M$ and $N$ are Abelian Lie groups, so one must keep track of it separately.

A classical result of Banach asserts that a continuous, closed, bijective homomorphism from one Polish Abelian group to another is necessarily an isomorphism.  In the setting of Banach spaces this is a well-known consequence of the Closed Graph Theorem, but it is more difficult to find a reference for the general-groups case (which needs a little more thought).  One such is Section I.3 of Banach's own book~\cite{Ban93}, and see also Remark (iv) in III.39.V of Kuratowski~\cite{Kur66}.  By factorizing an arbitrary closed continuous operator into a quotient, a bijection, and an inclusion, one easily deduces the following.

\begin{thm}\label{thm:open-mors}
For a continuous homomorphism $\phi:M\to N$ of Polish Abelian groups, the following are equivalent:
\begin{enumerate}
\item $\phi$ is closed;
\item if $n_k \in \phi(M)$ is a null sequence then there is a null sequence $m_k\in M$ such that $\phi(m_k) = n_k$. \qed
\end{enumerate}
\end{thm}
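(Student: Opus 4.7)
My plan is to treat the two implications separately, with the harder work going into $(1)\Rightarrow(2)$, where I lean on the factorization hint and the cited Banach theorem, while $(2)\Rightarrow(1)$ will be a direct telescoping argument using completeness of $M$.

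For $(1)\Rightarrow(2)$, I would factor $\phi$ as
\[
M \stackrel{q}{\onto} M/\ker\phi \stackrel{\ol\phi}{\to} \phi(M) \stackrel{\iota}{\into} N,
\]
where $\ker\phi$ is closed (being the continuous preimage of $\{0\}$) so that $M/\ker\phi$ is Polish, $\phi(M)$ is closed in $N$ by assumption hence also Polish, $\ol\phi$ is a continuous bijection between these two Polish groups, and $\iota$ is an isometric inclusion. The Banach theorem quoted just before the statement then upgrades $\ol\phi$ to a topological isomorphism. To lift a null sequence $n_k \in \phi(M)$, I first set $\ol m_k := \ol\phi^{-1}(n_k)$, which is null in $M/\ker\phi$ by continuity of $\ol\phi^{-1}$, and then exploit openness of the quotient map $q$: choosing a translation-invariant metric on $M$ and a sequence $\eps_k \to 0$, for large $k$ the set $q(B_M(0,\eps_k))$ is a neighbourhood of $0$ containing $\ol m_k$, so I can pick $m_k \in B_M(0,\eps_k)$ with $q(m_k) = \ol m_k$. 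This $m_k$ is null in $M$ and $\phi(m_k) = n_k$.

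For $(2)\Rightarrow(1)$, I would argue directly. Fix complete translation-invariant metrics $d_M,d_N$ on $M,N$. Let $n \in \ol{\phi(M)}$ and pick $y_j \in \phi(M)$ with $y_j \to n$; passing to a subsequence, arrange $d_N(y_{j+1},y_j) < 2^{-j}$. The differences $y_{j+1}-y_j$ form a null sequence in $\phi(M)$, so by (2) I obtain $x_j \in M$ with $\phi(x_j) = y_{j+1}-y_j$ and $x_j \to 0$; refining once more, assume $d_M(x_j,0) < 2^{-j}$. Choose any $m_1 \in \phi^{-1}(y_1)$ and set $m_k := m_1 + x_1 + \cdots + x_{k-1}$. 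Then $\phi(m_k) = y_k$ by telescoping, and $(m_k)$ is Cauchy in $M$ since $d_M(m_k,m_j) \leq \sum_{i\geq k} 2^{-i}$ for $j > k$. Completeness of $d_M$ gives $m_k \to m$, and continuity of $\phi$ gives $\phi(m) = \lim y_k = n$; hence $n \in \phi(M)$ and $\phi(M)$ is closed.

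The only place I expect any subtlety is in $(1)\Rightarrow(2)$, and there the work is done for me by Banach's theorem, which is cited as available. The telescoping in $(2)\Rightarrow(1)$ is elementary but requires the mild care of taking subsequences so that the lifted and original sequences decay geometrically enough to be summable --- this is the place where one must remember that (2) only guarantees \emph{some} null lift, without any quantitative control, and that geometric decay is what lets the partial sums form a Cauchy sequence.
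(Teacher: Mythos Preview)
Your argument for $(1)\Rightarrow(2)$ is correct and is exactly what the paper's one-line hint (``factorize into a quotient, a bijection, and an inclusion'') is pointing to; the paper itself does not spell out more than that and closes with a \qed.

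For $(2)\Rightarrow(1)$ the paper gives no explicit argument, and your telescoping idea is natural, but the step ``refining once more, assume $d_M(x_j,0) < 2^{-j}$'' does not work as written. The lifts $x_j$ satisfy $\phi(x_j) = y_{j+1} - y_j$ for \emph{consecutive} indices; if you pass to a subsequence $j_1 < j_2 < \cdots$ to force $d_M(x_{j_k},0) < 2^{-k}$, then $\phi(x_{j_k}) = y_{j_k+1} - y_{j_k}$, not $y_{j_{k+1}} - y_{j_k}$, and the telescoping identity $\phi(m_\ell) = y_\ell$ is destroyed. Re-applying (2) to the new consecutive differences only produces another null sequence of lifts with no better summability, so this does not terminate.

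The clean repair is to first extract the qualitative content of (2): it says exactly that $\phi$ is open onto its image. Indeed, if some ball $B_M(0,\eps)$ had image not a neighbourhood of $0$ in $\phi(M)$, one could choose a null sequence in $\phi(M)\setminus\phi(B_M(0,\eps))$, and (2) would then furnish a null lift eventually landing in $B_M(0,\eps)$, a contradiction. With openness in hand, choose $\delta_j\searrow 0$ so that $\phi(M)\cap B_N(0,\delta_j)\subseteq \phi(B_M(0,2^{-j}))$, pass to a subsequence of $(y_j)$ with $d_N(y_{j+1},y_j)<\delta_j$ \emph{first}, and only \emph{then} pick $x_j\in B_M(0,2^{-j})$ with $\phi(x_j)=y_{j+1}-y_j$; now your telescoping and completeness argument goes through verbatim. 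Alternatively, openness makes $\ol\phi:M/\ker\phi\to\phi(M)$ a homeomorphism, so $\phi(M)$ is Polish in the subspace topology, and a Polish subgroup of a Polish group is closed by the usual Baire-category coset argument --- this bypasses the telescoping entirely.
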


The following corollary is less standard, but will greatly simplify some arguments later.  It may have some interest of its own.

\begin{lem}\label{lem:ctble-Polish-are-disc}
Let $\phi:M\to N$ be a continuous homomorphism of Polish Abelian groups. If $N/\phi(M)$ is countable, then $\phi(M)$ is closed and open in $N$, so in fact $N/\phi(M)$ is discrete.

In particular, a Polish Abelian group is countable if and only if it is discrete.
\end{lem}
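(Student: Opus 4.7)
The plan is to show $\phi(M)$ is open in $N$; once that is established, it is automatically closed (its complement is a union of open cosets), and the quotient $N/\phi(M)$ is then discrete by definition of the quotient topology. The approach rests on Baire category together with the classical Pettis--Banach observation that non-meager Baire-measurable subgroups of a Polish group are open.

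First I would observe that $\phi(M)$, as the continuous image of a Polish space, is analytic in $N$. Every analytic subset of a Polish space has the Baire property, so the same is true of every coset $n + \phi(M)$. Second, since $N$ is Polish, it is a Baire space, and hence cannot be expressed as a countable union of meager subsets. By hypothesis $N$ is the disjoint union of countably many cosets of $\phi(M)$; since translation is a homeomorphism, all these cosets have the same category, so each of them (in particular $\phi(M)$ itself) must be non-meager.

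Third, I would invoke the Pettis--Banach lemma: if $A$ is a Baire-measurable, non-meager subset of a Polish group, then $A - A$ contains an open neighborhood of the identity. Applied to the subgroup $\phi(M)$, this gives $\phi(M) = \phi(M) - \phi(M) \supseteq V$ for some open neighborhood $V$ of $0$, hence $\phi(M)$ is open. Any open subgroup of a topological group is closed, so $\phi(M)$ is closed-and-open and $N/\phi(M)$ is discrete.

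For the final ``in particular'' assertion, apply the main statement with $M$ the trivial group and $\phi$ the zero morphism: if $N$ is countable then $N/\phi(M) = N$ is countable, so the conclusion forces $\{0\}$ to be open in $N$, i.e.\ $N$ is discrete. Conversely, a discrete Polish group is separable and thus countable. The main obstacle is really just having the Pettis--Banach lemma at hand in the general Polish-group setting; once that is invoked the rest is essentially bookkeeping.
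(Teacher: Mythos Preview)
Your argument is correct. It is a genuinely different route from the paper's proof, and the comparison is worth noting.

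The paper does not use Baire category or Pettis' theorem. Instead it first reduces to injective $\phi$, then builds a second Polish group topology $\t{d}$ on $N$ by declaring distinct cosets of $\phi(M)$ to be at distance $2$ and using $d_M$ (pulled back through $\phi^{-1}$) within each coset. The identity map $(\t{N},\t{d})\to N$ is a continuous bijection of Polish groups, so Banach's open mapping theorem (the paper's Theorem~\ref{thm:open-mors}) makes it a homeomorphism; since $\phi(M)$ is open-and-closed in $\t{N}$ by construction, it is open-and-closed in $N$.

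Your approach trades that construction for standard descriptive set theory: analytic $\Rightarrow$ Baire property, countable coset decomposition $\Rightarrow$ non-meager, Pettis $\Rightarrow$ open. This is shorter and arguably more conceptual, but it imports machinery (analytic sets having the Baire property, Pettis' lemma) that the paper has not set up, whereas the paper's argument stays entirely within the open-mapping framework it has already introduced and will use repeatedly. Either proof is perfectly adequate here.
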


\begin{proof}
Since $\phi$ is continuous, its kernel is closed, and we may factorize it through $M/\ker \phi$.  We may therefore assume that $\phi$ is injective.

Now let $d_M$ be a translation-invariant Polish group metric which generates the topology of $M$.  By replacing it with $\min\{d_M,1\}$ if necessary, we may assume it is bounded by $1$.

Using this metric, define a new metric Abelian group $\t{N}$ as follows: its underlying abstract group is $N$, and its metric is
\[\t{d}(n,n') := \left\{\begin{array}{ll} d_M(\phi^{-1}(n-n'),0)&\quad \hbox{if}\ n-n' \in \phi(M)\\ 2 &\quad \hbox{else.}\end{array}\right.\]
With this metric, $\t{N}$ is a discrete extension of $M$.  It is complete since $d_M$ is complete, and it is separable because $N/\phi(N)$ is countable, so it is Polish.

However, the identity homomorphism $\Phi:\t{N}\to N$ is continuous and bijective.  Therefore the direction (1. $\Longrightarrow$ 2.) of Theorem~\ref{thm:open-mors} gives that $\Phi^{-1}$ is also continuous, and this implies that $\phi(M)$ is open and closed in $N$, since it is open and closed in $\t{N}$.

If $N$ is countable, then applying the above reasoning to the trivial inclusion $0\into N$ gives that $N$ is discrete.
\end{proof}

%
%

\begin{ex}
If $\phi:M\to N$ is a closed homomorphism, then it does \emph{not} follow that its restriction to any closed submodule $K\leq M$ is still closed, even if $M$ and $N$ are Abelian Lie groups.  For example, if $M = \bbZ\times \bbR$, $N = \bbT$ and $\phi$ is the coordinate projection to $\bbR$ composed with the quotient homomorphism $\bbR\onto \bbT$, and if we let $\a \in \bbT$ be irrational, then the subgroup $K := \bbZ\cdot(1,\a) \leq M$ is closed, but its image under $\phi$ is the countable dense subgroup $\bbZ\a$ of $N$. \fin
\end{ex}

Following the conventions of Moore~\cite{Moo76(gr-cohomIII)}, a sequence of morphisms
\[M \stackrel{\phi}{\to} N \stackrel{\psi}{\to} P\]
is \textbf{exact} only if it is algebraically exact, in the sense that $\img\,\phi = \ker \psi$.  This of course requires that $\phi$ be closed.  With this convention, a morphism $\phi$ is closed if and only if it can be inserted into an algebraically exact sequence in $\PMod(Z)$,
\[M \stackrel{\phi}{\to} N \to K \to 0.\]
Relatedly, given a complex of Polish modules and continuous homomorphisms
\[\ldots \to M_i \to M_{i+1} \to M_{i+2} \to \ldots,\]
we will call the complex \textbf{closed} if all of its homomorphisms are closed.

The module $M$ is a \textbf{quotient} of modules $P$ and $Q$ if one has a short exact sequence
\[0\to P \to Q \to M \to 0.\]

Now suppose that $M\onto N$ is a surjection of Polish Abelian groups, and that $Z$ is a compact Abelian group.  Then the Measurable Selector Theorem implies that the resulting map $\F(Z,M)\to \F(Z,N)$ is also surjective, and from this Theorem~\ref{thm:open-mors} gives the following.

\begin{lem}\label{lem:small-selection}
Any null sequence in $\F(Z,N)$ is the image of a null sequence in $\F(Z,M)$. \qed
\end{lem}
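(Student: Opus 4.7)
The plan is to reduce the claim directly to Theorem~\ref{thm:open-mors} applied to the postcomposition homomorphism
\[\Phi:\F(Z,M) \to \F(Z,N), \qquad \Phi(f) := \pi \circ f,\]
where $\pi:M\onto N$ denotes the given surjection. Since $M$ and $N$ are Polish Abelian groups and $Z$ is a compact metrizable Abelian group, both $\F(Z,M)$ and $\F(Z,N)$ are Polish Abelian groups in the topology of convergence in probability, and $\Phi$ is a continuous group homomorphism (convergence in probability is preserved under postcomposition with a continuous map, since $\pi$ sends sequences converging in measure to sequences converging in measure by a routine $\eps$-$\delta$ argument using the uniform continuity of $\pi$ on compact sets, together with tightness of Borel probability measures on Polish spaces).

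The key observation is that the paragraph preceding the lemma has already noted that the Measurable Selector Theorem makes $\Phi$ \emph{surjective}: any measurable $g:Z\to N$ admits a measurable lift $f:Z\to M$ with $\pi\circ f = g$. In particular $\img\,\Phi = \F(Z,N)$, which is trivially closed in itself, so $\Phi$ is a closed continuous homomorphism of Polish Abelian groups in the sense defined just before Theorem~\ref{thm:open-mors}.

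Therefore the implication (1)$\Rightarrow$(2) of Theorem~\ref{thm:open-mors} applies: any null sequence in $\img\,\Phi = \F(Z,N)$ is the image under $\Phi$ of some null sequence in $\F(Z,M)$, which is exactly the statement of the lemma.

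The only step requiring any care is the continuity of $\Phi$, and even there no real obstacle appears: one can, if desired, fix complete bounded translation-invariant metrics $d_M$ and $d_N$ compatible with the Polish topologies, observe that Lusin's theorem lets us replace $f$ by a function with values in a compact set up to arbitrarily small measure, and then conclude using that $\pi$ is uniformly continuous on that compact set. No further input is needed.
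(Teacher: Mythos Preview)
Your proof is correct and follows exactly the same approach as the paper: the paragraph immediately preceding the lemma already records that the Measurable Selector Theorem makes $\F(Z,M)\to\F(Z,N)$ surjective, and the paper then simply invokes Theorem~\ref{thm:open-mors} (which is why the lemma carries a \qed{} with no separate proof). Your additional remarks on the continuity of $\Phi$ are more detail than the paper supplies, but they are correct and do no harm.
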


Similarly, if $W \leq Z$ and $M\onto N$ is a surjection of Polish Abelian $W$-modules, say with kernel $P \leq M$, then
\[\Cnd_W^ZM \to \Cnd_W^Z N\]
is also a continuous surjection, whose kernel is easily verified to be $\Cnd_W^Z P$.  This gives the following useful isomorphism of Polish $Z$-modules:
\begin{eqnarray}\label{eq:cnd-of-quot}
\frac{\Cnd_W^Z M}{\Cnd_W^Z P} \stackrel{\cong}{\to} \Cnd_W^Z\frac{M}{P}.
\end{eqnarray}

\subsection{Topological and Polish complexes}\label{subs:cplxs}

If $Z$ is a compact Abelian group, a \textbf{topological} (resp. \textbf{Polish}) \textbf{complex of $Z$-modules} is a (finite or infinite) sequence of topological (resp. Polish) $Z$-modules and continuous morphisms, say
\[\cdots \stackrel{\a_\ell}{\to} M_\ell \stackrel{\a_{\ell+1}}{\to} M_{\ell+1} \stackrel{\a_{\ell+2}}{\to} M_{\ell+2} \stackrel{\a_{\ell+3}}{\to} \cdots,\]
with the property that
\[\a_{\ell+1}\circ \a_\ell = 0 \quad \forall \ell.\]
This is the obvious adaptation of the usual notion from homological algebra.  Often we will write simply `topological complex' or `Polish complex' if the group $Z$ is understood.

Given a complex indexed as above, its \textbf{homology in position $\ell$} is the quotient
\[\ker\a_{\ell+1}/\img\,\a_\ell,\]
regarded as a topological group with the quotient topology.  For a Polish complex, this quotient topology is Hausdorff if and only if $\a_\ell$ is a closed morphism, and in this case the quotient is also Polish.

The complex is \textbf{exact in position $\ell$} if its homology is trivial in that position: that is, if $\ker\a_{\ell+1} = \img\,\a_\ell$.  It is \textbf{exact} if it is exact in all positions.

A complex as above is \textbf{left-bounded} (resp. \textbf{right-bounded}) if $M_\ell = 0$ for all sufficiently small (resp. large) $\ell$.  It is \textbf{bounded} if it is both left- and right-bounded.   In that case it will always be indexed as
\[0 \to M_1 \stackrel{\a_2}{\to} \cdots \stackrel{\a_\ell}{\to} M_\ell \stackrel{\a_{\ell+1}}{\to} M_{\ell+1} \stackrel{\a_{\ell+2}}{\to} M_{\ell+2} \stackrel{\a_{\ell+3}}{\to} \cdots \stackrel{\a_k}{\to} M_k \to 0\]
for some $k$.

A stronger notion than exactness is the following, which will be important in the sequel.  It is also a standard idea from homological algebra.

\begin{dfn}
Let $Z$ be a compact Abelian group, and let
\[\cdots \stackrel{\a_\ell}{\to} M_\ell \stackrel{\a_{\ell+1}}{\to} M_{\ell+1} \stackrel{\a_{\ell+2}}{\to}\cdots \]
be a topological complex of $Z$-modules.  Then this complex is \textbf{split} if there are topological $Z$-module homomorphisms
\[\b_\ell:M_{\ell+1}\to M_\ell \quad \hbox{for all}\ \ell\]
such that $\b_\ell\b_{\ell+1} = 0$ for all $\ell$ and
\[\a_\ell\b_{\ell-1} + \b_\ell\a_{\ell+1} = \id_{M_\ell} \quad \forall \ell.\]
Given a choice of such homomorphisms $\b_\ell$, they will be referred to as \textbf{splitting homomorphisms} of the complex.
\end{dfn}

A routine exercise shows that if a bounded complex is split, then it is isomorphic to a complex of the form
\[0 \to A_1 \to A_1\oplus A_2 \to A_2\oplus A_3 \to \ldots \to A_{k-1}\oplus A_k\to A_k \to 0,\]
where the maps are the obvious coordinate projections and inclusions and where
\[A_\ell \cong \ker\a_{\ell+1} \cong \ker \b_{\ell-1}.\]
In the teminology of homological algebra, $(\b_\ell)_\ell$ is a chain homotopy from the identity morphisms of this chain complex to the zero morphisms.

\subsection{Almost discrete homology}\label{subs:almost-discrete}

In our later work, a special part will be played by complexes whose homology is controlled in the following specific sense.

\begin{dfn}\label{dfn:almost-disc-homol}
Let $\ell_0 \geq 0$, and let
\[0 \to M_1 \to M_2 \to \ldots \to M_k \to \cdots\]
be a left-bounded Polish complex of $Z$-modules.  It has \textbf{$\ell_0$-almost discrete homology} if
\begin{itemize}
\item $M_i = 0$ for $i < \ell_0$,
\item $\ker(M_{\ell_0}\to M_{\ell_0+1})$ is an Abelian Lie group, and
\item the homology of the sequence is discrete at all positions $> \ell_0$.
\end{itemize}

If, in addition, it actually has discrete homology at all positions, then it has \textbf{$\ell_0$-discrete homology}.
\end{dfn}

To emphasize the difference, $\ell_0$-discrete homology will sometimes be called \textbf{strictly} $\ell_0$-discrete.  Note that almost or strictly discrete homology implies that all the morphisms $M_i\to M_{i+1}$ are closed.

This notion will be extremely important in the sequel, and may have some interest in its own right.  The first indication of its value is a topological addendum to the usual construction  of long exact sequences from complexes (see~\cite[Theorem 6.10]{Rotman09}).  For the rest of this subsection, fix a compact Abelian group $Z$, and suppose that
\begin{small}
\begin{center}
$\phantom{i}$\xymatrix{
0\ar[r] & M_1 \ar^{\a^M_2}[r]\ar@{^{(}->}^{\a_1}[d] & \ldots\ar[r] & M_{\ell-1} \ar@{^{(}->}^{\a_{\ell-1}}[d]\ar[r]^{\a^M_\ell} & M_\ell \ar@{^{(}->}^{\a_\ell}[d]\ar[r]^{\a^M_{\ell+1}} & M_{\ell+1} \ar@{^{(}->}^{\a_{\ell+1}}[d]\ar[r]^-{\a^M_{\ell+2}} & \ldots \\
0\ar[r] & N_1 \ar^{\a^N_2}[r]\ar@{->>}^{\b_1}[d] & \ldots\ar[r] & N_{\ell-1} \ar@{->>}^{\b_{\ell-1}}[d]\ar[r]^{\a^N_\ell} & N_\ell \ar@{->>}^{\b_\ell}[d]\ar[r]^{\a^N_{\ell+1}} & N_{\ell+1} \ar@{->>}^{\b_{\ell+1}}[d]\ar[r]^-{\a^N_{\ell+2}} & \ldots \\
0\ar[r] & P_1 \ar^{\a^P_2}[r] & \ldots\ar[r] & P_{\ell-1} \ar[r]^{\a^P_\ell} & P_\ell \ar[r]^{\a^P_{\ell+1}} & P_{\ell+1} \ar[r]^-{\a^P_{\ell+2}} & \ldots\\
} 
\end{center}
\end{small}
is a commutative diagram in $\PMod(Z)$ in which all columns are short exact sequences and each row is a left-bounded complex.  Naturally, this is referred to as a \textbf{short exact sequence of complexes}.

In the above diagram, let $I_\ell^M := \rm{img}(\a^M_\ell)$, $K_\ell^M := \ker(\a^M_{\ell+1})$, and
\[H^M_\ell := K^M_\ell/I^M_\ell,\]
and similarly for the other rows.  Then the usual construction (see, for example,~\cite[Theorem 6.10]{Rotman09}) produces a long exact sequence tying all of these homology groups together:
\begin{multline}\label{eq:snake}
0\to H^M_1 \to H^N_1 \to H^P_1 \to H^M_2 \to H^N_2 \to H^P_2 \to \\
 \cdots \to H^M_k\to H^N_k\to H^P_k \to \cdots.
\end{multline}
In general this long exact sequence does not give much information about the topologies of these homology groups.  However, for complexes with almost discrete homology one can relate these topologies.

\begin{prop}\label{prop:topsnake}
Let $\ell_0 \geq 1$, and consider a short exact sequence of complexes as above.
\begin{enumerate}
\item[(1)]  If the first and second rows have $\ell_0$-almost (resp. strictly) discrete homology, then so does the third row.
\item[(2)]  If the first and third rows have $\ell_0$-almost (resp. strictly) discrete homology, then so does the second row.
\item[(3)]  If the second and third rows have $\ell_0$-almost (resp. strictly) discrete homology, and if $M_i = 0$ for all $i < \ell_0 + 1$, then the first row has $(\ell_0+1)$-almost (resp. strictly) discrete homology.
\end{enumerate}
\end{prop}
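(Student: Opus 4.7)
The plan is to deduce all three parts from the long exact homology sequence~(\ref{eq:snake}), which is available since it is obtained by the snake lemma from the short exact sequence of complexes (and the connecting morphisms are known to be continuous). The topological information will come from Lemma~\ref{lem:ctble-Polish-are-disc} (a countable Polish Abelian group is discrete) combined with Theorem~\ref{thm:open-mors} (the null-sequence characterization of closed morphisms).

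Before using the long exact sequence I would first record that the low-degree vanishing propagates automatically: each column is a short exact sequence, so if two of $M_i,N_i,P_i$ are zero, so is the third. This settles the hypothesis $M_i = N_i = P_i = 0$ for $i < \ell_0$ (respectively $i < \ell_0 + 1$ for part~(3)).

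The main content concerns the positions $\ell > \ell_0$ (resp.\ $\ell > \ell_0 + 1$ in part~(3)). For any such $\ell$, extract the three-term piece of~(\ref{eq:snake}) in which the unknown homology group is the middle term between two of the given rows' homology groups; by hypothesis these flanking groups are discrete, so exactness presents the unknown group as a short exact sequence $0 \to Q \to H^?_\ell \to S \to 0$ where $Q$ is a quotient of a discrete (hence countable) group and $S$ is a subgroup of one. Thus $H^?_\ell$ is countable as an abstract group. If one also knows $H^?_\ell$ is Polish (equivalently, the relevant boundary map in the unknown row is closed), then Lemma~\ref{lem:ctble-Polish-are-disc} upgrades this to discreteness. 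The exceptional position $\ell_0$ (or $\ell_0 + 1$ in part~(3)) is handled in exactly the same way, except that one of the two flanking groups is now an Abelian Lie group instead of discrete, so the same extension analysis produces a Polish extension of a Lie group by a discrete group, which is again an Abelian Lie group. The strictly discrete variant is the same squeeze with both flanking groups already discrete.

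The main obstacle is verifying that the boundary maps in the unknown row are closed, i.e.\ Hausdorffness of $H^?_\ell$. I would dispatch this via Theorem~\ref{thm:open-mors} by chasing null sequences through the short exact sequence of complexes. For instance, in part~(1) where the third row is unknown, given a null sequence $p_k \in \img\,\a^P_\ell$, I would first lift it to a null sequence $n_k \in N_\ell$ using that the column $N_\ell \to P_\ell$ is a closed surjection (so Theorem~\ref{thm:open-mors} provides null lifts). Since the second row's homology is discrete, $\a^N_\ell$ is closed, so after subtracting a suitable contribution from $M_\ell$ one can write $n_k$ as the image of a null sequence in $N_{\ell-1}$ modulo $M_\ell$; projecting down to $P_{\ell-1}$ via $\b_{\ell-1}$ and using commutativity $\b_\ell \circ \a^N_\ell = \a^P_\ell \circ \b_{\ell-1}$ yields the desired null preimage of $p_k$ in $P_{\ell-1}$. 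Parts~(2) and~(3) follow by permuting the roles of $M$, $N$, $P$ in this diagram chase, using in each case that the two rows with known almost-discrete homology supply both a closed boundary map and a closed column surjection or inclusion to carry out the lift.
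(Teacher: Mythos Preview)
Your overall strategy matches the paper's: reduce to $\ell_0 = 1$ by column exactness, then read off the size of each unknown homology group from the long exact sequence~(\ref{eq:snake}). The paper does exactly this.

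However, you have misread Lemma~\ref{lem:ctble-Polish-are-disc}, and this causes you to do a lot of unnecessary work. That lemma does \emph{not} require $H^?_\ell$ to already be Polish. Its hypotheses are only that $\phi:M\to N$ is a continuous homomorphism between Polish groups and that the \emph{abstract} quotient $N/\phi(M)$ is countable; closedness of $\phi(M)$ is then a \emph{conclusion}. In the present situation, take $\phi = \a^P_\ell: P_{\ell-1}\to \ker\a^P_{\ell+1}$. Both $P_{\ell-1}$ and $\ker\a^P_{\ell+1}$ are Polish (the latter as a closed subgroup of $P_\ell$), and the long exact sequence gives that the quotient $H^P_\ell$ is countable. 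Lemma~\ref{lem:ctble-Polish-are-disc} then immediately yields that $\a^P_\ell$ has closed image and $H^P_\ell$ is discrete. This is precisely what the paper does, in one sentence.

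Your entire final paragraph --- the null-sequence chase through the diagram to establish closedness of $\a^P_\ell$ --- is therefore redundant. It can be made to work (though your sketch is imprecise: the ``suitable contribution from $M_\ell$'' that corrects $n_k$ into $\ker\a^N_{\ell+1}$ actually comes from the discreteness of $H^M_{\ell+1}$, not from the second row), but it is not needed. At the exceptional position $\ell_0$, the paper observes separately that $H^P_{\ell_0} = K^P_{\ell_0}$ is Polish because $P_{\ell_0-1}=0$, and then the extension argument goes through as you describe.
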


\begin{proof}
We prove only the almost-discrete case of part (1), since the other cases are all very similar.

First note that, by the exactness of each column, if two of the modules in a column are $0$, then so is the third.  In light of this, $\ell_0$-almost discreteness implies that $M_i = N_i = P_i = 0$ whenever $i < \ell_0$. Therefore we may simply truncate the above diagram to the left of position $\ell_0$, and so assume that $\ell_0 = 1$.

Next consider $H^P_1$.  Since $P_0 = I^P_0 = 0$, we have $H^P_1 = K^P_1$, so it is obviously Polish.  Since the first few entries in~(\ref{eq:snake}) give a presentation
\[\coker (H^M_1 \to H^N_1) \into H^P_1 \onto \ker(H^M_2 \to H^N_2),\]
the Polish module $H^P_1$ is an extension of a Lie module by a discrete module, hence is itself Lie.

Finally, for each $\ell > 1$, the long exact sequence~(\ref{eq:snake}) gives a presentation
\[\coker(H^M_\ell \to N^N_\ell) \to H^P_\ell \to \ker(H^M_{\ell+1}\to H^N_{\ell+1}).\]
Since $H^N_\ell$ and $H^M_{\ell+1}$ are discrete and countable by assumption, this proves that the group
\[H^P_\ell = \ker \a^P_{\ell+1} / \a^P_\ell(P_{\ell-1})\]
is countable.  Since $\ker \a^P_{\ell+1}$ is a closed subgroup of $P_\ell$ and $\a^P_\ell$ is a continuous homomorphism, hence Polish, we may apply Lemma~\ref{lem:ctble-Polish-are-disc} to this quotient.  It gives that $\a^P_\ell$ is closed and $H^P_\ell$ is discrete.
\end{proof}

Having proved Proposition~\ref{prop:topsnake}, the long exact sequence~(\ref{eq:snake}) enables one to compute the homology groups of one complex in a short exact sequence of complexes, given the homologies of the other two.  The operations underlying this calculation are those of forming closed subgroups, quotients, and extensions, all of which preserve the property of being finitely or compactly generated.  This has the following immediate consequence.

\begin{lem}\label{lem:still-fg}
In the setting of any of the Proposition~\ref{prop:topsnake}, if two of the Polish complexes have all homology groups compactly generated, then so does the third. \qed
\end{lem}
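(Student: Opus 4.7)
The plan is to invoke the long exact sequence~(\ref{eq:snake}) of homology groups produced by the snake-lemma construction and to track how compact generation propagates along it. Suppose, for concreteness, that the first two rows have all homology groups compactly generated; the remaining two cases will be symmetric. Proposition~\ref{prop:topsnake} already supplies that the third row has $\ell_0$-almost discrete homology, so each $H^P_\ell$ is Polish; only compact generation remains to be established.

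At each position $\ell$, I would use algebraic exactness of~(\ref{eq:snake}) to write $H^P_\ell$ as an extension
\[0 \to \img\,g_\ell \to H^P_\ell \to \img\,\partial_\ell \to 0,\]
where $g_\ell:H^N_\ell \to H^P_\ell$ and $\partial_\ell:H^P_\ell \to H^M_{\ell+1}$ are the adjacent morphisms in~(\ref{eq:snake}). The submodule on the left is a Hausdorff quotient of the compactly generated $H^N_\ell$, and the quotient on the right is a closed subgroup of the compactly generated $H^M_{\ell+1}$.

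It then suffices to verify three closure properties of compact generation within the relevant classes (Abelian Lie groups at position $\ell_0$ and countable discrete Abelian groups at higher positions): compact generation is preserved under Hausdorff quotients, extensions, and closed subgroups. The first two hold in general for locally compact Abelian groups. The third splits into cases: a subgroup of a finitely generated discrete Abelian group is finitely generated, and a closed subgroup of a compactly generated Abelian Lie group is compactly generated after one writes the ambient group, via the structure theorem, as $\bbR^a \times \bbZ^b \times K$ with $K$ compact. Combining these properties on the short exact sequence above yields compact generation of $H^P_\ell$. The remaining two cases of the lemma (in which the known pair of rows is first-third or second-third) reduce analogously by instead solving the adjacent segment of~(\ref{eq:snake}) for $H^N_\ell$ or for $H^M_{\ell+1}$. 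I do not anticipate any real obstacle beyond these routine closure properties, which the remark preceding the statement already flags.
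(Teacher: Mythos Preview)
Your proposal is correct and follows exactly the approach the paper intends: the paper's ``proof'' is nothing more than the sentence preceding the lemma, which points to the long exact sequence~(\ref{eq:snake}) and notes that forming closed subgroups, quotients, and extensions all preserve compact generation. Your write-up simply fleshes out those closure properties in slightly more detail than the paper bothers to.
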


\begin{rmk}
Proposition~\ref{prop:topsnake} does not generalize to complexes whose homology is only assumed to be Lie in all positions.

Indeed, one can construct short exact sequences of complexes consisting entirely of Abelian Lie groups and closed homomorphisms, such that two of those complexes are closed, but the third is not.  (Thus, the notion of almost discrete homology is less arbitrary than it might appear.)

For instance, let $\a \in \bbR$ be irrational, let $\phi,\psi:\bbZ\to \bbR$ be respectively the obvious inclusion and the map $n\mapsto \a n$, and let $\ol{\psi}:\bbZ\to \bbT$ be the composition of $\psi$ with the quotient $\bbR\to \bbR/\bbZ = \bbT$.  Then the diagram
\begin{center}
$\phantom{i}$\xymatrix{0\ar[r]\ar[d] & \bbZ\ar^\phi[d]\\
\bbZ \ar_{\rm{id}}[d]\ar^\psi[r] & \bbR\ar^{\mod 1}[d]\\
\bbZ \ar_{\ol{\psi}}[r] & \bbT
}
\end{center}
gives an example with closed morphisms in the first and second rows, but not the third.  There are similar examples having either the first or second horizontal arrow equal to $\ol{\psi}$, and the other horizontal arrows closed. \fin
\end{rmk}

\section{Measurable cohomology}\label{sec:backgd2}

\subsection{Overview}\label{subs:cohom-overview}

Group cohomology provides a powerful way to pick apart the structure of modules over a given group.  In our setting --- compact groups acting on modules of measurable functions --- the appropriate theory is measurable cohomology for locally compact acting groups and Polish modules.  This was developed by Calvin Moore in his important sequence of papers~\cite{Moo64(gr-cohomI-II),Moo76(gr-cohomIII),Moo76(gr-cohomIV)}.

The basics of the theory can be found in those papers, and also in the more recent work~\cite{AusMoo--cohomcty}, which resolves some outstanding issues from those works.  This measurable theory largely parallels cohomology for discrete groups, which is nicely treated in many standard texts, such as Brown's~\cite{Bro82}.  However, some standard techniques from the discrete world --- most obviously, the construction of injective and projective resolutions --- do not have straightforward generalizations.

We next offer a very terse summary of the foundations of the measurable theory.  A more complete explanation, as well as proofs, can be found in the paper~\cite{Moo76(gr-cohomIII)} of Moore's sequence and in the introduction to~\cite{AusMoo--cohomcty}.  A reader who has no familiarity with this theory may prefer to treat it entirely as a `black box' on first reading.  Our notation will largely follow~\cite{AusMoo--cohomcty}.

If $Z$ is a compact Abelian group and $M \in \PMod(Z)$, then an \textbf{$M$-valued cochain in degree $p$} is an element of $\F(Z^p,M)$, which space is regarded as a $Z$-module with the diagonal action.  This module will sometimes be written $\C^p(Z,M)$.  The \textbf{inhomogeneous bar resolution} of $M$ is the following sequence of $Z$-modules and morphisms:
\[M \stackrel{d}{\to} \C^1(Z,M) \stackrel{d}{\to} \C^2(Z,M) \stackrel{d}{\to} \ldots,\]
where for $f \in \C^p(Z,M)$ one defines
\begin{multline}\label{eq:d}
d f(z_1,\ldots,z_{p+1}) := z_1\cdot f(z_2,\ldots,z_{p+1}) + \sum_{i=1}^p(-1)^pf(z_1,\ldots,z_i + z_{i+1},\ldots,z_{p+1})\\ + (-1)^{p+1} f(z_1,\ldots,z_p).
\end{multline}
Since we will need to work simultaneously with many different compact Abelian groups, we will sometimes write $d^Z$ in place of $d$ to record the acting group in question.  When $p=0$, this gives
\[d^Zf(w) := w\cdot f - f,\]
which correctly generalizes~(\ref{eq:d}) when $f \in \F(Z,A)$ and $Z$ acts on this module by rotations. (Beware that the argument $w$ here appeared in~(\ref{eq:d}) in the subscript, and the argument $z$ of~(\ref{eq:d}) is now hidden because we are treating $f$ as an element of an abstract module.)

A routine calculation shows that $d\circ d = 0$.  The \textbf{$p$-cocycles} are the elements of the subgroup
\[\Z^p(Z,M) := \ker (d|\C^p(Z,M)),\]
the \textbf{$p$-coboundaries} are the elements of the further subgroup
\[\B^p(Z,M) := \img (d|\C^{p-1}(Z,M)),\]
and the \textbf{$p^{\rm{th}}$ cohomology group} is
\[\rmH^p_\m(Z,M) := \frac{\Z^p(Z,M)}{\B^p(Z,M)}\]
(the subscript `m' reminds us that we work throughout with measurable cochains).  Both $\Z^p$ and $\B^p$ inherit topologies as subspaces of the Polish space $\C^p$.  The former is obviously closed, but the latter may not be.  We sometimes consider $\rmH^p_\m(Z,M)$ endowed with the quotient topology.  This quotient topology is Hausdorff if and only if $\B^p$ is closed, in which case the quotient topology is actually Polish.

This construction gives a sequence $\rmH^p_\m(Z,-)$, $p\geq 0$, of functors from $\PMod(Z)$ to the category of topological Abelian groups.  It is easy to check that if $\phi:M\to N$ is a morphism in $\PMod(Z)$, then the induced morphisms on cohomology
\[\rmH^p_\m(Z,\phi):\rmH^p_\m(Z,M)\to \rmH^p_\m(Z,N)\]
are also continuous, even if the quotient topologies here are not Hausdorff (or see~\cite[Proposition 25(1)]{Moo76(gr-cohomIII)}).

As in the more classical setting of discrete groups, the functors $\rmH^p_\m(Z,-)$ for $p\geq 0$ fit together into a connected sequence of functors.  This augments these functors with some additional structure as follows.  Whenever
\begin{eqnarray}\label{eq:short-exact}
0 \to M \stackrel{\a}{\to} N \stackrel{\b}{\to} P\to 0
\end{eqnarray}
is a short exact sequence in $\PMod(Z)$, one may form the corresponding short exact sequence of left-bounded complexes:
\begin{center}
$\phantom{i}$\xymatrix{
0 \ar[r] & M \ar^-d[r]\ar@{^{(}->}^\a[d] & \C^1(Z,M) \ar^-d[r]\ar@{^{(}->}[d] & \cdots \ar^-d[r] & \C^p(Z,M) \ar^-d[r]\ar@{^{(}->}[d] & \cdots\\
0 \ar[r] & N \ar^-d[r]\ar@{->>}^\b[d] & \C^1(Z,N) \ar^-d[r]\ar@{->>}[d] & \cdots \ar^-d[r] & \C^p(Z,N) \ar^-d[r]\ar@{->>}[d] & \cdots\\
0 \ar[r] & P \ar^-d[r] & \C^1(Z,P) \ar^-d[r] & \cdots \ar^-d[r] & \C^p(Z,P) \ar^-d[r] & \cdots.
}
\end{center}
Applying the usual construction (see~\cite[Theorem 6.10]{Rotman09}), one obtains from this a canonical sequence of homomorphisms
\[s_p : \rmH^{p-1}_\m(Z,P) \to \rmH^p_\m(Z,M), \quad  p\geq 1,\]
that fit into a resulting long exact sequence
\begin{multline*}
0 \to \rmH^0_\m(Z,M) \stackrel{\a_0}{\to} \rmH^0_\m(Z,N) \stackrel{\b_0}{\to} \rmH^0_\m(Z,P) \stackrel{s_1}{\to} \rmH^1_\m(Z,M) \stackrel{\a_1}{\to} \cdots\\
\cdots \stackrel{s_p}{\to} \rmH^p_\m(Z,M) \stackrel{\a_p}{\to} \rmH^p_\m(Z,N) \stackrel{\b_p}{\to} \rmH^p_\m(Z,P) \stackrel{s_{p+1}}{\to} \cdots,
\end{multline*}
where $\a_p := \rmH^p_\m(Z,\a)$, $\b_p := \rmH^p_\m(Z,\b)$.  This is referred to as the \textbf{long exact sequence of cohomology} corresponding to~(\ref{eq:short-exact}), and these new homomorphisms $s_p$ are called the \textbf{switchback morphisms} (or sometimes the `transgression maps').  For the measurable theory, they were introduced in~\cite[Section 4]{Moo76(gr-cohomIII)}: see, in particular, the proof of Proposition 21 there.  In keeping with the rest of this theory, they turn out to be continuous for the quotient topologies on the relevant cohomology groups, even when those are not Hausdorff:~\cite[Proposition 25(2)]{Moo76(gr-cohomIII)}.

The long exact sequence is `natural' in the original short exact sequence, meaning that if
\begin{center}
$\phantom{i}$\xymatrix{
0 \ar[r] & M \ar^\a[r]\ar[d] & N \ar^\b[r]\ar[d] & P \ar[r]\ar[d] & 0\\
0 \ar[r] & M' \ar^{\a'}[r] & N' \ar^{\b'}[r] & P' \ar[r] & 0
}
\end{center}
is a commutative diagram of short exact sequences in $\PMod(Z)$,  then there is a resulting commutative diagram for the switchback morphisms:
\begin{center}
$\phantom{i}$\xymatrix{
\rmH^{p-1}_\m(Z,M) \ar^{s_p}[r]\ar[d] & \rmH^p_\m(Z,P)\ar[d]\\
\rmH^{p-1}_\m(Z,M') \ar^{s_p'}[r] & \rmH^p_\m(Z,P'),
}
\end{center}
where the downwards morphisms are those induced on cohomology by the downwards morphisms of the previous diagram.

The sequence of functors $\rmH^p_\m(Z,-)$ and these homomorphisms $s_p$ together define the structure of a \textbf{cohomological functor}.

The additional structure offered by long exact sequences is often the key to explicit calculations in group cohomology, and hence its usefulness. Consider, for instance, the modules $M_e$ of {\PDE}-solutions introduced there.  The point of Theorem A is roughly that $M_{[k]}$ appears at the end of the complex~(\ref{eq:cplx-for-PDceE}) of Polish modules and homomorphisms, and this complex is `nearly' exact, in the sense that its homology groups $\ker\partial_\ell/\rm{img}\,\partial_{\ell-1}$ are `small'.  As discussed there, the inductive proof of this will involve already understanding something about the structure of, say, $\rmH^1_\m(U_k,M_{[k-1]})$.  (Actually, when we finally give the proof in Section~\ref{sec:PDceE}, we will end up working with $\rmH^0_\m(U_k,-)$, but the remarks we make here will apply in the same way.) Now, the complex~(\ref{eq:cplx-for-PDceE}) for $k-1$ instead of $k$, which we assume is already understood from an inductive hypothesis, gives short exact sequences of Polish modules
\begin{eqnarray}\label{eq:pres1}
\ker\partial_{k-1}\leq \bigoplus_{|e|=k-2}M_e\onto \rm{img}\,\partial_{k-1}
\end{eqnarray}
and
\begin{eqnarray}\label{eq:pres2}
\rm{img}\,\partial_{k-1} \leq M_{[k-1]} \onto A,
\end{eqnarray}
where $A$ is co-induced from a discrete $U_{[k-1]}$-module.
The long exact sequence corresponding to~(\ref{eq:pres2}) expresses $\rmH^1_\m(U_k,M_{[k-1]})$ in terms of $\rmH^1_\m(U_k,\rm{img}\,\partial_{k-1})$ and $\rmH^1_\m(U_k,A)$.  The second of these falls within the scope of standard theory (see Theorem~\ref{thm:Shapiro}), because $A$ is co-induced-of-discrete.  For the first, we can now construct the long exact sequence corresponding to~(\ref{eq:pres1}) to describe $\rmH^1_\m(U_k,\rm{img}\,\partial_{k-1})$ in terms of
\[\coker\Big(\rmH^1_\m(U_k,\ker \partial_{k-1})\to \rmH^1_\m\Big(U_k,\bigoplus_{|e|=k-2}M_e\Big)\Big)\]
and
\[\ker\Big(\rmH^2_\m(U_k,\ker \partial_{k-1})\to \rmH^2_\m\Big(U_k,\bigoplus_{|e|=k-2}M_e\Big)\Big).\]
We therefore need to understand the ingredient modules of \emph{these} expressions.  This can be done along the same lines, using next the presentation
\[\rm{img}\,\partial_{k-2}\leq \ker \partial_{k-1} \onto (\hbox{co-induced-of-discrete}),\]
which also comes from an inductive appeal to Theorem A. Continuing in this way, we can gradually unravel the structure of $\rmH^1_\m(W,M_{[k-1]})$ in terms of increasingly high-degree cohomology groups of increasingly early modules in the sequence~(\ref{eq:cplx-for-PDceE}).  This shows both why we need to study the whole sequence~(\ref{eq:cplx-for-PDceE}), and why we will need the functors $\rmH^p_\m$ for arbitrarily large $p$. 

\subsection{Universality}\label{subs:univ}

Long exact sequences also lie behind certain universality properties of the theory $\rmH^\ast_\m$, some of whose consequences will be important later.  The extra ingredient one needs is the following.

\begin{lem}\label{lem:vanish}
If $M \in \PMod(Z)$ then $\rmH^p_\m(Z,\F(Z,M)) = 0$ for all $p\geq 1$. \qed
\end{lem}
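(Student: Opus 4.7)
The plan is to recognise $\F(Z,M)$ as a co-induced module and then invoke Shapiro's lemma, reducing the calculation to the cohomology of the trivial group, which vanishes above degree $0$ for trivial reasons.

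First I would produce a shearing isomorphism of Polish $Z$-modules between $\F(Z,M)$ with its \emph{diagonal} action and $\F(Z,M_0)$ with the \emph{rotation} action, where $M_0$ denotes the underlying Polish Abelian group of $M$ with the $Z$-action stripped away. Explicitly, define
\[\Phi:\F(Z,M_0)\longrightarrow \F(Z,M),\qquad \Phi(f)(z):=z\cdot f(z).\]
A one-line check, using $R_w^\rm{rot}f(z)=f(z-w)$ on the domain and the diagonal action $R_w^\rm{diag}g(z)=w\cdot g(z-w)$ on the codomain, shows $\Phi\circ R_w^\rm{rot}=R_w^\rm{diag}\circ\Phi$; and $\Phi$ is manifestly a bimeasurable bijection. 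Thus as a $Z$-module, $\F(Z,M)$ is precisely the co-induced module $\Cnd_{\{0\}}^ZM_0$.

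Second I would invoke Shapiro's lemma in the measurable setting (part of the Moore cohomology machinery summarised in Subsection~\ref{subs:cohom-overview}): for any closed $U\leq Z$ and any $N\in\PMod(U)$,
\[\rmH^p_\m(Z,\Cnd_U^ZN)\ \cong\ \rmH^p_\m(U,N).\]
Specialising to $U=\{0\}$ and $N=M_0$ and combining with the previous step,
\[\rmH^p_\m(Z,\F(Z,M))\ \cong\ \rmH^p_\m(\{0\},M_0),\]
and the right-hand side vanishes in every positive degree: the inhomogeneous bar cochain complex of the trivial group is $M_0\to M_0\to M_0\to\cdots$ with coboundaries that alternate between $0$ and the identity, hence is exact from degree $1$ onwards.

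The only genuine obstacle is the invocation of Shapiro's lemma, which in the measurable setting rests on an explicit contracting homotopy on the bar resolution of a co-induced module. If one preferred a self-contained route, one could write this homotopy down directly after conjugating by $\Phi$: given $c\in\Z^p(Z,\F(Z,M_0))$, set
\[b(z_1,\ldots,z_{p-1})(w)\ :=\ c(w,z_1,\ldots,z_{p-1})(0),\]
and verify from the coboundary formula~(\ref{eq:d}) (recalling that the rotation action contributes $c(z_2,\ldots,z_p)(w-z_1)$ in the first term) that $db=c$ for every $p$-cocycle $c$. This is precisely the standard contracting homotopy that underlies Shapiro's lemma in this special case; the bookkeeping with~(\ref{eq:d}) is entirely routine once one has done the shearing, and this is where I would expect the only nuisance to lie.
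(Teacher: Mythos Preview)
Your shearing isomorphism $\Phi$ is correct, and the identification $\F(Z,M)\cong\Cnd_{\{0\}}^ZM_0$ is exactly right.  The difficulty is with both routes you then propose for finishing.

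For the Shapiro route: in this paper's development (and in Moore's), Theorem~\ref{thm:Shapiro} is \emph{derived from} the universality of $\rmH^\ast_\m$, and that universality is established precisely by using Lemma~\ref{lem:vanish} to show effaceability (see the paragraph immediately following the lemma).  So invoking Shapiro here is circular.

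For the explicit contracting homotopy: your formula $b(z_1,\ldots,z_{p-1})(w):=c(w,z_1,\ldots,z_{p-1})(0)$ requires evaluating an element of $\F(Z,M_0)$ at the single point $0$, but cochains here are only defined up to Haar-a.e.\ equality, so this evaluation is not well-defined.  This is exactly the ``minor complication'' the paper flags just after the statement of the lemma: the classical homotopy does not survive passage to a.e.\ equivalence classes.  The paper's reference (Moore~\cite{Moo76(gr-cohomIII)}, Theorem~4 and Proposition~22) circumvents this by verifying a Buchsbaum-type criterion for the connected sequence of functors, rather than writing down a pointwise contracting homotopy.  If you want a hands-on fix, you would need to replace evaluation at $0$ by something like integration over $Z$ in the last variable, and then check that this still yields $db=c$; this works but requires more care than ``entirely routine bookkeeping.''
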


The counterpart of this result in classical group cohomology is just a simple calculation.  For $\rmH^\ast_\m$, a minor complication arises because cocycles are only defined up to a.e.-equality.  However, this can be worked around by verifying a `Buchsbaum criterion'.  See Theorem 4, Proposition 22 and the remark that follows it in~\cite{Moo76(gr-cohomIII)}. Lemma~\ref{lem:vanish} implies that the cohomological functor ~$\rmH^\ast_\m(Z,-)$ is `effaceable', in the usual terminology of homological algebra.

Given any closed injective morphism $\phi:M\to N$, one may construct the long exact sequence for the short exact sequence
\[M \into N \onto N/\phi(M).\]
If $N = \F(Z,M)$ and $\phi$ is the inclusion of $M$ as the constant functions, then Lemma~\ref{lem:vanish} implies that the resulting long exact sequence collapses (that is, has a zero entry) at every third position starting from $\rmH^1_\m(Z,N)$.  It therefore provides a sequence of continuous homomorphisms
\begin{eqnarray}\label{eq:higher-dim-shift}
\rmH^p_\m(Z,N/\phi(M)) \cong \rmH^{p+1}_\m(Z,M) \quad \hbox{for}\ p\geq 1,
\end{eqnarray}
and also
\begin{eqnarray}\label{eq:base-dim-shift}
\coker\big(N^Z\to (N/\phi(M))^Z\big) \cong \rmH^1_\m(Z,M),
\end{eqnarray}
which are algebraic \emph{iso}morphisms.  In fact these turn out to be topological isomorphisms: see~\cite[Proposition 26]{Moo76(gr-cohomIII)}.

Using these isomorphisms, a classical argument by induction on degree shows that there can be at most one cohomological functor on the category $\PMod(Z)$ for which Lemma~\ref{lem:vanish} holds and which starts out with the functor $\rmH^0_\m(Z,-) = (-)^Z$, up to isomorphism of cohomological functors.  Moreover, any two such are isomorphic via a unique sequence of isomorphisms which make all the resulting diagrams commute.  This fact can be found as~\cite[Theorem 2]{Moo76(gr-cohomIII)}, but the proof is essentially the same as in older, non-topological settings: see, for instance,~\cite[Theorem 7.5]{Bro82} (where effaceability is called `co-effaceability').

We will not make direct appeal to this universality below, but will use some of its consequences.  In the first place, it is the basis for various results proving agreement between $\rmH^p_\m(Z,-)$ and other cohomology theories, such as in Wigner's work~\cite{Wig73} and in~\cite{AusMoo--cohomcty}.  Some of the explicit calculations that we call on later are made via one of those other theories: see Appendix~\ref{app:exp-calc}.

The universality of the cohomology functor is also the key to the \textbf{Shapiro Isomorphisms}, which simplify the description of the cohomology of co-induced modules.

\begin{thm}[Shapiro Isomorphisms]\label{thm:Shapiro}
If $W \leq Z$ is an inclusion of compact Abelian groups, then there is an isomorphisms of cohomological functors
\[\rmH^p_\m(W,-) \cong \rmH^p_\m(Z,\Cnd_W^Z (-)) \quad \hbox{for every}\ p\geq 0.\] \qed
\end{thm}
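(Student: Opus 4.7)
The plan is to deduce the Shapiro isomorphisms from the universality of $\rmH^\ast_\m$ described in Subsection~\ref{subs:univ}. Concretely, I will exhibit $\rmH^p_\m(Z,\Cnd_W^Z(-))$ as an effaceable cohomological functor on $\PMod(W)$ that agrees with $(-)^W$ in degree zero, and then invoke the uniqueness of such functors.

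First I would check that $\Cnd_W^Z:\PMod(W)\to \PMod(Z)$ is an exact functor. Preservation of injections is immediate from the pointwise formula $(\Cnd_W^Z\phi)(f) = \phi\circ f$. Preservation of surjections (with the correct kernel) is exactly the content of the isomorphism~(\ref{eq:cnd-of-quot}), which in turn relies on the Measurable Selector Theorem as already noted before Lemma~\ref{lem:small-selection}. Given exactness, any short exact sequence
\[0\to M\to N\to P\to 0\]
in $\PMod(W)$ gives rise to a short exact sequence
\[0\to \Cnd_W^Z M\to \Cnd_W^Z N\to \Cnd_W^Z P\to 0\]
in $\PMod(Z)$, and applying the cohomological functor $\rmH^\ast_\m(Z,-)$ produces a long exact sequence with natural switchback morphisms. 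Composition therefore endows $\rmH^p_\m(Z,\Cnd_W^Z(-))$, $p\geq 0$, with the structure of a cohomological functor on $\PMod(W)$.

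Next I would verify effaceability. Given $M\in \PMod(W)$, embed $M$ into $\F(W,M)$ as the constant functions. By the natural identification~(\ref{eq:cnd-of-fnl}) one has $\Cnd_W^Z\F(W,M)\cong \F(Z,M)$ as $Z$-modules (with the rotation action on the right), so Lemma~\ref{lem:vanish} applied over $Z$ gives
\[\rmH^p_\m\bigl(Z,\Cnd_W^Z\F(W,M)\bigr) \cong \rmH^p_\m(Z,\F(Z,M)) = 0 \quad \forall p\geq 1.\]
This is exactly the effaceability condition needed for the functor $\rmH^p_\m(Z,\Cnd_W^Z(-))$.

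Finally I would compute the functor in degree zero. An element $f\in (\Cnd_W^Z M)^Z$ is, on the one hand, $W$-equivariant in the sense that $f(z-w) = w\cdot f(z)$ for a.e.\ $z$ and every $w\in W$, and on the other hand $Z$-invariant under the rotation action, forcing $f$ to be essentially constant. Writing $f\equiv m$, the $W$-equivariance collapses to $m = w\cdot m$ for all $w\in W$, i.e.\ $m\in M^W$. The resulting evaluation map $(\Cnd_W^Z M)^Z\to M^W$ is a natural topological isomorphism, so $\rmH^0_\m(Z,\Cnd_W^Z(-))\cong (-)^W = \rmH^0_\m(W,-)$. The universality statement from~\cite[Theorem 2]{Moo76(gr-cohomIII)}, recalled in Subsection~\ref{subs:univ}, now provides a unique extension of this degree-zero isomorphism to a natural isomorphism of cohomological functors in all degrees, yielding the claimed Shapiro isomorphisms. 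The main delicate point I expect is keeping the topological structure (Polish modules, continuity of switchback morphisms) under control throughout; this is exactly where we rely on the enhanced version of universality proved by Moore, which applies to effaceable cohomological functors valued in topological Abelian groups and not merely in the abstract category.
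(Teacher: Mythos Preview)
Your proposal is correct and follows essentially the same approach as the paper: the paper's discussion after the theorem statement explains that the proof proceeds by showing $\rmH^\ast_\m(Z,\Cnd_W^Z(-))$ is an effaceable cohomological functor on $\PMod(W)$ that equals $(-)^W$ in degree zero, and then invoking universality, which is exactly what you do (with more detail, since the paper simply cites Moore~\cite[Theorem 6]{Moo76(gr-cohomIII)}).
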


This theorem can be found as~\cite[Theorem 6]{Moo76(gr-cohomIII)}.  The fact that these isomorphisms are topological is proved later in that paper: see the discussion and corollary following~\cite[Proposition 26]{Moo76(gr-cohomIII)}.  The proof proceeds by showing that $\rmH^\ast_\m(Z,\Cnd_W^Z(-))$ also satisfies the axioms of an effaceable cohomological functor on the category $\PMod(W)$, and also equals the functor $(-)^W$ in degree zero, so must agree canonically with $\rmH^\ast_\m(W,-)$, by universality.  Theorem~\ref{thm:Shapiro} extends an older result for the cohomology of discrete groups.  That has a more elementary proof by direct manipulation of cochains, but this approach runs into trouble in measurable group cohomology because the cochains are defined only up to Haar-a.e. equality.  Even in the setting of discrete groups, the quickest proofs are via similar appeals to universality (see, for instance,~\cite[Proposition 6.2]{Bro82}).

\subsection{Compact groups acting on Lie modules}

The next proposition gives some concrete information on measurable cohomology for actions on Lie modules.  It will be the building block for our later results on the structure of the (much larger) modules of {\PDE}-solutions and zero-sum tuples.  It is an easy corollary of results from~\cite{AusMoo--cohomcty}; we include the deduction for completeness.

\begin{prop}\label{prop:nice-cohom-gps}
Suppose that $Z$ is a compact Abelian group and $A$ is a Lie $Z$-module.  If $p=0$ then $\rmH_\m^p(Z,A) = A^Z$ is also Lie, and if $p\geq 1$ or $A$ is discrete then $\rmH^p_\m(Z,A)$ is discrete.  If $Z$ is a Lie group and $A$ is compactly generated, then each $\rmH^p_\m(Z,A)$ is compactly generated.
\end{prop}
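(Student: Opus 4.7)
I would handle the three conclusions of the proposition in order of increasing difficulty, invoking results from~\cite{AusMoo--cohomcty} where needed.

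The $p=0$ case is immediate from $\rmH^0_\m(Z,A)=A^Z$, which is a closed subgroup of $A$. Closed subgroups of Abelian Lie groups are Lie; closed subgroups of discrete groups are discrete; and closed subgroups of a compactly generated Abelian Lie group (which has the form $\bbR^a\times\bbT^b\times\bbZ^c\times F$ with $F$ finite) are again of this form, hence compactly generated. This disposes of $p=0$ in all three assertions simultaneously.

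For $p\geq 1$, I would proceed by a two-stage d\'evissage. First, the canonical short exact sequence
\[0 \to A^\circ \to A \to A/A^\circ \to 0,\]
in which $A^\circ\cong \bbR^a\times\bbT^b$ is the identity component and $A/A^\circ$ is discrete, gives a long exact sequence of cohomology; combined with Proposition~\ref{prop:topsnake}, this reduces the discreteness claim to the cases (i) $A$ discrete and (ii) $A$ connected Abelian Lie. In case (ii), the exponential short exact sequence $0\to \bbZ^b\to\bbR^{a+b}\to A^\circ\to 0$ (well-defined as $Z$-modules since the compact $Z$ must act trivially on the discrete kernel $\bbZ^b$) further reduces the problem to case (i) together with the Euclidean case $A\cong \bbR^n$. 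For $A = \bbR^n$, averaging cochains over $Z$ against normalized Haar measure shows $\rmH^p_\m(Z,\bbR^n)=0$ for $p\geq 1$, since $\bbR^n$ is divisible and uniquely so for Haar-averaging. In case (i), the results of~\cite{AusMoo--cohomcty} identify $\rmH^p_\m(Z,A)$ with classical continuous cohomology of $Z$ on the discrete module $A$: continuity of the action forces each finitely generated submodule of $A$ to be stabilized by an open subgroup of $Z$ (so the action factors through a finite quotient), which reduces the computation to classical finite-group cohomology with finitely generated coefficients, manifestly countable. The Hausdorffness half of the main theorem of~\cite{AusMoo--cohomcty} ensures $\rmH^p_\m(Z,A)$ is Polish, and countability then gives discreteness by Lemma~\ref{lem:ctble-Polish-are-disc}.

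For the compact-generation assertion in the Lie/compactly-generated case, I would run the same d\'evissage, tracking compact generation through each long exact sequence via Lemma~\ref{lem:still-fg}. The only genuinely new ingredient is the case of $Z$ compact Lie acting on a finitely generated discrete $A$, where the action still factors through a finite quotient $Z/K$ of $Z$, and classical finite-group cohomology with finitely generated coefficients is finitely generated. The main obstacle here is verifying that the comparison and Hausdorffness theorems from~\cite{AusMoo--cohomcty} are available in the form needed (especially for discrete coefficient modules arising as direct limits of finitely generated submodules); once those are in hand, the whole argument reduces to standard computations and the tracking lemmas of Section~\ref{sec:backgd1}.
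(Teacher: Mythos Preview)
Your d\'evissage skeleton is close to the paper's, but there is a genuine error in the discrete-coefficient step, and it propagates into the compact-generation argument.

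You write that when $A$ is discrete, the continuous $Z$-action factors through a finite quotient, ``which reduces the computation to classical finite-group cohomology.'' This conflates the \emph{action} factoring with the \emph{cohomology} factoring. Take $Z=\bbT$ acting trivially on $A=\bbZ$: the action factors through the trivial group, yet $\rmH^2_\m(\bbT,\bbZ)\cong\bbZ$, not $\rmH^2(\{1\},\bbZ)=0$. The torus itself contributes cohomology even when it acts trivially. So your countability argument for discrete $A$ does not go through as written, and the same mistake reappears verbatim in your compact-generation paragraph (``the action still factors through a finite quotient $Z/K$ of $Z$, and classical finite-group cohomology \ldots'').

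The paper handles both points differently. For discreteness when $A$ is discrete, it simply cites~\cite[Theorem D]{AusMoo--cohomcty} directly rather than attempting a reduction. For compact generation with $Z$ compact Lie and $A$ discrete, it splits $Z\cong F\times T$ (finite $\times$ torus) and applies the Lyndon--Hochschild--Serre spectral sequence to compute $\rmH^\ast_\m(Z,A)$ from $\rmH^\ast_\m(T,\rmH^\bullet(F,A))$; finite-group cohomology with finitely generated coefficients handles the inner groups, and the explicit toral calculation (Lemma~\ref{lem:tori-calc-1}) handles the outer ones. This is exactly the piece your argument is missing.

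A smaller point: your parenthetical that ``compact $Z$ must act trivially on the discrete kernel $\bbZ^b$'' is false (e.g.\ $\bbZ/2\bbZ$ acting on $\bbT$ by negation lifts to negation on $\bbZ$). The exponential sequence is still a short exact sequence of $Z$-modules, but the reason is that automorphisms of $A^\circ$ lift uniquely to its universal cover, not that the kernel action is trivial.
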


Thus, in degrees one and higher, `cohomology functors convert Lie modules into discrete modules'.  In the nomenclature of Subsection~\ref{subs:almost-discrete}, this is asserting precisely that the chain complex which defines $\rmH^\ast_\m(Z,A)$ has $0$-almost discrete homology.

\begin{proof}
If $A$ is a Euclidean space, then $\rmH^0_\m(Z,A) = A^Z$, and $\rmH^p_\m(Z,A) = 0$ for all $p \geq 1$, by~\cite[Theorem A]{AusMoo--cohomcty}.  If $A$ is discrete, then so is $\rmH^p_\m(Z,A)$, by~\cite[Theorem D]{AusMoo--cohomcty}.

Next, consider the issue of compact generation in case $Z$ is Lie and $A$ is discrete.  In this case $Z$ is isomorphic to $F \times T$ for some finite group $F$ and torus $T$, and the action on $A$ must factorize through the quotient $Z\onto F$.  Using this splitting of $Z$, the measurable-cohomology version of the Lyndon-Hochschild-Serre spectral sequence (see~\cite[Section 5]{Moo76(gr-cohomIII)}) computes $\rmH^\ast_\m(Z,A)$ in terms of $\rmH^\ast_\m(T,\rmH^\bullet_\m(F,A))$.  Cohomology is finitely generated for finite $Z$ because then the groups of cochains are themselves finitely generated, and it is finitely generated for toral $Z$ by a classical calculation (see Lemma~\ref{lem:tori-calc-1}), so it follows that in this case $\rmH^p_\m(Z,A)$ is also finitely generated.

Next, if $Z$ is arbitrary and $A$ is a torus, then $A$ is a quotient of its universal cover, a Euclidean space, by a discrete submodule.  Since the chain complex for a toral module is the quotient of the chain complexes for these Euclidean and discrete modules, the desired results for a toral module now follow by part (1) of Proposition~\ref{prop:topsnake} and Lemma~\ref{lem:still-fg}.

Finally, for a general Lie module $A$, consider the closed subgroups $A_1 \leq A_0 \leq A$, where $A_0$ is the maximal connected subgroup and
\[A_1 := \{a \in A_0\,|\ \bbZ\cdot a\ \hbox{is precompact}\},\]
the maximal compact subgroup of $A_0$.  Both subgroups are defined intrinsically, so must be preserved by any automorphism.  The standard structure theory for locally compact Abelian groups gives that $A/A_0$ is discrete, $A_1$ is a torus and $A_0/A_1$ is a Euclidean space (see, for instance,~\cite[Section II.9]{HewRos79}).  It follows that the chain complex for a general $A$ may obtained from those of suitable toral, Euclidean and discrete modules by extension, so now the desired results follow from part (2) of Proposition~\ref{prop:topsnake} and Lemma~\ref{lem:still-fg}.
\end{proof}

\begin{rmk}
Proposition~\ref{prop:nice-cohom-gps} does not generalize to all locally compact, second-countable $Z$-modules $A$, even if one assumes the action is trivial.  Most obviously, one has
\[\rmH^1_\m(Z,\bbT^\bbN) = \Hom(Z,\bbT^\bbN) = \hat{Z}^\bbN,\]
where this last infinite product is given the product topology, for which it is not even locally compact if $\hat{Z}$ is non-finite. \fin
\end{rmk}

Combining Proposition~\ref{prop:nice-cohom-gps} with the Shapiro Isomorphism (Theorem~\ref{thm:Shapiro}) gives the following simple corollary, which will streamline some arguments later.

\begin{cor}\label{cor:little-Hpgood}
Suppose that $Y,W \leq Z$ are two closed subgroups of a compact Abelian group such that $Y + W = Z$, that $A_0$ is a Lie $Y$-module, and that $A = \Cnd_Y^Z A_0$.  If $p=0$ then $\rmH_\m^p(W,A) = A^W$ is also Lie, and if $p\geq 1$ or $A_0$ is discrete then $\rmH^p_\m(W,A)$ is discrete.  If $Z$ is a Lie group and $A_0$ is compactly generated, then each $\rmH^p_\m(W,A)$ is compactly generated.
\end{cor}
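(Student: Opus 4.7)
The proof should be essentially a direct chaining together of the three ingredients already established in the preceding subsections. The plan is to first rewrite $A$ as a co-induction from a subgroup of $W$, and then apply Shapiro's isomorphism to reduce to Proposition~\ref{prop:nice-cohom-gps}.

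First I would apply Lemma~\ref{lem:coind-and-coind} with the roles of $Y$ and $W$ as in the statement. Since $Y + W = Z$, this lemma gives an isomorphism of $W$-modules
\[
A = \Cnd_Y^Z A_0 \cong \Cnd_{Y\cap W}^W A_0.
\]
Note that even though the full $Z$-action is lost, only the $W$-action is needed, since we are computing $\rmH^p_\m(W,A)$.

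Next I would invoke the Shapiro Isomorphism (Theorem~\ref{thm:Shapiro}) applied to the inclusion $Y \cap W \leq W$: this yields a topological isomorphism
\[
\rmH^p_\m(W, \Cnd_{Y\cap W}^W A_0) \cong \rmH^p_\m(Y\cap W, A_0)
\]
for every $p \geq 0$. Combining with the previous step gives $\rmH^p_\m(W,A) \cong \rmH^p_\m(Y\cap W, A_0)$.

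Finally, restrict $A_0$ from a Lie $Y$-module to a Lie $(Y\cap W)$-module and apply Proposition~\ref{prop:nice-cohom-gps} with acting group $Y\cap W$ and module $A_0$. This yields: $\rmH^0_\m(Y\cap W, A_0) = A_0^{Y\cap W}$ is Lie (matching that $\rmH^0_\m(W,A) = A^W$ is Lie, where the identification $A^W \cong A_0^{Y\cap W}$ is consistent with the preceding isomorphisms); for $p \geq 1$, or for any $p$ when $A_0$ is discrete, the group is discrete; and if additionally $Z$ is a Lie group then $Y \cap W$, being a closed subgroup of a Lie group, is itself Lie, so the compact generation conclusion also applies when $A_0$ is compactly generated.

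There is no real obstacle here: the whole point is that Lemma~\ref{lem:coind-and-coind} isolates precisely the fact about co-induction one needs in order to transfer to the acting group $W$, after which Shapiro and Proposition~\ref{prop:nice-cohom-gps} are black-box inputs. The only point worth double-checking is that the isomorphism in Lemma~\ref{lem:coind-and-coind} is an isomorphism of \emph{Polish} $W$-modules (not merely an algebraic one), so that the Shapiro Isomorphism applies with its full topological strength and the qualitative conclusions of Proposition~\ref{prop:nice-cohom-gps} (Lie, discrete, compactly generated) transfer back to $\rmH^p_\m(W,A)$; this is verified at the end of the proof of Lemma~\ref{lem:coind-and-coind}.
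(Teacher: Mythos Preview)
Your proposal is correct and matches the paper's own proof essentially line for line: apply Lemma~\ref{lem:coind-and-coind} to rewrite $A$ as $\Cnd_{Y\cap W}^W A_0$, then the Shapiro Isomorphism (Theorem~\ref{thm:Shapiro}) to reduce to $\rmH^p_\m(Y\cap W,A_0)$, and finally Proposition~\ref{prop:nice-cohom-gps}. Your additional remarks (that $Y\cap W$ is Lie when $Z$ is, and that the isomorphisms are topological) are correct and simply make explicit what the paper leaves implicit.
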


\begin{proof}
Combining Lemma~\ref{lem:coind-and-coind} and Theorem~\ref{thm:Shapiro} gives topological isomorphisms
\[\rmH^p_\m(W,\Cnd_Y^ZA_0) \cong \rmH^p_\m(W,\Cnd_{W\cap Y}^WA_0) \cong \rmH^p_\m(W\cap Y,A_0),\]
and to this right-hand side we may apply Proposition~\ref{prop:nice-cohom-gps}.
\end{proof}

\subsection{Cohomology groups as new modules}\label{subs:cohoms-as-mods}

Let $Z$ be a locally compact, second-countable Abelian group, $W \leq Z$ a closed subgroup, and $M \in \PMod(Z)$. Then $Z$ acts on each of the cocycle modules $\C^p(W,M) = \F(W^p,M)$ pointwise: calling this action $T$, it is simply
\[(T_zf)(w_1,\ldots,w_p) := z\cdot (f(w_1,\ldots,w_p)).\]
This action commutes with $d:\C^p\to \C^{p+1}$, because $Z$ is Abelian.  It therefore preserves the subgroups $\Z^p$ and $\B^p$, and so defines an action of $Z$ on the quotient groups $\rmH^p_\m(W,M)$.  It is a jointly continuous action if that quotient is Hausdorff.  We will henceforth always regard $\rmH_\m^p(W,M)$ as a $Z$-module with this quotient of the pointwise action when $M$ itself is a $Z$-module and $W\leq Z$.

The following is a simple enhancement of~\cite[Theorem 1]{Moo76(gr-cohomIV)} which takes this action into account.

\begin{lem}\label{lem:cohom-of-coind}
If $W \leq Y \leq Z$ are inclusion of locally compact, second-countable Abelian groups, and $M \in \PMod(Y)$ is such that $\rmH^p_\m(W,M)$ is Hausdorff, then one has a canonical isomorphism of $Z$-modules
\[\rmH^p_\m(W,\Cnd_Y^Z M) \cong \Cnd_Y^Z \rmH^p_\m(W,M).\]
\end{lem}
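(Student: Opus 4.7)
The plan is to enhance Moore's proof of the ($Z$-less) topological isomorphism in~\cite{Moo76(gr-cohomIV)} by carrying along the residual $Z$-action and checking that the cochain-level identification intertwines it with the translation $Z$-action on $\Cnd_Y^Z$. First I would exhibit the cochain-level map
\[\Phi : \C^p(W,\Cnd_Y^Z M) \to \Cnd_Y^Z \C^p(W,M), \qquad (\Phi f)(z)(w_1,\ldots,w_p) := f(w_1,\ldots,w_p)(z).\]
A function $f: W^p \to \Cnd_Y^Z M$ satisfies the $Y$-equivariance condition $f(w_1,\ldots,w_p)(z-y) = y\cdot f(w_1,\ldots,w_p)(z)$ in the $z$-variable for every $(w_1,\ldots,w_p)$, and this is precisely the condition $(\Phi f)(z-y) = y\cdot (\Phi f)(z)$ placing $\Phi f$ in $\Cnd_Y^Z \C^p(W,M)$ (with the diagonal $Y$-action on $\C^p(W,M)$). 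That $\Phi$ is a topological isomorphism is a standard Fubini/Measurable-Selector argument, essentially~(\ref{eq:cnd-of-fnl}) applied to $M$.

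Next I would check that $\Phi$ intertwines the coboundary operators $d^W$ on the two sides. Both are built only from the $W$-action and from shifting the $W$-arguments; on the left the $W$-action on $\Cnd_Y^Z M$ is the restriction of the translation $Z$-action, while on the right the $W$-action on $\Cnd_Y^Z \C^p(W,M)$ is the restriction of the pointwise-in-$z$ action coming from $\C^p(W,M)$. These two descriptions of the $W$-action on $\Cnd_Y^Z M$ agree by the very definition of $\Cnd_Y^Z$, so a direct comparison of formula~(\ref{eq:d}) on both sides gives $\Phi \circ d^W = d^W \circ \Phi$. Hence $\Phi$ sends cocycles to cocycles and coboundaries to coboundaries, and descends to a continuous bijection
\[\bar\Phi : \rmH^p_\m(W,\Cnd_Y^Z M) \to \Cnd_Y^Z \rmH^p_\m(W,M),\]
where the Hausdorffness hypothesis is used precisely to ensure that $\rmH^p_\m(W,M)$ is a Polish module and that the right-hand side makes sense. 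Topological invertibility of $\bar\Phi$ now follows either from Moore's Theorem 1 in~\cite{Moo76(gr-cohomIV)} or, directly, by applying the Banach-type result (Theorem~\ref{thm:open-mors}) to the closed continuous bijection $\bar\Phi$.

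The new content is the $Z$-equivariance of $\bar\Phi$. Per Subsection~\ref{subs:cohoms-as-mods}, the $Z$-action on $\rmH^p_\m(W,\Cnd_Y^Z M)$ is induced by the pointwise action $T_z f(w_1,\ldots,w_p) := z \cdot f(w_1,\ldots,w_p)$, where the outer dot denotes translation on $\Cnd_Y^Z M$. Chasing this through $\Phi$ yields
\[(\Phi T_z f)(z')(w_1,\ldots,w_p) = f(w_1,\ldots,w_p)(z'-z) = (\Phi f)(z'-z)(w_1,\ldots,w_p),\]
i.e.\ the translation-by-$z$ action in $\Cnd_Y^Z \C^p(W,M)$. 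Passing to the quotient, $\bar\Phi$ intertwines the $Z$-module structure of $\rmH^p_\m(W,\Cnd_Y^Z M)$ with the translation $Z$-action on $\Cnd_Y^Z \rmH^p_\m(W,M)$. The main (very mild) obstacle is simply keeping the two different-looking but coincident descriptions of the $W$-action on $\Cnd_Y^Z M$ straight across the two sides of $\Phi$, and reading off $Z$-equivariance from the same cochain computation; no new homological input is needed beyond Moore's original argument.
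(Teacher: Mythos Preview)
Your proposal is correct and follows essentially the same approach as the paper: both establish the cochain-level identification $\C^p(W,\Cnd_Y^Z M)\cong \Cnd_Y^Z \C^p(W,M)$ as $Z$-modules, observe that it commutes with $d^W$, and pass to cohomology. The paper's proof is simply a terser version of yours, writing the identification as a chain of equalities $\C^p(W,\F(Z,M)^Y)=\F(W^p\times Z,M)^Y\cong \Cnd_Y^Z\C^p(W,M)$ and asserting $Z$-equivariance and $d^W$-compatibility without spelling out the formulas; your explicit check that the translation $W$-action on $\Cnd_Y^Z M$ coincides with the pointwise $W$-action (using $W\leq Y$) is exactly the content behind that assertion.
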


\begin{rmk}
One must know that $\rmH^p_\m(W,M)$ is Hausdorff in order that its co-induction to $Z$ be well-defined. \fin
\end{rmk}

\begin{proof}
Recalling that $\Cnd_Y^Z(-) := \F(Z,-)^Y$, this is most easily seen at the level of cocycles, where one has the following identifications of $Z$-modules:
\[\C^p(W,\Cnd_Y^ZM) = \C^p(W,\F(Z,M)^Y) = \F(W^p\times Z,M)^Y\\ \cong \Cnd_Y^Z\C^p(W,M).\]
In the third of these modules, the $Y$-fixed points are taken for the diagonal action of $Y$ which rotates the variable in $Z$ and acts on $M$.  This action does nothing to the variable in $W^p$. The last co-induction refers to the pointwise action of $Y$ on $\C^p(W,M)$.

These isomorphisms of $Z$-modules commute with the coboundary operators, so descend to the desired isomorphisms on cohomology.
\end{proof}

\begin{rmk}
The action of $W$ itself on $\rmH^p_\m(W,M)$ is always trivial.  This may be seen abstractly as follows.  Each $T_w$ with $w \in W$ defines a sequence of isomorphisms $\rmH^p_\m(W,T_w):\rmH^p_\m(W,M)\to \rmH^p_\m(W,M)$, and they clearly respect the axioms of the cohomological functor defined by $\rmH^p_\m(W,-)$.  This implies that they are uniquely determined by the isomorphism in degree zero, by the universality property recalled in Subsection~\ref{subs:univ}.  However, in degree zero one has $\rmH^0_\m(W,M) = M^W$, so obviously $\rmH^0_\m(W,T_w) = \rm{id}$.  Therefore every other $\rmH^p_\m(W,T_w)$ is also the identity.

Of course, if $W$ is a proper subgroup of $Z$, then elements of $Z\setminus W$ may still act nontrivially on cohomology.  It is these that make the viewpoint of $\rmH^p_\m(W,M)$ as a $Z$-module necessary. \fin
\end{rmk}

\subsection{Cohomology applied to Polish complexes}\label{subs:cohom-of-Polish-cplx}

Let $Z$ be a compact metrizable Abelian group, and now suppose that
\begin{eqnarray}\label{eq:init-cplx}
0 = M_0 \stackrel{\a_1}{\to} M_1 \stackrel{\a_2}{\to} \dots \stackrel{\a_k}{\to} M_k \stackrel{\a_{k+1}}{\to} M_{k+1} = 0
\end{eqnarray}
is a bounded Polish complex of $Z$-modules.  For each $p\geq 0$ and any closed subgroup $W \leq Z$, applying the functor $\rmH^p_\m(W,-)$ gives a new topological complex
\begin{multline}\label{eq:cohom-cplx}
0 = \rmH^p_\m(W,M_0) \ \stackrel{\rmH^p_\m(W,\a_1) = 0}{\to}\  \rmH^p_\m(W,M_1) \ \stackrel{\rmH^p_\m(W,\a_2)}{\to} \ \dots \\ \dots \stackrel{\rmH^p_\m(W,\a_k)}{\to} \ \rmH^p_\m(W,M_k) \ \stackrel{\rmH^p_\m(W,\a_{k+1}) = 0}{\to} \rmH^p_\m(W,M_{k+1}) = 0.
\end{multline}
This is another Polish complex in case each $\rmH^p_\m(W,M_i)$ is Hausdorff.

To lighten notation, we now fix $W \leq Z$, and abbreviate $\rmH^p_\m(W,-) =: \rmH^p(-)$ for the rest of this subsection.

It can be difficult to deduce much about the homology of the complex~(\ref{eq:cohom-cplx}) from general features of the complex~(\ref{eq:init-cplx}), without computing the cohomology groups and morphisms explicitly.  However, the situation explored in this subsection is a modest exception to this: if we make enough assumptions about~(\ref{eq:cohom-cplx}), then we can deduce some additional topological consequences for it.

Let $Y\leq Z$ be another closed subgroup, and let
\begin{eqnarray}\label{eq:local-cplx}
0 = M_0^\circ \to M^\circ_1 \to \cdots \to M^\circ_k \to M^\circ_{k+1} = 0
\end{eqnarray}
be a Polish $Y$-complex which has $\ell_0$-almost discrete homology for some $\ell_0 \leq k$, and now let~(\ref{eq:init-cplx}) be the Polish $Z$-complex obtained by applying $\Cnd_Y^Z(-)$ to~(\ref{eq:local-cplx}).  These will all be fixed for the rest of this subsection.  We will consider~(\ref{eq:cohom-cplx}) for this example.  Note that neither of $W$ and $Y$ is assumed to contain the other.

\begin{prop}\label{prop:cohom-propagate-pos}
Assume that $Z = Y+W$, and that the following properties hold for all $i \in \{\ell_0,\ldots,k-1\}$:
\begin{itemize}
\item[a)$_i$] $\rmH^p(M_i)$ is Hausdorff for all $p \geq 1$;
\item[b)$_i$] $\rmH^p(\a_i):\rmH^p(M_{i-1}) \to \rmH^p(M_i)$ is closed for all $p\geq 1$;
\item[c)$_i$] the homology of
\[\rmH^p(M_{i-1}) \to \rmH^p(M_i) \to \rmH^p(M_{i+1})\]
is discrete for all $p\geq 1$ (equivalently, the submodule
\[\a_i(\Z^p(W,M_{i-1})) + \B^p(W,M_i)\]
is relatively open in $\Z^p(W,M_i) \cap \a_{i+1}^{-1}(\B^p(W,M_{i+1}))$).
\end{itemize}
Then properties (a)$_k$, (b)$_k$ and (c)$_k$ also hold.

If $Z$ is finite-dimensional, all of the homology groups of~(\ref{eq:init-cplx}) are compactly generated, and all the homology groups in assumptions (c)$_i$ above are compactly generated for $i \leq k-1$, then so is
\[\coker(\rmH^p(M_{k-1})\to \rmH^p(M_k)) \quad \hbox{for all}\ p\geq 1.\]
\end{prop}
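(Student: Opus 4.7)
Because $\Cnd_Y^Z$ is exact and the $Y$-complex $M^\circ_\bullet$ has $\ell_0$-almost discrete homology, every $\a_i$ is closed, so the submodules $I_i := \img(\a_i)$ and $K_i := \ker(\a_{i+1})$ are closed in $M_i$, and the homology modules $H_i := K_i/I_i$ equal $\Cnd_Y^Z H_i^\circ$ where $H_i^\circ$ is Abelian Lie (discrete for $i > \ell_0$, zero for $i < \ell_0$). Since $Y + W = Z$, Corollary~\ref{cor:little-Hpgood} then gives that $\rmH^p_\m(W, H_i)$ is Hausdorff for every $p$; Lie in degree zero (discrete when $i > \ell_0$); discrete for $p \geq 1$; and compactly generated whenever $Z$ is a Lie group and $A$ is compactly generated.

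My plan is to factor each $\a_i$ as $M_{i-1} \onto I_i \into K_i \into M_i$ and exploit the three Polish short exact sequences
\[0 \to K_{i-1} \to M_{i-1} \to I_i \to 0, \quad 0 \to I_i \to K_i \to H_i \to 0, \quad 0 \to K_i \to M_i \to I_{i+1} \to 0,\]
together with the long exact sequences they induce on $\rmH^p_\m(W,-)$. The morphism $\rmH^p_\m(W, \a_i)$ is the composition of the three cohomology maps appearing there, so the hypotheses (a)$_i$, (b)$_i$, (c)$_i$ for $i \leq k-1$ convert into auxiliary statements about Hausdorffness of $\rmH^p_\m(W, I_i)$ and $\rmH^p_\m(W, K_i)$, closedness of the switchback morphisms in these long exact sequences, and discreteness of certain subquotients.

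I would prove these auxiliary statements by an induction on $i$ from $\ell_0$ up to $k$. At each step, the middle long exact sequence has the Hausdorff module $\rmH^p_\m(W, H_i)$ --- discrete when $p \geq 1$ or when $i > \ell_0$ --- in every third position, and the other two long exact sequences tie $\rmH^p_\m(W, I_i)$ and $\rmH^p_\m(W, K_i)$ to the cohomologies of $M_{i-1}$ and $M_i$. This lets me apply Lemma~\ref{lem:ctble-Polish-are-disc} to conclude that countable quotients are discrete and the corresponding inclusions closed, combined with the inductive hypothesis. At $i = k$ the third sequence degenerates because $I_{k+1} = 0$ and $K_k = M_k$; the middle sequence $0 \to I_k \to M_k \to H_k \to 0$ and its long exact sequence then yield (a)$_k$ from Hausdorffness of $\rmH^p_\m(W, I_k)$ and $\rmH^p_\m(W, H_k)$, and (c)$_k$ as a discreteness statement reducing to a subquotient of $\rmH^p_\m(W, H_k)$; finally (b)$_k$ follows automatically from (a)$_k$ and (c)$_k$ via Lemma~\ref{lem:ctble-Polish-are-disc}.

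The main obstacle, in my view, lies in the coordinated bookkeeping through three long exact sequences at once. Each appeal to Lemma~\ref{lem:ctble-Polish-are-disc} needs a certain quotient to be countable, and that countability in turn depends on data already produced at earlier positions; moreover the switchback maps in the long exact sequences are not closed a priori, so one must use the discreteness of the nearby $\rmH^p_\m(W, H_j)$ to upgrade algebraic exactness to topological closedness. The final compact-generation assertion follows from the same inductive scheme with Lemma~\ref{lem:still-fg} replacing the Hausdorff-and-discreteness tracking, using that each $\rmH^p_\m(W, H_j)$ is compactly generated (by Corollary~\ref{cor:little-Hpgood}) in the Lie-plus-compactly-generated setting.
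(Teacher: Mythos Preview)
Your plan is structurally the same as the paper's: both decompose $\a_i$ through the short exact sequences
\[0 \to K_{i-1} \to M_{i-1} \to I_i \to 0 \quad\hbox{and}\quad 0 \to I_i \to K_i \to H_i \to 0,\]
exploit that $\rmH^p_\m(W,H_i)$ is discrete for $p\geq 1$ by Corollary~\ref{cor:little-Hpgood}, and chase through the resulting long exact sequences. The paper organizes this as an induction on the length $k$ (replacing the complex by $0\to M_1\to\cdots\to M_{k-2}\to K_{k-1}\to 0$), whereas you induct on the position $i$; these are equivalent bookkeeping schemes.

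However, your sketch has a genuine gap at the step you label easiest: ``(a)$_k$ from Hausdorffness of $\rmH^p_\m(W,I_k)$ and $\rmH^p_\m(W,H_k)$.'' For a short exact sequence $I\hookrightarrow M\twoheadrightarrow H$ of Polish modules, Hausdorffness of $\rmH^p_\m(W,I)$ and $\rmH^p_\m(W,H)$ does \emph{not} by itself imply Hausdorffness of $\rmH^p_\m(W,M)$. What one actually needs, tracing the null-sequence argument, is that the switchback image $\img\bigl(\rmH^{p-1}_\m(W,H_k)\to\rmH^p_\m(W,I_k)\bigr)$ is closed --- and since $\rmH^{p-1}_\m(W,H_k)$ is countable while $\rmH^p_\m(W,I_k)$ is in general a nondiscrete Polish group, this closedness is not free (a countable subgroup of a Polish group need not be closed; compare the remark following Proposition~\ref{prop:topsnake}). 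You acknowledge that switchback maps require upgrading, but your inductive package as described does not obviously deliver this particular closedness at stage $k$.

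The paper's Claim~4 circumvents this by arguing directly with null sequences in $\Z^p(W,M_k)$: having already established (Claim~3) that $\partial_k(\Z^p(W,M_{k-1}))+\B^p(W,M_k)$ is relatively open in $\Z^p(W,M_k)$, one writes a null sequence of coboundaries as $\partial_k\tau_n + d\a_n$ with $\tau_n,\a_n$ null, and then invokes (c)$_{k-1}$ and (a)$_{k-1}$ --- not just properties of $I_k$ and $H_k$ --- to show that $\tau_n$ is eventually of the form $\partial_{k-1}\k_n + d\g_n$ with $\g_n$ null. This is where the full strength of the hypotheses at position $k-1$ enters. Your route can be made to work, but only if you augment the inductive data to include closedness of the relevant switchback images (or, equivalently, reorganize to use (c)$_{k-1}$ directly at this step as the paper does).
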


Loosely, Proposition~\ref{prop:cohom-propagate-pos} asserts the following: if~(\ref{eq:init-cplx}) has almost or strictly discrete homology, and if one already knows similarly good behaviour for~(\ref{eq:cohom-cplx}) for every $p\geq 1$ except at the right-hand end of the complex, then one can deduce it at that end as well.  This is a rather specialized result, but it will be the linchpin of an important induction in Section~\ref{sec:take-cohom}.  Intuitively, it is useful in case~(\ref{eq:local-cplx}) relates a module of interest $M^\circ_k$ to a sequence of modules $M^\circ_1,\dots,M^\circ_{k-1}$ that are already understood, including their cohomology.  Most simply, one might imagine that~(\ref{eq:local-cplx}) is a left-resolution of $M^\circ_k$ in terms of `simpler' modules, but in general we can allow the homology of~(\ref{eq:local-cplx}) to be non-zero, so long as it is suitably controlled in the sense of almost or strict discreteness.

It is worth emphasizing that Proposition~\ref{prop:cohom-propagate-pos} is not a separate result for each $p\geq 1$.  The proof of (a--c)$_k$ for some $p$ will require knowing (a--c)$_{\ell_0,\dots,k-1}$ also in degree $p+1$, so the different degrees are tied together.  This shifting of the degree of the relevant cohomology results from some applications of long exact sequences.

Note that if $\ell_0 = k$, then the hypotheses (a--c)$_{\ell_0,\dots,k-1}$ in Proposition~\ref{prop:cohom-propagate-pos} are vacuous, but the conclusions (a--c)$_k$ still make sense: in that case they simply assert that $\rmH^p(M_k)$ is discrete for all $p\geq 1$.

The complex~(\ref{eq:cohom-cplx}) with $p=0$ is not covered by Proposition~\ref{prop:cohom-propagate-pos}.  It turns out that this case can be controlled entirely using its counterparts in positive degrees.  This is also a consequence of some degree-shifting in the course of the proof.  This fact will give us some crucial extra leverage when we apply these results later.

\begin{prop}\label{prop:cohom-propagate-zero}
Assume that $Y+W = Z$, and that conclusions (a--c)$_i$ hold for $i\leq k-1$ as in Proposition~\ref{prop:cohom-propagate-pos}.  Then the following also hold for all $i \in \{\ell_0,\dots,k\}$:
\begin{itemize}
\item[d)$_i$] $\rmH^0(\a_i):\rmH^0(M_{i-1})\to \rmH^0(M_i)$ is closed;
\item[e)$_i$] the homology of
\[\rmH^0(M_{i-1})\to \rmH^0(M_i) \to \rmH^0(M_{i+1})\]
is Lie, and discrete if $i \geq \ell_0 + 1$.
\end{itemize}

If $Z$ is finite-dimensional, all of the homology groups of~(\ref{eq:init-cplx}) are compactly generated, and all the homology groups in assumptions (c)$_i$ above are compactly generated for $i \leq k-1$, then so is the homology of
\[\rmH^0(M_{i-1}) \to \rmH^0(M_i)\to \rmH^0(M_{i+1})\]
for all $i \in \{1,2,\ldots,k\}$.
\end{prop}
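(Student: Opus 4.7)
The plan is to reduce Proposition~\ref{prop:cohom-propagate-zero} to Proposition~\ref{prop:cohom-propagate-pos} by a degree-shifting argument. I will introduce the short exact sequence of Polish $Z$-complexes
\[0 \to M_\bullet \stackrel{\iota}{\to} \F(Z,M_\bullet) \stackrel{\pi}{\to} N_\bullet \to 0,\]
where $\iota$ embeds $M_i$ as constants, $N_i := \F(Z,M_i)/M_i$, and the horizontal morphisms are induced from $\a_i$. Once $\F(Z,M_i)$ is shown to be $\rmH^p_\m(W,-)$-acyclic for $p\geq 1$, the associated long exact sequence converts statements about $\rmH^0_\m(W,M_\bullet)$ and $\rmH^1_\m(W,M_\bullet)$ into statements about $\rmH^0_\m(W,\F(Z,M_\bullet))$ and $\rmH^0_\m(W,N_\bullet)$, and reindexes $\rmH^{p+1}_\m(W,M_\bullet)$ as $\rmH^p_\m(W,N_\bullet)$ for $p\geq 1$.

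First I would establish $\rmH^p_\m(W,\F(Z,M_i))=0$ for $p\geq 1$: a Borel cross-section $Z/W\to Z$ gives an isomorphism of $W$-modules $\F(Z,M_i)\cong \F(W,\F(Z/W,M_i))$ with diagonal $W$-action on the right (using the pointwise action on $\F(Z/W,M_i)$), so Lemma~\ref{lem:vanish} applies. From this, the long exact sequence collapses to the four-term exact sequence
\[0\to M_i^W\to \F(Z/W,M_i)\to N_i^W\to \rmH^1_\m(W,M_i)\to 0\]
at each position, together with topological isomorphisms $\rmH^p_\m(W,N_i)\cong \rmH^{p+1}_\m(W,M_i)$ for $p\geq 1$ (continuous bijections of Polish groups being automatic isomorphisms by Banach's theorem, cited before Theorem~\ref{thm:open-mors}). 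Naturality in $i$ upgrades these to isomorphisms of Polish $Z$-complexes.

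Next I would bring $N_\bullet$ into the scope of Proposition~\ref{prop:cohom-propagate-pos}. Via the isomorphism above, the hypotheses (a)$_i$--(c)$_i$ at positions $i\leq k-1$ in degree $p+1\geq 2$ for $M_\bullet$ give (a)$_i$--(c)$_i$ in degree $p\geq 1$ for $N_\bullet$. It remains to identify $N_\bullet\cong \Cnd_Y^Z N_\bullet^\circ$ for some $Y$-complex $N_\bullet^\circ$ with $\ell_0$-almost discrete homology. Using $M_i=\Cnd_Y^Z M_i^\circ$ and the canonical identification $\F(Z,M_i)\cong \Cnd_Y^Z \F(Y,M_i)$ from~(\ref{eq:cnd-of-fnl}), the constants inclusion at the $Z$-level can be traced through Frobenius-reciprocity-style bookkeeping to give $N_i\cong \Cnd_Y^Z N_i^\circ$ for an appropriate $Y$-quotient $N_i^\circ$; then $N_\bullet^\circ$ inherits $\ell_0$-almost discrete homology from $M_\bullet^\circ$ by routine tracking of kernels and cokernels. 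Proposition~\ref{prop:cohom-propagate-pos} applied to $N_\bullet$ now supplies (a)$_k$--(c)$_k$ for $N_\bullet$ in degrees $p\geq 1$.

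Finally I would conclude by splitting the four-term exact sequence into two short exact sequences of Polish complexes,
\[0\to M_\bullet^W\to \F(Z/W,M_\bullet)\to K_\bullet\to 0 \quad\text{and}\quad 0\to K_\bullet\to N_\bullet^W\to \rmH^1_\m(W,M_\bullet)\to 0,\]
with $K_\bullet$ the image complex, and applying Proposition~\ref{prop:topsnake} twice. The $\rmH^1_\m(W,M_\bullet)$ complex is $\ell_0$-almost discrete at positions $\leq k-1$ by hypothesis (c)$_i$ and at position $k$ by the previous step; the complex $\F(Z/W,M_\bullet)$ can be analyzed directly from the $\ell_0$-almost discrete homology of $M_\bullet^\circ$ using Lemma~\ref{lem:coind-and-coind} and the exactness of $\F(Z/W,-)$. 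Two invocations of Proposition~\ref{prop:topsnake} then yield (d)$_i$--(e)$_i$ for all $i\in\{\ell_0,\ldots,k\}$, and the finite-dimensional addendum follows by tracking Lemma~\ref{lem:still-fg} through the same steps. The main obstacle I anticipate is the identification of $N_\bullet$ as a co-induction from a $Y$-complex with $\ell_0$-almost discrete homology: the $Z$-equivariant constants inclusion $M_i\hookrightarrow \F(Z,M_i)$ does not correspond cleanly at the $Y$-level to the constants inclusion $M_i^\circ\hookrightarrow \F(Y,M_i^\circ)$, so the comparison requires careful bookkeeping.
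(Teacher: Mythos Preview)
Your dimension-shifting strategy has a genuine gap precisely at the step you flagged as the main obstacle: the complex $N_\bullet^\circ$ does not have $\ell_0$-almost discrete homology, so Proposition~\ref{prop:cohom-propagate-pos} cannot be applied to $N_\bullet$. Taking the natural choice $N_i^\circ := \F(Y,M_i^\circ)/M_i^\circ$, the exactness of $\F(Y,-)$ and the injectivity of the constants inclusion $H_i^{M^\circ}\hookrightarrow \F(Y,H_i^{M^\circ})$ force the long exact sequence of homology for $0\to M_\bullet^\circ\to \F(Y,M_\bullet^\circ)\to N_\bullet^\circ\to 0$ to split into short exact sequences, giving
\[H_i^{N^\circ}\ \cong\ \F(Y,H_i^{M^\circ})\big/H_i^{M^\circ}.\]
For $i>\ell_0$ with $H_i^{M^\circ}$ a nonzero discrete group (say $\bbZ$) and $Y$ infinite, this quotient is a non-discrete Polish group: $\F(Y,\bbZ)$ contains elements such as $1_E$ for sets $E$ of arbitrarily small measure, so $\F(Y,\bbZ)/\bbZ$ is not discrete. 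Hence ``routine tracking of kernels and cokernels'' cannot recover almost-discrete homology here. The same obstruction reappears in your final step: the complex you write as $\F(Z/W,M_\bullet)$ is really $\F(Z,M_\bullet)^W\cong \Cnd_W^Z M_\bullet$, whose homology at position $i$ is $\Cnd_W^Z H_i^M$, again a function-space rather than a Lie or discrete group, so Proposition~\ref{prop:topsnake} does not apply to either of your two short exact sequences of complexes.

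The paper sidesteps this by refusing to separate the $p=0$ case from $p\geq 1$. Propositions~\ref{prop:cohom-propagate-pos} and~\ref{prop:cohom-propagate-zero} are proved simultaneously by induction on the length $k$: one replaces $M_k$ by $K=\ker\a_k\leq M_{k-1}$, applies the full inductive hypothesis (including (d)--(e)) to the shorter complex $0\to M_1\to\cdots\to M_{k-2}\to K\to 0$, and then uses the cohomology long exact sequence for $K\hookrightarrow M_{k-1}\twoheadrightarrow I:=\img\,\a_k$ together with the fact that $M_k/I$ is co-induced-of-discrete. The resulting Claims cover all $p\geq 0$ at once; in particular the $p=0$ instance of Claim~2 gives (d)$_k$ and Claim~3 gives (e)$_k$ directly, with no dimension shift and no auxiliary complex that leaves the almost-discrete regime.
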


For the proof of Propositions~\ref{prop:cohom-propagate-pos} and~\ref{prop:cohom-propagate-zero}, we may truncate and re-number the complex~(\ref{eq:init-cplx}) so that $\ell_0 = 1$, so we assume this for the rest of the subsection.  We will refer to the case in which $Z$ is finite-dimensional,~(\ref{eq:init-cplx}) has compactly generated homology, and all the homology groups in assumptions (c)$_{i\leq k-1}$ are compactly generated as the case of the `extra assumptions'.

\begin{proof}[Proof of Propositions~\ref{prop:cohom-propagate-pos} and~\ref{prop:cohom-propagate-zero}]
These will be proved together by induction on the length $k$ of that complex.  

\vspace{7pt}

\emph{Base clause: $k=1$.}\quad In this case,~(\ref{eq:init-cplx}) consists of only the module $M_1$, which must be co-induced from a Lie $Y$-module.  All of the desired conclusions follow from Corollary~\ref{cor:little-Hpgood}.

\vspace{7pt}

\emph{Recursion clause.}\quad Now assume that $k\geq 2$, and let $K := \ker \a_k  \leq M_{k-1}$.  Our induction is based on the observation that if the length-$k$ complex~(\ref{eq:init-cplx}) satisfies the assumptions of Proposition~\ref{prop:cohom-propagate-pos}, then so does the length-$(k-1)$ complex
\begin{eqnarray}\label{eq:another-cplx}
0 \to M_1 \stackrel{\a_2}{\to} \dots \stackrel{\a_{k-2}}{\to} M_{k-2} \stackrel{\a_{k-1}}{\to} K \to  0,
\end{eqnarray}
which is co-induced from the analogous length-$(k-1)$ complex of $Y$-modules obtained from~(\ref{eq:local-cplx}).  It is immediate to verify all of the assumptions (a--c) for this new complex, except possibly the counterpart of (c)$_{k-2}$ for~(\ref{eq:another-cplx}), which follows by observing that
\begin{multline*}
\img\big(\rmH^p(M_{k-3}) \to \rmH^p(M_{k-2})\big) \leq \ker\big (\rmH^p(M_{k-2}) \to \rmH^p(K)\big)\\ \leq \ker\big(\rmH^p(M_{k-2})\to \rmH^p(M_{k-1})\big).
\end{multline*}
It therefore follows from the inductive hypothesis that
\begin{itemize}
\item $\rmH^p(K)$ is Hausdorff for all $p \geq 0$, and
\item $\coker(\rmH^p(M_{k-2})\to \rmH^p(K))$ is Lie, and discrete in case $p\geq 1$, and compactly generated in the case of the extra assumptions.
\end{itemize}

The induction hypothesis also already gives conclusions (d)$_i$ and (e)$_i$ for $i\leq k-1$.  This is immediate for $i\leq k-2$.  For $i=k-1$, the counterpart of (d)$_{k-1}$ for the complex~(\ref{eq:another-cplx}) is that $\rmH^0(\a_{k-1}):\rmH^0(M_{k-2}) \to \rmH^0(K)$ is closed, but this implies (d)$_{k-1}$ itself because $\rmH^0(K) = K^W$ is simply a closed subgroup of $M_{k-1}^W$.  Finally, the counterpart of (e)$_{k-1}$ for the complex~(\ref{eq:another-cplx}) is that $\coker(\rmH^0(M_{k-2}) \to \rmH^0(K))$ is Lie, and discrete if $k \geq 3$, and this implies (e)$_{k-1}$ itself because
\[\rmH^0(K) = \ker(\rmH^0(M_{k-1}) \to \rmH^0(M_k)).\]
Thus, it remains to prove (a--e)$_k$.  These will follow via a sequence of smaller claims.

\vspace{7pt}

\noindent\emph{Claim 1.}\quad The kernel $A := \ker(\rmH^p(K)\to \rmH^p(M_{k-1}))$ is discrete for all $p\geq 1$, and finitely generated in the case of the extra assumptions.

\vspace{7pt}

\noindent\emph{Proof of claim.}\quad We have seen above that $\rmH^p(K)$ is Polish, and $\rmH^p(M_{k-1})$ is Polish by assumption, so $A$ is also Polish.

Now let
\[B := \img\big(\rmH^p(M_{k-2}) \to \rmH^p(K)\big)\]
and $C:= A\cap B$, and consider the presentation $C \into A \onto A/C$.  On the one hand, we have a monomorphism
\[A/C \cong (A+B)/B \into \rmH^p(K)/B = \coker\big(\rmH^p(M_{k-2})\to \rmH^p(K)\big),\]
so $A/C$ is countable, since this was deduced for the right-hand cokernel from the inductive hypothesis.  On the other, $C$ may also be written as
\begin{multline*}
\img\big(\ker\big(\rmH^p(M_{k-2}) \to \rmH^p(M_{k-1})\big)\to \rmH^p(K)\big)\\ = \img\Big(\frac{\ker(\rmH^p(M_{k-2}) \to \rmH^p(M_{k-1}))}{\img(\rmH^p(M_{k-3})\to \rmH^p(M_{k-2}))}\to \rmH^p(K)\Big),
\end{multline*}
since the composition $\rmH^p(M_{k-3})\to \rmH^p(M_{k-2})\to \rmH^p(K_{k-1})$ is zero.  By assumption (c)$_{k-2}$, this implies that $C$ is an image of a discrete module, and finitely generated in the case of the extra assumptions.  Therefore $A$ is countable, and finitely generated in the case of the extra assumptions.  Since we have already argued that $A$ must be Polish, this implies it is discrete, by Lemma~\ref{lem:ctble-Polish-are-disc}. \qed$\phantom{i}_{\rm{Claim}}$

\vspace{7pt}

\noindent\emph{Claim 2.}\quad Let $I:= \img\,\a_k$.  The cokernel $\coker(\rmH^p(M_{k-1}) \to \rmH^p(I))$ is discrete for all $p\geq 0$, and finitely generated in the case of the extra assumptions.

\vspace{7pt}

\noindent\emph{Proof of claim.}\quad Applying the cohomology functor $\rmH^p(-)$ to the short exact sequence $K \into M_{k-1} \onto I$ gives a cohomology long exact sequence.  Its switchbacks are a sequence of continuous homomorphisms which are algebraic isomorphisms
\[\coker\big(\rmH^p(M_{k-1}) \to \rmH^p(I)\big) \stackrel{\cong}{\to} \ker\big(\rmH^{p+1}(K)\to \rmH^{p+1}(M_{k-1})\big).\]
Claim 1 gave that the kernel on the right is discrete, and finitely generated in the case of the extra assumptions.  Therefore so is the cokernel on the left.  Note that this follows even though $\rmH^p(I)$ is not yet known to be Hausdorff. \qed$\phantom{i}_{\rm{Claim}}$

\vspace{7pt}

In case $p=0$, Claim 2 shows that $\a_k(M_{k-1}^W)$ is relatively open-and-closed in $\rmH^0(I) = I^W$, which is itself a closed submodule of $M_k^W$.  Therefore $\a_k(M_{k-1}^W)$ is a closed submodule of $M_k^W$: this is conclusion (d)$_k$.

\vspace{7pt}

\noindent\emph{Claim 3.}\quad The cokernel $\coker(\rmH^p(M_{k-1})\to \rmH^p(M_k))$ is discrete, and finitely generated in the case of the extra assumptions.

\vspace{7pt}

\noindent\emph{Proof of claim.}\quad We must show that
\[P := \partial_k\big(\Z^p(Z,M_{k-1})\big) + \B^p(Z,M_k)\]
is relatively open in $\Z^p(Z,M_k)$. Thus, let $(\s_n)_n$ be a null sequence in $\Z^p(Z,M_k)$.  We will show that $\s_n \in P$ for all sufficiently large $n$.

Let
\[\Phi:M_k \to M_k/I =: L\]
be the quotient homomorphism, and let $\ol{\s}_n := \Phi\circ \s_n$, so these form a null sequence in $\Z^p(Z,L)$.

Since $k\geq 2$, our assumption of $1$-almost discrete homology for the complex~(\ref{eq:local-cplx}) gives that $L = \Cnd_Y^Z L^\circ$ for a discrete $Y$-module $L^\circ$.  Therefore $\rmH^p(L)$ is discrete, by Corollary~\ref{cor:little-Hpgood}.  This implies that the cohomology class of $\ol{\s}_n$ is eventually zero, so there are $\ol{\b}_n \in \C^{p-1}(Z,L)$ such that $\ol{\s}_n = d\ol{\b}_n$ for all sufficiently large $n$.  Since $d:\C^{p-1}(Z,L)\to \Z^p(Z,L)$ has closed image (because $\rmH^p(L)$ is Hausdorff), Theorem~\ref{thm:open-mors} implies that we may assume $\ol{\b}_n \to 0$, and now Lemma~\ref{lem:small-selection} gives a null sequence $\b_n$ in $\C^{p-1}(Z,M_k)$ such that $\ol{\b}_n = \Phi\circ \b_n$.

We may therefore replace $\s_n$ with $\s_n - d\b_n$ without disrupting the property of being null or the desired conclusion of lying in $P$.  This means we may assume that $\ol{\s}_n = 0$, and hence that $\s_n \in \Z^p(Z,I)$.  However, Claim 2 implies that $\Z^p(Z,I)$ contains
\[Q := \partial_k\big(\Z^p(Z,M_{k-1})\big) + \B^p(Z,I)\]
as a relatively open submodule, so now $\s_n$ must lie in $Q$ for all sufficiently large $n$.  Since $Q \leq P$, this completes the proof of discreteness.

Finally, for the case of the extra assumptions, consider the exact sequence
\begin{multline*}
 0 \to \img\Big(\coker\big(\rmH^p(M_{k-1}) \to \rmH^p(I)\big) \to \coker\big(\rmH^p(M_{k-1})\to \rmH^p(M_k)\big)\Big)\\ \to \coker(\rmH^p(M_{k-1})\to \rmH^p(M_k)) \to \coker\big(\rmH^p(I) \to \rmH^p(M_k)\big) \to 0.
 \end{multline*}
The first and last modules appearing here are finitely generated in the case of the extra assumptions (the last because it long exact sequence identifies it with a subgroup of $\rmH^p(M_k/I)$), hence so is the middle module. \qed$\phantom{i}_{\rm{Claim}}$

\vspace{7pt}

Claim 3 completes the proof of conclusion (e)$_k$.  To complete the proofs of (a--c)$_k$, it only remains to show that $\rmH^p(M_k)$ is Hausdorff for $p\geq 1$, since Claim 3 also shows that $\img\,\rmH^p(\a_k)$ is an open-and-closed subgroup of it, with cokernel finitely generated in the case of the extra assumptions.

\vspace{7pt}

\noindent\emph{Claim 4.}\quad The quotient topology of $\rmH^p(M_k)$ is Hausdorff for all $p\geq 1$.

\vspace{7pt}

\noindent\emph{Proof of claim.}\quad Suppose that $\b_n \in \C^{p-1}(Z,M_k)$ is a sequence such that
\[d \b_n \to 0 \quad \hbox{in}\ \Z^p(Z,M_k).\]
We will find a null sequence $(\b'_n)_n$ in $\C^{p-1}(Z,M_k)$ such that $d\b_n = d\b'_n$ for all sufficiently large $n$, from which the result follows by Theorem~\ref{thm:open-mors}.

Let $P$ be as in the proof of Claim 3.  We showed there that $P$ is relatively open in $\Z^p(Z,M_k)$, and so $d \b_n \in P$ for all sufficiently large $n$.

On the other hand, that relative openness also implies that $P$ is a closed submodule of $\Z^p(Z,M_k)$ (because it is the complement of the union of its non-identity cosets, which are open).  Therefore Theorem~\ref{thm:open-mors} may be applied to the map
\[\Z^p(Z,M_{k-1})\oplus \C^{p-1}(Z,M_k) \to P:(\tau,\a) \mapsto \partial_k\tau + d\a.\]
This gives that the null sequence $d\b_n$ is eventually equal to $\partial_k\tau_n + d\a_n$ for some null sequences $(\tau_n)_n$ in $\Z^p(Z,M_{k-1})$ and $(\a_n)_n$ in $\C^{p-1}(Z,M_k)$

Since $(\a_n)_n$ is null, we may replace $\b_n$ by $\b_n - \a_n$ without disrupting our desired conclusion, and so we may in fact assume that $d\b_n = \partial_k\tau_n$ for all sufficiently large $n$.  Since $d\b_n$ is an $M_k$-valued coboundary, this assumption now requires that each $\tau_n$ represent an element of
\[\ker\big(\rmH^p(M_{k-1})\to \rmH^p(M_k)\big).\]
Since $p\geq 1$, assumption (c)$_{k-1}$ states that this kernel is co-discrete over
\[\img\big(\rmH^p(M_{k-2})\to \rmH^p(M_{k-1})\big),\]
and by assumption (b)$_{k-1}$ this last image is closed.  Therefore, since $(\tau_n)_n$ is null, another appeal to Theorem~\ref{thm:open-mors} implies that for $n$ large enough we may write
\[\tau_n = \partial_\ell\k_n + d \g_n\]
with $(\k_n)_n$ and $(d\g_n)_n$ both null.  Since assumption (a)$_{k-1}$ gives that $\rmH^p(M_{k-1})$ is Hausdorff, we may also assume that $(\g_n)_n$ is null. This finally implies that
\[d \b_n = \partial_k \tau_n = d \b_n'\]
with $\b_n' := \partial_k\g_n\to 0$. \qed$\phantom{i}_{\rm{Claim}}$\newline
\end{proof}

Both Propositions~\ref{prop:cohom-propagate-pos} and~\ref{prop:cohom-propagate-zero} made the assumption that $Y+W = Z$.  To obtain analogs of these results for a general pair of subgroups $Y,W \leq Z$, one first considers the co-induction of the complex~(\ref{eq:local-cplx}) to $Y+W$, applies Propositions~\ref{prop:cohom-propagate-pos} and~\ref{prop:cohom-propagate-zero} to that, and then further applies $\Cnd_{Y+W}^Z(-)$ to the resulting picture.  This effect of this second co-induction on the complex of cohomology groups is given by Lemma~\ref{lem:cohom-of-coind}.  The full result is as follows.

\begin{cor}\label{cor:cohom-propagate-full}
Consider general $Y,W \leq Z$.  Assume that~(\ref{eq:local-cplx}) has $\ell_0$-almost discrete homology and that the following are satisfied for all $i \in \{\ell_0,\dots,k-1\}$:
\begin{itemize}
\item[a)$_i$] $\rmH_\m^p(W,\Cnd_Y^{Y+W}M^\circ_i)$ is Hausdorff for all $p \geq 1$;
\item[b)$_i$] the maps on cohomology
\[\rmH^p_\m(W,\Cnd_Y^{Y+W}M^\circ_{i-1}) \to \rmH^p_\m(W,\Cnd_Y^{Y+W}M^\circ_i)\]
are closed for all $p\geq 1$;
\item[c)$_i$] the homology of
\begin{multline*}
\rmH^p_\m(W,\Cnd_Y^{Y+W}M^\circ_{i-1}) \to \rmH^p_\m(W,\Cnd_Y^{Y+W}M^\circ_i)\\ \to \rmH^p_\m(W,\Cnd_Y^{Y+W}M^\circ_{i+1})
\end{multline*}
is discrete for all $p\geq 1$.
\end{itemize}

Then, letting~(\ref{eq:init-cplx}) again be the outcome of co-inducing~(\ref{eq:local-cplx}) to $Z$, the following hold for all $i \in \{\ell_0,\ldots,k\}$:
\begin{itemize}
\item[a$'$)$_i$] $\rmH^p_\m(W,M_i)$ is Hausdorff for all $p\geq 0$;
\item[b$'$)$_i$] $\rmH^p_\m(W,M_{i-1}) \to \rmH^p_\m(W,M_i)$ is closed for all $p\geq 0$;
\item[c$'$)$_i$] the homology of
\[\rmH^p_\m(W,M_{i-1})\to \rmH^p_\m(W,M_i) \to \rmH^p_\m(W,M_{i+1})\]
is co-induced from a Lie $(Y+W)$-module for all $p\geq 0$, and from a discrete $(Y+W)$-module in case either $p\geq 1$ or $i > \ell_0$. If $Z$ is finite-dimensional and all the homology groups in assumptions (c)$_i$ are compactly generated, then all the homology groups in (c$'$)$_i$ are co-induced from compactly generated $(Y+W)$-modules. \qed
\end{itemize}
\end{cor}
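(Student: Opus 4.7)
The plan is to factor the co-induction $\Cnd_Y^Z$ through the intermediate subgroup $Y+W$, reducing to the hypothesis $Y+W = Z$ under which Propositions~\ref{prop:cohom-propagate-pos} and~\ref{prop:cohom-propagate-zero} apply directly. Set $Z' := Y+W$ and define the intermediate Polish $Z'$-complex
\[
M'_i := \Cnd_Y^{Z'} M^\circ_i,
\]
which is obtained by applying $\Cnd_Y^{Z'}$ to the local complex~(\ref{eq:local-cplx}). By the transitivity of co-induction~(\ref{eq:compose-coind}), the full complex~(\ref{eq:init-cplx}) factors as $M_i = \Cnd_{Z'}^Z M'_i$. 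Since $W,Y \leq Z'$ satisfy $W+Y = Z'$, the hypotheses (a)$_i$, (b)$_i$, (c)$_i$ of the corollary are literally the hypotheses (a--c)$_i$ of the two propositions applied to $(M'_i)$ inside $Z'$. This provides the inductive input we need.

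Applying Propositions~\ref{prop:cohom-propagate-pos} and~\ref{prop:cohom-propagate-zero} to the intermediate complex then yields the conclusions (a--c)$_k$ in positive degrees and (d--e)$_i$ in degree zero for every $i \in \{\ell_0,\ldots,k\}$, together with finite generation of the relevant cokernel under the extra hypotheses. In particular, $\rmH^p_\m(W, M'_i)$ is Polish (hence Hausdorff) for every $p\geq 0$ and every $i$, the connecting morphisms are closed, and the homology of the intermediate cohomology complex is Lie (and discrete except possibly at $i = \ell_0$ in degree zero), and compactly generated in the finite-dimensional case.

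The second step transports these conclusions through $\Cnd_{Z'}^Z(-)$ via Lemma~\ref{lem:cohom-of-coind}: because $\rmH^p_\m(W, M'_i)$ is Hausdorff, we obtain a canonical isomorphism of $Z$-modules
\[
\rmH^p_\m(W, M_i) \;=\; \rmH^p_\m\big(W,\, \Cnd_{Z'}^Z M'_i\big) \;\cong\; \Cnd_{Z'}^Z \rmH^p_\m(W, M'_i).
\]
Since $\Cnd_{Z'}^Z$ is an exact functor on short exact sequences of Polish modules (equation~(\ref{eq:cnd-of-quot})) and sends Polish modules to Polish modules, it preserves Hausdorffness, sends closed morphisms to closed morphisms, and commutes with the formation of the homology of a Polish complex. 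Conclusions (a$'$)$_i$ and (b$'$)$_i$ follow immediately, and for (c$'$)$_i$ the homology of the cohomology complex of $(M_i)$ is identified with the co-induction to $Z$ of the analogous homology for $(M'_i)$, which was shown in the first step to be Lie (respectively discrete when $p\geq 1$ or $i > \ell_0$). The compact-generation assertion under the extra hypotheses also transfers along $\Cnd_{Z'}^Z(-)$ tautologically, since the statement is merely that the relevant $(Y+W)$-module being co-induced is compactly generated.

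The only genuinely technical point is checking that $\Cnd_{Z'}^Z(-)$ behaves well enough with respect to the relevant topological properties — exactness on short sequences, preservation of closedness of morphisms, and commutation with taking kernels and cokernels of Polish morphisms — but all of these are formal consequences of~(\ref{eq:cnd-of-quot}) together with Lemma~\ref{lem:small-selection} and Theorem~\ref{thm:open-mors}. With these formalities in place the result is immediate from the two propositions already proved.
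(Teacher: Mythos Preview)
Your proof is correct and follows exactly the approach the paper indicates in the paragraph preceding the corollary: co-induce the local complex to $Y+W$, apply Propositions~\ref{prop:cohom-propagate-pos} and~\ref{prop:cohom-propagate-zero} there, and then push forward along $\Cnd_{Y+W}^Z(-)$ using Lemma~\ref{lem:cohom-of-coind}. You have supplied the routine details (exactness of co-induction via~(\ref{eq:cnd-of-quot}), preservation of closedness) that the paper leaves implicit in its \qed.
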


\section{$\P$-modules}\label{sec:delta-mods}

Theorems A and B will be deduced from some significantly more abstract results, asserting that certain classes of families of topological $Z$-modules are preserved under some basic operations, such as cohomology functors and short exact sequences.

This section introduces these new classes of module-families.  The first class of principal importance, $\P$-modules, will appear in Subsection~\ref{subs:delta-mods}.  Their definition is quite intricate, so it will be introduced in stages, starting in Subsection~\ref{subs:pre-delta-mods}.

\subsection{$\P$-diagrams and pre-$\P$-modules}\label{subs:pre-delta-mods}

\begin{dfn}[$\P$-diagram]\label{dfn:delta-diag}
Let $Z$ a compact Abelian group and $S$ a finite set.  A \textbf{$\P_S$-diagram over $Z$} is a family of topological $Z$-modules $(M_e)_{e \subseteq S}$ indexed by subsets of $S$, together with a family of continuous homomorphisms $\phi_{a,e}:M_a\to M_e$ indexed by pairs $a \subseteq e \subseteq S$, such that
\begin{itemize}
 \item [$\P 1$)] $\phi_{e,e} = \rm{id}_{M_e}$ for all $e \subseteq S$;
 \item [$\P 2$)] $\phi_{a,e}\circ \phi_{b,a} = \phi_{b,e}$ whenever $b \subseteq a \subseteq e$.
 \end{itemize}
The homomorphisms $\phi_{a,e}$ are the \textbf{structure morphisms} of the $\P$-diagram, and the individual modules $M_e$ are the \textbf{constituent modules} of the $\P$-diagram.

A $\P_S$-diagram over $Z$ is \textbf{Polish} if all of its constituents are Polish.
\end{dfn}

In categorial terms, a $\P_S$-diagram is a covariant functor to $\PMod(Z)$ from the category whose objects are elements of the power set $\P(S)$ and whose morphisms are the inclusions: this accounts for the terminology.

By identifying subsets of $S$ with elements of $\{0,1\}^{|S|}$, one may visualize a $\P$-diagram as an assignment of $Z$-modules $M_e$ to the vertices of the unit cube in $\bbR^{|S|}$, with morphisms directed along the edges of the cube in the direction of the positive quadrant.  This picture ignores the extra structure that will be imposed next, but may offer some helpful intuition.

An important part of that extra structure depends on the following notion.

\begin{dfn}
Suppose that $M$ and $N$ are topological $Z$-modules and that $U \leq Z$.  Then a \textbf{derivation-action of $U$ from $N$ to $M$} is a map
\[U\to \Hom_Z(N,M):u\mapsto \t{\nabla}_u\]
which is jointly continuous regarded as a map $U\times N\to M$ and which satisfies the relations
\begin{eqnarray}\label{eq:t-nabla}
\t{\nabla}_{u+u'} = \t{\nabla}_u + \t{\nabla}_{u'}\circ R_u \quad \forall u,u' \in U,
\end{eqnarray}
where $R$ is the $U$-action on $N$.

If, in addition, $\phi:M\to N$ is a homomorphism of $Z$-modules, then a derivation-action $\t{\nabla}$ of $U$ is a \textbf{derivation-lift of $U$ through $\phi$} if
\[\phi\circ \t{\nabla}_u = d_u \quad \forall u \in U.\]
\end{dfn}

A derivation-action from $N$ to $M$ may be regarded as a $1$-cocycle from $U$ to $\Hom_Z(N,M)$,
where $Z$ (and so also $U$) acts on $\Hom_Z(N,M)$ by pre-composition (this gives a well-defined action because $Z$ is Abelian).  In these terms, the defining relation~(\ref{eq:t-nabla}) is just the usual equation for a $1$-cocycle. Beware, however, that $\Hom_Z(N,M)$ may not be Polish even if $M$ and $N$ are so.

Suppose that $\phi:M\to N$ is a homomorphism and that $\t{\nabla}$ is a derivation-lift of $U$ through $\phi$.  Given $n \in N$, it may not lie in the image of $\phi$, but the derivation-lift provides a $\phi$-pre-image $\t{\nabla}_u n \in M$ for each of the differenced elements $d_u\phi$, $u \in U$.  Moreover, this pre-image is somewhat canonical, in the sense of the consistency guaranteed by~(\ref{eq:t-nabla}).  The existence of these families of pre-images says something rather strong about the homomorphism $\phi$: for instance, it follows easily that the quotient action of $U$ on $N/\phi(M)$ is trivial.

\begin{ex}
The following simple example will not be used later, but may offer some intuition.  Suppose $U = Z$, let $M := \bbT$ with the trivial $Z$-action, and let $N := \A(Z)$ (the affine functions $Z\to \bbT$) with the rotation $Z$-action $R$.  Then for any $f \in N$ and $w \in Z$, the differenced function $d_w f$ takes a constant value in $\bbT$.  Letting $\t{\nabla}_w f$ be this constant value, this defines a derivation-action $\A(Z) \to \bbT$, and it is a derivation-lift of $Z$ through the constant-functions inclusion $\bbT\into \A(Z)$.

Of course, this example arises simply by regarding $d_w$ itself as a morphism between two different modules.  Later examples will be more subtle; in particular, we will meet derivation-lifts through non-injective homomorphisms. \fin
\end{ex}

Derivation-lifts are added to Definition~\ref{dfn:delta-diag} as follows.

\begin{dfn}[Pre-$\P$-module]\label{dfn:pre-delta-mod}
Let $Z$ and $S$ be as before, and let $\bfU = (U_i)_{i \in S}$ be a tuple of closed subgroups of $Z$.  Then a \textbf{pre-$\P_S$-module over $(Z,\bfU)$} is a $\P_S$-diagram $(M_e)_e$, $(\phi_{a,e})_{a,e}$ over $Z$ together with a family of derivation-actions
\[\t{\nabla}^{e,e\setminus i}:U_i \to \Hom_Z(M_e,M_{e\setminus i}) \quad \hbox{for}\ i \in S,\ e\subseteq S\]
satisfying the following additional axioms:
\begin{itemize}
\item [$\P 3$)] if $i\not\in e$ then $\t{\nabla}^{e,e} = d^{U_i}$,
\item [$\P 4$)] if $a\subseteq e \subseteq S$ and $i \in S$ then
 \[\phi_{a\setminus i,e\setminus i}\circ\t{\nabla}_u^{a,a\setminus i} = \t{\nabla}_u^{e,e\setminus i}\circ\phi_{a,e} \quad  \forall u \in U_i,\]
\item [$\P 5$)] and if $e \subseteq S$ and $i,j \in S$ then
\[\t{\nabla}^{e\setminus j,e\setminus \{i,j\}}_{u_i}\circ\t{\nabla}^{e,e\setminus j}_{u_j} = \t{\nabla}^{e\setminus i,e\setminus \{i,j\}}_{u_j}\circ\t{\nabla}^{e,e\setminus i}_{u_i} \quad \forall (u_i,u_j) \in U_i\times U_j.\]
\end{itemize}

As before, a pre-$\P$-module is \textbf{Polish} if all its constituents are Polish.
\end{dfn}

Note that upon taking $a = e\setminus i$ in ($\P 4$), it asserts precisely that $\t{\nabla}^{e,e\setminus i}$ is a derivation-lift of $U_i$ through $\phi_{e\setminus i,e}$.

The imposition of derivation-lifts in Definition~\ref{dfn:pre-delta-mod} adds considerable weight compared to Definition~\ref{dfn:delta-diag}.  I do not know how much of the theory below could be developed without them: they are quite essential to the approach we take to our main theorems.

\subsection{$\P$-modules}\label{subs:delta-mods}

One last layer of structure is needed to define $\P$-modules.  This will require that the constituents of a pre-$\P$-module are obtained by co-induction of Polish modules over certain subgroups of $Z$.

\begin{dfn}[$\P$-module]\label{dfn:Delta-mod}
Let $Z$, $S$ and $\bfU$ be as before, and let $Y \leq Z$ be another closed subgroup.  Then a \textbf{$\P_S$-module directed by $(Z,Y,\bfU)$}, or \textbf{$(Z,Y,\bfU)$-$\P$-module}, consists of
\begin{itemize}
\item a family $(A_e)_{e\subseteq S}$ where each $A_e \in \PMod(Y + U_e)$, and $U_e := \sum_{i \in e}U_i$,
\item a family of continuous $(Y+U_e)$-module homomorphisms
\[\psi_{a,e}:\Cnd_{Y+U_a}^{Y+U_e}A_a \to A_e\]
for each pair $a \subseteq e \subseteq S$,
\item and derivation-lifts
\[\t{\nabla}^{\circ,e,e\setminus i}:U_i \to \Hom_{Y+U_e}(A_e,\Cnd_{Y+U_{e\setminus i}}^{Y+U_e}A_{e\setminus i})\]
through the homomorphisms $\psi_{e\setminus i,e}$ for all $i \in e \subseteq S$
\end{itemize}
such that the data
\begin{eqnarray}\label{eq:delta-mod}
(M_e)_{e\subseteq S},\ (\phi_{a,e})_{a\subseteq e\subseteq S},\ (\t{\nabla}^{e,e\setminus i})_{i \in S,e \subseteq S}
\end{eqnarray}
form a pre-$\P$-module over $(Z,\bfU)$, where
\begin{itemize}
\item $M_e := \Cnd_{Y+U_e}^ZA_e$,
\item $\phi_{a,e}:= \Cnd_{Y+U_e}^Z\psi_{a,e}$, and
\item we set
\[\t{\nabla}_u^{e,e\setminus i} := \left\{\begin{array}{ll}\Cnd_{Y+U_e}^Z \t{\nabla}^{\circ,e,e\setminus i}_u &\quad \hbox{if}\ i \in e,\\
d_u &\quad \hbox{if}\ i \not\in e\end{array}\right. \quad \forall u \in U_i.\]
\end{itemize}

A \textbf{$\P$-module} is a $(Z,Y,\bfU)$-$\P$-module for some $(Z,Y,\bfU)$: even when omitted from the notation, a particular choice of $(Z,Y,\bfU)$ is implied.

The modules $A_e$ (resp. homomorphisms $\phi_{a,e}$, derivation-lifts $\t{\nabla}^{\circ,e,e\setminus i}$) appearing here are called the \textbf{sub-constituent} modules (resp. homomorphisms, derivation-lifts) of this $\P$-module. Together, these are referred to as its \textbf{sub-constituent data}.  The resulting co-induced data in~(\ref{eq:delta-mod}) are its constituents, structure morphisms and derivation-lifts as in Definition~\ref{dfn:pre-delta-mod}.
\end{dfn}

Unlike in the previous subsection, it is here part of the definition that each $A_e$, hence also each $M_e$, is Polish.

In the sequel, we will usually refer to the data in~(\ref{eq:delta-mod}) itself as the `$\P$-module', rather than the sub-constituents. This is slightly abusive, since in general I do not know that the sub-constituents can be uniquely recovered from the constituents.  However, this convention will be much more convenient for our subsequent operations on $\P$-modules, and the reader should understand that a particular collection of sub-constituent modules, homomorphisms and derivation-lifts is always implied. We will usually denote a $\P$-module by a script symbol such as $\scrM$, or simply by $(M_e)_e$ where $M_e$ is as above, and write its associated structure morphisms and derivation-lifts as $\phi^\scrM_{a,e}$ and $\t{\nabla}^{\scrM,e,e\setminus i}$ when necessary.  In many instances $S$ will be $[k]$ for some $k$.  Also, $M_{\{i\}}$ will usually be abbreviated to $M_i$ for $i \in S$.
%
%
%
%

Some properties of $\P$-modules are easier to understand in a special subclass, which will include our first important examples.

\begin{dfn}[Inner $\P$-module]
A $\P$-module is \textbf{inner} if all its structure morphisms $\phi_{a,e}$ are closed and injective.  In this case, by Theorem~\ref{thm:open-mors}, we may simply identify $M_a$ with its image $\phi_{a,S}(M_a) \subseteq M_S$, and so regard our $\P$-module as a distinguished family of submodules of the module $M_S$.
\end{dfn}

In this case each $\phi_{a,e}$ is effectively an inclusion $M_a \leq M_e$, and the defining equation of a derivation-lift (from axiom ($\P 4$)) shows that
\[\phi_{e\setminus i,e}\circ \t{\nabla}^{e,e\setminus i}_u = d_u \quad \forall e,\ \forall u\in U_i,\ \forall i.\]
Thus, in this case the derivation-actions $\t{\nabla}$ must be given simply by ordinary differencing, followed by the isomorphisms $\phi_{e\setminus i,e}^{-1}:\phi_{e\setminus i,e}(M_{e\setminus i}) \to M_{e\setminus i}$.

Later we will see that, to study {\PDE}s, we cannot limit ourselves to inner $\P$-modules.  This is because innerness is not preserved by the operation of forming cohomology groups, which will be a crucial tool in our analysis.  However, we will still need a sense in which applying differencing operators `moves elements down to lower modules in the family'.  This feature \emph{is} retained by forming cohomology, and it is this that motivates the more abstract notion of our `derivation-lifts'.  Some non-inner examples will become available after cohomology $\P$-modules have been introduced in Section~\ref{sec:take-cohom}.

\begin{rmk}
If $\scrM$ is any $\P_S$-module in which all the structure morphisms $\phi_{a,e}$ are closed, but not necessarily injective, then the family of images $\phi_{a,S}(M_a)$ may nevertheless \emph{not} define an inner $\P$-module, because the maps $\phi_{a,S}$ may not be co-induced over the smaller subgroup $Y + U_a$. Only in the case of injective structure morphisms is this problem avoided. \fin
\end{rmk}

Now let $\scrM = (M_e)_e$ be a $(Z,Y,\bfU)$-$\P_{[k]}$-module, and let $c\subseteq [k]$.

\begin{dfn}[Restriction]
The \textbf{restriction of $\scrM$ to $c$}, denoted $\scrM\uhr_c$, is the ${(Z,Y,\bfU\uhr_c)}$-$\P_c$-module whose constituent modules, structure morphisms and derivation lifts are precisely those of $\scrM$ that are indexed by subsets or elements of $c$:
\begin{itemize}
\item (modules) \quad $M_a$ for $a \subseteq c.$
\item (structure morphisms) \quad $\phi^\scrM_{a,b}$ for $a\subseteq b \subseteq c$.
\item (derivation-lifts) \quad $\t{\nabla}^{\scrM,a,a\setminus i}$ for $a\subseteq c$, $i \in c$.
\end{itemize}
Its sub-constituent data are similarly those indexed by subsets or elements of $c$.
\end{dfn}

It is obvious that these data do form a $(Z,Y,\bfU\uhr_c)$-$\P_c$-module.

It will be useful to note the following, a trivial consequence of Definition~\ref{dfn:Delta-mod} and the relation~(\ref{eq:compose-coind}).

\begin{lem}\label{lem:YY'}
If $Y \leq Y' \leq Z$, then a $\P_S$-module directed by $(Z,Y,\bfU)$ may also be interpreted as being directed by $(Z,Y',\bfU)$, where the new sub-constituents are $A'_e := \Cnd_{Y+U_e}^{Y'+U_e}A_e$, and similarly for the structure morphisms and derivation-lifts. \qed
\end{lem}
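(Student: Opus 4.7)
The plan is to construct the new sub-constituent data indexed by $(Z,Y',\bfU)$ by co-inducing each piece of the original sub-constituent data one step further, from $Y+U_e$ up to $Y'+U_e$. Concretely, I would set
\[A'_e := \Cnd_{Y+U_e}^{Y'+U_e}A_e, \quad \psi'_{a,e} := \Cnd_{Y+U_e}^{Y'+U_e}\psi_{a,e}, \quad \t{\nabla}'^{\circ,e,e\setminus i} := \Cnd_{Y+U_e}^{Y'+U_e}\t{\nabla}^{\circ,e,e\setminus i},\]
and then verify that the resulting data satisfy Definition~\ref{dfn:Delta-mod} over $(Z,Y',\bfU)$ and that the co-induced constituents, structure morphisms and derivation-lifts from $Y'+U_e$ up to $Z$ literally agree with those of the original $\P$-module.

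The key computation is a double use of the composition rule~(\ref{eq:compose-coind}). On constituents it gives
\[\Cnd_{Y'+U_e}^Z A'_e = \Cnd_{Y'+U_e}^Z\Cnd_{Y+U_e}^{Y'+U_e}A_e \cong \Cnd_{Y+U_e}^ZA_e = M_e,\]
and on each $\psi'_{a,e}$ the analogous identification
\[\Cnd_{Y'+U_a}^{Y'+U_e}A'_a = \Cnd_{Y'+U_a}^{Y'+U_e}\Cnd_{Y+U_a}^{Y'+U_a}A_a \cong \Cnd_{Y+U_e}^{Y'+U_e}\Cnd_{Y+U_a}^{Y+U_e}A_a\]
lets me check, via the functoriality of $\Cnd$, that $\Cnd_{Y'+U_e}^Z\psi'_{a,e}$ corresponds to $\Cnd_{Y+U_e}^Z\psi_{a,e} = \phi_{a,e}$ under these isomorphisms. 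The same applies to the derivation-lifts, using that co-induction of a derivation-action again yields a derivation-action.

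Once these identifications are in place, axioms ($\P 1$)--($\P 5$) for the new pre-$\P$-module over $(Z,\bfU)$ are inherited for free, because at the level of the constituents $M_e$, structure morphisms $\phi_{a,e}$ and derivation-lifts $\t{\nabla}^{e,e\setminus i}$ we have recovered exactly the original data. I do not expect any genuine obstacle here: the only thing to be careful about is keeping straight which co-induction is applied at which stage, so that the composition isomorphism~(\ref{eq:compose-coind}) can be invoked at every step, and checking that the $\Cnd_{Y+U_e}^{Y'+U_e}$ of a derivation-lift through $\psi_{e\setminus i,e}$ is again a derivation-lift, now through $\psi'_{e\setminus i,e}$, which is immediate from the definition of co-induction of a morphism.
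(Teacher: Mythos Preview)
Your proposal is correct and matches the paper's approach exactly: the paper states this lemma with an immediate \qed, calling it ``a trivial consequence of Definition~\ref{dfn:Delta-mod} and the relation~(\ref{eq:compose-coind}),'' which is precisely the composition-of-co-inductions argument you have written out in detail.
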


The importance of a particular choice of $Y$ will appear later when we come to place some extra demands on the sub-constituents $A_e$.

The next definitions now almost write themselves.

\begin{dfn}[$\P$-submodules]
Suppose that $\scrM = (M_e)_e$ is a $(Z,Y,\bfU)$-$\P_S$-module with sub-constituent modules $A_e$, morphisms $\psi_{a,e}$ and derivation-actions $\t{\nabla}^{\circ, e,e\setminus i}$.  A collection of $Z$-submodules $N_e \leq M_e$ is \textbf{compatible} with these morphisms and derivation-lifts if each $N_e$ is co-induced over $(Y + U_e)$ from some sub-constituent $B_e \leq A_e$, and also
\[\psi_{a,e}(\Cnd_{Y+U_a}^{Y+U_e}B_a) \subseteq B_e \quad \forall a\subseteq e \subseteq S\]
and
\[\t{\nabla}^{\circ,e,e\setminus i}_u(B_e) \subseteq \Cnd_{Y+U_{e\setminus i}}^{Y+U_e}B_{e\setminus i} \quad \hbox{whenever}\ i \in e\subseteq S,\ u \in U_i.\]
In this case, the associated \textbf{$\P_S$-submodule} of $\scrM$ is the family of submodules $(N_e)_e$ together with the restricted structure morphisms and derivation-lifts
\[\phi_{a,e}|N_a \quad \hbox{and} \quad u\ \mapsto \t{\nabla}^{e,e\setminus i}_u|N_e.\]
These are clearly co-induced from the corresponding restrictions of the sub-constituent data.
\end{dfn}

\begin{dfn}\label{dfn:quot}
Suppose that $\scrM = (M_e)_e$ is a $(Z,Y,\bfU)$-$\P$-module and $\scrN = (N_e)_e \leq (M_e)_e$ is a $(Z,Y,\bfU)$-$\P$-submodule.  Then its \textbf{quotient} is the resulting $\P$-module $\scrM/\scrN = (N_e/M_e)_e$, with the structure morphisms and derivation-actions obtained as the quotients of those of $\scrM$.  It is also a $\P$-module directed by $(Z,Y,\bfU)$.  In the notation above, its sub-constituent modules are $A_e/B_e$.
\end{dfn}

Direct sum and co-induction also generalize easily from modules to $\P$-modules.

\begin{dfn}
If $\scrM_1$ and $\scrM_2$ are both $(Z,Y,\bfU)$-$\P_S$-modules, then their \textbf{direct sum} $\scrM_1\oplus \scrM_2$ is the $\P_S$-module with constituent modules $M_{1,e}\oplus M_{2,e}$ for $e \subseteq S$, and similarly direct sums of all structure morphisms and derivation-lifts. Its sub-constituents are also obtained simply by direct sum.  It is immediate that $\scrM_1\oplus \scrM_2$ is again directed by $(Z,Y,\bfU)$.
\end{dfn}

\begin{dfn}
If $Z \leq Z'$ is an inclusion of compact Abelian groups, and $\scrM$ is a $(Z,Y,\bfU)$-$\P_S$-module, then the \textbf{co-induction} of $\scrM$ to $Z'$, denoted $\Cnd_Z^{Z'}\scrM$, is the pre-$\P_S$-module with constituent modules $\Cnd_Z^{Z'}M_e$ for $e \subseteq S$ and with all structure morphisms and derivation-actions co-induced from those of $\scrM$.  (Co-inducing a derivation-action $u\mapsto \t{\nabla}_u$ simply means co-inducing each homomorphism $\t{\nabla}_u$ separately.)

In view of the relation~(\ref{eq:compose-coind}), $\Cnd_Z^{Z'}\scrM$ defines a $(Z',Y,\bfU)$-$\P_S$-module with exactly the same sub-constituent data as $\scrM$ itself.
\end{dfn}

It can sometimes be useful to make the ambient group $Z$ of a $(Z,Y,\bfU)$-$\P$-module as small as possible. The next two definitions will enable us to do this.

\begin{dfn}[Lean data, $\P$-modules]
The tuple of group data $(Z,Y,\bfU)$ is \textbf{lean} if $Z = Y + U_{[k]}$, and a $(Z,Y,\bfU)$-$\P$-module is \textbf{lean} if $(Z,Y,\bfU)$ is lean.
\end{dfn}

In the setting of Lemma~\ref{lem:YY'}, it can happen that a $(Z,Y,\bfU)$-$\P$-module is not lean, but becomes lean upon being interpreted as a $(Z,Y',\bfU)$-$\P$-module for some larger subgroup $Y'$.  Also, since it can happen that $Y + U_e \lneqq Y + U_{[k]}$ when $e \subsetneqq [k]$, a nontrivial restriction of a lean $\P$-module need not be lean.

\begin{dfn}[Lean version]
Let $\scrM$ be a $(Z,Y,\bfU)$-$\P$-module, and let its sub-constituent data be
\[(A_e)_e, \quad (\psi_{a,e})_{a,e} \quad \hbox{and} \quad (\t{\nabla}^{\circ,e,e\setminus i})_{i,e}.\]
Then the \textbf{lean version} of $\scrM$ is the $(Y + U_{[k]},Y,\bfU)$-$\P$-module with the same sub-constituents: that is, the constituents of the lean version are $\Cnd_{Y+U_e}^{Y + U_{[k]}}A_e$, and similarly for the structure morphisms and derivation-actions.  (Of course, this is just $\scrM$ itself if $(Z,Y,\bfU)$ is lean.)
\end{dfn}

Combining the above definitions and recalling the relation~(\ref{eq:compose-coind}) gives the following.

\begin{lem}\label{lem:back-from-lean}
If $\scrM$ is a $(Z,Y,\bfU)$-$\P$-module and $\scrN$ is its lean version, then
\[\scrM = \Cnd_{Y+U_{[k]}}^Z\scrN.\]
\qed
\end{lem}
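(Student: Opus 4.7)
The plan is to unwind the definition of $\Cnd_{Y+U_{[k]}}^Z\scrN$ and compare it to $\scrM$ at each level of $\P$-module structure, invoking the composition relation~(\ref{eq:compose-coind}) (and its analog for morphisms) at each step. By construction, the lean version $\scrN$ of $\scrM$ carries \emph{identically the same} sub-constituent data $(A_e)_e$, $(\psi_{a,e})_{a,e}$, $(\t{\nabla}^{\circ,e,e\setminus i})_{i,e}$ as $\scrM$; only the final co-induction target has changed, from $Z$ to $Y+U_{[k]}$. So the constituents of $\scrN$ are $N_e = \Cnd_{Y+U_e}^{Y+U_{[k]}}A_e$, with structure morphisms and derivation-actions obtained by co-inducing the sub-constituents only as far as $Y+U_{[k]}$.

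First I verify agreement on constituent modules. The $e$-th constituent of $\Cnd_{Y+U_{[k]}}^Z\scrN$ is $\Cnd_{Y+U_{[k]}}^Z\Cnd_{Y+U_e}^{Y+U_{[k]}}A_e$; since $Y+U_e \leq Y+U_{[k]} \leq Z$, the composition relation~(\ref{eq:compose-coind}) collapses this to $\Cnd_{Y+U_e}^Z A_e$, which is $M_e$. Next, for the structure morphisms, the analog of~(\ref{eq:compose-coind}) for co-inductions of morphisms identifies $\Cnd_{Y+U_{[k]}}^Z(\Cnd_{Y+U_e}^{Y+U_{[k]}}\psi_{a,e})$ with $\Cnd_{Y+U_e}^Z\psi_{a,e} = \phi^\scrM_{a,e}$.

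Finally I check the derivation-lifts. For $i \in e$, the same morphism-composition law identifies
\[\Cnd_{Y+U_{[k]}}^Z\bigl(\Cnd_{Y+U_e}^{Y+U_{[k]}}\t{\nabla}^{\circ,e,e\setminus i}_u\bigr) = \Cnd_{Y+U_e}^Z\t{\nabla}^{\circ,e,e\setminus i}_u = \t{\nabla}^{\scrM,e,e\setminus i}_u\]
for each $u \in U_i$. For $i \not\in e$, both $\scrM$ and $\Cnd_{Y+U_{[k]}}^Z\scrN$ set this derivation-action equal to $d_u$ on the \emph{same} underlying module (established in the previous paragraph), so the two agree tautologically.

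There is no real obstacle: the lemma is a bookkeeping statement to the effect that ``first truncate the ambient group down to $Y+U_{[k]}$, then co-induce back up to $Z$'' is the identity operation on $\P$-modules, and at every stratum this assertion reduces to an instance of~(\ref{eq:compose-coind}) (for modules or for morphisms). The only mild care needed is to observe that the derivation-lift components indexed by $i \not\in e$ are not themselves built by co-induction in Definition~\ref{dfn:Delta-mod}, so they must be handled by the tautological observation above rather than by~(\ref{eq:compose-coind}).
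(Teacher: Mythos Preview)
Your proof is correct and follows exactly the approach the paper indicates: the paper simply states ``Combining the above definitions and recalling the relation~(\ref{eq:compose-coind}) gives the following'' and marks the lemma with a \qed, so your explicit unwinding of the constituents, structure morphisms, and derivation-lifts via~(\ref{eq:compose-coind}) is precisely what was intended.
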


\subsection{$\P$-morphisms}

\begin{dfn}[$\P$-morphisms]\label{dfn:delta-mors}
Suppose that $\scrM$ and $\scrN$ are $(Z,Y,\bfU)$-$\P_S$-modules.  Then a \textbf{$\P$-morphism} from $\scrM$ to $\scrN$ is a family $\Psi = (\Psi_e)_e$ of continuous $Z$-module homomorphisms $\Psi_e:M_e\to N_e$ such that:
\begin{itemize}
 \item (Consistency with structure morphisms) $\Psi_e\circ \phi^\scrM_{a,e} = \phi^\scrN_{a,e}\circ \Psi_a$ whenever $a\subseteq e\subseteq S$;
 \item (Consistency with derivation-lifts) $\Psi_{e\setminus i}\circ \t{\nabla}_u^{\scrM,e,e\setminus i} = \t{\nabla}_u^{\scrN,e,e\setminus i}\circ \Psi_e$ whenever $e \subseteq S$, $u \in U_i$ and $i\in S$.
 \item (Co-induced) for every $e \subseteq S$, the homorphism $\Psi_e$ is co-induced from a homomorphism of the sub-constituent $(Y+U_e)$-modules.
\end{itemize}
A sequence of $\P$-morphisms $(M_e)_e \stackrel{\Psi}{\to} (N_e)_e \stackrel{\Phi}{\to} (P_e)_e$ is \textbf{exact} if each sequence of continuous homomorphisms $M_e\stackrel{\Psi_e}{\to} N_e \stackrel{\Phi_e}{\to} P_e$ is algebraically exact.
\end{dfn}

This above situation may be denoted by $\Psi:(M_e)_e\to (N_e)_e$, or just $\Psi:\scrM\to \scrN$, if no confusion can arise.

Given such a $\P$-morphism, its intertwining properties for the structure morphisms and derivation-actions give immediately that the family of kernels $(\ker\Psi_e)_e$ is compatible with the structure morphisms and derivation lifts of $\scrM$.  These therefore comprise a $\P$-submodule of $\scrM$, called the \textbf{kernel} $\P$-module of $\Psi$ and denoted $\ker\Psi$.

Similarly, the intertwining properties of $\Psi$ also imply that the family of images $(\img\,\Psi_e)_e$ comprise a $\P$-submodule of $\scrN$, called the \textbf{image} $\P$-module and denote $\img\,\Psi$.

Many of the nontrivial examples of $\P$-submodules that we will meet later arise as kernel or image $\P$-modules.  Indeed, all $\P$-submodules may be represented as kernel $\P$-modules: if $\scrN \leq \scrM$ are a $(Z,Y,\bfU)$-$\P_S$-module and $\P_S$-submodule, then the quotient homomorphisms $M_e\to M_e/N_e$ for $e\subseteq S$ are easily seen to define a $\P$-morphism, and $\scrN$ is their kernel $\P$-module.

\subsection{Structure complexes and modesty}\label{subs:struct-cplx}

We now revisit the complexes introduced in Subsection~\ref{subs:mod-soln}.  To do this, it is easiest to focus on an indexing set $S$ that is totally ordered, so here we will simply assume that $S = [k]$ for some $k$.

It will help to have some more bespoke notation.  Suppose that $a \subseteq e$ are finite subsets of $\bbN$ with $|e| = |a| + 1$, and let these sets be enumerated as $e = \{i_1 < i_2 < \ldots < i_s\}$ and $a = e\setminus i_j$ for some $j \in \{1,2,\ldots,s\}$.  Then the quantity $\sgn(e:a)$ is defined to be $(-1)^{j-1}$. (This is where we make use of the chosen order on $[k]$.)

Now suppose that $\scrM = (M_e)_e$ is a pre-$\P$-module over $(Z,\bfU)$ with structure morphisms $(\phi_{a,e})_{a\subseteq e}$, and that $e \subseteq [k]$ is nonempty.  Then we may construct from it the following sequence of topological modules and homomorphisms:
\begin{eqnarray}\label{eq:struct-cplx}
0 \stackrel{\partial_0}{\to} M_\emptyset \stackrel{\partial_1}{\to} \bigoplus_{i \in e} M_i \stackrel{\partial_2}{\to} \bigoplus_{a \in \binom{e}{2}}M_a \stackrel{\partial_3}{\to} \cdots \stackrel{\partial_{|e|}}{\to} M_e,
\end{eqnarray}
where $\partial_\ell$ is defined by
\begin{eqnarray}\label{eq:s-c-bdry}
(\partial_\ell m)_b = \sum_{a \in \binom{b}{\ell-1}} \sgn(b:a)\phi_{a,b}(m_a) \quad
\hbox{for}\ b \in \binom{e}{\ell},\ m = (m_a)_{a \in \binom{e}{\ell-1}}.
\end{eqnarray}
This construction is one of the central ideas of this paper.  It is clearly motivated by simplicial cohomology: if every $M_a$ is equal to some fixed Abelian group, then~(\ref{eq:struct-cplx}) is the simplicial cohomology complex of the $k$-simplex with coefficients in that group.

A quick check verifies the classical equality
\begin{multline}\label{eq:sgn-vanish}
\sgn(a\cup\{s\}:a)\sgn(a\cup\{t\}:a)\\ + \sgn(a\cup\{s,t\}:a\cup\{s\})\sgn(a\cup\{s,t\}:a\cup\{t\}) = 0
\end{multline}
whenever $s\neq t$ and $a \cap \{s,t\} = \emptyset$, and as usual this implies the following:

\begin{lem}
The sequence~(\ref{eq:struct-cplx}) is a complex: that is, $\partial_{\ell+1}\partial_\ell = 0$ for every $\ell$. \qed
\end{lem}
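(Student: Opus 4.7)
The plan is a direct computation modeled on the standard simplicial boundary identity. Fix $m = (m_a)_{a \in \binom{e}{\ell-1}}$ and $c \in \binom{e}{\ell+1}$; I want to verify that the $c$-component of $\partial_{\ell+1}\partial_\ell m$ vanishes. Expanding~(\ref{eq:s-c-bdry}) twice gives
\[
(\partial_{\ell+1}\partial_\ell m)_c \ =\ \sum_{b \in \binom{c}{\ell}}\ \sum_{a \in \binom{b}{\ell-1}} \sgn(c:b)\,\sgn(b:a)\,\phi_{b,c}\phi_{a,b}(m_a).
\]

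Next I would interchange the order of summation, re-indexing the outer sum by $a \in \binom{c}{\ell-1}$. For each such $a$, the inner sum ranges over those $b$ with $a \subseteq b \subseteq c$ and $|b| = \ell$, and there are exactly two such: $b_1 = a\cup\{s\}$ and $b_2 = a\cup\{t\}$, where $\{s,t\} = c\setminus a$. Applying axiom~($\P 2$) to replace each composition $\phi_{b,c}\phi_{a,b}$ by $\phi_{a,c}$, the inner sum factorizes as $\phi_{a,c}(m_a)$ times the scalar
\[
\sgn(c:b_1)\sgn(b_1:a) + \sgn(c:b_2)\sgn(b_2:a).
\]
Since all four signs lie in $\{\pm 1\}$, this sum vanishes precisely when the product of the four signs equals $-1$, which is the same condition as the left-hand side of~(\ref{eq:sgn-vanish}). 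Hence the inner sum vanishes for every $a$, and the result follows.

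There is no genuine obstacle here; the proof is pure bookkeeping. The two ingredients that make it work are the functoriality of the structure morphisms encoded in~($\P 2$) and the simplicial sign identity~(\ref{eq:sgn-vanish}). Neither the derivation-lift axioms nor the $Z$-actions enter, so this lemma is really a statement about the underlying $\P_S$-diagram (in the sense of Definition~\ref{dfn:delta-diag}) rather than the full pre-$\P$-module structure.
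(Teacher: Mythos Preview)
Your proof is correct and is exactly the standard argument the paper has in mind: the lemma is stated with a \qed\ and the text simply says that the sign identity~(\ref{eq:sgn-vanish}) ``as usual'' implies the result, leaving the double-sum expansion, the use of axiom~($\P 2$), and the two-intermediate-$b$ bookkeeping to the reader. You have filled in precisely those details.
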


The complex~(\ref{eq:struct-cplx}) is called the \textbf{structure complex of $\scrM$ at $e$}, and $\partial_\ell$ is the \textbf{boundary homomorphism at position $\ell$} of that structure complex.  The structure complex at $[k]$ will usually be called the \textbf{top} structure complex. If we need to signify that a boundary homomorphism belongs to the pre-$\P$-module $\scrM$, then we will do so with a superscript, as in `$\partial_\ell^\scrM$'; but more often we will just write $\partial_\ell$ and leave the pre-$\P$-module to the reader's understanding.

\begin{rmk}
In case the modules $M_a$ are all the same and each $\phi_{a,e} = \rm{id}$, the complex~(\ref{eq:struct-cplx}) is exact if $e \neq\emptyset$.  In general, different modules $M_a$ appear in the direct summands at each position of~(\ref{eq:struct-cplx}), so it need not be exact, but the homology of the structure complex~(\ref{eq:struct-cplx}) is very reminiscent of the \v{C}ech cohomology of a presheaf with respect to a given cover of a topological space.

Indeed, it is tempting to think of a pre-$\P$-module $\scrM$ over $(Z,\bfU)$ as part of a presheaf over $\rm{Spec}\,\bbZ[Z]$.  For non-discrete $Z$, the ring $\bbZ[Z]$ is not Noetherian, and it is not clear how $\rm{Spec}\,\bbZ[Z]$ relates to other ideals in $\bbZ[Z]$.  Nevertheless, if we define 
\[I(U) := \{p\cdot d_u\,|\ p \in \bbZ[Z],\ u \in U\} \unlhd \bbZ[Z]\]
for any $U \leq Z$, then it does make sense to think of $\scrM$ as a presheaf over the specific partially ordered family of ideals
\[I_e := \prod_{i \in e}I(U_i) \quad \hbox{for} \ e \subseteq [k].\]
These satisfy $I_a \supseteq I_e$ when $a \subseteq e$, so the structure morphisms $M_a\to M_e$ can be interpreted as the restriction maps of this presheaf.  Now the structural homology of $\scrM$ really is the obvious analog of \v{C}ech cohomology for this presheaf on a partially ordered set.

This point of view motivates the definition of pre-$\P$-modules and the study of their structural homology, but I have not explored it very far beyond that.  It would be interesting to know whether any other ideas from classical commutative algebra have a fruitful adaptation to this setting, even though the ring $\bbZ[Z]$ is too pathological for most of that classical theory to apply. \fin
\end{rmk}

A crucial feature of the examples of interest will be that the failure of exactness in~(\ref{eq:struct-cplx}) --- that is, the homology of this complex --- has some special structure.

\begin{dfn}[Structural closure]\label{dfn:struct-closed}
Let $\scrM$ be a Polish pre-$\P$-module, $e \subseteq [k]$ and $\ell \leq |e|$.  Then $\scrM$ is \textbf{structurally closed at $(e,\ell)$} if the boundary homomorphism at position $\ell$ of its structure complex at $e$ is closed.  It is \textbf{structurally closed} if it is structurally closed at every $(e,\ell)$.
\end{dfn}

We now focus on the case of a $(Z,Y,\bfU)$-$\P$-module $\scrM$.  In this case the modules and homomorphisms appearing in~(\ref{eq:struct-cplx}) are all co-induced from various sub-constituents of $\scrM$; in particular, the whole complex is co-induced from its counterpart for the lean version of $\scrM$. The next definition contains the second main innovation of this paper, after $\P$-modules.

\begin{dfn}[Modesty and almost modesty]
Suppose that $0 \leq \ell_0\leq k$.

A lean $\P_{[k]}$-module is \textbf{$\ell_0$-top-modest} (resp. \textbf{$\ell_0$-almost top-modest}) if its top structure complex has $\ell_0$-discrete homology (resp. $\ell_0$-almost discrete homology).

A general $\P_{[k]}$-module is \textbf{$\ell_0$-top-modest} (resp. \textbf{$\ell_0$-almost top-modest}) if its lean version has this property.

A general $\P_{[k]}$-module is \textbf{$\ell_0$-modest} (resp. \textbf{$\ell_0$-almost modest}) if every restriction of it is $\ell_0$-top-modest (resp. $\ell_0$-almost top-modest).

Finally, it is \textbf{modest} (resp. \textbf{almost modest}) if it is $\ell_0$-modest (resp. $\ell_0$-almost modest) for some $\ell_0$.
\end{dfn}

If a $(Z,Y,\bfU)$-$\P$-module is $\ell_0$-modest (resp. $\ell_0$-almost discrete homology), then, for each $e\subseteq [k]$, its structure complex at $e$ is co-induced over $Y + U_e$ from a complex of $(Y+U_e)$-modules that has $\ell_0$-discrete homology (resp. $\ell_0$-almost discrete homology).

\begin{dfn}[Structural compact generation]
An almost modest $\P$-module is \textbf{structurally compactly generated} if all the homology groups of those $(Y + U_e)$-complexes are compactly generated.
\end{dfn}

The following is obvious from the definitions.

\begin{lem}\label{lem:minimize-ambient-gp}
A $(Z,Y,\bfU)$-$\P$-module is $\ell_0$-modest (resp. $\ell_0$-almost modest) if and only if its lean version has this property. \qed
\end{lem}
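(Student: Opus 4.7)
The approach is essentially to unwind the definitions and observe that the two operations ``pass to the lean version'' and ``restrict to $c$'' commute on the level of sub-constituent data. Let $\scrN$ denote the lean version of $\scrM$, so that $\scrN$ is a $(Y+U_{[k]},Y,\bfU)$-$\P_{[k]}$-module having the same sub-constituent data $(A_e)_e$, $(\psi_{a,e})_{a,e}$, $(\t{\nabla}^{\circ,e,e\setminus i})_{i,e}$ as $\scrM$. The key claim I would prove is that for every $c\subseteq [k]$, the lean version of $\scrM\uhr_c$ and the lean version of $\scrN\uhr_c$ are literally equal as lean $(Y+U_c,Y,\bfU\uhr_c)$-$\P_c$-modules.

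To verify this, first trace the sub-constituent data through each operation. By definition, restriction to $c$ retains precisely those sub-constituents indexed by subsets of $c$ and those structure morphisms and derivation-lifts indexed by pairs or elements inside $c$; passing to the lean version leaves the sub-constituent data unchanged, only contracting the ambient group. Thus the lean version of $\scrM\uhr_c$ is the $(Y+U_c,Y,\bfU\uhr_c)$-$\P_c$-module with sub-constituent data $(A_e)_{e\subseteq c}$, $(\psi_{a,e})_{a\subseteq e\subseteq c}$, $(\t{\nabla}^{\circ,e,e\setminus i})_{e\subseteq c, i\in c}$; an identical chase shows that the lean version of $\scrN\uhr_c$ is built from exactly the same sub-constituent data over exactly the same group $(Y+U_c,Y,\bfU\uhr_c)$. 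Since a $\P$-module is determined by its group data together with its sub-constituent data (the constituents, structure morphisms and derivation-lifts all being fixed by the co-induction recipe in Definition~\ref{dfn:Delta-mod}), the two lean $\P$-modules coincide.

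Given this identification, the lemma is immediate. Unwinding the definition of modesty: $\scrM$ is $\ell_0$-modest (resp. $\ell_0$-almost modest) iff for every $c\subseteq [k]$ the restriction $\scrM\uhr_c$ is $\ell_0$-top-modest (resp. $\ell_0$-almost top-modest), and this in turn, by definition, is the assertion that the lean version of $\scrM\uhr_c$ has $\ell_0$-discrete (resp. $\ell_0$-almost discrete) top structural homology. The analogous unwinding for $\scrN$ instead involves the lean version of $\scrN\uhr_c$. By the identification proved above, these two conditions refer to the same complex for each $c$, and so they are equivalent.

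There is no serious obstacle in this proof; the work is entirely bookkeeping. The one place where I would take some care is ensuring that ``restriction'' and ``lean version'' really do preserve the sub-constituent data in the sense claimed, since the ambient group shifts (respectively from $Z$ to $Z$ with smaller tuple, and from $Z$ to $Y+U_{[k]}$), and that a lean $\P_c$-module is uniquely specified by its sub-constituent data once the directing triple is fixed. Both points are formal consequences of Definition~\ref{dfn:Delta-mod} and the transitivity of co-induction~\eqref{eq:compose-coind}.
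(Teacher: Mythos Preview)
Your proposal is correct and is essentially the same approach the paper takes: the paper states just before the lemma that it ``is obvious from the definitions'' and gives no proof beyond the \qed, so your careful unwinding of the sub-constituent bookkeeping is exactly the content that the paper leaves implicit. Your key observation---that the lean version of $\scrM\uhr_c$ and the lean version of $\scrN\uhr_c$ share the same sub-constituent data over the same directing triple $(Y+U_c,Y,\bfU\uhr_c)$, and hence coincide---is precisely the reason the result is immediate.
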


Clearly an $\ell_0$-modest $\P$-module is also $\ell_0$-almost modest.  To emphasize the difference, a modest $\P$-module will sometimes be called \textbf{strictly} modest.

If $|e| < \ell_0$, then $\ell_0$-almost modesty actually implies that the structure complex at $e$ is trivial, and hence that $M_e = 0$.

If $|e| = \ell_0$, then the structure complex of an $\ell_0$-almost modest $\P$-module $\scrM$ at $e$ is simply
\[0\to 0 \to \cdots \to 0 \to M_e\to 0,\]
since all modules indexed by sets smaller than $\ell_0$ vanish.  So an $\ell_0$-almost modest $\P$-module $(M_e)_e$ has a layer of modules $M_e$ with $|e| = \ell_0$ which are co-induced-of-Lie, and co-induced-of-discrete in case of strict modesty.

The intuition behind the almost-modest case is that the constituents $M_e$ with $|e| = \ell_0$ are the `main ingredients' of $\scrM$, and the constituents indexed by larger subsets of $[k]$ are the result of applying various `corrections' to those at level $\ell_0$.  Those corrections are described by the structural homology.

The notion of modesty gives more firmness to the r\^ole of $Y$ in the definition of a $\P$-module.  On the one hand, $Y$ must be large enough that the constituent modules and structure morphisms are co-induced over $Y + U_e$ for the relevant $e$.  On the other hand, given a modest $\P$-module, we cannot freely make $Y$ larger as in Lemma~\ref{lem:YY'} without disrupting the definition of modesty.  This is because the structural homology of $\scrM$ at $e$ must be co-induced over $Y + U_e$ from a discrete (hence, `small') module, and replacing this with co-induction over $Y' + U_e$ for some larger $Y'$ may not retain this `smallness'.

Nevertheless, there can still be some flexibility in the choice of $Y$, as given by the following, which again follows trivially from the definitions.

\begin{lem}\label{lem:YY'modest}
An $\ell_0$-almost (resp. strictly) modest $(Z,Y,\bfU)$-$\P$-module $(M_e)_e$ is also an $\ell_0$-almost (resp. strictly) modest $(Z,Y',\bfU)$-$\P$-module provided
\begin{itemize}
\item $Y \leq Y' \leq Z$, and
\item one has $Y + U_e = Y' + U_e$ for every $e$ such that $M_e \neq 0$. \qed
\end{itemize}
\end{lem}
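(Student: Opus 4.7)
My plan is to work directly with the definition of modesty via restrictions and lean versions. Fix an arbitrary $c \subseteq [k]$; I must show that the lean version of $\scrM\uhr_c$ in the $(Z,Y',\bfU)$-interpretation has top structure complex with $\ell_0$-almost (resp.\ strictly) discrete homology. Lemma~\ref{lem:YY'} tells us that both interpretations of $\scrM$ make sense, with sub-constituents related by $A'_a = \Cnd_{Y+U_a}^{Y'+U_a}A_a$, so the task reduces to comparing the top structure complex of the $(Y+U_c,Y,\bfU\uhr_c)$-lean version of $\scrM\uhr_c$ with that of the $(Y'+U_c,Y',\bfU\uhr_c)$-lean version. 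Modesty depends only on this complex, not on the derivation-lifts.

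The key step is an elementary observation about subgroups: if any $a \subseteq c$ has $M_a \neq 0$, then the hypothesis forces $Y + U_a = Y' + U_a$, and adding $U_{c \setminus a}$ to both sides gives
\[Y + U_c = (Y + U_a) + U_{c \setminus a} = (Y' + U_a) + U_{c \setminus a} = Y' + U_c.\]
So as soon as any constituent of $\scrM\uhr_c$ is nonzero, both lean versions are directed by the same ambient group. Moreover, for every $a \subseteq c$ one then has $A'_a = A_a$ (either both are zero, or $Y+U_a = Y'+U_a$ makes the intermediate co-induction trivial). Using the composition identity~(\ref{eq:compose-coind}), the position-$\ell$ constituent of the $Y'$-lean structure complex is
\[\bigoplus_{|a|=\ell,\,a\subseteq c}\Cnd_{Y+U_a}^{Y'+U_c}A_a,\]
which, under $Y+U_c = Y'+U_c$, coincides position-by-position with the corresponding $Y$-lean constituent; the boundary maps agree as well, being the same co-inductions of the common sub-constituent morphisms $\psi_{a,b}$.

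The argument then finishes by a case split. Either every $M_a$ for $a \subseteq c$ vanishes, in which case both top structure complexes are identically zero and the homology condition is trivial, or else some $M_a \neq 0$ and the preceding paragraph shows that the two top structure complexes are literally the same complex of $(Y+U_c)=(Y'+U_c)$-modules, so any property of its homology --- in particular, $\ell_0$-almost or strictly discrete homology --- transfers automatically. Since $c$ was arbitrary, the $(Z,Y',\bfU)$-interpretation inherits the desired modesty property. The only point at which one might worry is the general temptation to push the $Y$-complex forward via $\Cnd_{Y+U_c}^{Y'+U_c}$: a co-induction of a Lie or discrete module to a strictly larger group need not remain Lie or discrete, which would jeopardize the conclusion. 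The role of the hypothesis is precisely to prevent this situation from arising whenever the complex has any nonzero term, so I do not anticipate any genuine obstacle.
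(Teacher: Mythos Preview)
Your proof is correct and is precisely the verification that the paper leaves implicit: the paper provides no argument beyond the \qed in the statement, having declared just before that this ``again follows trivially from the definitions.'' Your case split on whether $\scrM\uhr_c$ has any nonzero constituent, together with the observation that a single $M_a\neq 0$ with $a\subseteq c$ forces $Y+U_c=Y'+U_c$ (so the two lean structure complexes at $c$ literally coincide), is exactly the content behind that triviality claim.
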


Our main technical theorems later will assert that the class of (almost) modest $\P$-modules is closed under certain natural operations.  The first such closure result is trivial, however, and we omit the proof.

\begin{lem}\label{lem:sum-of-modest}
If $\scrM_1$ and $\scrM_2$ are $\ell_0$-almost (resp. strictly) modest $(Z,Y,\bfU)$-$\P$-modules then so is $\scrM_1\oplus \scrM_2$. If both direct summands are stucturally compactly generated, then so is the direct sum. \qed
\end{lem}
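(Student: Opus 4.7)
The plan is to verify that every construction appearing in the definition of (almost) modesty is compatible with taking direct sums, so that the statement reduces to the observation that a direct sum of two Polish complexes with $\ell_0$-almost discrete homology again has $\ell_0$-almost discrete homology.

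First I would check that lean version and restriction commute with direct sum. By Definition~\ref{dfn:Delta-mod}, the sub-constituent modules of $\scrM_1 \oplus \scrM_2$ are the direct sums $A_{1,e} \oplus A_{2,e}$, and its structure morphisms and derivation-lifts are the direct sums of those of the summands. Since co-induction $\Cnd_W^{Z'}(-)$ is additive (it is defined as a submodule of $\F(Z',-)$, and forming measurable function spaces and taking $W$-fixed points both commute with finite direct sums), the lean version of $\scrM_1 \oplus \scrM_2$ is the direct sum of the lean versions of $\scrM_i$. The analogous statement for restriction $(\scrM_1 \oplus \scrM_2)\uhr_c = \scrM_1\uhr_c \oplus \scrM_2\uhr_c$ is immediate from the definition, since it only involves selecting certain modules and morphisms from the given data.

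Next I would inspect the boundary formula~(\ref{eq:s-c-bdry}): each summand of $\partial_\ell$ is a signed application of some structure morphism $\phi_{a,b}$. Since the structure morphisms of $\scrM_1 \oplus \scrM_2$ are the direct sums of those of $\scrM_1$ and $\scrM_2$, we obtain an identification of Polish complexes
\[
\big(\hbox{structure complex of}\ \scrM_1 \oplus \scrM_2\ \hbox{at}\ c\big)\ =\ \big(\hbox{s.c. of}\ \scrM_1\ \hbox{at}\ c\big) \oplus \big(\hbox{s.c. of}\ \scrM_2\ \hbox{at}\ c\big),
\]
and likewise for the corresponding sub-constituent complexes of $(Y + U_c)$-modules, via the first step.

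Finally, in any category of topological modules with closed homomorphisms, taking kernels, images and quotients commutes with finite direct sums; in particular, the homology of a direct sum of two Polish complexes at each position is the direct sum of the homologies. A direct sum of two Abelian Lie groups is Lie; a direct sum of two discrete Abelian groups is discrete; and a direct sum of two compactly generated topological Abelian groups is compactly generated. Consequently, the $(Y+U_c)$-complex associated to $(\scrM_1 \oplus \scrM_2)\uhr_c$ has $\ell_0$-almost discrete (resp. strictly $\ell_0$-discrete) homology whenever both summands do, and compactly generated homology in the structurally compactly generated case. Applying this for every $c \subseteq [k]$ gives both conclusions. There is no real obstacle here; the content of the lemma is just that all the relevant functors in Section~\ref{sec:delta-mods} are additive.
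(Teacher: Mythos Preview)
Your argument is correct and is exactly the verification the paper has in mind: the lemma is marked with a \qed\ box and the text immediately before it says ``The first such closure result is trivial, however, and we omit the proof.'' You have simply spelled out the additivity of lean versions, restrictions, structure complexes and homology that the paper leaves implicit.
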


\subsection{Short exact sequences}\label{subs:delta-ses}

The next notion for modules that should be extended to $\P$-modules is that of short exact sequences.  The basic definition is obvious.

\begin{dfn}
A \textbf{short exact sequence} of $(Z,Y,\bfU)$-$\P$-modules is a sequence of $\P$-morphisms
\[0 \to (M_e)_e \to (N_e)_e \to (L_e)_e \to 0\]
which is exact in the sense of Definition~\ref{dfn:delta-mors}.
\end{dfn}

In this setting, we will rely heavily on the following relation among the properties of almost or strict modesty for these $\P$-modules.

\begin{prop}\label{prop:bigquot}
Suppose that
\[0 \to \scrM \stackrel{\Phi}{\to} \scrN \stackrel{\Psi}{\to} \scrP \to 0\]
is a short exact sequence of $(Z,Y,\bfU)$-$\P$-modules.
\begin{itemize}
\item[(1)] If $\scrM$ and $\scrN$ are $\ell_0$-almost (resp. strictly) modest, then so is $\scrP$.
\item[(2)] If $\scrM$ and $\scrP$ are $\ell_0$-almost (resp. strictly) modest, then so is $\scrN$.
\item[(3)] If $\scrN$ and $\scrP$ are $\ell_0$-almost (resp. strictly) modest, and if $M_e = 0$ whenever $|e| \leq \ell_0$, then $\scrM$ is $(\ell_0+1)$-almost (resp. strictly) modest.
\end{itemize}
In any of these three cases, if two out of $\scrM$, $\scrN$ and $\scrP$ are structurally compactly generated, then so is the third.
\end{prop}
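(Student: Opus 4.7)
The plan is to reduce Proposition~\ref{prop:bigquot} to its Polish-complex analog, Proposition~\ref{prop:topsnake}, together with Lemma~\ref{lem:still-fg} for the compact-generation statement. Modesty of a $\P$-module is a property of the structure complexes of the lean versions of its restrictions, and a short exact sequence of $\P$-modules will translate into a short exact sequence of such Polish complexes at each $c \subseteq [k]$; Proposition~\ref{prop:topsnake} then propagates the $\ell_0$-almost (or strictly) discrete homology property between the three rows.

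First I would check that the operations of restriction to $c$, passage to the lean version, and formation of the structure complex at $c$ all transport the short exact sequence $0 \to \scrM \to \scrN \to \scrP \to 0$ to a short exact sequence of bounded Polish complexes of $(Y + U_c)$-modules. Restriction is trivial, since it just selects the constituents indexed by subsets of $c$. Passage to the lean version is the only delicate step: here I would use that each $\P$-morphism is by definition co-induced from morphisms of sub-constituents, together with the fact that $\Cnd_{Y+U_c}^Z$ both preserves and reflects short exactness of Polish modules (via eq.~(\ref{eq:cnd-of-quot}) and a measurable-selection argument). Finally, the structure complex is formed from signed sums of structure morphisms (eq.~(\ref{eq:s-c-bdry})), and since $\P$-morphisms commute with those structure morphisms by Definition~\ref{dfn:delta-mors}, the resulting commutative diagram is a short exact sequence of Polish complexes in the sense of Subsection~\ref{subs:cplxs}.

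Given such a short exact sequence of complexes, parts (1) and (2) of Proposition~\ref{prop:bigquot} follow directly from parts (1) and (2) of Proposition~\ref{prop:topsnake}, applied at every $c \subseteq [k]$. For part (3), the hypothesis that $M_e = 0$ whenever $|e| \leq \ell_0$ means that the first few positions of the structure complex of $\scrM\uhr_c$ vanish identically, which is exactly the vanishing hypothesis required by Proposition~\ref{prop:topsnake}(3), and the conclusion is $(\ell_0+1)$-almost (resp.\ strictly) discrete homology of the $\scrM$-row at every $c$. The compact-generation assertion in all three cases follows from Lemma~\ref{lem:still-fg} applied to the same short exact sequence of Polish complexes, since compact generation of homology is preserved under short exact sequences of complexes.

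The main bookkeeping subtlety will be aligning the index $\ell_0$ of modesty (which refers to the cardinality of subsets $e \subseteq [k]$) with the positional index used by Proposition~\ref{prop:topsnake} (where position $\ell$ of the structure complex at $c$ contains $\bigoplus_{|a|=\ell-1,\ a\subseteq c} M_a$); once this uniform shift is fixed across parts (1)--(3), the remainder of the argument is a mechanical application of the snake-lemma machinery already developed in Subsection~\ref{subs:almost-discrete}, with no additional analytic input beyond eq.~(\ref{eq:cnd-of-quot}).
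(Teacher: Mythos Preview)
Your proposal is correct and follows essentially the same route as the paper: for each $e\subseteq[k]$, the structure complexes of $\scrM$, $\scrN$, $\scrP$ at $e$ form a short exact sequence of Polish complexes (co-induced from the lean versions), to which one applies Proposition~\ref{prop:topsnake}(1)--(3) and Lemma~\ref{lem:still-fg}. The paper's proof is terser but identical in substance; your extra care about co-induction reflecting exactness via~(\ref{eq:cnd-of-quot}) is appropriate, and your flagged indexing concern is minor (in the paper's convention position $\ell$ of the structure complex holds $\bigoplus_{|a|=\ell}M_a$, so no shift is actually needed).
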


\begin{proof}
For each $e \subseteq [k]$, the structure complexes of $\scrM$, $\scrN$ and $\scrP$ at $e$ fit together into a vertical short exact sequence of complexes: for instance, when $e = [k]$,
\begin{small}
\begin{center}
$\phantom{i}$\xymatrix{
0\ar[r] & \ldots\ar[r] & M^{(\ell-1)} \ar@{^{(}->}[d]\ar[r]^{\partial^\scrM_\ell} & M^{(\ell)} \ar@{^{(}->}[d]\ar[r]^{\partial^\scrM_{\ell+1}} & M^{(\ell+1)} \ar@{^{(}->}[d]\ar[r]^-{\partial^\scrM_{\ell+2}} & \ldots\ar[r] & M_{[k]}\ar@{^{(}->}[d]\ar[r] & 0 \\
0\ar[r] & \ldots\ar[r] & N^{(\ell-1)} \ar@{->>}[d]\ar[r]^{\partial^\scrN_\ell} & N^{(\ell)} \ar@{->>}[d]\ar[r]^{\partial^\scrN_{\ell+1}} & N^{(\ell+1)} \ar@{->>}[d]\ar[r]^-{\partial^\scrN_{\ell+2}} & \ldots\ar[r] & N_{[k]}\ar@{->>}[d]\ar[r] & 0 \\
0 \ar[r] & \ldots\ar[r] & P^{(\ell-1)} \ar[r]^{\partial^\scrP_\ell} & P^{(\ell)} \ar[r]^{\partial^\scrP_{\ell+1}} & P^{(\ell+1)} \ar[r]^-{\partial^\scrP_{\ell+2}} & \ldots\ar[r] & P_{[k]} \ar[r] & 0.
}
\end{center}
\end{small}
This entire diagram is obtained by applying the functor $\Cnd_{Y+U_{[k]}}^Z(-)$ to the lean versions of $\scrM$, $\scrN$ and $\scrP$, so we may assume that these are themselves lean.  Having done so, conclusions (1--3) result from applying parts (1--3) of Proposition~\ref{prop:topsnake} to this picture.  The last conclusion follows by Lemma~\ref{lem:still-fg}.
\end{proof}

\subsection{First examples}\label{subs:egs}

Several examples of inner $\P$-modules can be introduced immediately, including those that underly Theorems A and B.

\begin{ex}
For any Polish Abelian group $A$, the $Z$-module $\F(Z,A)$ is a $(Z,0,\ast)$-$\P$-module, where $0$ denotes the zero-subgroup of $Z$ and $\ast$ denotes the empty subgroup-tuple. \fin
\end{ex}

\begin{ex}\label{ex:constant}
Given $Z$ and also $\bfU = (U_i)_{i=1}^k$, one may obtain a $(Z,0,\bfU)$-$\P$-module $\scrC$ simply by setting $C_e := \F(Z,A)$ for every $e$, setting $\phi ^\scrC_{a,e} := \rm{id}$ whenever $a \subseteq e$, and setting $\t{\nabla}^{\scrC,e,e\setminus i}:= d^{U_i}$ for all $e$ and $i$. All of the axioms of a pre-$\P$-module are trivial verifications in this case. For the sub-constituent modules, one may simply take $\F(U_e,A)$ for each $e$.

In this example, the structure complex at every nonempty $e$ is exact, because it amounts to computing the higher simplicial cohomology of the $k$-simplex with coefficients in the fixed group $\F(Z,A)$. By Lemma~\ref{lem:YY'}, this is also a $(Z,Y,\bfU)$-$\P$-module for any other $Y \leq Z$.  Such a $\P$-module will be called a \textbf{constant} $\P$-module. \fin
\end{ex}

\begin{ex}\label{ex:big-trivial}
A close relative $\scrL$ of the constant $\P$-module $\scrC$ is obtained as follows: set $L_\emptyset := 0$ and $L_e := C_e$ for every other $e$, and let the structure morphisms and derivation-lifts be either zero or the same as for $\scrC$, as appropriate.

Once again, the pre-$\P$-module axioms are easily verified, and Lemma~\ref{lem:YY'} lets us interpret this as a $(Z,Y,\bfU)$-$\P$-module for any other $Y \leq Z$.

In this case, the structure complex at $\emptyset$ is trivial, so there is no homology there.  On the other hand, the structure complex at any nonempty $e$ is the same as for $\scrC$, except that the entry in position $0$ is now $L_\emptyset = 0$, rather than $C_\emptyset = \F(Z,A)$.  This has the effect of removing all homology at position $(e,0)$, and replacing it with a homology group equal to $\F(Z,A)$ in position $(e,1)$.

This example can easily be generalized to truncating $\scrC$ below any other fixed level $\ell$; the details are left to the reader. \fin
\end{ex}

\begin{ex}\label{ex:PDceE-delta}
Now let $U_1$, \ldots, $U_k \leq Z$ be as in the Introduction, and let $(M_e)_e$ be the family of modules of solutions to the associated heirarchy of {\PDE}s.  These are all closed submodules of $\F(Z,A)$, and $M_a \leq M_e$ whenever $a \subseteq e$, so we may let $\phi_{a,e}:M_a\to M_e$ be the inclusion.  Also, if $f \in M_e$ and $i \in e$, then the definition implies that $d^{U_i}f \in M_{e\setminus i}$, so this uniquely defines a derivation-lift $\t{\nabla}^{e,e\setminus i}$ from $M_e$ to $M_{e\setminus i}$.  Finally, each $M_e$ is co-induced from the corresponding module of solutions on $U_e$, and similarly for these structure morphisms and derivation-actions.  Thus, these solution-modules together define an inner $(Z,0,\bfU)$-$\P$-module.

This will be called the \textbf{solution $\P$-module} of the {\PDE} associated to $\bfU$.  Note that $M_{[k]}$ is precisely the module of solutions to that \PDE. Such $\P$-modules will be the centre of attention in Section~\ref{subs:pfofA}. \fin
\end{ex}

\begin{ex}\label{ex:pre-zero-sum}
Let $U_1$, \ldots, $U_k \leq Z$, and for $e \subseteq [k]$ define
\[P_e := \bigoplus_{i \in e}\F(Z,A)^{U_i}.\]
There is an obvious inclusion morphism $\phi ^\scrP_{a,e}:P_a\to P_e$ whenever $a \subseteq e$, since $P_a$ is a direct summand of $P_e$.  On the other hand, if $f = (f_i)_{i\in e} \in P_e$ and $j \in e$, then
\[d^{U_j}f = (d^{U_j}f_i)_{i \in e},\]
which is identically zero in coordinate $j$.  We may therefore canonically identify this with its projection to $P_{e\setminus j}$, and this defines the derivation-lift $\t{\nabla}_u^{\scrP,e,e\setminus j}$.  This gives another inner $(Z,0,\bfU)$-$\P$-module $\scrP$. \fin
\end{ex}

\begin{ex}\label{ex:zero-sum-delta}
We may now combine Examples~\ref{ex:big-trivial} (denoted $\scrL$) and~\ref{ex:pre-zero-sum} (denoted $\scrP$) as follows.  Regard both $\scrL$ and $\scrP$ as being directed by $(Z,0,\bfU)$. For each $e\subseteq [k]$, there is a homomorphism
\[\Psi_e\ :\ P_e = \bigoplus_{i\in e}\F(Z,A)^{U_i} \to L_e = \F(Z,A)\ :\ (f_i)_{i \in e}\mapsto \sum_{i\in e}f_i.\]
It is easy to check that the family $(\Psi_e)_e$ verifies the axioms of a $(Z,0,\bfU)$-$\P$-morphism from $\scrL$ to $\scrP$, and so $\scrN := \ker\Psi$ is a $(Z,0,\bfU)$-$\P$-submodule of $\scrP$.  Each $N_e$ is precisely the module of zero-sum tuples introduced in Subsection~\ref{subs:mod-soln}.

This kernel $\P$-module will be called the \textbf{zero-sum $\P$-module} associated to $\bfU$, and it will be the focus of Subsection~\ref{subs:zero-sum}.  

One could also study the image of this $\P$-morphism as another example of a $(Z,0,\bfU)$-$\P$-module, but we will leave this aside. \fin
\end{ex}

\section{Aggrandizement and reduction}\label{sec:agg-res-red}

Suppose that $c \subseteq e$ is an inclusion of finite sets, that $Y\leq Z$ is an inclusion of compact Abelian groups, and that $\bfU = (U_i)_{i\in e}$ is a family of closed subgroups of $Z$.  Let $\bfU\uhr_c$ be the subfamily $(U_i)_{i\in c}$.

Let $\scrM$ be a pre-$\P$-module over $(Z,\bfU\uhr_c)$ with structure morphisms $\phi_{a,b}$ and derivation-lifts $\t{\nabla}^{a,a\setminus i}$ for $a \subseteq b \subseteq c$ and $i\in c$.

\begin{dfn}[Aggrandizement]\label{dfn:agg}
The \textbf{aggrandizement of $\scrM$ to $\bfU$}, say $\scrM^\wedge$, is the pre-$\P$-module over $(Z,\bfU)$ consisting of the following:
\begin{itemize}
\item (modules) \quad $M^\wedge_a := M_{a\cap c} \quad \forall a \subseteq e.$
\item (structure morphisms) \quad $\phi^\wedge_{a,b} := \phi_{a\cap c,b\cap c} \quad \forall a\subseteq b \subseteq e.$
\item (derivation-lifts) \quad $\t{\nabla}^{\wedge,a,a\setminus i} := \t{\nabla}^{a\cap c,(a\cap c)\setminus i} \quad \forall a\subseteq e,\ i \in e.$
\end{itemize}

This aggrandizement is denoted $\Ag_{\bfU\uhr_c}^\bfU\scrM$, or $\Ag_c^e\scrM$ if the tuple $\bfU$ is clear.  If $\scrN$ is a $(Z,Y,\bfU)$-$\P$-module of the form $\Ag_c^e\scrM$ for some $(Z,Y,\bfU\uhr_c)$-$\P$-module $\scrM$, then $\scrN$ is said to be \textbf{aggrandized from $c$}.
\end{dfn}

It is easy to check that $\rm{Ag}_c^e\scrM$ satisfies the axioms of a pre-$\P$-module.

Now suppose that $\scrM$ is actually a $(Z,Y,\bfU\uhr_c)$-$\P$-module, say with sub-constituent modules $(A_a)_{a\subseteq c}$ and structure-morphisms $(\psi_{a,b})_{a\subseteq b\subseteq c}$.  Then in terms of these one has the following for any $a\subseteq b \subseteq e$:
\[M^\wedge_b = M_{b \cap c} = \Cnd_{Y + U_{b\cap c}}^Z A_{b\cap c} = \Cnd_{Y + U_b}^Z A_b^\wedge\]
and
\[\phi^\wedge_{a,b} = \phi_{a \cap c,b\cap c} = \Cnd_{Y+U_{b\cap c}}^Z \psi_{a\cap c,b\cap c} = \Cnd_{Y+U_b}^Z \psi_{a,b}^\wedge,\]
where
\[A_b^\wedge := \Cnd_{Y+U_{b\cap c}}^{Y + U_b}A_{b\cap c},\]
and
\[\psi_{a,b}^\wedge := \Cnd_{Y+U_{b\cap c}}^{Y + U_b}\psi_{a\cap c,b\cap c}.\]
The derivation-actions of $\rm{Ag}_c^e\scrM$ may be obtained from the sub-constituent derivation-actions of $\scrM$ similarly.  Therefore these co-inductions of the sub-constituent data of $\scrM$ provided sub-constituent data for $\rm{Ag}_c^e\scrM$: with this choice, it is actually a $(Z,Y,\bfU)$-$\P$-module.  We will henceforth give it this interpretation whenever possible.

The structure complexes of an aggrandizement of $\scrM$ bear a simple relation to those of $\scrM$ itself.  This fact will be crucially important in the sequel.  Its proof is closely related to the homotopy invariance of classical homology (or rather, the special case that every complete simplex has the same homology as a point).

\begin{lem}[Homotopical Lemma]\label{lem:homot}
Suppose that $c \subsetneqq [k]$ and that $\scrM = (M_e)_{e\subseteq c}$ is a pre-$\P$-module over $(Z,\bfU)$, and let $\scrN = (N_e)_{e\subseteq [k]} := \Ag_{\bfU\uhr_c}^\bfU\scrM$.  For each $\ell \in \{0,1,\ldots,k\}$, let
\[N^{(\ell)} := \bigoplus_{|e|=\ell}N_e,\]
and let
\[0 \stackrel{\partial,_0}{\to} N_\emptyset \stackrel{\partial'_1}{\to} N^{(1)} \stackrel{\partial'_2}{\to} \ldots \stackrel{\partial'_k}{\to} N_{[k]}\to 0\]
be the top structure complex of $\scrN$.  Then this complex is split.
\end{lem}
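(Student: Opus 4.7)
Since $c \subsetneqq [k]$, the plan is to fix some $i_0 \in [k] \setminus c$ and use it as a ``cone vertex,'' in direct analogy with the classical proof that the simplicial cochain complex of a simplex is contractible. The key structural observation is that whenever $i_0 \notin a \subseteq [k]$, the equalities $N_a = M_{a \cap c} = M_{(a \cup \{i_0\}) \cap c} = N_{a \cup \{i_0\}}$ hold (since $i_0 \notin c$), and under these identifications the structure morphism $\phi^\wedge_{a, a \cup \{i_0\}} = \phi_{a \cap c, a \cap c}$ is the identity. This is what makes a simplex-contraction argument feasible in the aggrandized setting.

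Concretely, define the splitting homomorphisms $\b_\ell : N^{(\ell+1)} \to N^{(\ell)}$ by
\[ (\b_\ell n)_a := \begin{cases} \sgn(a \cup \{i_0\} : a)\cdot n_{a \cup \{i_0\}} & \text{if } i_0 \notin a,\\ 0 & \text{if } i_0 \in a, \end{cases} \]
where $n_{a \cup \{i_0\}}$ is regarded as an element of $N_a$ via the identification above. Each $\b_\ell$ is plainly a continuous $Z$-module homomorphism, and $\b_\ell \circ \b_{\ell+1} = 0$ is immediate: the output of $\b_{\ell+1}$ vanishes at every index containing $i_0$, and $\b_\ell$ reads only indices of that form.

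The main calculation is to verify the chain-homotopy identity $\partial'_\ell \b_{\ell-1} + \b_\ell \partial'_{\ell+1} = \rm{id}_{N^{(\ell)}}$ by evaluating at each $b \in \binom{[k]}{\ell}$ and splitting into cases. If $i_0 \in b$, the second summand vanishes and only the term $a = b \setminus \{i_0\}$ survives in the first, contributing $\sgn(b : b\setminus \{i_0\})^2 n_b = n_b$ since the relevant $\phi^\wedge$ is again the identity. If $i_0 \notin b$, the ``diagonal'' term $a' = b$ in the expansion of $(\partial'_{\ell+1} n)_{b \cup \{i_0\}}$ yields $\sgn(b \cup \{i_0\} : b)^2 n_b = n_b$; the remaining cross-terms, indexed by pairs $a = b \setminus \{j\}$ for $j \in b$, appear once in $\partial'_\ell \b_{\ell-1} n$ and once in $\b_\ell \partial'_{\ell+1} n$, and in each case the surviving structure morphism simplifies to $\phi_{(b\setminus \{j\})\cap c, b\cap c}$ applied to the \emph{same} element $n_{(b \setminus \{j\}) \cup \{i_0\}}$ (using $i_0 \notin c$). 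The accompanying signs are respectively
\[\sgn(a \cup \{j\} : a) \cdot \sgn(a \cup \{i_0\} : a) \quad \text{and} \quad \sgn(a \cup \{j, i_0\} : a \cup \{j\}) \cdot \sgn(a \cup \{j, i_0\} : a \cup \{i_0\}),\]
so they sum to zero by the classical sign identity~(\ref{eq:sgn-vanish}) applied with $s = j$ and $t = i_0$. Hence all cross-terms cancel in pairs, and the identity holds.

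The only delicate piece is the bookkeeping that matches cross-terms on the two sides; once $i_0 \notin c$ is used to collapse every occurrence of $\cdot \cup \{i_0\}$ under $\cdot \cap c$, the matching is transparent and the sign identity is exactly what is needed. No step looks like a genuine obstacle; the proof is essentially a direct translation of the contractibility of the simplex with a chosen cone vertex into the language of aggrandized pre-$\P$-modules.
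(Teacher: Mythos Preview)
Your proof is correct and is essentially identical to the paper's own proof: the paper also fixes an element $s \in [k]\setminus c$, defines the same splitting maps $\xi_\ell$ (your $\b_\ell$ with $i_0 = s$), and verifies $\xi_\ell\xi_{\ell+1}=0$ and $\partial'_\ell\xi_{\ell-1}+\xi_\ell\partial'_{\ell+1}=\id$ by the same case split on whether $s$ lies in the index set, with the cross-term cancellation via the sign identity~(\ref{eq:sgn-vanish}). Your explicit observation that $\phi^\wedge_{a,a\cup\{i_0\}}$ is the identity makes the paper's implicit step more transparent, but the approach is the same.
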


\begin{proof}
Pick some $s \in [k]\setminus c$, and now define
\[\xi_\ell:N^{(\ell+1)}\to N^{(\ell)}\]
by
\[\big(\xi_\ell((n_b)_{|b| = \ell+1})\big)_e := \left\{\begin{array}{ll}0 &\quad \hbox{if}\ s \in e\\
\sgn(e\cup s:e)n_{e \cup s} &\quad \hbox{if}\ s \not\in e\end{array}\right.\]
Since $s \not\in c$, for any $e$ one has $(e\cup s)\cap c = e\cap c$.  Therefore if $(n_b)_b \in N^{(\ell+1)}$ and $|e| = \ell$, then
\[n_{e\cup s} \in N_{e\cup s} = M_{(e \cup s)\cap c} = M_{e\cap c},\]
so $\xi_\ell$ does indeed take values in $N^{(\ell)}$.

It remains to verify that these maps have the required properties.

\vspace{7pt}

\emph{Step 1.}\quad Suppose that $n = (n_a)_{|a| = \ell+2} \in N^{(\ell+2)}$, and let $|e| = \ell$. If $s \in e$ then $(\xi_\ell\xi_{\ell+1}n)_e = 0$ directly from the definition.  On the other hand, if $s \not\in e$, then
\[(\xi_\ell\xi_{\ell+1}n)_e = \sgn(e\cup\{s\}:e)(\xi_{\ell+1}n)_{e \cup \{s\}} = 0,\]
where this vanishing is because $s \in e \cup \{s\}$.  Thus $\xi_\ell\xi_{\ell+1} = 0$.

\vspace{7pt}

\emph{Step 2.}\quad Now let $m = (m_e)_e \in N^{(\ell)}$.  If $s \in e$, then one has
\begin{eqnarray*}
(\partial'_\ell\xi_{\ell-1}m)_e &=& \sum_{a \in \binom{e}{\ell-1}}\sgn(e:a)\phi^\scrN_{a,e}((\xi_{\ell-1}m)_a)\\ &=& \sum_{a \in \binom{e}{\ell-1},\,a \not\ni s}\sgn(e:a)\phi^\scrN_{a,e}(\sgn(a\cup\{s\}:a)m_{a\cup \{s\}})\\ &=& \sgn(e:e\setminus s)\phi^\scrN_{e\setminus s,e}(\sgn(e:e\setminus s)m_e) = m_e,
\end{eqnarray*}
whereas
\[(\xi_\ell\partial'_{\ell+1}m)_e = 0\]
directly from the definition of $\xi_\ell$.

On the other hand, if $s\not\in e$ then
\begin{eqnarray*}
(\partial'_\ell\xi_{\ell-1}m)_e &=& \sum_{a \in \binom{e}{\ell-1}}\sgn(e:a)\phi^\scrN_{a,e}((\xi_{\ell-1}m)_a)\\ &=& \sum_{a \in \binom{e}{\ell-1}}\sgn(e:a)\phi^\scrN_{a,e}(\sgn(a\cup\{s\}:a)m_{a\cup \{s\}}),
\end{eqnarray*}
whereas
\begin{eqnarray*}
&&(\xi_\ell\partial'_{\ell+1}m)_e\\ &&= \sgn(e\cup\{s\}:e)(\partial_{\ell+1}m)_{e\cup\{s\}}\\
&&= \sgn(e\cup\{s\}:e)\sum_{b \in \binom{e\cup\{s\}}{\ell}}\sgn(e\cup\{s\}:b)\phi^\scrN_{b,e\cup\{s\}}(m_b)\\
&&= \sgn(e\cup\{s\}:e)\phi^\scrN_{e,e\cup\{s\}}(\sgn(e\cup\{s\}:e)m_e)\\
&&\quad\quad + \sgn(e\cup\{s\}:e)\sum_{a \in \binom{e}{\ell-1}}\sgn(e\cup\{s\}:a\cup\{s\})\phi^\scrN_{a,e}(m_{a\cup \{s\}}).
\end{eqnarray*}
Adding these last two expressions and recalling the identity~(\ref{eq:sgn-vanish}), one sees that all the terms cancel except for $\phi^\scrN_{e,e\cup\{s\}}(m_e) = m_e$.

Therefore $\partial'_\ell\xi_{\ell-1} + \xi_\ell\partial'_{\ell+1} = \rm{id}_{N^{(\ell)}}$.
\end{proof}

\begin{cor}\label{cor:s-c-of-agg}
In the situation of the previous lemma, the structure complex of $\scrN$ at $e$ is:
\begin{itemize}
\item the same as for $\scrM$ itself if $e \subseteq c$, and
\item split if $e \not\subseteq c$.
\end{itemize}
\end{cor}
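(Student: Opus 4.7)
The plan is to handle the two cases separately, with the second following by direct reduction to the Homotopical Lemma that has just been proved.

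For $e \subseteq c$ my approach is simply to unwind Definition~\ref{dfn:agg}. Every $a \subseteq e$ satisfies $a \cap c = a$, so $N_a = M_{a\cap c} = M_a$, $\phi^\scrN_{a,b} = \phi^\scrM_{a\cap c,b\cap c} = \phi^\scrM_{a,b}$ for $a \subseteq b \subseteq e$, and likewise the derivation-lifts $\t{\nabla}^{\scrN,a,a\setminus i}$ reduce to the corresponding $\t{\nabla}^{\scrM,a,a\setminus i}$ for all $i \in e$. Hence the structure complex of $\scrN$ at $e$ coincides term-for-term with that of $\scrM$ at $e$. No real argument is needed in this case.

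For $e \not\subseteq c$ the key observation is that the structure complex of $\scrN$ at $e$ is by definition the \emph{top} structure complex of the restricted pre-$\P$-module $\scrN\uhr_e$. A direct inspection of Definition~\ref{dfn:agg} will show that aggrandizement commutes with restriction, in the sense that
\[\scrN\uhr_e \;=\; \Ag_{\bfU\uhr_{c\cap e}}^{\bfU\uhr_e}\bigl(\scrM\uhr_{c\cap e}\bigr),\]
because for every $a \subseteq e$ one has $a \cap c = a \cap (c\cap e)$, and the same identity propagates to the structure morphisms and derivation-lifts via the formulas in Definition~\ref{dfn:agg}.

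Now the hypothesis $e \not\subseteq c$ is equivalent to $c \cap e \subsetneq e$, which is precisely the hypothesis needed to apply Lemma~\ref{lem:homot} with $[k]$ replaced by $e$ and $c$ replaced by $c \cap e$. That lemma then supplies splitting homomorphisms for the top structure complex of $\scrN\uhr_e$, which is exactly the structure complex of $\scrN$ at $e$. I do not anticipate any real obstacle: the only thing to check carefully is the commutation of aggrandizement with restriction displayed above, and that is a direct unwinding of Definition~\ref{dfn:agg}. Any choice of $s \in e \setminus c$ (which exists by hypothesis) will yield explicit splitting homomorphisms by the formula used in the proof of Lemma~\ref{lem:homot}.
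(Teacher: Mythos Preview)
Your proposal is correct and follows essentially the same route as the paper: for $e \subseteq c$ both just unwind the definition, and for $e \not\subseteq c$ both reduce to Lemma~\ref{lem:homot} applied with the ambient index set $[k]$ replaced by $e$ and $c$ replaced by $c\cap e$. Your explicit verification that $\scrN\uhr_e = \Ag_{c\cap e}^e(\scrM\uhr_{c\cap e})$ is a slightly more careful spelling-out of what the paper leaves implicit when it simply displays the structure complex at $e$ and invokes the lemma.
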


\begin{proof}
If $e \subseteq c$, then the structure complex for $\scrN$ at $e$ involves only the modules $M_{a \cap c} = M_a$ for $a \subseteq e \subseteq c$, so the first case is clear.  For the second, observe that if $e \setminus c \neq \emptyset$, then the structure complex of $\scrN$ at $e$ is
\begin{eqnarray}\label{eq:s-c-of-agg}
0 \to M_\emptyset \stackrel{\partial_1}{\to} \bigoplus_{i\in e} M_{c \cap \{i\}} \stackrel{\partial_2}{\to} \bigoplus_{a \in \binom{e}{2}}M_{c \cap a} \stackrel{\partial_3}{\to} \cdots \stackrel{\partial_k}{\to} M_{c\cap e},
\end{eqnarray}
so this case follows from Lemma~\ref{lem:homot}.
\end{proof}

Combining restriction with aggrandizement leads to the following.

\begin{dfn}[Reduction]
If $\scrM$ is a pre-$\P$-module over $(Z,\bfU)$, then its \textbf{reduction at $c$} is the $\P$-module
\[\scrM \llc_c := \Ag_c^{[k]}(\scrM\uhr_c).\]
As previously, this will always be interpreted as a $(Z,Y,\bfU)$-$\P$-module if $\scrM$ was such.
\end{dfn}

If $\scrM$ is a $(Z,Y,\bfU)$-$\P$-modules, then so are all of its reductions.  Intuitively, these are simply the restrictions $\scrM\uhr_c$ re-interpreted so that they are still directed by $(Z,Y,\bfU)$.

Since the structure complexes of $\scrM\uhr_c$ are simply those of $\scrM$ at subsets of $c$, Corollary~\ref{cor:s-c-of-agg} turns into the following.

\begin{cor}\label{lem:s-c-of-reduction}
If $c \subseteq [k]$ and $e \subseteq [k]$, then the structure complex of $\scrM\llc_c$ at $e$ is
\begin{itemize}
\item the same as for $\scrM$ itself if $e \subseteq c$, and
\item split if $e \not\subseteq c$. \qed
\end{itemize}
\end{cor}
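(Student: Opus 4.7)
The plan is to deduce this corollary directly from Corollary~\ref{cor:s-c-of-agg}, with only a trivial bookkeeping step to handle the effect of first restricting to $c$. First I would observe that, by definition, $\scrM\llc_c = \Ag_c^{[k]}(\scrM\uhr_c)$, so the structure complex of $\scrM\llc_c$ at $e$ is precisely the structure complex at $e$ of an aggrandizement, namely of the pre-$\P$-module $\scrM\uhr_c$ over $(Z,\bfU\uhr_c)$.

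Next I would apply Corollary~\ref{cor:s-c-of-agg} to this aggrandizement, with $\scrM$ there replaced by $\scrM\uhr_c$. In the case $e \subseteq c$, that corollary identifies the structure complex of $\Ag_c^{[k]}(\scrM\uhr_c)$ at $e$ with the structure complex of $\scrM\uhr_c$ at $e$. But because every index set $a \subseteq e \subseteq c$ satisfies $a \subseteq c$, the constituents $M_a$, structure morphisms $\phi_{a,b}$ and derivation-lifts appearing in the structure complex at $e$ are unaffected by the restriction: the structure complex of $\scrM\uhr_c$ at $e$ coincides with the structure complex of $\scrM$ at $e$. This gives the first bullet.

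For the second bullet, with $e \not\subseteq c$, Corollary~\ref{cor:s-c-of-agg} states directly that the structure complex of the aggrandizement at $e$ is split, with no further use of the structure of $\scrM\uhr_c$. Thus the same conclusion transfers verbatim to $\scrM\llc_c$ at $e$. The only case not covered literally by Corollary~\ref{cor:s-c-of-agg} as stated (which assumes $c \subsetneqq [k]$) is $c = [k]$, but in that case $\scrM\llc_c = \scrM$ and the condition $e \not\subseteq c$ is vacuous, so there is nothing to check. There is no real obstacle here: the entire content of the corollary is already packaged in Corollary~\ref{cor:s-c-of-agg}, and all that remains is to confirm that restriction to $c$ does not alter any structure complex indexed by $e \subseteq c$, which is immediate from the definition of restriction.
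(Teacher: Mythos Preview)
Your proposal is correct and follows essentially the same approach as the paper: the paper simply remarks, just before stating the corollary, that the structure complexes of $\scrM\uhr_c$ are those of $\scrM$ at subsets of $c$, so Corollary~\ref{cor:s-c-of-agg} applied to $\Ag_c^{[k]}(\scrM\uhr_c)$ yields the result. Your explicit handling of the edge case $c = [k]$ is a nice touch that the paper leaves implicit.
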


\begin{cor}\label{cor:restriction-still-modest}
If $\scrM$ is a $(Z,Y,\bfU)$-$\P$-module and $c\subseteq [k]$, then $\scrM\llc_c$ is $\ell_0$-almost (resp. strictly) modest if and only if $\scrM\uhr_c$ is $\ell_0$-almost (resp. strictly) modest, and both are implied if $\scrM$ itself is $\ell_0$-almost (resp. strictly) modest.  If $\scrM$ is structurally compactly generated, then so are its restrictions and reductions.
\end{cor}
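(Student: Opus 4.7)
The plan is to run everything through Corollary~\ref{lem:s-c-of-reduction}, which relates the structure complexes of $\scrM$, $\scrM\uhr_c$, and $\scrM\llc_c$. By Lemma~\ref{lem:minimize-ambient-gp} I would first reduce to the lean case $Z = Y + U_{[k]}$. Modesty of $\scrM\llc_c$ is then tested by checking, for every $e \subseteq [k]$, the top structure complex of the lean version of $(\scrM\llc_c)\uhr_e$. Corollary~\ref{lem:s-c-of-reduction} splits this into two regimes: for $e\subseteq c$ the structure complex of $\scrM\llc_c$ at $e$ coincides with that of $\scrM$ at $e$, which is also the top structure complex of $(\scrM\uhr_c)\uhr_e$; while for $e\not\subseteq c$ the structure complex is split, hence has vanishing homology.

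Given this case split, the equivalence ``$\scrM\llc_c$ modest iff $\scrM\uhr_c$ modest'' should follow by matching the two lists of top-modesty conditions: the $e\subseteq c$ cases exactly account for the restrictions of $\scrM\uhr_c$, and the $e\not\subseteq c$ cases are automatically satisfied. The implication from modesty of $\scrM$ to modesty of either $\scrM\uhr_c$ or $\scrM\llc_c$ is then immediate, because every restriction of $\scrM\uhr_c$ is a restriction of $\scrM$.

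The step I expect to require the most care is justifying the ``automatically satisfied'' claim in the $e\not\subseteq c$ regime at the correct level. Corollary~\ref{lem:s-c-of-reduction} only provides splitness for the $Z$-module structure complex, whereas modesty is tested against the complex of $(Y + U_e)$-sub-constituents. Fortunately, inspection of the splitting operators $\xi_\ell$ in the proof of Lemma~\ref{lem:homot} shows that they are built only from signs and the tautological identification between the $a$-slot and the $(a\cup s)$-slot arising from $(a\cup s)\cap c = a\cap c$ whenever $s\notin c$. This identification persists at the sub-constituent level, so the same $\xi_\ell$ split the lean complex and its homology vanishes. To promote zero homology to $\ell_0$-almost discrete homology one must still check that the modules at positions below $\ell_0$ vanish: this follows by applying $\ell_0$-almost modesty of $\scrM\uhr_c$ (or $\scrM$) to the singleton restriction indexed by $d$ for each $|d|<\ell_0$, which forces $A_d = 0$ and hence $A_{a\cap c}=0$ for every $|a|<\ell_0$.

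The structural-compact-generation claim then follows by the same case division: in Case $e\subseteq c$ the structural homologies of the reduction coincide with those of $\scrM$, which are compactly generated by hypothesis; in Case $e\not\subseteq c$ they are zero, which is trivially compactly generated. The restriction case is even easier since every structural homology of $\scrM\uhr_c$ appears verbatim among those of $\scrM$.
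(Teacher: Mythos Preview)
Your approach is correct and essentially the same as the paper's: both reduce to Corollary~\ref{lem:s-c-of-reduction}, using that for $e\subseteq c$ the structure complex of $\scrM\llc_c$ at $e$ agrees with that of $\scrM\uhr_c$, while for $e\not\subseteq c$ it is split. The paper's proof is a single sentence that does not spell out the split case; you are right to notice that ``split'' must be verified at the sub-constituent level and that the vanishing of $A_d$ for $|d|<\ell_0$ must be checked separately, and your resolutions of both points (via the explicit form of the $\xi_\ell$ and via top-modesty of $\scrM\uhr_d$ for small $d$) are sound.
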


\begin{proof}
This follows from Lemma~\ref{lem:s-c-of-reduction}, since the structure complexes of $\scrM\llc_c$ are the same as for $\scrM\uhr_c$, and they are among the structure complexes of $\scrM$ itself.
\end{proof}

\section{Cohomology $\P$-modules}\label{sec:take-cohom}

Suppose that $\scrM = (M_e)_e$ is a Polish pre-$\P$-module over $(Z,\bfU)$ and that $W \leq Z$ is a closed subgroup.  Then for each $p\geq 0$ one may form a new pre-$\P$-module by applying the functor $\rmH^p_\m(W,-)$ to all the modules, structure morphisms and derivation-actions of $\scrM$ (recalling that the cohomology groups $\rmH^p_\m(W,M_e)$ are interpreted as topological $Z$-modules with their quotient topologies).

To be explicit, the new data are the following.
\begin{itemize}
 \item (Modules) \quad $\rmH^p_\m(W,M_e)$ for $e \subseteq [k]$.
\item (Structure morphisms) \quad These are obtained by composing cocycles with the structure morphisms $\phi^\scrM_{a,e}:M_a\to M_e$.  If $d^Wf \in \B^p(W,M_a)$, then of course
\[\phi^\scrM_{a,e}\circ(d^Wf) = d^W(\phi^\scrM_{a,e}\circ f) \in \B^p(W,M_e),\]
so this composition sends coboundaries to coboundaries, and hence descends to a suitable structure morphism $\rmH^p_\m(W,M_a)\to \rmH^p_\m(W,M_e)$.
\item (Derivation-actions) \quad These are obtained similarly, as compositions with the derivation-actions of $\scrM$ itself.  If $\s \in \Z^p(W,M_e)$, then one easily verifies that the map $u\mapsto \t{\nabla}^{e,e\setminus i}_u\circ \s$ satisfies~(\ref{eq:t-nabla}) and that it takes values in $\Z^p(W,M_{e\setminus i})$.  If $\s = d^Wf$ is a coboundary, then
\[\t{\nabla}^{e,e\setminus i}_u\circ (d^Wf) = d^W(\t{\nabla}^{e,e\setminus i}_u\circ f),\]
because $\t{\nabla}^{e,e\setminus i}_u:M_e\to M_{e\setminus i}$ intertwines the $W$-actions, so this image takes values among coboundaries.  Thus, these compositions quotient to well-defined derivation-actions
\[U_i\to \Hom\big(\rmH^p_\m(W,M_e), \rmH_\m^p(W,M_{e\setminus i})\big).\]
Since these are obtained from manifestly continuous maps
\[U_i \times \Z^p(W,M_e) \to \Z^p(W,M_{e\setminus i}),\]
they are continuous as maps
\[U_i\times \rmH^p_\m(W,M_e) \to \rmH^p_\m(W,M_{e\setminus i}).\]
\end{itemize}
Axioms ($\P 3$--$5$) clearly still hold for these composition-structure-morphisms and composition-derivation-actions. The functoriality of $\rmH^p_\m(W,-)$ means that it preserves all the relations among the structure morphisms and derivation-lifts of $\scrM$.

\begin{dfn}\label{dfn:cohom}
The new pre-$\P$-module constructed above is the \textbf{$p^{\rm{th}}$ cohomology pre-$\P$-module of $\scrM$ for the subgroup $W$}, and is denoted by $\rmH^p_\m(W,\scrM)$.
\end{dfn}

Clearly $\rmH^p_\m(W,\scrM)$ is again a Polish pre-$\P$-module if and only if $\rmH^p_\m(W,M_e)$ is Hausdorff, hence Polish, for every $e \subseteq [k]$.

In case $p \in \{0,1\}$, the above definition may be extended to an arbitrary input pre-$\P$-module.  For $p=0$, this is because $\rmH^0_\m(W,-)$ is simply the fixed-point functor $(-)^W$, so there are no coboundaries and $M_e$-valued cocycles are simply $W$-invariant elements of $M_e$.  For $p=1$, the above definition can again be repeated, but rather than using Borel measurable functions $W\to M_e$ in the definition of the cohomology groups for each $e$, one can insist on continuous functions: this makes no difference to the definition of $\rmH^1_\m(W,\scrM)$, because in fact measurable $1$-cocycles are always continuous (see, for instance,~\cite[Section I.2]{Moo64(gr-cohomI-II)}).  With this understanding, we will freely use the construction of $\rmH^0_\m(W-)$ and $\rmH^1_\m(W,-)$ for an arbitrary pre-$\P$-module.

Now suppose that $\scrM$ is actually a $(Z,Y,\bfU)$-$\P$-module.  In the cases of importance to us, $\rmH^p_\m(W,\scrM)$ will then have the structure of a $(Z,Y+W,\bfU)$-$\P$-module.  This happens under the following circumstances.

\begin{lem}\label{lem:when-cohom-is-delta}
Let $\scrM$ be as above, and let $(A_e)_e$, $(\psi_{a,e})_{a\subseteq e}$ and $(\t{\nabla}^{\circ,e,e\setminus i})_{i,e}$ be its sub-constituent data.  If $\rmH^p_\m(W\cap (Y + U_e),A_e)$ is Hausdorff (hence Polish) for every $e$, then $\rmH^p_\m(W,\scrM)$ is a $(Z,Y+W,\bfU)$-$\P$-module for the following sub-constituent data.
\begin{itemize}
\item (Sub-constituent modules) \quad $\rmH^p_\m(W,\Cnd_{Y + U_e}^{W+Y+U_e},A_e)$ for $e \subseteq [k]$.
\item (Sub-constituent structure morphisms)
\[\rmH^p_\m(W,\Cnd_{Y+U_e}^{W + Y + U_e}\phi_{a,e}) \quad \hbox{for}\ a\subseteq e\subseteq [k].\]
\item (Sub-constituent derivation-actions) These are constructed as above by applying $\rmH^p_\m(W,\Cnd_{Y+U_e}^{W+Y+U_e}(-))$ to $\t{\nabla}^{\circ,e,e\setminus i}$ for each $i$ and $e$.
\end{itemize}
\end{lem}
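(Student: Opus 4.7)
The plan is to verify that the listed data satisfy Definition~\ref{dfn:Delta-mod} with ambient triple $(Z, Y+W, \bfU)$. Write $V_e := Y + W + U_e$ for brevity. The central tool is that, sub-constituent by sub-constituent, one simply applies the functor $\rmH^p_\m(W, \Cnd_{Y+U_e}^{V_e}(-))$ to the sub-constituent data of $\scrM$; and then one checks that when the results are co-induced from $V_e$ back up to $Z$, one recovers the constituent data of $\rmH^p_\m(W, \scrM)$ as defined by Definition~\ref{dfn:cohom}.

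First I would show that $A'_e := \rmH^p_\m(W, \Cnd_{Y+U_e}^{V_e} A_e)$ is Polish, and that $\Cnd_{V_e}^Z A'_e$ canonically equals $\rmH^p_\m(W, M_e)$. For the Polish claim, note that $W + (Y + U_e) = V_e$, so Lemma~\ref{lem:coind-and-coind} gives a $W$-module isomorphism
\[
\Cnd_{Y+U_e}^{V_e} A_e \cong \Cnd_{W \cap (Y+U_e)}^W A_e,
\]
and Shapiro's Lemma (Theorem~\ref{thm:Shapiro}) identifies $A'_e$ with $\rmH^p_\m(W \cap (Y + U_e), A_e)$, which is Polish by hypothesis. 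For the second claim, the composition rule (\ref{eq:compose-coind}) gives $M_e = \Cnd_{V_e}^Z \Cnd_{Y + U_e}^{V_e} A_e$, and Lemma~\ref{lem:cohom-of-coind} --- whose Hausdorffness hypothesis is exactly what was just verified --- yields $\rmH^p_\m(W, M_e) \cong \Cnd_{V_e}^Z A'_e$ naturally in $A_e$. The same pair of identifications, applied pointwise to the morphisms $\psi_{a,e}$ and to each derivation-lift $\t{\nabla}_u^{\circ, e, e \setminus i}$, builds the sub-constituent morphisms $\psi'_{a,e}: \Cnd_{V_a}^{V_e} A'_a \to A'_e$ and the sub-constituent derivation-actions whose co-inductions to $Z$ are the constituent structure morphisms and derivation-actions of $\rmH^p_\m(W, \scrM)$ from Definition~\ref{dfn:cohom}; the target of $\t{\nabla}'^{\circ, e, e\setminus i}_u$ comes out as the required $\Cnd_{V_{e\setminus i}}^{V_e} A'_{e\setminus i}$ after one further application of (\ref{eq:compose-coind}) and Lemma~\ref{lem:cohom-of-coind} to rewrite $\Cnd_{Y + U_{e\setminus i}}^{V_e} = \Cnd_{V_{e\setminus i}}^{V_e} \Cnd_{Y+U_{e\setminus i}}^{V_{e\setminus i}}$.

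Finally I would verify axioms ($\P 1$)--($\P 5$) and the derivation-lift condition at the sub-constituent level. All five structural axioms transfer from those of $\scrM$ by functoriality of $\rmH^p_\m(W, \Cnd_{Y+U_e}^{V_e}(-))$. The derivation-lift condition follows similarly: for $i \in e$, differencing by $u \in U_i$ on $A_e$ (resp.\ on $\Cnd_{Y+U_e}^{V_e}A_e$) is induced by the $V_e$-action, hence commutes with the functor, so applying the functor to $\psi_{e\setminus i, e}\circ\t{\nabla}^{\circ,e,e\setminus i}_u = d_u$ gives $\psi'_{e\setminus i, e}\circ \t{\nabla}'^{\circ,e,e\setminus i}_u = d_u$ on $A'_e$. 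The main bookkeeping obstacle is matching up the various nested co-inductions and Lemma~\ref{lem:cohom-of-coind} identifications so that domains and codomains align cleanly, but this is a purely formal chase once the central isomorphism of the preceding paragraph is in hand.
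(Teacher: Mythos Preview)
Your proposal is correct and follows exactly the paper's approach: the paper's own proof is a terse two-sentence appeal to Lemma~\ref{lem:cohom-of-coind} and relation~(\ref{eq:compose-coind}), and you have simply unpacked those citations in full, including the use of Lemma~\ref{lem:coind-and-coind} and Shapiro to explain why the Hausdorffness hypothesis is phrased for $W\cap(Y+U_e)$.
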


\begin{proof}
This all follows directly from Lemma~\ref{lem:cohom-of-coind} and the relation~(\ref{eq:compose-coind}).  These show that if a module homomorphism is co-induced over some subgroup $Y\leq Z$, then its image under $\rmH^p_\m(W,-)$ is still co-induced over $Y+W$, provided the cohomology groups are still Polish.  This is why $Y$ is replaced by $Y+W$ in the tuple of directing subgroups.
\end{proof}

Under the conditions of the above lemma, $\rmH^p_\m(W,\scrM)$ will always be interpreted as a $\P$-module with these sub-constituents, and will be called a \textbf{cohomology $\P$-module}.

\begin{rmk}
If $\scrM$ is an inner $\P$-module, then nevertheless $\rmH^p_\m(W,\scrM)$ need not be inner: an injection $\phi_{a,e}:M_a \to M_e$ need not give rise to an injection on cohomology $\rmH^p_\m(W,M_a)\to \rmH^p_\m(W,M_e)$.  This is the main reason we have not restricted our attention to inner $\P$-modules throughout. \fin
\end{rmk}

The next two lemmas record the simple relations between cohomology and aggrandizement and reduction.

\begin{lem}\label{lem:cohom-of-agg}
Suppose that $Z$ is a compact Abelian group, that $\bfU = (U_i)_{i\in e}$ is a tuple of closed subgroups, that $c \subseteq e$, and that $\scrM$ is a Polish pre-$\P$-module over $(Z,\bfU\uhr_c)$.  Suppose also that $W \leq Z$ and $p\geq 0$. Then
\[\rmH^p_\m(W,\Ag_c^e\scrM) = \Ag_c^e\rmH^p_\m(W,\scrM).\]
This also holds for non-Polish $\scrM$ in case $p \in \{0,1\}$.  On the other hand, if $\scrM$ is a $(Z,Y,\bfU\uhr_c)$-$\P$-module and it satisfies the conditions of Lemma~\ref{lem:when-cohom-is-delta}, then this is an equality of $(Z,Y+W,\bfU)$-$\P$-modules.
\end{lem}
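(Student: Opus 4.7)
The plan is to verify the equality directly by checking that both sides agree on each of the three ingredients of a pre-$\P$-module: constituent modules, structure morphisms, and derivation-lifts. The essential point is that aggrandizement is just a relabeling of the indexing set via $a \mapsto a\cap c$, together with a tautological re-use of the existing data, while $\rmH^p_\m(W,-)$ is applied to every module/morphism/derivation pointwise. Since functoriality of $\rmH^p_\m(W,-)$ preserves compositions and commutations with ordinary differencing, these two operations commute in an essentially formal way.

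Concretely, for any $a\subseteq e$, the constituent at $a$ on both sides equals $\rmH^p_\m(W,M_{a\cap c})$: on the left by the definition of $\rmH^p_\m(W,-)$ applied to the constituent $M^\wedge_a = M_{a\cap c}$ of $\Ag_c^e\scrM$, and on the right by the definition of aggrandizement applied to the constituent $\rmH^p_\m(W,M_{a\cap c})$ of $\rmH^p_\m(W,\scrM)$. For $a\subseteq b\subseteq e$, the structure morphism is in both cases $\rmH^p_\m(W,\phi^\scrM_{a\cap c,b\cap c})$, because aggrandized structure morphisms are defined by the rule $(-)_{a,b} := (-)_{a\cap c,b\cap c}$, a rule that commutes with applying a functor. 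The derivation-lifts are handled identically: both sides yield $\t\nabla^{a\cap c,(a\cap c)\setminus i}_u$ post-composed with cocycles, which is precisely the derivation-action on the $p$-cohomology group. The extension to $p\in\{0,1\}$ without a Polish hypothesis is immediate since $\rmH^0_\m(W,-)$ and $\rmH^1_\m(W,-)$ were defined on arbitrary pre-$\P$-modules in the same pointwise fashion.

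The only substantive verification is the final assertion about $\P$-module structures. Assuming the hypotheses of Lemma~\ref{lem:when-cohom-is-delta}, both $\rmH^p_\m(W,\Ag_c^e\scrM)$ and $\Ag_c^e\rmH^p_\m(W,\scrM)$ carry canonical $(Z,Y+W,\bfU)$-$\P$-module structures, and I must check that their sub-constituent data agree. For $b\subseteq e$, the sub-constituent of $\rmH^p_\m(W,\Ag_c^e\scrM)$ at $b$ is
\[\rmH^p_\m\bigl(W,\Cnd_{Y+U_b}^{W+Y+U_b}\Cnd_{Y+U_{b\cap c}}^{Y+U_b}A_{b\cap c}\bigr) \;=\; \rmH^p_\m\bigl(W,\Cnd_{Y+U_{b\cap c}}^{W+Y+U_b}A_{b\cap c}\bigr),\]
using the composition law~(\ref{eq:compose-coind}). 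The sub-constituent of $\Ag_c^e\rmH^p_\m(W,\scrM)$ at $b$ is
\[\Cnd_{W+Y+U_{b\cap c}}^{W+Y+U_b}\rmH^p_\m\bigl(W,\Cnd_{Y+U_{b\cap c}}^{W+Y+U_{b\cap c}}A_{b\cap c}\bigr),\]
and by Lemma~\ref{lem:cohom-of-coind} (applied to the chain of subgroups $W \leq W+Y+U_{b\cap c}\leq W+Y+U_b$) this equals
\[\rmH^p_\m\bigl(W,\Cnd_{W+Y+U_{b\cap c}}^{W+Y+U_b}\Cnd_{Y+U_{b\cap c}}^{W+Y+U_{b\cap c}}A_{b\cap c}\bigr) \;=\; \rmH^p_\m\bigl(W,\Cnd_{Y+U_{b\cap c}}^{W+Y+U_b}A_{b\cap c}\bigr),\]
again by~(\ref{eq:compose-coind}). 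The same argument applied to sub-constituent structure morphisms and derivation-actions completes the identification. The main (minor) obstacle is not conceptual but just bookkeeping: one must track the two successive co-inductions carefully and recognize that the $W$-factor introduced when upgrading $\rmH^p_\m(W,\scrM)$ to a $(Z,Y+W,\bfU\uhr_c)$-$\P$-module is absorbed, via Lemma~\ref{lem:cohom-of-coind} and~(\ref{eq:compose-coind}), into the single co-induction that naturally describes $\rmH^p_\m(W,\Ag_c^e\scrM)$ on the other side.
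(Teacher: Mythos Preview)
Your proposal is correct and takes essentially the same approach as the paper: a direct unpacking of Definitions~\ref{dfn:agg} and~\ref{dfn:cohom}, checking that the constituent modules, structure morphisms, and derivation-lifts agree on both sides. The paper's proof is in fact terser than yours --- it gives only the derivation-lift computation as a sample and leaves the $\P$-module sub-constituent verification implicit --- so your explicit use of Lemma~\ref{lem:cohom-of-coind} and~(\ref{eq:compose-coind}) to match the sub-constituents is a welcome elaboration rather than a different route.
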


\begin{proof}
This is an immediate consequence of Definitions~\ref{dfn:agg} and~\ref{dfn:cohom}.  For instance, the left-hand derivation lifts are defined at the level of cocycles by
\[\t{\nabla}^{\rmH^p_\m(W,\Ag_c^{[k]}\scrM),a,a\setminus i}_u\ :\ \s \mapsto \t{\nabla}^{\Ag_c^{[k]}\scrM,a,a\setminus i}_u\circ \s = \t{\nabla}^{\scrM,a\cap c,(a\cap c)\setminus i}_u\circ \s,\]
which agrees with the derivation-lift acting on $\Z^p(W,M_{a \cap c})$.
\end{proof}

\begin{lem}\label{lem:cohom-of-res-and-red}
For any Polish pre-$\P_e$-module $\scrM$, any $W \leq Z$, any $p \geq 0$ and any $c \subseteq e$ one has
\[\rmH^p_\m(W,\scrM\uhr_c) = (\rmH^p_\m(W,\scrM))\uhr_c\]
and
\[\rmH^p_\m(W,\scrM\llc_c) = (\rmH^p_\m(W,\scrM))\llc_c.\]
These hold also for non-Polish $\scrM$ when $p \in \{0,1\}$.  If $\scrM$ is a $\P_e$-module, then these are equalities of $\P$-modules.
\end{lem}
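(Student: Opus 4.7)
The plan is to observe that restriction and reduction are essentially combinatorial operations on the indexed families of modules, structure morphisms, and derivation-lifts comprising a pre-$\P$-module, whereas $\rmH^p_\m(W,-)$ acts entry-by-entry on each such family. Both equalities should therefore follow tautologically from definition-chasing, with the only non-trivial input being Lemma~\ref{lem:cohom-of-agg} for the reduction case.

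For the first equality, I would begin by unfolding both sides: both $\rmH^p_\m(W,\scrM\uhr_c)$ and $(\rmH^p_\m(W,\scrM))\uhr_c$ are pre-$\P_c$-modules whose data consist of the modules $\rmH^p_\m(W,M_a)$ for $a\subseteq c$, the structure morphisms $\rmH^p_\m(W,\phi^\scrM_{a,b})$ (given at the level of cocycles by $\s \mapsto \phi^\scrM_{a,b}\circ \s$) for $a\subseteq b\subseteq c$, and the derivation-lifts $u\mapsto \rmH^p_\m(W,\t{\nabla}^{\scrM,a,a\setminus i}_u)$ for $a\subseteq c$ and $i\in c$. Since restriction consists purely of selecting the subfamily indexed by subsets and elements of $c$, while cohomology is applied index-by-index, the two operations commute on the nose.

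For the second equality, I would use the definition $\scrM\llc_c = \Ag_c^e(\scrM\uhr_c)$ together with Lemma~\ref{lem:cohom-of-agg} to write
\[\rmH^p_\m(W,\scrM\llc_c) = \rmH^p_\m(W,\Ag_c^e(\scrM\uhr_c)) = \Ag_c^e\bigl(\rmH^p_\m(W,\scrM\uhr_c)\bigr),\]
and then substitute the first equality to obtain
\[\Ag_c^e\bigl((\rmH^p_\m(W,\scrM))\uhr_c\bigr) = (\rmH^p_\m(W,\scrM))\llc_c.\]
The hypotheses required for Lemma~\ref{lem:cohom-of-agg} (Polishness when $p\geq 2$) are inherited from those assumed on $\scrM$.

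Finally, for the $\P$-module enhancement, when $\scrM$ is a $(Z,Y,\bfU)$-$\P_e$-module satisfying the Polishness hypotheses of Lemma~\ref{lem:when-cohom-is-delta}, both sides of each equality carry $\P$-module sub-constituent data obtained by applying $\rmH^p_\m(W,\Cnd_{Y+U_a}^{W+Y+U_a}(-))$ to the sub-constituents of $\scrM$ indexed by $a$ in the appropriate range, after which restriction or aggrandizement selects or re-indexes in a purely combinatorial fashion. These operations again commute index-by-index. I do not anticipate any serious obstacle here: the substantive content is already contained in Lemma~\ref{lem:cohom-of-agg}, and what remains is bookkeeping to match the two indexed families on each side of each equation.
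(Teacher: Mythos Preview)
Your proposal is correct and follows the same approach as the paper, which simply states that the result ``follows at once from the commutativity of all the relevant diagrams and Lemma~\ref{lem:cohom-of-agg}.'' You have spelled out in detail exactly what that terse sentence means: the first equality is pure bookkeeping since restriction and cohomology both act index-by-index, and the second follows by combining the first with Lemma~\ref{lem:cohom-of-agg} via the definition $\scrM\llc_c = \Ag_c^e(\scrM\uhr_c)$.
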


\begin{proof}
This follows at once from the commutativity of all the relevant diagrams and Lemma~\ref{lem:cohom-of-agg}.
\end{proof}

\subsection{A partial order on subgroup tuples}\label{subs:subgp-order}

The next subsection will prove that almost modesty is preserved under forming cohomology $\P$-modules. That proof will be by induction the data $(Z,Y,\bfU)$, so we first introduce the partial well-order that will direct this induction.

Let $\cal{D}$ be the class of all triples $(Z,Y,\bfU)$ in which $Z$ is a compact metrizable Abelian group, $Y\leq Z$ is a closed subgroup and $\bfU$ is a finite (possibly empty) tuple of closed subgroups of $Z$.  (This $\cal{D}$ is not formally a set, but the worried reader can simply restrict attention to only those $Z$ that are themselves closed subgroups of $\bbT^\bbN$, which covers all possible $Z$ up to isomorphism.)

\begin{dfn}[Complexity order on subgroup data]
Suppose that
\[(Z,Y,(U_1,\ldots,U_k)),\ (Z',Y',(U'_1,\ldots,U'_{k'})) \in \cal{D}.\]
Then the tuple of data $(Z,Y,\bfU)$ \textbf{strictly precedes} $(Z',Y',\bfU')$, written $(Z,Y,\bfU) \prec (Z',Y',\bfU')$, if
\begin{itemize}
\item either $k < k'$,
\item or $k = k'$, but
\[|\{i \leq k\,|\,U_i \leq Y\}| > |\{i' \leq k\,|\,U'_{i'} \leq Y'\}|.\]
\end{itemize}

The data $(Z,Y,\bfU)$ are \textbf{pure} if $U_i \leq Y$ for every $i$.
\end{dfn}

This is easily seen to be a partial well-ordering.

Subsection~\ref{subs:mod-soln} discussed the idea of applying a difference operator to a {\PDE} solution to obtain a $1$-cocycle into the space of solutions to the {\PDE} associated to a smaller tuple of subgroups.  A more abstact version of this idea will be the basis for the coming induction on the partial order $\prec$.  However, in some cases this reduction will work only for non-pure modules, and a separate proof will have to be given in the pure case.  That case will usually depend upon the following simple structure theorem.

Let $(Z,Y,\bfU) \in \cal{D}$ be pure, and let $\scrM = (M_e)_e$ be a strictly $\ell_0$-modest $(Z,Y,\bfU)$-$\P$-module. For $e\subseteq [k]$ and $\ell \leq |e|$ let $\partial_{e,\ell}$ be the boundary morphism in position $\ell$ of the structure complex of $\scrM$ at $e$.  Let $K_{e,\ell} := \ker \partial_{e,\ell}$ and $I_{e,\ell} := \img\,\partial_{e,\ell}$.

\begin{prop}[Structure of pure $\P$-modules]\label{prop:struct-of-pure}
In the situation above, all the $M_e$ and also all the $I_{e,\ell}$ and $K_{e,\ell}$ are co-induced over $Y$ from discrete $Y$-modules.  If $\scrM$ is structurally compactly generated, then these discrete $Y$-modules are also finitely generated.
\end{prop}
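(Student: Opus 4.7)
The plan is to prove everything at the level of sub-constituents and then co-induce. By purity, $Y + U_e = Y$ for every $e \subseteq [k]$, so the lean version of any restriction $\scrM\uhr_c$ is directed by $(Y,Y,\bfU\uhr_c)$, and its constituents are exactly the sub-constituents $A_a$ of $\scrM$, regarded as Polish $Y$-modules. It therefore suffices to show that each $A_e$ is a discrete $Y$-module: then $M_e = \Cnd_Y^Z A_e$ is co-induced from a discrete $Y$-module, and each $K_{e,\ell}$ (resp.\ $I_{e,\ell}$) will be the co-induction of the corresponding kernel $K^\circ_{e,\ell}$ (resp.\ image $I^\circ_{e,\ell}$) in the sub-constituent top structure complex of $\scrM\uhr_e$.

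The core argument will be an induction on $|e|$. For $|e| < \ell_0$, strict $\ell_0$-modesty of $\scrM\uhr_e$ forces $A_e = 0$; for $|e| = \ell_0$, the top structure complex of $\scrM\uhr_e$ collapses to $0 \to 0 \to \cdots \to 0 \to A_e \to 0$, so $A_e$ equals its own top homology and is discrete by strict modesty. For $|e| > \ell_0$, I would assume inductively that each $A_a$ with $a \subsetneqq e$ is a discrete $Y$-module. The direct sums at positions $0,1,\ldots,|e|-1$ of the top structure complex of $\scrM\uhr_e$ are then discrete, so each $K^\circ_{e,\ell}$ for $\ell \leq |e|-1$ is a closed subgroup of a discrete module and hence discrete; strict modesty makes all boundary maps closed, so each $I^\circ_{e,\ell}$ for $\ell \leq |e|-1$ is a closed subgroup of $K^\circ_{e,\ell}$ and is again discrete. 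At the top position, strict modesty gives that $A_e / I^\circ_{e,|e|}$ is discrete, so $I^\circ_{e,|e|}$ is both open and closed in $A_e$.

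The main obstacle is the final step of the induction: deducing that $A_e$ itself is discrete. Here $I^\circ_{e,|e|}$ is a continuous image of the discrete, countable group $\bigoplus_{|a|=|e|-1,\,a\subseteq e} A_a$, so it is countable; being closed in the Polish module $A_e$ it is itself Polish. Lemma~\ref{lem:ctble-Polish-are-disc} then forces $I^\circ_{e,|e|}$ to be discrete in its own Polish topology, and since it is open in $A_e$, this implies that $A_e$ is discrete as well. This closes the induction.

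For the structural-compact-generation addendum, I would run exactly the same induction with \emph{discrete} replaced by \emph{finitely generated discrete}. Compactly generated discrete abelian groups are finitely generated, and the class of finitely generated abelian groups is closed under subgroups, quotients, continuous images and extensions. At each inductive step, $A_e$ will fit into a short exact sequence whose kernel $I^\circ_{e,|e|}$ is a continuous image of the finitely generated group $\bigoplus_{|a|=|e|-1,\,a\subseteq e} A_a$ and whose quotient $A_e / I^\circ_{e,|e|}$ is the top homology of $\scrM\uhr_e$, finitely generated by structural compact generation; hence $A_e$ will be finitely generated as well.
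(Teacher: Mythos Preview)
Your proof is correct and follows essentially the same strategy as the paper: an induction on $|e|$ that walks along the structure complex at $e$, using the inductive hypothesis for the direct sums at positions below the top and strict modesty for the homology. You work at the sub-constituent level (legitimately, since purity makes every $Y+U_e$ equal to $Y$), whereas the paper phrases everything for the co-induced modules; and at the top position you conclude by a countability argument via Lemma~\ref{lem:ctble-Polish-are-disc}, whereas the paper uses the extension presentation $I_{|e|-1}\hookrightarrow K_{|e|}\twoheadrightarrow K_{|e|}/I_{|e|-1}$ directly---but these are cosmetic differences in an otherwise identical induction.
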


\begin{proof}
This is a simple induction on $|e|$.  Clearly we may assume $\ell_0 = 0$.

When $e = \emptyset$, this follows directly from the modesty of the structure complex
\[0\to M_\emptyset \to 0,\]
whose homology is $M_\emptyset$ itself.

Now given nonempty $e \subseteq [k]$, and assuming the conclusion is already known for all $a \subsetneqq e$, we need only inspect the structure complex at $e$.  To lighten notation, we explain this in case $e = [k]$.  With all kernels and images inserted, the structure complex reads
\begin{multline*}
0 \to K_0\into  M_\emptyset \onto I_0\into K_1 \into M^{(1)} \onto I_1 \into K_2 \into M^{(2)} \onto I_2 \into\\ \cdots \onto I_{k-1} \into K_k = M_{[k]},
\end{multline*}
where all the morphisms here are co-induced over $Y$.

Here $K_0$ is a submodule of $M_\emptyset$ that is still co-induced over $Y$, so it is also co-induced-of-discrete over $Y$, and co-induced-of-finitely-generated in case $\scrM$ is structurally compactly generated.  The result now follows for all the remaining modules $I_\ell$ and $K_\ell$, including $K_k = M_{[k]}$, by induction on $\ell$.  Given the desired structure for $K_\ell$ and $M^{(\ell)}$ with $\ell \leq k-1$, it follows for $I_\ell$ in view of the presentation
\[K_\ell\into M^{(\ell)}\onto I_\ell.\]
On the other hand, each $K_\ell/I_{\ell-1}$ is assumed to be co-induced-of-discrete over $Y$ (and co-induced-of-finitely-generated in case $\scrM$ is structurally compactly generated), so given the desired structure for $I_{\ell-1}$, it follows also for $K_\ell$ owing to the presentation
\[I_{\ell-1}\into K_\ell\onto K_\ell/I_{\ell-1}.\]
\end{proof}

The previous result easily generalizes to $\ell_0$-almost modest $\scrM$, but we will not need that.

\subsection{Modesty of cohomology $\P$-modules}\label{subs:intro-cohom-modest}

Let us now fix a $(Z,Y,\bfU)$-$\P_{[k]}$-module $\scrM = (M_e)_e$ and a closed subgroup $W \leq Z$.

\begin{thm}\label{thm:Hpgood}
If $\scrM$ is $\ell_0$-almost modest, then $\rmH^p_\m(W,\scrM)$ satisfies the conditions of Lemma~\ref{lem:when-cohom-is-delta}, is $\ell_0$-almost modest, and is strictly modest in case $p \geq 1$ or $\scrM$ is strictly modest.

If, in addition, $Z$ is finite-dimensional and $\scrM$ is structurally compactly generated, then each $\rmH^p_\m(W,\scrM)$ is also structurally compactly generated.
\end{thm}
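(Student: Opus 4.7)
The plan is to proceed by induction on the complexity order $\prec$ on the directing data $(Z,Y,\bfU)$, with the workhorse at each stage being Corollary~\ref{cor:cohom-propagate-full}: for a fixed $(Z,Y,\bfU)$ and for each $e\subseteq[k]$, the structure complex of $\rmH^p_\m(W,\scrM)$ at $e$ is obtained by applying $\rmH^p_\m(W,-)$ to the structure complex of $\scrM$ at $e$, and the hypotheses (a)--(c) of that corollary at positions below the top will be set up recursively. Since the conclusions of the theorem must be shown for every restriction, I treat the full claim at $(Z,Y,\bfU,p,\scrM)$ simultaneously for all $p\geq 0$; this matters because Corollary~\ref{cor:cohom-propagate-full} ties together adjacent cohomological degrees through its use of long exact sequences.

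Base case (pure data). If $U_i \leq Y$ for all $i$, Proposition~\ref{prop:struct-of-pure} shows that every sub-constituent $A_e$ of $\scrM$ is a discrete $Y$-module and that the structure complex of $\scrM$ at any $e$ is co-induced over $Y = Y + U_e$ from a complex of discrete $Y$-modules with $\ell_0$-almost discrete (resp. discrete) homology, compactly generated when $\scrM$ is structurally compactly generated. Inducting on $|e|$, Corollary~\ref{cor:cohom-propagate-full} gives conclusions (a$'$)--(c$'$) for the whole structure complex of $\rmH^p_\m(W,\scrM)$ at $e$, which is exactly what it means for $\rmH^p_\m(W,\scrM)$ to be $\ell_0$-almost modest (or strictly modest when appropriate) at $e$. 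Lemma~\ref{lem:when-cohom-is-delta} then promotes this to a $(Z,Y+W,\bfU)$-$\P$-module structure. Compact generation descends through the final compact-generation clause of the same corollary.

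Inductive step (non-pure data). Choose $i$ with $U_i \not\leq Y$, and follow the informal strategy of Subsection~\ref{subs:cohom-overview}: pass to $U_i$-invariants via $\rmH^0_\m(U_i,-)$. The $\P$-submodule $\scrM^{U_i}$ is a $(Z,Y+U_i,\bfU)$-$\P$-module by Lemma~\ref{lem:when-cohom-is-delta}, and $(Z,Y+U_i,\bfU)\prec(Z,Y,\bfU)$. I would fit $\scrM^{U_i}$ into a short exact sequence of $(Z,Y+U_i,\bfU)$-$\P$-modules whose other terms have been resolved into pieces accessible to the inductive hypothesis; concretely, one uses the derivation-lifts $\t{\nabla}^{\circ,e,e\setminus i}$ (which by axiom ($\P 4$) are compatible with all structure morphisms) together with the degree-shift identifications (\ref{eq:higher-dim-shift})--(\ref{eq:base-dim-shift}) to write the quotients and cokernels that arise in terms of further $\P$-modules over simpler data. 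Proposition~\ref{prop:bigquot} then transfers $\ell_0$-almost modesty (and structural compact generation) from these pieces back up to $\rmH^p_\m(W,\scrM)$. Restrictions are handled by Lemma~\ref{lem:cohom-of-res-and-red} and Corollary~\ref{cor:restriction-still-modest}, which together reduce almost-modesty at every $\scrM\uhr_c$ to the same statement for $\P$-modules directed by data $(Z,Y,\bfU\uhr_c)$ that are $\preceq(Z,Y,\bfU)$.

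The hard part will be the inductive step, specifically the construction of a short exact sequence that genuinely decreases $(Z,Y,\bfU)$ under $\prec$. The quotient $\scrM/\scrM^{U_i}$ is not a priori a $\P$-module of simpler complexity; making it one requires reassembling the groups $\rmH^q_\m(U_i,M_e)$, for varying $q$ and $e$, into compatible constituents equipped with structure morphisms and derivation-lifts satisfying axioms ($\P 1$--$\P 5$). In particular, the derivation-lifts on the quotient must be extracted from $\scrM$'s derivation-lifts modulo the $U_i$-coboundary relations, and their compatibility is what ultimately forces the simultaneous treatment of all $p$. Once that reassembly is done, the verification of the hypotheses of Corollary~\ref{cor:cohom-propagate-full} at every position and every degree runs in parallel with the induction on $\prec$, and the statement about compact generation is carried along at no extra cost because Proposition~\ref{prop:bigquot} and Corollary~\ref{cor:cohom-propagate-full} both preserve compact generation of structural homology.
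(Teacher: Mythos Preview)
Your framework---$\prec$-induction, with Corollary~\ref{cor:cohom-propagate-full} handling the last position of each structure complex---is the right one, but the non-pure inductive step has a real gap. You propose to reduce by fitting $\scrM^{U_i}$ into a short exact sequence of $(Z,Y+U_i,\bfU)$-$\P$-modules, but you never say what the other terms are, and in fact there is no natural candidate: $\scrM$ itself is not a $(Z,Y+U_i,\bfU)$-$\P$-module, and (as you concede) $\scrM/\scrM^{U_i}$ is not obviously one either. Your suggested ``reassembly'' of all the groups $\rmH^q_\m(U_i,M_e)$ into a single $\P$-module over strictly $\prec$-smaller data is never carried out, and I do not see how it can be: no single $\P$-module carries all degrees $q$ at once, and Proposition~\ref{prop:bigquot} therefore has nothing to act on. A smaller issue: your pure base case invokes Proposition~\ref{prop:struct-of-pure}, which is stated only for \emph{strictly} modest $\scrM$, so the almost-modest case is not covered there.

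The paper's route is different in a way that matters. It does not decompose $\scrM$; it runs the non-pure reduction \emph{after} applying $\rmH^p_\m(W,-)$, via an auxiliary result (Proposition~\ref{prop:internal-co-disc}) that is itself a separate $\prec$-induction on an abstract pre-$\P$-module $\scrP$ whose proper restrictions are already known to be strictly modest. Setting $\scrP:=\rmH^p_\m(W,\scrM)$, one shows directly that $\img\,\partial^\scrP_\ell$ is closed and relatively open in $\ker\partial^\scrP_{\ell+1}$ for all $\ell\leq k-1$; these are precisely hypotheses (a)--(c) of Corollary~\ref{cor:cohom-propagate-full}, which then closes up position $k$. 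The engine is Lemma~\ref{lem:cobdry=image}: for $f\in\ker\partial^\scrP_{\ell+1}$ and any $i$, one has $d^{U_i}f=\partial^\scrP_\ell\sigma$ for some $\sigma\in\Z^1(U_i,P^{(\ell-1)})$, obtained by applying the derivation-lifts $\t\nabla^{U_i}$ to land in the reduction $\scrP\llc_{[k]\setminus i}$, whose top structure complex is \emph{split} by the Homotopical Lemma~\ref{lem:homot}, and then pushing back along the structure morphisms. This converts the question into one about $\rmH^1_\m(U_i,\scrP)$ and $\scrP^{U_i}$, both directed by data strictly $\prec$-below $(Z,Y+W,\bfU)$. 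Because this reduction may produce iterates like $\rmH^1_\m(U_i,\rmH^p_\m(W,\scrM))$, the proposition is deliberately phrased for a general $\scrP$ rather than for $\rmH^p_\m(W,\scrM)$ specifically---this is the abstraction your sketch is missing.
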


\begin{rmk}
A crucial feature of this theorem is that even if $\scrM$ is only almost modest, the cohomology $\P$-modules $\rmH^p_\m(W,\scrM)$ are strictly modest for all $p\geq 1$: that is, `higher degree cohomology converts almost modesty intro strict modesty'.  Ultimately this will be a consequence of Proposition~\ref{prop:nice-cohom-gps}. \fin
\end{rmk}

Before proving Theorem~\ref{thm:Hpgood} itself, we give an auxiliary proposition that will enable us to make use of a $\prec$-inductive hypothesis.  It concerns a new pre-$\P$-module $\scrP$, which later will be set equal to $\rmH^p_\m(W,\scrM)$.

In the sequel, for any $e \subseteq [k]$ and $0 \leq \ell \leq |e|$, we set
\[M_e^{(\ell)} := \bigoplus_{a \in\binom{e}{\ell}}M_a,\]
so this is the $\ell^\rm{th}$ entry in the structure complex of $\scrM$ at $e$, and similarly for $P^{(\ell)}_e$.  The subscript is omitted when $e = [k]$.

\begin{prop}\label{prop:internal-co-disc}
Fix $(Z',Y',\bfU')$, and suppose that Theorem~\ref{thm:Hpgood} is known for any strictly modest $(Z_1,Y_1,\bfU_1)$-$\P$-module for which $(Z_1,Y_1,\bfU_1) \prec (Z',Y',\bfU')$.  Suppose also that $(Z',Y',\bfU')$ is lean.

Let $\scrP$ be a pre-$\P$-module over $(Z',\bfU')$ such that $\scrP\uhr_e$ is a strictly modest ${(Z',Y',\bfU'\uhr_e)}$-$\P$-module for every $e \subsetneqq [k]$.  Let $\partial_\ell:P^{(\ell-1)}\to P^{(\ell)}$, $\ell = 1,\ldots,k$, be the boundary morphisms of its top structure complex.

Then the image $\partial_\ell(P^{(\ell-1)})$ is a closed submodule of $P^{(\ell)}$ for every $\ell \leq k-1$, and is relatively open in
\[\ker\big(P^{(\ell)} \stackrel{\partial_{\ell+1}}{\to} P^{(\ell+1)}\big).\]

Also, if $Z$ is finite-dimensional and every nontrivial restriction of $\scrP$ is structurally compactly generated, then the quotient of this kernel by $\partial_\ell(P^{(\ell-1)})$ is finitely generated.
\end{prop}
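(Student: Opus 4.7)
The plan is to proceed by induction on the well-order $\prec$, with the base case $k \leq 1$ vacuous, and with the inductive step peeling off the index $k$ from $\bfU'$ of length $k \geq 2$. Given $\scrP$ satisfying the hypotheses, I decompose the top structure complex of $\scrP$ as a short exact sequence of Polish $Z'$-complexes
\[0 \to L^{(\cdot)} \to \scrP^{(\cdot)} \to Q^{(\cdot)} \to 0,\]
where $L^{(\ell)} := \bigoplus_{a \in \binom{[k]}{\ell},\,k \in a} P_a$ is the sub-complex indexed by $k$-containing subsets (closed under $\partial^\scrP$ since the formula~(\ref{eq:s-c-bdry}) preserves the condition of containing $k$), and the quotient $Q^{(\ell)} = \bigoplus_{a \in \binom{[k-1]}{\ell}} P_a$ is exactly the top structure complex of $\scrP\uhr_{[k-1]}$. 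Rebracketing $a = a' \cup \{k\}$ identifies $L^{(\ell)} = \scrP'^{(\ell - 1)}$, where $\scrP'$ denotes the pre-$\P_{[k-1]}$-module with $P'_{a'} := P_{a' \cup \{k\}}$, structure morphisms $\phi^{\scrP'}_{a',b'} := \phi^\scrP_{a'\cup\{k\}, b'\cup\{k\}}$, and derivation-lifts inherited analogously from $\scrP$; a direct sign computation shows the induced boundary on $L^{(\cdot)}$ agrees with that of $\scrP'$'s top structure complex. Since each $P_{a' \cup \{k\}}$ is co-induced over $Y' + U'_k + U'_{a'}$, $\scrP'$ is naturally directed by $(Z', Y' + U'_k, \bfU'\uhr_{[k-1]})$, which is lean and strictly $\prec$-precedes $(Z',Y',\bfU')$ since its subgroup tuple has length $k-1$.

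The first main task is to verify that $\scrP'$ meets the hypotheses of the proposition, i.e., that $\scrP'\uhr_{e'}$ is strictly modest for every $e' \subsetneqq [k-1]$. For each $e'' \subseteq e'$, the same decomposition applied to the structure complex of $\scrP\uhr_{e''\cup\{k\}}$ at $e'' \cup \{k\}$ yields a short exact sequence of Polish complexes whose middle term (the structure complex of $\scrP\uhr_{e''\cup\{k\}}$) and quotient term (the structure complex of $\scrP\uhr_{e''}$) both have strictly discrete homology by the modesty hypothesis on $\scrP$, while the sub-complex vanishes at position $0$. Proposition~\ref{prop:topsnake}(3) then forces the sub-complex to have strictly discrete homology in every position, which via the shift is precisely strict top-modesty of $\scrP'\uhr_{e''}$; as this holds for every $e'' \subseteq e'$, $\scrP'\uhr_{e'}$ is strictly modest.

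Applying the inductive hypothesis to $\scrP'$ gives that $\partial^{\scrP'}_\ell$ is closed with image relatively open in $\ker \partial^{\scrP'}_{\ell+1}$ for $\ell \leq k - 2$, so $H^\ell(\scrP')$ is topologically discrete for those $\ell$; via the shift $L^{(\ell)} = \scrP'^{(\ell - 1)}$ this gives discreteness of $H^\ell(L^{(\cdot)}) = H^{\ell-1}(\scrP')$ for $\ell \leq k - 1$. Since $H^\ell(Q^{(\cdot)})$ is discrete at every position by the modesty of $\scrP\uhr_{[k-1]}$, the long exact sequence in cohomology attached to the SES sandwiches $H^\ell(\scrP^{(\cdot)})$, for each $\ell \leq k - 1$, between a quotient of $H^\ell(L)$ and a subgroup of $H^\ell(Q)$, both countable; hence $H^\ell(\scrP^{(\cdot)})$ itself is countable. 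As $\ker \partial_{\ell+1}$ is Polish and $H^\ell(\scrP^{(\cdot)})$ is its quotient by the continuous homomorphism image $\img \partial_\ell$, Lemma~\ref{lem:ctble-Polish-are-disc} upgrades this to topological discreteness, so $\img \partial_\ell$ is closed and relatively open in $\ker \partial_{\ell+1}$. The compactly generated conclusion follows identically using Lemma~\ref{lem:still-fg}, since the relevant intermediate groups are finitely generated under the extra hypotheses.

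The main obstacle is the precise identification of $L^{(\cdot)}$ with the shifted top structure complex of $\scrP'$, including careful sign bookkeeping in the boundary formula and confirming compatibility with the inherited derivation-lifts and structure morphisms (and hence verifying that $\scrP'$ really is a pre-$\P$-module). A subsidiary subtlety is the boundary case $\ell = k - 1$, where the LES connecting map $H^{k-1}(Q) \to H^k(L) = H^{k-1}(\scrP')$ involves a cohomology group at the top of $\scrP'$ that is not directly controlled by the inductive hypothesis; however, only countability of the kernel of this map is needed, which is automatic from discreteness of $H^{k-1}(Q)$.
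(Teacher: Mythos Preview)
Your decomposition idea is natural, but there is a genuine gap that breaks the argument in the non-pure case.  You assert that ``$H^\ell(Q^{(\cdot)})$ is discrete at every position by the modesty of $\scrP\uhr_{[k-1]}$''.  This is not what modesty gives.  Strict modesty of $\scrP\uhr_{[k-1]}$ means that the structure complex of its \emph{lean version}, which lives over $Y' + U'_{[k-1]}$, has discrete homology.  On the larger group $Z' = Y' + U'_{[k]}$, the homology of $Q^{(\cdot)}$ is only $\Cnd_{Y'+U'_{[k-1]}}^{Z'}$ of a discrete module, and this is in general uncountable (think of $\F(\bbT,\bbZ)$).  Consequently the long exact sequence does not force $H^\ell(\scrP^{(\cdot)})$ to be countable, and Lemma~\ref{lem:ctble-Polish-are-disc} does not apply.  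The same mismatch of ambient groups also invalidates your verification that $\scrP'\uhr_{e'}$ is strictly modest: the quotient complex there has homology co-induced from $Y'+U'_{e''}$ up to $Y'+U'_k+U'_{e''}$, which again is not discrete unless $U'_k \leq Y' + U'_{e''}$.  A secondary issue is that for $\ell = k-1$ you implicitly use that $\ker\partial_k \subseteq P^{(k-1)}$ is Polish, but $P_{[k]}$ is not assumed Hausdorff.

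The paper's proof circumvents exactly this obstacle by an entirely different mechanism: it uses the derivation-lifts (via Lemma~\ref{lem:cobdry=image}) to show that $d^{U'_i}f$ is a $\partial_\ell$-image for any $f \in \ker\partial_{\ell+1}$, and then passes to the cohomology $\P$-module $\rmH^1_\m(U'_i,\scrP)$, directed by $(Z',Y'+U'_i,\bfU')$.  This keeps the \emph{full} tuple $\bfU'$ but enlarges $Y'$, so the ambient group for leanness does not shrink, and the $\prec$-induction proceeds through the second clause of the complexity order rather than by shortening the tuple.  Your splitting-off-an-index approach never invokes the derivation-lifts, which the paper flags as the one place where they are essential; this is the structural reason the argument cannot close.
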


It is worth remarking on the assumptions made on $\scrP$ here.  They include that every $P_e$ for $e\subsetneqq [k]$ is Polish, but do not assert anything about the topology of $P_{[k]}$ itself.  This is why the case $\ell = k$ cannot be covered by the conclusions.

The case $\ell = k-1$ is a little surprising for the same reason.  Since we do not assume even that $P_{[k]}$ is Hausdorff, it is not obvious that $\ker \partial_k$ is closed, and the proof cannot assume this fact.

Proposition~\ref{prop:internal-co-disc} rests on the following lemma, which amounts to a simple but crucial calculation using derivation-lifts and reduction.  In the whole paper, this is the point at which the involvement of derivation-lifts is essential.

\begin{lem}\label{lem:cobdry=image}
Let $\scrP$ be any pre-$\P$-module over $(Z',\bfU')$, and let $\partial_{\ell+1}:P^{(\ell)}\to P^{(\ell+1)}$, $\ell = 0,\ldots,k-1$, be the boundary morphisms of its top structure complex.  If $f \in \ker\partial_{\ell+1}$ for some $\ell \leq k-1$, and $i \in [k]$ is arbitrary, then
\[d^{U_i'}f = \partial_\ell \s\]
for some $\s \in \Z^1(U'_i,P^{(\ell-1)})$. Moreover, if $(f_n)_n$ is a null sequence in $\ker\partial_{\ell+1}$, then the corresponding sequence $(\s_n)_n$ in $\Z^1(U'_i,P^{(\ell-1)})$ may also be chosen null.
\end{lem}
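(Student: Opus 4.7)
The plan is to construct $\s$ explicitly from $f$ using the derivation-lifts of $\scrP$, then verify the identity $\partial_\ell \s_u = d_u f$ by a direct componentwise calculation, and finally check the 1-cocycle and continuity properties. For $\ell \geq 1$ and each $c \subseteq [k]$ with $|c| = \ell - 1$, set
\[\s_{u,c} \; := \; \begin{cases} 0 & \text{if } i \in c,\\ \sgn(c \cup \{i\}:c)\,\t{\nabla}^{\scrP,\,c \cup \{i\},\,c}_u(f_{c \cup \{i\}}) & \text{if } i \notin c, \end{cases}\]
for $u \in U'_i$.  Each $\s_{u,c}$ lies in $P_c$ as required and is jointly continuous in $u$ and $f$.

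To check $\partial_\ell \s_u = d_u f$, compute $(\partial_\ell \s_u)_a$ at each $a \in \binom{[k]}{\ell}$ and split into two cases.  When $i \in a$, write $a = a' \sqcup \{i\}$ with $a' = a \setminus i$; only the summand at $c = a'$ is nonzero, the signs multiply to $1$, and what remains is $\phi^{\scrP}_{a',a}(\t{\nabla}^{\scrP,a,a'}_u(f_a))$, which equals $d_u f_a$ by the derivation-lift property of $\t{\nabla}^{\scrP,a,a'}$ through $\phi^{\scrP}_{a',a}$. When $i \notin a$, every $c \in \binom{a}{\ell-1}$ also avoids $i$, and axiom $(\P 4)$ applied to the inclusion $c \cup \{i\} \subseteq a \cup \{i\}$ with index $i$ gives
\[\phi^{\scrP}_{c,a}\circ \t{\nabla}^{\scrP,c \cup \{i\},c}_u \;=\; \t{\nabla}^{\scrP,a \cup \{i\},a}_u\circ \phi^{\scrP}_{c \cup \{i\},a \cup \{i\}}.\]
Combining this with the sign identity~(\ref{eq:sgn-vanish}) (applied with $s$ the unique element of $a \setminus c$ and $t = i$, which converts $\sgn(a:c)\sgn(c\cup\{i\}:c)$ into $-\sgn(a\cup\{i\}:a)\sgn(a\cup\{i\}:c\cup\{i\})$), the sum defining $(\partial_\ell \s_u)_a$ collapses to $-\sgn(a\cup\{i\}:a)$ times $\t{\nabla}^{\scrP,a\cup\{i\},a}_u$ applied to a partial alternating sum over subsets $b \in \binom{a \cup \{i\}}{\ell}$ that contain $i$.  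That partial sum differs from the full boundary $(\partial_{\ell+1}f)_{a\cup\{i\}}$ only by the single term indexed by $b = a$, so the hypothesis $\partial_{\ell+1} f = 0$ identifies it with $-\sgn(a \cup \{i\}:a)\phi^{\scrP}_{a,a\cup\{i\}}(f_a)$; after the two negative signs cancel, the final expression $\t{\nabla}^{\scrP,a\cup\{i\},a}_u(\phi^{\scrP}_{a,a\cup\{i\}}(f_a))$ equals $d_u f_a$ by another application of axiom $(\P 4)$ (this time to $a \subseteq a \cup \{i\}$, combined with $(\P 3)$ to identify $\t{\nabla}^{\scrP,a,a}_u$ with $d_u$).

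The 1-cocycle identity $\s_{u_1 + u_2} = \s_{u_1} + R_{u_1}\s_{u_2}$ is inherited directly from the derivation-action relation~(\ref{eq:t-nabla}) applied to each $\t{\nabla}^{\scrP,c\cup\{i\},c}$, together with the fact that each operator $\t{\nabla}^{\scrP,c\cup\{i\},c}_u$ is a $Z'$-module homomorphism and hence commutes with the translation $R_{u_1}$.  For the null-sequence claim, joint continuity of each derivation-action and compactness of $U'_i$ together imply that $g \mapsto (u \mapsto \t{\nabla}^{\scrP,c\cup\{i\},c}_u(g))$ is a continuous homomorphism from $P_{c \cup \{i\}}$ into the Polish module $\Z^1(U'_i, P_c)$, so $f_n \to 0$ in $P^{(\ell)}$ forces $\s_n \to 0$ in $\Z^1(U'_i, P^{(\ell-1)})$.

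The main obstacle is the sign bookkeeping in the case $i \notin a$: one must verify that the signs produced by~(\ref{eq:sgn-vanish}) line up exactly with those of $\partial_{\ell+1}f$ restricted to the subsets of $a \cup \{i\}$ containing $i$, so that the hypothesis $\partial_{\ell+1}f = 0$ can be invoked to leave only a single term which is precisely $d_u f_a$.
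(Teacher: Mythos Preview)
Your proof is correct, and if one unwinds the paper's construction, it produces exactly the same $\sigma$: for $i\notin c$ one gets $\sigma_{u,c}=\sgn(c\cup\{i\}:c)\,\t{\nabla}^{c\cup\{i\},c}_u(f_{c\cup\{i\}})$, and zero otherwise.

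The packaging is different, however. The paper observes that $\t{\nabla}^{U'_i}f$ lands in the kernel of $\partial^\llc_{\ell+1}$ for the reduced pre-$\P$-module $\scrP^\llc:=\scrP\llc_{[k]\setminus i}$, invokes the Homotopical Lemma~\ref{lem:homot} to split that complex, and then pushes back along $\bigoplus_{|e|=\ell-1}\phi_{e\setminus i,e}$. Your approach instead writes down the formula for $\sigma$ directly and verifies $\partial_\ell\sigma_u=d_uf$ by a bare-hands sign computation using axioms $(\P3)$--$(\P4)$ and the identity~(\ref{eq:sgn-vanish}). Your route is more explicit and self-contained, and makes visible where each axiom is used; the paper's route is shorter and explains \emph{why} such a $\sigma$ should exist (the aggrandized complex is contractible with $i$ as the cone point), deferring the sign work to the earlier Homotopical Lemma. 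Neither approach dominates the other: they are the same construction viewed concretely versus abstractly.

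One tiny remark: your phrase ``For $\ell\geq 1$'' is harmless but slightly misleading. When $\ell=0$ your formula gives $\sigma=0$ in the zero module $P^{(-1)}$, and your Case~2 computation (with $a=\emptyset$) still correctly yields $d_uf_\emptyset=\t{\nabla}^{\{i\},\emptyset}_u\bigl(\phi_{\emptyset,\{i\}}(f_\emptyset)\bigr)=0$, so no separate argument is needed.
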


As recalled previously, the $1$-cocycle group $\Z^1(U_i', P^{(\ell-1)})$ makes sense, and may be assumed to consist of continuous $1$-cocycles, even if $P^{(\ell-1)}$ is not Polish.

\begin{proof}
We prove the version for null sequences, since the same argument gives the first conclusion. Let $(f_n)_n$ be a null sequence as above.

Abbreviate $\scrP^\llc := \scrP\llc_{[k]\setminus i}$, and let $P^{\llc(\ell)}$ and $\partial^\llc_\ell$, $0 \leq \ell \leq k$, be the modules and boundary morphisms of the top structure complex of $\scrP^\llc$.  Also, let $\t{\nabla}^{U_i'}$ denote $\t{\nabla}^{\scrP,e,e\setminus i}$ for any $e\subseteq [k]$, or any direct sum of these derivation-lifts over different $e$.

In this notation, $(\t{\nabla}^{U'_i}f_n)_n$ is now a null sequence in \[\Z^1(U'_i,\ker\partial^\llc_{\ell+1}).\]
Recall from the second alternative in Lemma~\ref{lem:s-c-of-reduction} that the top structure complex of $\scrP^\llc$ splits with continuous splitting homomorphisms.  Therefore, composing $\t{\nabla}^{U'_i}f_n$ with the splitting homomorphism $P^{\llc(\ell)} \to P^{\llc(\ell-1)}$, one obtains
\begin{eqnarray}\label{eq:nablaU'}
\t{\nabla}^{U'_i}f_n = \partial^\llc_\ell\s_n'
\end{eqnarray}
for some null sequence $(\s_n')_n$ in $\Z^1(U'_i,P^{\llc(\ell-1)})$.

Now let
\[\phi := \bigoplus_{|e|=\ell-1}\phi_{e\setminus i,e}:P^{\llc(\ell-1)}\to P ^{(\ell-1)},\]
and let $\s_n := \phi\circ \s_n'$.  Then applying $\phi$ to equation~(\ref{eq:nablaU'}) gives
\[d^{U'_i}f_n = \partial_\ell\s_n\]
for the null sequence $(\s_n)_n$ in $\Z^1(U_i',P^{(\ell-1)})$.
\end{proof}

\begin{proof}[Proof of Proposition~\ref{prop:internal-co-disc}]
This will be our first proof by $\prec$-induction.

\vspace{7pt}

\emph{Pure case.}\quad In case $(Z',Y',\bfU')$ is pure, Proposition~\ref{prop:struct-of-pure} gives that each module $P_e$ with $|e| = \ell$ is itself discrete, because it is a constituent of $\scrP\uhr_e$, which is a pure and strictly modest $(Z',Y',\bfU'\uhr_e)$-$\P$-module.  Hence the direct sum $P^{(\ell)}$ is also discrete, from which the required closure and relative openness are obvious.  That proposition also gives finite generation of the quotient in the relevant special case.

\vspace{7pt}

\emph{Non-pure case, step 1.}\quad Now suppose that $i \in [k]$ is such that $Y' \not\geq U'_i$, and that $(f_n)_n$ is null sequence in $\ker \partial_{\ell+1}$.  We will show firstly that $f_n \in \partial_\ell(P^{(\ell-1)})$ for all sufficiently large $n$, and moreover that $f_n$ eventually agrees with $\partial_\ell(g_n)$ for some null sequence $(g_n)_n$ in $P^{(\ell-1)}$.  This gives the relative openness of the image in the kernel; it also gives the closure of that image by Theorem~\ref{thm:open-mors}, since we know a priori that $P^{(\ell-1)}$ and $P^{(\ell)}$ are both Polish.

First, Lemma~\ref{lem:cobdry=image} gives $d^{U'_i}f_n = \partial_\ell\s_n$ for some null sequence $(\s_n)_n$ in $\Z^1(U_i',P^{(\ell-1)})$.
Having found these cocycles $\s_n$, this equation implies that their cohomology classes $[\s_n]$ actually lie in
\[\ker\big(\rmH^1_\m(U'_i,P^{(\ell-1)}) \to \rmH^1_\m(U'_i,P^{(\ell)})\big).\]
Our assumption concerning Theorem~\ref{thm:Hpgood} gives that $\rmH^1_\m(U'_i,\scrP)$ is a $(Z',Y' + U'_i,\bfU')$-$\P$-module whose nontrivial restrictions are all modest, and our choice of $i$ implies that $(Z',Y'+U'_i,\bfU') \prec (Z',Y',\bfU')$.  Therefore, by the hypothesis of our $\prec$-induction applied to the sequence of cohomology classes $[\s_n]$, these must lie in $\partial_{\ell-1}(\rmH^1_\m(U'_i,P^{(\ell-2)}))$ for all sufficiently large $n$, and this latter image is a Polish module.

We may therefore apply Theorem~\ref{thm:open-mors} to the homomorphism
\[\Z^1(U_i',P^{(\ell-2)}) \oplus P^{(\ell-1)} \to \Z^1(U_i',P^{(\ell-1)}):(\tau,h) \mapsto \partial_{\ell-1}\tau + d^{U_i'}h.\]
This gives that there are null sequences $(\tau_n)_n$ in $\Z^1(U_i',P^{(\ell-2)})$ and $(h_n)_n$ in $P^{(\ell-1)}$ such that
\[\s_n = \partial_{\ell-1}\tau_n + d^{U_i'}h_n\]
for all sufficiently large $n$. Since $\partial_\ell\partial_{\ell+1} = 0$, this implies
\[d^{U'_i}f_n = d^{U'_i}(\partial_\ell h_n).\]

Now, at this point observe that $f_n - \partial_\ell h_n$ is still a null sequence in $\ker \partial_{\ell+1}$, and that $(f_n)_n$ satisfies the desired conclusions if and only if this new sequence does.  We may therefore replace each $f_n$ with $f_n - \partial_\ell h_n$, and so reduce to the case in which $d^{U_i'}f_n = 0$: that is, $f_n \in \ker(\partial_{\ell+1}|(P^{(\ell)})^{U'_i})$.

Since $\scrP^{U'_i}$ is directed by $(Z',Y'+U'_i,\bfU') \prec (Z',Y',\bfU')$, another appeal to the inductive hypothesis now shows that $f_n$ eventually lies in $\partial_\ell((P^{(\ell-1)})^{U_i'})$, and that it eventually agrees with the $\partial_\ell$-image of a null sequence in $(P^{(\ell-1)})^{U_i'}$, as required.

\vspace{7pt}

\emph{Non-pure case, step 2.}\quad Finally, suppose also that $Z$ is a Lie group and every nontrivial restriction of $\scrP$ is structurally compactly generated.  Then another inductive appeal to the present proposition and to Theorem~\ref{thm:Hpgood} gives that
\[\ker(\rmH^1_\m(U'_i,P^{(\ell-1)}) \to \rmH^1_\m(U'_i,P^{(\ell)}))\,\big/\,\partial_{\ell-1}(\rmH^1_\m(U'_i,P^{(\ell-2)}))\]
is finitely generated.  We may therefore find finite lists $f_1$, \ldots, $f_r \in \ker \partial_{\ell+1}$ and $\s_1$, \ldots, $\s_r \in \Z^1(U_i,P^{(\ell-1)})$ such that $d^{U_i}f_s = \partial_\ell\s_s$, and $\s_1$, \ldots, $\s_r$ generate all of $\ker(\rmH^1_\m(U'_i,P^{(\ell-1)}) \to \rmH^1_\m(U'_i,\ker \partial_{\ell+1}))$ modulo $\partial_{\ell-1}(\rmH^1_\m(U'_i,P^{(\ell-2)}))$.  From this, the arguments in Step 1 give that for any other $f \in \ker\partial_{\ell+1}$, there are $n_1$, \ldots, $n_r \in \bbZ$ and $g\in P^{(\ell-1)}$ such that
\[f - n_1f_1 - \ldots - n_rf_r - \partial_\ell (g) \in \ker(\partial_{\ell+1}|(P^{(\ell)})^{U'_i}).\]
Since the hypothesis of our $\prec$-induction gives a finite generating set for $\ker(\partial_{\ell+1}|(P^{(\ell)})^{U'_i})$ modulo $\partial_\ell((P^{(\ell-1)})^{U'_i})$, the union of that generating set with $\{f_1, \ldots, f_r\}$ gives a finite generating set for $\ker \partial_{\ell+1}$ modulo $\partial_\ell(P^{(\ell-1)})$.
\end{proof}

\begin{rmks}\emph{(1)}\quad In step 1 of the non-pure case of the above proof, we invoked the pre-$\P$-modules $\rmH^p_\m(U_i',\scrP)$ and $\rmH^1_\m(U_i',\scrP)$, without knowing that $P_{[k]}$ is Polish.  This is why we needed the remarks following Definition~\ref{dfn:cohom} about non-Polish input pre-$\P$-modules.

\vspace{7pt}

\emph{(2)}\quad In the inductive step of the above proof, if we begin with $\scrP = \rmH^p_\m(W,\scrM)$ and $(Z',Y',\bfU') = (Z,Y+W,\bfU)$, then the $\prec$-induction leads next to the $\P$-module
\[\rmH^1_\m(U_i,\scrP) = \rmH^1_\m(U_i,\rmH^p_\m(W,\scrM)).\]
This appearance of iterated cohomology functors is an unusual feature of the present work, but I believe it is essential.  It is why the proposition above is formulated for some general $\scrP$ satisfying certain conditions: otherwise we would need to keep track of longer and longer iterations of cohomology functors. \fin
\end{rmks}

\begin{cor}\label{cor:cohom-struct-cplx-below-top}
Now suppose that Theorem~\ref{thm:Hpgood} is known for any almost modest $(Z_1,Y_1,\bfU_1)$-$\P$-module for which $(Z_1,Y_1,\bfU_1) \prec (Z,Y,\bfU)$, and let $\scrM$ be as at the beginning of this subsection.  Assume in addition that the data $(Z,Y+W,\bfU)$ are lean: that is, that $Y + W + U_{[k]} = Z$.  If $p\geq 1$ then the middle homology of the sequence
\begin{eqnarray}\label{eqn:piece-of-cohom-s-c}
\rmH^p_\m(W,M^{(\ell - 1)}) \to \rmH^p_\m(W,M^{(\ell)}) \to \rmH^p_\m(W,M^{(\ell+1)})
\end{eqnarray}
is discrete for all $\ell = 0,\ldots,k-1$.  In case $Z$ is finite-dimensional and $\scrM$ is structurally compactly generated, this middle homology is finitely generated.
\end{cor}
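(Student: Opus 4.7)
The plan is to set $\scrP := \rmH^p_\m(W,\scrM)$ and apply Proposition~\ref{prop:internal-co-disc} with $(Z',Y',\bfU') := (Z, Y+W, \bfU)$. Note first that $\scrP$ is a pre-$\P$-module over $(Z,\bfU)$ by the construction preceding Definition~\ref{dfn:cohom}, that its top structure complex at position $\ell$ is exactly
\[
\rmH^p_\m(W, M^{(\ell-1)}) \stackrel{\partial_\ell}{\to} \rmH^p_\m(W, M^{(\ell)}) \stackrel{\partial_{\ell+1}}{\to} \rmH^p_\m(W, M^{(\ell+1)}),
\]
and that leanness of $(Z, Y+W, \bfU)$ is precisely the hypothesis $Y+W+U_{[k]} = Z$. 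So if we can verify the two remaining hypotheses of Proposition~\ref{prop:internal-co-disc}, then the conclusions of that proposition --- closedness of $\img\,\partial_\ell$ in $P^{(\ell)}$ together with relative openness of $\img\,\partial_\ell$ in $\ker\partial_{\ell+1}$ --- immediately force the middle homology at position $\ell$ to be discrete for $\ell \leq k-1$, and the second clause will provide finite generation in the compactly generated case.

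Next, I would check that the assumed inductive knowledge of Theorem~\ref{thm:Hpgood} transfers from $(Z,Y,\bfU)$ to $(Z,Y+W,\bfU)$. If $(Z_1,Y_1,\bfU_1) \prec (Z,Y+W,\bfU)$, then either $k_1 < k$ (so immediately $\prec (Z,Y,\bfU)$), or $k_1 = k$ and $|\{i : U_{1,i}\leq Y_1\}| > |\{i : U_i \leq Y+W\}| \geq |\{i : U_i \leq Y\}|$, so once again $\prec (Z,Y,\bfU)$. Hence the hypothesis concerning Theorem~\ref{thm:Hpgood} for $\prec(Z,Y,\bfU)$ data specializes to the form required by Proposition~\ref{prop:internal-co-disc}.

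The main remaining point is to show $\scrP\uhr_e$ is a strictly modest $(Z,Y+W,\bfU\uhr_e)$-$\P$-module for each $e\subsetneqq [k]$. By Lemma~\ref{lem:cohom-of-res-and-red}, $\scrP\uhr_e = \rmH^p_\m(W, \scrM\uhr_e)$. Since $\scrM$ is $\ell_0$-almost modest, each restriction $\scrM\uhr_e$ is an $\ell_0$-almost modest $(Z,Y,\bfU\uhr_e)$-$\P$-module; and because $|e|<k$, the directing data $(Z,Y,\bfU\uhr_e) \prec (Z,Y,\bfU)$. Thus the inductive assumption of Theorem~\ref{thm:Hpgood} applies to $\scrM\uhr_e$, and since we are taking $p \geq 1$, it yields that $\rmH^p_\m(W,\scrM\uhr_e)$ satisfies the Hausdorffness hypotheses of Lemma~\ref{lem:when-cohom-is-delta}, is a $(Z, Y+W, \bfU\uhr_e)$-$\P$-module, and is \emph{strictly} modest (this is the crucial point: cohomology of positive degree upgrades almost modesty to strict modesty). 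This verifies the remaining hypothesis of Proposition~\ref{prop:internal-co-disc}.

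With all hypotheses checked, Proposition~\ref{prop:internal-co-disc} delivers the desired discreteness of the homology at positions $\ell = 0,1,\ldots,k-1$. For the finite-generation addendum, when $Z$ is finite-dimensional and $\scrM$ is structurally compactly generated, the same inductive appeal to Theorem~\ref{thm:Hpgood} yields that every $\scrP\uhr_e = \rmH^p_\m(W,\scrM\uhr_e)$ is structurally compactly generated, so the second conclusion of Proposition~\ref{prop:internal-co-disc} gives finite generation of the homology. The only delicate aspect of this argument is the careful bookkeeping that lets us invoke the inductive hypothesis on $\prec$ at the right place, but once the transfer between $(Z,Y,\bfU)$ and $(Z,Y+W,\bfU)$ is set up, Proposition~\ref{prop:internal-co-disc} does all the real work.
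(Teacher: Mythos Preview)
Your proposal is correct and follows essentially the same approach as the paper: set $\scrP := \rmH^p_\m(W,\scrM)$, $(Z',Y',\bfU') := (Z,Y+W,\bfU)$, verify via the inductive hypothesis (applied to $\scrM\uhr_e$ with $|e|<k$) that each nontrivial restriction $\scrP\uhr_e$ is strictly modest, and then invoke Proposition~\ref{prop:internal-co-disc}. Your added bookkeeping---the explicit check that $\prec (Z,Y+W,\bfU)$ implies $\prec (Z,Y,\bfU)$, and the use of Lemma~\ref{lem:cohom-of-res-and-red}---fills in details the paper leaves implicit, but the argument is the same.
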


\begin{proof}
Let $(Z',Y',\bfU') := (Z,Y+W,\bfU)$ and $\scrP := \rmH^p_\m(W,\scrM)$.  The assumed cases of Theorem~\ref{thm:Hpgood} imply that every nontrivial restriction $\scrP\uhr_e$ is a strictly modest $(Z',Y',\bfU'\uhr_e)$-$\P$-module, and structurally compactly generated in the case of the extra assumptions, so the corollary follows from Proposition~\ref{prop:internal-co-disc}.
\end{proof}

\begin{proof}[Proof of Theorem~\ref{thm:Hpgood}]
This will be proved by an induction on the position of $(Z,Y,\bfU)$ in the complexity order $\prec$ of Subsection~\ref{subs:subgp-order}.  It requires the following conclusions:
\begin{itemize}
\item[i)] That each $\rmH^p_\m(W,M_e)$ is Polish.  As usual, since $M_e$ is Polish, this is equivalent to the closure of the submodule $\B^p(W,M_e)$ in $\Z^p(W,M_e)$.
\item[ii)] That $\rmH^p_\m(W,\scrM)$ is structurally closed, meaning that
\[\partial_{e,\ell}(\Z^p(W,M_e^{(\ell-1)})) + \B^p(W,M_e^{(\ell)})\]
is closed in $\Z^p(W,M^{(\ell)}_e)$ for each nonempty $e \subseteq [k]$ and each $\ell \in \{1,2,\ldots,|e|\}$.
\item[iii)] That the homology of the structure complex of $\rmH^p_\m(W,\scrM)$ at $e$ is co-induced over $W+Y + U_e$ from discrete $(W+Y+U_e)$-modules, or possibly from a Lie $(W+Y+U_e)$-module in position $\ell_0$ in case $p=0$.
\item[iv)] That the discrete or Lie modules appearing in conclusion (iii) are all compactly generated in case $Z$ is finite-dimensional and $\scrM$ is structurally compactly generated.
\end{itemize}

\vspace{7pt}

\emph{Step 1.}\quad The base case is when $k = 0$.  In that case, $\scrM$ is just a single $Z$-module $M_\emptyset$.  It must be $0$-almost (resp. strictly) modest, implying that $M_\emptyset$ is co-induced from a Lie $Y$-module (resp. discrete $Y$-module).  The result now follows by Corollary~\ref{cor:little-Hpgood}.

\vspace{7pt}

\emph{Step 2.}\quad Now assume that $k\geq 1$, and fix $(Z,Y,\bfU)$ and an $\ell_0$-almost or strictly modest $(Z,Y,\bfU)$-$\P$-module $\scrM$.  Suppose the theorem is already known for all almost or strictly modest $(Z',Y',\bfU')$-$\P$-modules for which $(Z',Y',\bfU') \prec (Z,Y,\bfU)$.

If $e \subsetneqq [k]$, then the structure complex of $\rmH^p_\m(W,\scrM)$ at $e$ is simply the structure complex of $\rmH^p_\m(W,\scrM\uhr_e)$.  Since $\scrM\uhr_e$ is a $(Z,Y,\bfU\uhr_e)$-$\P$-module satisfying the same structural assumptions as $\scrM$, and since
\[(Z,Y,\bfU\uhr_e) \prec (Z,Y,\bfU) \quad \hbox{(because $|e| < k$)},\]
the $\prec$-inductive hypothesis will already give all of the desired conclusions (i--iv) for the modules or structure complex of $\rmH^p_\m(W,\scrM)$ at $e$.  We therefore need consider only $e = [k]$ in the rest of the proof.

\vspace{7pt}

\emph{Step 3.}\quad Now let $\scrN_1$ be the lean version of $\scrM$, let $Z_0:= W + Y + U_{[k]}$, and let $\scrN := \Cnd^{Z_0}_{Y+U_{[k]}}\scrN_1$.  Combining Lemma~\ref{lem:back-from-lean} with the relation~(\ref{eq:compose-coind}) gives $\scrM = \Cnd_{Z_0}^Z\scrN$.  Clearly we have $(Z_0,Y,\bfU) \prec (Z,Y,\bfU)$, so the hypotheses of our $\prec$-induction imply that Theorem~\ref{thm:Hpgood} is known for any almost modest $(Z_1,Y_1,\bfU_1)$-$\P$-module when $(Z_1,Y_1,\bfU_1) \prec (Z_0,Y,\bfU)$.  We may therefore apply Corollary~\ref{cor:cohom-struct-cplx-below-top} to the lean module $\scrN$, and this gives that all of the homology of the complex
\[0 \to \rmH^p_\m(W,N_\emptyset) \to \rmH^p_\m(W,N^{(1)})\to \cdots \to \rmH^p_\m(W,N_{[k]}) \to 0\]
is discrete below the last position in case $p\geq 1$, and hence also that all of the nonzero morphisms in that complex except possibly the last are closed in case $p\geq 1$.

Finally, since the top structure complex of $\scrM$ is simply obtained by applying $\Cnd_{Z_0}^Z(-)$ to the top structure complex of $\scrN$, we have accumulated the assumptions (a--c)$_i$ for $1 \leq i \leq k-1$ required to apply Corollary~\ref{cor:cohom-propagate-full}.  That corollary gives that $\rmH^p_\m(W,M_{[k]})$ is also Polish for every $p$; that the last structure morphism is closed for every $p$; and that
\[\coker(\rmH^p_\m(W,M^{(k-1)}) \to \rmH^p_\m(W,M_{[k]}))\]
is co-induced from a Lie $(W + Y + U_{[k]})$-module, and from a discrete $(W + Y + U_{[k]})$-module in case either $\ell_0 < k$ or $p\geq 1$.  This completes the proof of conclusions (i--iii) of the present theorem.

In case $\scrM$ is structurally compactly generated and $Z$ (and hence also $W$) is finite-dimensional, Corollary~\ref{cor:cohom-struct-cplx-below-top} also gives that the homology of
\[\rmH^p_\m(W,N^{(\ell-1)})\to \rmH^p_\m(W,N^{(\ell)}) \to \rmH^p_\m(W,N^{(\ell+1)})\]
is compactly generated for all $\ell \leq k-1$ and $p\geq 1$.  This gives the required extra assumption to obtain conclusion (iv) from Corollary~\ref{cor:cohom-propagate-full}, so the whole proof is complete.
\end{proof}

\begin{rmk}
A similar $\prec$-induction to the above yields the following interesting fact:

\vspace{7pt}

\noindent\emph{Theorem}\quad If $\scrM$ is a strictly modest $\P$-module and $\scrH$ is a $\P$-submodule of $\scrM$, then $\scrH$ is also strictly modest.

\vspace{7pt}

First one proves this in the case of a pure $\P$-module.  Already in the pure case, the analog of this theorem is false if `strictly modest' is replaced by `almost modest'.  For instance, one may choose any injective morphism $\phi:A\to B$ of Abelian Lie groups whose image is not closed (such as $\bbZ\stackrel{\times \a}{\to}\bbT$ for some irrational $\a\in\bbT$), and interpret its image $\phi(A)$ as a $\P$-submodule of the pure $(Z,Z,\ast)$-$\P$-module $B$.

The remainder of the $\prec$-induction follows the same steps as for Theorem~\ref{thm:Hpgood}.  The nontrivial restrictions of $\scrH$ are strictly modest by the inductive hypothesis.  An appeal to Proposition~\ref{prop:internal-co-disc} proves the desired structure for most of the top structure complex of $\scrH$.  Finally, in this submodule-setting, a slight variation on the proof of Proposition~\ref{prop:internal-co-disc} gives the required co-induced-of-discrete homology at the last position of  that structure complex, completing the proof.

Since this theorem is not needed in the remaining sections, we leave further details aside. \fin
\end{rmk}

\section{Partial difference equations and zero-sum tuples}\label{sec:PDceE}

\subsection{Proof of Theorem A}\label{subs:pfofA}

Recall the construction of the solution $\P$-module of a {\PDE} in Example~\ref{ex:PDceE-delta}.  It is clear that all constituents of these $\P$-modules are closed, and hence Polish.  In our abstract terminology, Theorem A asserts that this solution $\P$-module is $1$-almost modest, and that it is structurally compactly generated in case $Z$ is Lie and $A$ is compactly generated. We will prove this by giving an alternative construction of the solution $\P$-module in terms of other more general operations on $\P$-modules.

For each $j \in \{0,1,\ldots,k\}$, let $\bfU^{(j)} = (U^{(j)}_\ell)_{\ell=1}^k$ be the subgroup-tuple
\[U^{(j)}_\ell := \left\{\begin{array}{ll}U_\ell &\quad \hbox{if}\ \ell \leq j\\
\{0\} &\quad \hbox{if}\ \ell > j. \end{array}\right.\]
Also, for each $j$, let $\scrM^j = (M^j_e)_{e \subseteq [k]}$ be the solution $\P$-module of the {\PDE} directed by $\bfU^{(j)}$.

Observe that
\[M^j_\emptyset = 0 \quad \forall j \quad \hbox{and} \quad M^j_e = \F(Z,A) \quad \hbox{whenever}\ e \not\subseteq [j],\]
since if $\ell \in e\setminus [j]$ then $U^{(j)}_\ell = \{0\}$, and so every $f \in \F(Z,A)$ trivially satisfies
\[d^{U_\ell^{(j)}}f = 0.\]

In particular, $\scrM^0$ has $M_\emptyset^0 = 0$ and $M_e^0 = \F(Z,A)$ whenever $e \neq \emptyset$.  Now an easy calculation gives that for any $e \neq \emptyset$, the structure complex of $\scrM^0$ at $e$ has homology equal to $\F(Z,A)$ in position $1$ and is exact everywhere else. (Because almost all constituent modules here are the same, this follows by tweaking the usual calculation of the simplicial cohomology of the $k$-simplex with coefficients in $\F(Z,A)$.)  This shows that $\scrM^0$ is $1$-almost modest, and structurally compactly generated in case $Z$ is a Lie group and $A$ is compactly generated.

Next, for each $j$ let $\scrM^j_\llc := \scrM^j_{\llc [k]\setminus \{j+1\}}$. It is easy to check that
\begin{eqnarray}\label{eq:incl}
M^j_{e\setminus \{j+1\}} \leq M^{j+1}_e \leq M^j_e \quad \forall j,e,
\end{eqnarray}
with equality if $e \not\ni j+1$.

We will prove by induction on $j$ that each $\scrM^j$ is a $1$-almost modest $(Z,0,\bfU^{(j)})$-$\P$-module, by showing how $\scrM^{j+1}$ may be constructed from $\scrM^j$.  This will occupy the next few lemmas.

\begin{lem}\label{lem:reinterp-bits}
Suppose for some $j\leq k-1$ that $\scrM^j$ is already known to be a $1$-almost modest $(Z,0,\bfU^{(j)})$-$\P$-module.

Then the $(Z,0,\bfU^{(j)})$-$\P$-module $\scrM^j_\llc$ and the $(Z,U_{j+1},\bfU^{(j)})$-$\P$-module
\[(\scrM^j/\scrM^j_\llc)^{U_{j+1}} = \rmH^0_\m(U_{j+1},\scrM^j/\scrM^j_\llc)\]
are both also $(Z,0,\bfU^{(j+1)})$-$\P$-modules, and are $1$-almost modest as such.
\end{lem}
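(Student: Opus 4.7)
The plan is to handle the two assertions in turn, in each case first exhibiting the new $\P$-module structure under $(Z,0,\bfU^{(j+1)})$ and then verifying $1$-almost modesty.

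For $\scrM^j_\llc$: the constituent at $e$ is $M^j_{e\setminus\{j+1\}}$, which is co-induced over $U^{(j)}_{e\setminus\{j+1\}}$ and hence also, by~(\ref{eq:compose-coind}), over the larger subgroup $U^{(j+1)}_e$ (with sub-constituent $\Cnd_{U^{(j)}_{e\setminus\{j+1\}}}^{U^{(j+1)}_e}$ of the original sub-constituent of $\scrM^j$). The structure morphisms (inclusions) and the derivation-lifts in directions $i\neq j+1$ are unchanged; in direction $j+1$, I would set $\t{\nabla}^{e,e\setminus\{j+1\}}_u := d_u$ restricted to $M^j_{e\setminus\{j+1\}}$ for $u\in U_{j+1}$, which is well-defined since differencing preserves any $Z$-submodule of $\F(Z,A)$. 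Axioms $\P 3$--$\P 5$ then reduce to standard commutativity properties of differencing operators on $\F(Z,A)$. For $1$-almost modesty, Corollary~\ref{lem:s-c-of-reduction} tells me that the structure complex of $\scrM^j_\llc$ at any $e$ is either identical to that of $\scrM^j$ at $e$ (when $j+1\notin e$, in which case $U^{(j+1)}_e = U^{(j)}_e$, so the lean versions coincide and the inductive hypothesis on $\scrM^j$ applies) or is split (when $j+1\in e$, in which case co-induction preserves splitting and the lean top complex has trivial homology, hence $1$-almost discrete homology).

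For $(\scrM^j/\scrM^j_\llc)^{U_{j+1}}$: By Corollary~\ref{cor:restriction-still-modest} and the hypothesis, both $\scrM^j$ and $\scrM^j_\llc$ are $1$-almost modest under $\bfU^{(j)}$, so Proposition~\ref{prop:bigquot}(1) gives the same for the quotient $\scrM^j/\scrM^j_\llc$. Applying Theorem~\ref{thm:Hpgood} with $p=0$ and $W=U_{j+1}$ produces a $1$-almost modest $(Z,U_{j+1},\bfU^{(j)})$-$\P$-module. To re-interpret this under $(Z,0,\bfU^{(j+1)})$, the key observation is that the constituent of $\scrM^j/\scrM^j_\llc$ at $e$ vanishes whenever $j+1\notin e$, since in that case $M^j_e = M^j_{e\setminus\{j+1\}}$. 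Hence all non-zero constituents occur at $e$ with $j+1\in e$, and there $U_{j+1}+U^{(j)}_e = U^{(j+1)}_e$, so the sub-constituent co-induction structure is the same under both interpretations. The derivation-lift in direction $j+1$ demanded by axiom~$\P 3$ under $\bfU^{(j+1)}$, namely $d^{U_{j+1}}$, vanishes on $U_{j+1}$-invariant modules and is consistent with the trivial direction-$j+1$ derivation-lift under $\bfU^{(j)}$. Since the lean ambient subgroup $U^{(j+1)}_{[k]} = U_{j+1}+U^{(j)}_{[k]}$ and the non-zero lean constituents agree under both interpretations, the top structure complexes of the lean versions coincide, and $1$-almost modesty transfers.

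The main difficulty, which I would verify with care, is the compatibility of sub-constituent co-induction data between the two subgroup-tuple interpretations; the crucial simplification that tames this bookkeeping in the second half is the vanishing of $(\scrM^j/\scrM^j_\llc)_e$ whenever $j+1\notin e$, which confines the comparison to indices at which the two tuples genuinely agree. The appeal to Theorem~\ref{thm:Hpgood} is legitimate here because it will have been established before this lemma is invoked.
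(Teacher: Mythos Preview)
Your proof is correct and follows essentially the same route as the paper: for $\scrM^j_\llc$, exploit that it is aggrandized from $[k]\setminus\{j+1\}$ so the $(j+1)^{\rm th}$ subgroup can be swapped freely (the paper phrases this abstractly, you verify it by hand); for $(\scrM^j/\scrM^j_\llc)^{U_{j+1}}$, apply Proposition~\ref{prop:bigquot}(1), then Theorem~\ref{thm:Hpgood}, then use vanishing of constituents at $e\not\ni j+1$ to pass from $Y=U_{j+1}$ to $Y=0$ (the paper cites Lemma~\ref{lem:YY'modest} for this last step, you do it directly).

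Two small wording issues: your appeal to axiom~$\P 3$ in the second half is misplaced, since $\P 3$ concerns only $i\notin e$, while the nonzero constituents here all have $j+1\in e$; what you actually use is that the target $M_{e\setminus\{j+1\}}$ vanishes, forcing $\t\nabla^{e,e\setminus\{j+1\}}=0$, and then axiom~$\P 4$ is satisfied because $d^{U_{j+1}}=0$ on $U_{j+1}$-invariants. Also, ``co-induction preserves splitting'' in the first half reads backwards --- what makes the lean complex split is that the splitting maps of Lemma~\ref{lem:homot} are defined combinatorially and hence already at the sub-constituent level.
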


\begin{proof}
On the one hand, $\scrM^j_\llc$ is aggrandized from a $1$-almost modest ${(Z,0,\bfU^{(j)}\uhr_{[k]\setminus\{j+1\}})}$-$\P$-module.  Therefore it may be directed by $(Z,0,\bfU')$ for any subgroup-tuple $\bfU' = (U')_{i=1}^k$ such that $U'_i = U^{(j)}_i$ when $i\neq j+1$, and it will still be $1$-almost modest.  In particular, it is $1$-almost modest as either a $(Z,0,\bfU^{(j)})$-$\P$-module or a $(Z,0,\bfU^{(j+1)})$-$\P$-module.

The quotient $\scrM^j/\scrM^j_\llc$ is a $1$-almost modest $(Z,0,\bfU^{(j)})$-$\P$-module by part (1) of Proposition~\ref{prop:bigquot}.  Moreover, it has constituent equal to $0$ in all positions $e$ such that $e \not\ni j+1$.  Now Theorem~\ref{thm:Hpgood} implies that $(\scrM^j/\scrM^j_\llc)^{U_{j+1}}$ is a $1$-almost modest $(Z,U_{j+1},\bfU^{(j)})$-$\P$-module which still has constituent zero in all positions $e$ such that $e \not\ni j+1$.

However, since all the elements of $(M^j_e/M^j_{e\setminus\{j+1\}})^{U_{j+1}}$ are $U_{j+1}$-invariant, we may now interpret $(\scrM^j/\scrM^j_\llc)^{U_{j+1}}$ as a $(Z,U_{j+1},\bfU^{(j+1)})$-$\P$-module, just by letting the derivation-lifts for the subgroup $U_{j+1}$ all be zero.  Once again, this change of directing groups does not affect the $1$-almost modesty.

Finally, an appeal to Lemma~\ref{lem:YY'modest} now shows that, with these new, trivial derivation-lifts of $U_{j+1}$, $(\scrM^j/\scrM^j_\llc)^{U_{j+1}}$ is also a $1$-almost modest $(Z,0,\bfU^{(j+1)})$-$\P$-module, as required.
\end{proof}

\begin{lem}\label{lem:s-e-s-for-A}
For each $j \in \{0,1,\ldots,k-1\}$, if we regard the two $\P$-modules of the previous lemma as $(Z,0,\bfU^{(j+1)})$-$\P$-modules, then there is a short exact sequence
\[\scrM^j_\llc \into \scrM^{j+1} \onto (\scrM^j/\scrM^j_\llc)^{U_{j+1}}.\]
\end{lem}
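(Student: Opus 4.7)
The plan is to identify the constituents of the three $\P$-modules at each $e\subseteq [k]$, verify algebraic exactness constituent-wise, and then check the $\P$-morphism axioms. By the construction of reduction, solution $\P$-module, and cohomology of a quotient, these constituents are
\[M^j_{e\setminus\{j+1\}},\qquad M^{j+1}_e,\qquad \bigl(M^j_e/M^j_{e\setminus\{j+1\}}\bigr)^{U_{j+1}}\]
respectively. Using the inclusions~(\ref{eq:incl}), I will take the first map $\iota_e$ to be the inclusion $M^j_{e\setminus\{j+1\}}\into M^{j+1}_e$, and the second map $q_e$ to be induced by the inclusion $M^{j+1}_e\into M^j_e$ followed by passage to the quotient by $M^j_{e\setminus\{j+1\}}$.

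The case $j+1\notin e$ is trivial, since then $M^j_e = M^j_{e\setminus\{j+1\}} = M^{j+1}_e$, so the third constituent vanishes and the first two coincide. The essential case is $j+1\in e$, say $e = \{j+1,i_1,\ldots,i_{s-1}\}$. The one substantive calculation uses commutativity of differencing: for $f\in\F(Z,A)$, setting $g := d^{U^{(j)}_{i_1}}\cdots d^{U^{(j)}_{i_{s-1}}}f$, one has $d_u g = d^{U^{(j)}_{i_1}}\cdots d^{U^{(j)}_{i_{s-1}}}(d_u f)$ for every $u\in U_{j+1}$. Hence the following conditions on $f$ are equivalent: (i) $f\in M^{j+1}_e$; (ii) $g$ is $U_{j+1}$-invariant; (iii) $d_u f\in M^j_{e\setminus\{j+1\}}$ for every $u\in U_{j+1}$; (iv) the residue class of $f$ in $M^j_e/M^j_{e\setminus\{j+1\}}$ is $U_{j+1}$-invariant. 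This chain of equivalences simultaneously yields that $q_e$ is well-defined with image in the $U_{j+1}$-invariants, that $\ker q_e = M^j_{e\setminus\{j+1\}} = \rm{img}\,\iota_e$, and that every representative of a $U_{j+1}$-invariant class automatically lies in $M^{j+1}_e$, so $q_e$ is surjective.

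To finish, I will verify the three $\P$-morphism axioms of Definition~\ref{dfn:delta-mors} for $(\iota_e)_e$ and $(q_e)_e$. Compatibility with structure morphisms is immediate, because every structure morphism of the three $\P$-modules is an inclusion in $\F(Z,A)$ or a quotient of such. Compatibility with derivation-lifts reduces again to commutativity of differencings, together with the fact that $d^{U_{j+1}}$ preserves $M^j_{e\setminus\{j+1\}}$; this makes $d_u$ restrict consistently along $\iota_e$ and descend consistently along $q_e$, for both $i\neq j+1$ and $i=j+1$ (in the latter case both sides vanish at the invariant quotient, since the target constituent of $(\scrM^j/\scrM^j_\llc)^{U_{j+1}}$ at $e\setminus\{j+1\}$ is zero). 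Finally, the co-induction axiom follows via the tower relation~(\ref{eq:compose-coind}): after the reinterpretations supplied by Lemma~\ref{lem:reinterp-bits}, each constituent is co-induced over $U^{(j+1)}_e$, and the sub-constituent version of the inclusion or quotient is obtained by running the same argument inside the ambient group $U^{(j+1)}_e$ in place of $Z$. The whole argument is organized around the single commutativity calculation above; the main care required is just bookkeeping of the various co-inducing subgroups, and I do not anticipate any deeper analytic obstacle.
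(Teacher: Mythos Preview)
Your proof is correct and follows essentially the same route as the paper: both split into the cases $j+1\in e$ and $j+1\notin e$, and both rest on the key equivalence that $f\in M^{j+1}_e$ if and only if $d_u f\in M^j_{e\setminus\{j+1\}}$ for all $u\in U_{j+1}$, i.e.\ if and only if the class of $f$ in $M^j_e/M^j_{e\setminus\{j+1\}}$ is $U_{j+1}$-invariant. The paper packages this slightly more structurally by first noting that $\scrM^j_\llc\leq\scrM^{j+1}$ is a $\P$-submodule, forming the quotient $\P$-module $\scrL$, and then identifying $\scrL$ constituent-wise with $(\scrM^j/\scrM^j_\llc)^{U_{j+1}}$, thereby getting the $\P$-morphism axioms for free from the general quotient construction rather than verifying them by hand as you do.
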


\begin{proof}
Given the preceding lemma, the inclusions~(\ref{eq:incl}) show that $\scrM^j_\llc \leq \scrM^{j+1}$ as $(Z,0,\bfU^{(j+1)})$-$\P$-modules, so there is a short exact sequence
\[\scrM^j_\llc \into \scrM^{j+1} \onto \scrL.\]
Here $\scrL$ is the quotient $\P$-module, whose constituent modules are $(M^{j+1}_e/M^j_{e\setminus \{j+1\}})_e$.

It remains to show that $\scrL = (\scrM^j/\scrM^j_\llc)^{U_{j+1}}$.

If $j+1 \in e$, then $M^{j+1}_e$ consists of those $f \in M^j_e$ such that
\[d_uf \in M^j_{e\setminus \{j+1\}} \quad \forall u\in U_{j+1},\]
hence
\[M^{j+1}_e/M^j_{e\setminus \{j+1\}} = (M^j_e/M^j_{e\setminus \{j+1\}})^{U_{j+1}}.\]

On the other hand, if $j+1 \not\in e$, then
\[M^{j+1}_e = M^j_e = M^j_{e\setminus\{j+1\}},\]
and again one has the (now trivial) equality \[M^{j+1}_e/M^j_{e\setminus \{j+1\}} = (M^j_e/M^j_{e\setminus \{j+1\}})^{U_{j+1}}.\]
\end{proof}

\begin{cor}
The solution $\P$-module $\scrM^j$ is $1$-almost modest for every $j$.  In case $Z$ is Lie and $A$ is compactly generated, $\scrM^j$ is structurally compactly generated for every $j$.
\end{cor}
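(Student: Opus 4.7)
The plan is to execute the induction on $j$ that the preceding two lemmas were set up to support. The base case $j=0$ has already been verified in the text preceding Lemma~\ref{lem:reinterp-bits}: the structure complex of $\scrM^0$ at any nonempty $e\subseteq[k]$ is a simplicial-cohomology-type complex with essentially constant coefficients, so its homology is $\F(Z,A)$ in position $1$ (resp.\ $A$ at the sub-constituent level) and vanishes in every other position; this is precisely $1$-almost modesty, and it is structurally compactly generated when $A$ is compactly generated and $Z$ is Lie.

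For the inductive step, assume the assertion holds for some $j\in\{0,1,\ldots,k-1\}$. Lemma~\ref{lem:reinterp-bits} then yields that both $\scrM^j_\llc$ and $(\scrM^j/\scrM^j_\llc)^{U_{j+1}}$ are $1$-almost modest as $(Z,0,\bfU^{(j+1)})$-$\P$-modules, and Lemma~\ref{lem:s-e-s-for-A} fits $\scrM^{j+1}$ between these two in a short exact sequence of $(Z,0,\bfU^{(j+1)})$-$\P$-modules. An application of part (2) of Proposition~\ref{prop:bigquot} now gives that $\scrM^{j+1}$ itself is $1$-almost modest, completing the induction for the modesty assertion.

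For the structural-compact-generation clause, assume additionally that $Z$ is a Lie group and $A$ is compactly generated. The same induction propagates this property: the reduction $\scrM^j_\llc$ inherits structural compact generation from $\scrM^j$ by Corollary~\ref{cor:restriction-still-modest}; the quotient $\scrM^j/\scrM^j_\llc$ inherits it via the final clause of Proposition~\ref{prop:bigquot} applied to $\scrM^j_\llc\into \scrM^j \onto \scrM^j/\scrM^j_\llc$; and taking $U_{j+1}$-invariants preserves it by the compactly-generated clause of Theorem~\ref{thm:Hpgood}. A second appeal to the final clause of Proposition~\ref{prop:bigquot}, now applied to the sequence of Lemma~\ref{lem:s-e-s-for-A}, then delivers structural compact generation of $\scrM^{j+1}$.

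There is no real obstacle here: all of the heavy lifting (closure of almost modesty under short exact sequences, quotients, cohomology, restrictions, and aggrandizement) has already been assembled in Sections~\ref{sec:delta-mods}--\ref{sec:take-cohom}. The only point requiring care is ensuring that the two outer terms of the short exact sequence in Lemma~\ref{lem:s-e-s-for-A} are directed by the same tuple $(Z,0,\bfU^{(j+1)})$ as $\scrM^{j+1}$, so that Proposition~\ref{prop:bigquot} applies; this reinterpretation — which absorbs the change from $\bfU^{(j)}$ to $\bfU^{(j+1)}$ by using the triviality of the new derivation-lift of $U_{j+1}$ on $U_{j+1}$-invariants — is precisely the content of Lemma~\ref{lem:reinterp-bits}, resting on Lemma~\ref{lem:YY'modest}.
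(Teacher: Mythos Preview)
Your proof is correct and follows essentially the same inductive route as the paper: base case $j=0$, then use the short exact sequence of Lemma~\ref{lem:s-e-s-for-A} together with part~(2) of Proposition~\ref{prop:bigquot}, with the outer terms handled via the chain $\scrM^j \leadsto \scrM^j_\llc \leadsto \scrM^j/\scrM^j_\llc \leadsto (\scrM^j/\scrM^j_\llc)^{U_{j+1}}$. The only cosmetic difference is that you invoke Lemma~\ref{lem:reinterp-bits} directly for the almost-modesty of the outer terms, whereas the paper re-lists its constituent ingredients (Corollary~\ref{cor:restriction-still-modest}, Proposition~\ref{prop:bigquot}(1), Theorem~\ref{thm:Hpgood}) explicitly.
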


\begin{proof}
This follows by induction on $j$.  We have already seen that $\scrM^0$ is $1$-almost modest.  If we assume that $\scrM^j$ is $1$-almost modest, then Corollary~\ref{cor:restriction-still-modest} gives the same for $\scrM^j_\llc$; part (1) of Proposition~\ref{prop:bigquot} gives the same for $\scrM^j/\scrM^j_\llc$; Theorem~\ref{thm:Hpgood} gives the same for $(\scrM^j/\scrM^j_\llc)^{U_{j+1}}$; and finally the previous lemma and part (2) of Proposition~\ref{prop:bigquot} give the same for $\scrM^{j+1}$.  In case $Z$ is Lie and $A$ is compactly generated, the same chain of reasoning gives that every $\scrM^j$ is structurally compactly generated.
\end{proof}

\begin{proof}[Proof of Theorem A]
This is precisely the assertion that $\scrM^k$ is $1$-almost modest in its top structure complex, and that the top structure complex has compactly generated homology in case $Z$ is Lie and $A$ is compactly generated.  These conclusions are contained in the preceding corollary.
\end{proof}

\subsection{Generalizing Theorem A to systems of equations}

As a further illustration of our general theory, we can also study the module of functions that simultaneously satisfy several {\PDE}s.  We now sketch this in the case of two {\PDE}s.

Suppose that $\bfU = (U_i)_{i=1}^k$ and $\bfV = (V_j)_{j=1}^\ell$ are two tuples of closed subgroups of $Z$.  Let $\scrM^0_0$ be the solution $(Z,0,\bfV)$-$\P$-module associated to $\bfV$, so the previous subsection shows that this is $1$-almost modest, and let
\[\scrM^0 := \Ag_{\bfV}^{(\bfU,\bfV)}\scrM^0_0,\]
where $(\bfU,\bfV)$ denotes the concatenated subgroup-tuple $(U_1,\ldots,U_k,V_1,\ldots,V_\ell)$.

Let $\bfU^{(i')}$ for $i' = 0,1,\ldots,k$ be as in the previous subsection, and define $\scrM^{i'}$ to be the $(Z,0,(\bfU,\bfV))$-$\P$-submodule of $\scrM^0$ consisting of solutions to the pairs of {\PDE}s associated to sub-tuples of $\bfV$ and of $\bfU^{(i')}$ (so this is compatible with our previous definition for $i'=0$).

Now the same argument as in the previous subsection applies, giving by induction on $i'$ that every $\scrM^{i'}$ is $1$-almost modest.  For $i' = k$, this reaches the $(Z,0,(\bfU,\bfV))$-$\P$-module of simultaneous solutions to both systems of {\PDE}.  As a first consequence, we see that if $U_{[k]} + V_{[\ell]} = Z$, then the module of simultaneous solutions to both {\PDE}s is relatively discrete over the submodule of `degenerate' solutions: those solutions that may be decomposed as
\[\sum_{i=1}^k f_i + \sum_{j=1}^\ell f_j',\]
where the function $f_i$ for $1 \leq i \leq k$ solves the simpler pair of {\PDE}s associated to $\bfU\uhr_{[k]\setminus i}$ and $\bfV$, and the function $f'_j$ for $1 \leq j \leq \ell$ solves the {\PDE}s associated to $\bfU$ and $\bfV\uhr_{[\ell]\setminus j}$.

\subsection{Proof of Theorem B}\label{subs:zero-sum}

The zero-sum $\P$-module associated to subgroup data $\bfU$ was constructed in Example~\ref{ex:zero-sum-delta}.  To prove Theorem B we will show that it is always $2$-almost modest, and structurally compactly generated in case $Z$ is Lie and $A$ is compactly generated. This will follow very similar steps to Subsection~\ref{subs:pfofA}.

For each $j$, let $\bfU^{(j)}$ be the tuple of acting groups constructed from $\bfU$ as in Subsection~\ref{subs:pfofA}. Let
\[\Psi^j:\scrP^j\to\scrL\]
be the $\P$-morphism considered in Example~\ref{ex:zero-sum-delta} for the data $\bfU^{(j)}$, so that $\scrN^j := \ker\Psi^j$ is the associated zero-sum $\P$-module.  Our ultimate interest is in $\scrN^k$.

We start with the base clause of the induction.

\begin{lem}\label{lem:pureThmB}
The $(Z,0,\bfU^{(0)})$-$\P$-module $\scrN^0$ is $2$-almost modest, and structurally compactly generated in case $A$ is compactly generated.
\end{lem}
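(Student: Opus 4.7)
The plan is to realize $\scrN^0$ as the kernel of a surjective $\P$-morphism between two simpler $\P$-modules whose $1$-almost modesty is easy to establish, and then apply part~(3) of Proposition~\ref{prop:bigquot} to deduce $2$-almost modesty for $\scrN^0$.

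First I would observe that, since $\bfU^{(0)}$ consists entirely of trivial subgroups, one has $P^0_e = \bigoplus_{i\in e}\F(Z,A)$ (with every derivation-lift equal to zero), and the summation map $\Psi^0_e:\bigoplus_{i\in e}\F(Z,A)\to\F(Z,A) = L_e$ is surjective for every nonempty $e$ (both sides vanishing when $e = \emptyset$). Hence $\Psi^0:\scrP^0\to \scrL$ is a surjective $\P$-morphism with kernel $\scrN^0$, producing the short exact sequence
\[0\to\scrN^0\to\scrP^0\to\scrL\to 0.\]
Moreover $N^0_e = 0$ whenever $|e|\leq 1$: trivially for $e=\emptyset$, and for singletons because $\Psi^0_{\{i\}}$ is the identity. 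This positions us to apply Proposition~\ref{prop:bigquot}(3) with $\ell_0 = 1$, provided $\scrP^0$ and $\scrL$ are both $1$-almost modest.

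The next step is to verify that $1$-almost modesty. For $\scrL$ this was already indicated in Example~\ref{ex:big-trivial}: the structure complex of the lean version of $\scrL$ at any nonempty $e$ is the simplicial cochain complex of the full $(|e|-1)$-simplex with coefficients in $A$ with its initial entry replaced by $0$, whose homology is a diagonal copy of $A$ in position $1$ and vanishes above. For $\scrP^0$, I would use the direct-sum decomposition $\scrP^0 = \bigoplus_{i=1}^k\scrL\llc_{\{i\}}$ as $(Z,0,\bfU^{(0)})$-$\P$-modules, since the $i$-th summand is $\F(Z,A)$ at positions $e\ni i$ and $0$ otherwise, matching the definition of the reduction. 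By Corollary~\ref{cor:restriction-still-modest} each reduction $\scrL\llc_{\{i\}}$ is $1$-almost modest, and by Lemma~\ref{lem:sum-of-modest} their direct sum $\scrP^0$ is $1$-almost modest as well. When $A$ is compactly generated, the same chain of reasoning propagates structural compact generation from $\scrL$ through the reductions to $\scrP^0$.

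With these ingredients in place, Proposition~\ref{prop:bigquot}(3) immediately yields that $\scrN^0$ is $2$-almost modest, and its final clause gives structural compact generation for $\scrN^0$ when $A$ is compactly generated. The only real piece of work is the direct-sum decomposition of $\scrP^0$ and the explicit reading-off of the structural homology of $\scrL$; both are essentially bookkeeping exercises, and I do not anticipate any substantive obstacle in this purely combinatorial base case.
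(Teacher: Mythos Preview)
Your proposal is correct and follows essentially the same argument as the paper: both set up the short exact sequence $0\to\scrN^0\to\scrP^0\to\scrL\to 0$, verify that $\scrP^0$ and $\scrL$ are $1$-almost modest via a direct-sum-of-aggrandizements decomposition of $\scrP^0$, note that $N^0_e=0$ for $|e|\leq 1$, and then invoke Proposition~\ref{prop:bigquot}(3). Your phrasing of the summands as reductions $\scrL\llc_{\{i\}}$ (rather than the paper's explicitly defined $\Ag_{\{i\}}^{[k]}\scrP^0_i$) is a cosmetic variant---these are the same $\P$-modules---and your use of Corollary~\ref{cor:restriction-still-modest} in place of a direct appeal to Corollary~\ref{cor:s-c-of-agg} is an equally valid route to the same conclusion.
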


\begin{proof}
For $j=0$, one has
\begin{itemize}
\item $P^0_e := \F(Z,A)^{\oplus e}$, as in Example~\ref{ex:pre-zero-sum};
\item $L_e := \F(Z,A)$ for all nonempty $e$ and $L^0_\emptyset = 0$;
\item and $\Psi^0_e((f_i)_{i\in e}) := \sum_{i \in e}f_i$.
\end{itemize}

The special feature of the case $j=0$ is that each $\Psi^0_e$ is surjective, because whenever $e \neq \emptyset$, the module $P^0_e$ contains the whole of $\F(Z,A)$ as a direct summand.  We have therefore constructed a short exact sequence
\[0 \to \scrN^0 \to \scrP^0 \stackrel{\Psi}{\to} \scrL \to 0.\]

Next, $\scrP^0$ may be alternatively written as
\begin{eqnarray}\label{eq:pre-zero-sum-sum}
\bigoplus_{i = 1}^k \Ag_{\{i\}}^{[k]}\scrP^0_i,
\end{eqnarray}
where $\scrP^0_i$ is the $(Z,0,(0))$-$\P$-module
\[0\to \F(Z,A)\]
with trivial structure morphism and derivation-lift.

Therefore, by Corollary~\ref{cor:s-c-of-agg}, the homology of the structure complex of $\scrP^0$ at $e$ is all trivial whenever $|e| > 1$.  When $e = \{i\}$, its structural homology at $(e,1)$ is just $\F(Z,A) = \Cnd_0^Z A$.  Therefore $\scrP^0$ is $1$-almost modest, and structurally compactly generated in case $A$ is compactly generated.

On the other hand, $\scrL$ agrees with the constant $\P$-module $(\F(Z,A))_{e\subseteq [k]}$ at all $e$ except $e = \emptyset$.  Since the constant $\P$-module has trivial structural homology everywhere except $(\emptyset,0)$, and we form $\scrL$ by removing the module indexed by $\emptyset$, it follows that the structure complex of $\scrL$ at $e$ has homology equal to $\F(Z,A)$ in position $1$ and zero at all later positions.  Therefore $\scrL$ is also $1$-almost modest, and structurally compactly generated in case $A$ is compactly generated.

Lastly, for each $i$ one sees that $\Psi_i:P^0_i\to L_i$ is just the identity morphism of $\F(Z,A)$, and so $N^0_i = \ker\Psi^0_i = 0$.

Putting these facts together, we may now apply part (3) of Proposition~\ref{prop:bigquot} to deduce that $\scrN^0$ is $2$-almost modest, and structurally compactly generated in case $A$ is compactly generated.
\end{proof}

Now, for each $j \leq k-1$, let $\scrN^j_\llc := \scrN^j\llc_{[k]\setminus \{j+1\}}$. By Corollary~\ref{cor:s-c-of-agg}, if $\scrN^j$ is $2$-almost modest, then so is $\scrN^j_\llc$.  Also, as for the modules of {\PDE} solutions, an easy check gives
\[N^j_{e \setminus \{j+1\}} = N^{j+1}_{e\setminus \{j+1\}} \leq N^{j+1}_e \leq N^j_e \quad \forall j,e.\]
Thus $\scrN^j_\llc$ is also equal to $\scrN^{j+1}\llc_{[k]\setminus \{j+1\}}$, and so may also be interpreted as a $(Z,0,\bfU^{(j+1)})$-$\P$-module.  If $\scrN^j$ is 2-almost modest as a $(Z,0,\bfU^{(j)})$-$\P$-module, then $\scrN^j_\llc$ is 2-almost modest as a $(Z,0,\bfU^{(j+1)})$-$\P$-module, by the same argument as in the first part of Lemma~\ref{lem:reinterp-bits}.

\begin{lem}\label{lem:img-modest}
If $\scrN^j$ is a $2$-almost modest $(Z,0,\bfU^{(j)})$-module, then $(\scrN^j/\scrN^j_\llc)^{U_{j+1}}$ may be interpreted as a $2$-almost modest $(Z,0,\bfU^{(j+1)})$-$\P$-module.
\end{lem}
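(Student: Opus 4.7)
The plan is to mimic the proof of Lemma~\ref{lem:reinterp-bits} essentially verbatim, carrying the index $\ell_0 = 2$ throughout in place of $\ell_0 = 1$. The only real change is that here we deal with the zero-sum $\P$-module $\scrN^j$ rather than the {\PDE}-solution module $\scrM^j$, and $\scrN^j_\llc$ plays the role of $\scrM^j_\llc$; nothing in the argument of Lemma~\ref{lem:reinterp-bits} used the $1$-almost-modest assumption in an essential way other than that it is propagated through the same chain of operations.

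More concretely, I would proceed as follows. First, by Corollary~\ref{cor:restriction-still-modest}, the reduction $\scrN^j_\llc = \scrN^j\llc_{[k]\setminus\{j+1\}}$ is again a $2$-almost modest $(Z,0,\bfU^{(j)})$-$\P$-submodule of $\scrN^j$. Then part~(1) of Proposition~\ref{prop:bigquot}, applied to the short exact sequence
\[
0 \to \scrN^j_\llc \to \scrN^j \to \scrN^j/\scrN^j_\llc \to 0
\]
of $(Z,0,\bfU^{(j)})$-$\P$-modules, shows that $\scrN^j/\scrN^j_\llc$ is also $2$-almost modest. Now apply Theorem~\ref{thm:Hpgood} with $p = 0$ and $W = U_{j+1}$: the resulting $(Z,U_{j+1},\bfU^{(j)})$-$\P$-module $(\scrN^j/\scrN^j_\llc)^{U_{j+1}}$ is still $2$-almost modest (we do not need $p \geq 1$ here, since we are only propagating almost modesty).

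Next I would observe that $N^j_{e\setminus\{j+1\}} = N^j_e$ whenever $j+1 \notin e$, so the constituent of $\scrN^j/\scrN^j_\llc$ indexed by such $e$ vanishes, and every surviving constituent is $U_{j+1}$-invariant. This lets us reinterpret the $\P$-module $(\scrN^j/\scrN^j_\llc)^{U_{j+1}}$ as a $(Z,U_{j+1},\bfU^{(j+1)})$-$\P$-module simply by declaring the derivation-lifts for $U^{(j+1)}_{j+1} = U_{j+1}$ to be zero on the nonzero constituents; axioms ($\P 3$--$5$) are automatic because those constituents are already $U_{j+1}$-invariant. This change of directing subgroup-tuple only affects the top structure complex at subsets $e$ that contain $j+1$, but the structure complex on such $e$ is unchanged, so $2$-almost modesty is preserved. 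Finally I would invoke Lemma~\ref{lem:YY'modest} to slide $Y$ from $U_{j+1}$ back down to $0$, getting a $(Z,0,\bfU^{(j+1)})$-$\P$-module. To apply that lemma I need $U_{j+1} + U^{(j+1)}_e = 0 + U^{(j+1)}_e$ on every $e$ with nonzero constituent, but on each such $e$ we have $j+1 \in e$ and hence $U^{(j+1)}_e \supseteq U_{j+1}$, so this is automatic.

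I do not expect any genuine obstacle: each step is a direct application of a result from Sections~\ref{sec:delta-mods}--\ref{sec:take-cohom}, and the arrangement is dictated by the analogous argument of Lemma~\ref{lem:reinterp-bits}. The only point worth double-checking is the bookkeeping for $\ell_0 = 2$: one must verify that the hypothesis $M_e = 0$ for $|e| \leq \ell_0$ needed in part~(3) of Proposition~\ref{prop:bigquot} is \emph{not} invoked here (we only use part~(1)), and that the $\ell_0 = 2$ almost modesty is preserved, rather than accidentally promoted to strict modesty or demoted, at the cohomology step. Both of these are transparent from the statements of Proposition~\ref{prop:bigquot}(1) and Theorem~\ref{thm:Hpgood}.
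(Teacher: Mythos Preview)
Your proposal is correct and follows essentially the same route as the paper's proof: reduce, quotient, take $\rmH^0_\m(U_{j+1},-)$, observe vanishing of constituents at $e\not\ni j+1$, reinterpret the directing tuple, and apply Lemma~\ref{lem:YY'modest}. The paper cites Corollary~\ref{cor:s-c-of-agg} where you cite Corollary~\ref{cor:restriction-still-modest}, but these play the same role here, and otherwise the arguments match.
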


\begin{proof}
Given the assumption on $\scrN^j$, Corollary~\ref{cor:s-c-of-agg} and part (1) of Proposition~\ref{prop:bigquot} give that $\scrN^j/\scrN^j_\llc$ is a $2$-almost modest $(Z,0,\bfU^{(j)})$-$\P$-module.  Then Theorem~\ref{thm:Hpgood} gives that $(\scrN^j/\scrN^j_\llc)^{U_{j+1}}$ is a $2$-almost modest $(Z,U_{j+1},\bfU^{(j)})$-$\P$-module, and it may be interpreted also as a $(Z,U_{j+1},\bfU^{(j+1)})$-$\P$-module, by the same argument as in Lemma~\ref{lem:reinterp-bits}.  However, if $e \not\ni j+1$, then $N^j_e = N^j_{e\setminus \{j+1\}}$, and so $(\scrN^j/\scrN^j_\llc)^{U_{j+1}}$ has the zero module in all such positions $e$, and the proof is completed by Lemma~\ref{lem:YY'modest}.
\end{proof}

We now obtain the following analog of Lemma~\ref{lem:s-e-s-for-A}.

\begin{lem}\label{lem:s-e-s-for-B}
One has
\[\scrN^{j+1} = \Z^0(U_{j+1},\scrN^j,\scrN^j_\llc),\]
and so there is a short exact sequence
\[\scrN^j_\llc \into \scrN^{j+1} \onto (\scrN^j/\scrN^j_\llc)^{U_{j+1}}\]
of $(Z,0,\bfU^{(j+1)})$-$\P$-modules.
\end{lem}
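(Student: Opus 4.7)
The plan is to verify the identity $\scrN^{j+1} = \Z^0(U_{j+1},\scrN^j,\scrN^j_\llc)$ constituent by constituent, and then to read off the short exact sequence as an immediate corollary. I read the notation $\Z^0(U_{j+1},\scrN^j,\scrN^j_\llc)$ as the sub-$\P$-module of $\scrN^j$ whose constituent at $e$ consists of those $f \in N^j_e$ with $\t{\nabla}^{\scrN^j,e,e\setminus i}_u f \in N^j_{\llc,e}$ for every $u \in U_{j+1}$ (equivalently, elements whose image in $\scrN^j/\scrN^j_\llc$ is $U_{j+1}$-invariant).

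First I would split into the cases $j+1 \notin e$ and $j+1 \in e$. When $j+1 \notin e$, the invariance constraints on zero-sum tuples indexed by $e$ coincide for the data $\bfU^{(j)}$ and $\bfU^{(j+1)}$, so $N^j_e = N^{j+1}_e$; moreover $N^j_{\llc,e} = N^j_{e\setminus\{j+1\}} = N^j_e$, so the $\Z^0$ condition is vacuous, and all three constituents coincide with $N^j_e$. When $j+1 \in e$, the structure morphism $\phi^{\scrN^j}_{e\setminus\{j+1\},e}$ realises $N^j_{\llc,e}$ inside $N^j_e$ as the submodule of tuples $(f_i)_{i\in e}$ with $f_{j+1} = 0$. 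Because the zero-sum relation determines the $(j+1)$-coordinate from the others, the projection $(f_i)_i \mapsto f_{j+1}$ induces an isomorphism $N^j_e/N^j_{\llc,e} \cong \F(Z,A)$ of $Z$-modules (with the rotation action). Under this identification the condition $\t{\nabla}^{e,e\setminus\{j+1\}}_u f \in N^j_{\llc,e}$ for all $u \in U_{j+1}$ translates to $d_u f_{j+1} = 0$ for all such $u$, i.e.\ to $f_{j+1} \in \F(Z,A)^{U_{j+1}}$ --- which is exactly the extra invariance that distinguishes $N^{j+1}_e$ from $N^j_e$.

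Once the constituent-wise equality $\scrN^{j+1} = \Z^0(U_{j+1},\scrN^j,\scrN^j_\llc)$ is established, it is automatically an equality of sub-$\P$-modules of $\scrN^j$, since both sides inherit their structure morphisms and derivation-lifts from $\scrN^j$ itself (and hence from $\scrP^j$). The short exact sequence
\[
\scrN^j_\llc \into \scrN^{j+1} \onto (\scrN^j/\scrN^j_\llc)^{U_{j+1}}
\]
then follows by composing the inclusion $\scrN^{j+1}\into \scrN^j$ with the quotient $\scrN^j \onto \scrN^j/\scrN^j_\llc$: the kernel is $\scrN^j_\llc$ tautologically, and the image is $(\scrN^j/\scrN^j_\llc)^{U_{j+1}}$ by the identification just described.

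The only delicate point is the change of directing data. One must reinterpret $\scrN^j_\llc$ as a $(Z,0,\bfU^{(j+1)})$-$\P$-module (as already done in the remarks preceding the statement, using the observation that $N^j_{\llc,e}$ is insensitive to replacing $U^{(j)}_{j+1}=\{0\}$ by $U_{j+1}$), and $(\scrN^j/\scrN^j_\llc)^{U_{j+1}}$ as a $(Z,0,\bfU^{(j+1)})$-$\P$-module via Lemma~\ref{lem:img-modest}; with these interpretations fixed, all three maps in the sequence are manifestly $\P$-morphisms in the $(Z,0,\bfU^{(j+1)})$ category. I expect no substantive analytic or cohomological obstacle here: the whole content is the explicit coordinate-wise description of zero-sum tuples, together with the bookkeeping of directing subgroups already established in Lemmas~\ref{lem:reinterp-bits} and~\ref{lem:img-modest}.
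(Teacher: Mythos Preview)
Your approach is correct and matches the paper's: both verify constituent-by-constituent that the extra constraint distinguishing $N^{j+1}_e$ from $N^j_e$ (when $j+1\in e$) is precisely $U_{j+1}$-invariance of $f_{j+1}$, and then read off the short exact sequence. One minor inaccuracy that does not affect your argument: the projection $(f_i)_i\mapsto f_{j+1}$ gives an injection of $N^j_e/N^j_{\llc,e}$ into $\F(Z,A)$ but not in general an isomorphism (for example, when $e\setminus\{j+1\}\subseteq[j]$ the image lies only in $\sum_{i\in e\setminus\{j+1\}}\F(Z,A)^{U_i}$), though you only use the injectivity to translate the invariance condition.
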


\begin{proof}
We have already seen the inclusion $\scrN^j_\llc \leq \scrN^{j+1}$ between $(Z,0,\bfU^{(j+1)})$-$\P$-modules.  We now check that for any $e \subseteq [k]$ and $(f_i)_{i \in e} \in N^j_e$, we have
\[d_u((f_i)_i) \in N^j_{e\setminus \{j+1\}} \quad \forall u \in U_{j+1} \quad \Longleftrightarrow \quad (f_i)_i \in N^{j+1}_e.\]
Indeed, both sides here hold vacuously if $e \not\ni j+1$.  If $j+1 \in e$ and the left-hand side holds, then since $(f_i)_i \in N^j_e$ we have
\begin{multline*}
d_u f_{j+1} = - d_u\sum_{i \in e\setminus \{j+1\}} f_i = 0\ \forall u \in U_{j+1} \quad \Longrightarrow \quad f_{j+1} \in \F(Z,A)^{U_{j+1}}\\ \quad \Longrightarrow \quad (f_i)_i \in N^{j+1}_e.
\end{multline*}
Finally, if $j+1 \in e$ and $(f_i)_i \in N^{j+1}_e$, then for any $u \in U_{j+1}$ we have
\[\sum_{i \in e\setminus \{j+1\}} d_uf_i = - d_uf_{j+1} = 0 \quad \Longrightarrow \quad d_u((f_i)_i) \in N^j_{e\setminus \{j+1\}}.\]
This proves that $\scrN^{j+1} = \Z^0(U_{j+1},\scrN^j,\scrN^j_\llc)$, which translates directly into the required short exact sequence.
\end{proof}

\begin{proof}[Proof of Theorem B]
An induction on $j$ will show that $\scrN^j$ is $2$-almost modest (as a $(Z,0,\bfU^{(j)})$-$\P$-module) for every $j$.  We have already seen that $\scrN^0$ is $2$-almost modest, and structurally compactly generated in case $Z$ is Lie and $A$ is compactly generated.  If we assume that $\scrN^j$ is a $2$-almost modest $(Z,0,\bfU^{(j)})$-$\P$-module, then $\scrN^j_\llc$ is a 2-almost modest $(Z,0,\bfU^{(j+1)})$-$\P$-module, as remarked previously; Lemma~\ref{lem:img-modest} gives that $(\scrN^j/\scrN^j_\llc)^{U_{j+1}}$ is a 2-almost modest $(Z,0,\bfU^{(j+1)})$-$\P$-module; and finally Lemma~\ref{lem:s-e-s-for-B} and part (2) of Proposition~\ref{prop:bigquot} give the same for $\scrN^{j+1}$.  The same chain of reasoning gives structural compact-generation in case $Z$ is Lie and $A$ is compactly generated.
\end{proof}

\subsection{The case of Euclidean target modules}

Now suppose that $A$ is a Euclidean space with a $Z$-action by linear automorphisms.

\begin{proof}[Proof of Corollaries A$''$ and B$''$]
We may clearly assume that $Z = U_{[k]}$. Let $\scrM$ be the $\P$-module of solutions to the $A$-valued {\PDE} associated to $\bfU$, and let $\scrN$ be the $\P$-module of $A$-valued zero-sum tuples associated to $\bfU$.  Then every $M_e$ and every $N_e$ is actually a real topological vector space (as a subspace of $\F(Z,A)$), and all the boundary maps $\partial_\ell$ are $\bbR$-linear.  It follows that for either of these $\P$-modules, if $\ker\partial_{\ell+1}/\rm{img}\,\partial_\ell$ is Polish for some $\ell$, then it is also a topological vector space. However, Theorem A (resp. B) gives that these quotient modules are discrete for all $\ell \geq 2$ (resp. $\ell \geq 3$), so they must be the trivial vector spaces: that is, the structure complexes must be exact in these positions.

It now follows that any $A$-valued {\PDE}-solution (resp. zero-sum tuple) may be decomposed as in the statement of Corollary A$''$ (resp. B$''$), simply by appealing to this exactness repeatedly for $\ell = k,k-1,\ldots,2$ (resp. for $\ell = k,k-1,\ldots,3$).
\end{proof}

One could also prove Corollaries A$''$ and B$''$ directly by specializing the machinery of $\P$-modules to the setting of Euclidean-space-valued functions, so that all $Z$-modules of interest become topological vector spaces.  In this approach, much of our background work on complexes and cohomology trivializes, as a simple consequence of the result~\cite[Theorem A]{AusMoo--cohomcty} that $\rmH^p_\m(G,A) = 0$ for any compact group $G$, Euclidean $G$-module $A$ and $p \geq 1$.  In particular, one finds that for the Euclidean-valued {\PDE}-solution and zero-sum $\P$-modules, $\rmH^p_\m(W,\scrM) = \rmH^p_\m(W,\scrN) = 0$ for all $W\leq Z$ and $p\geq 1$.

However, while these special cases of our main $\P$-module results are certainly simpler than the general cases, I do not see an approach to Corollaries A$''$ and B$''$ that avoids the use of measurable group cohomology, and specifically~\cite[Theorem A]{AusMoo--cohomcty}, altogether.

This is somewhat surprising, because there \emph{is} a much simpler approach in case one assumes a priori that a {\PDE} solution or zero-sum tuple consists of integrable functions.  For instance, if $f$ is an integrable $\bbR$-valued {\PDE} solution, then the equation
\[d_{u_1}\cdots d_{u_k}f = 0 \quad \forall (u_1,\ldots,u_k) \in \prod_{i=1}^kU_i\]
may be written as
\[\sum_{e\subseteq [k]}(-1)^{|e|}f\Big(z - \sum_{i\in e}u_i\Big) \equiv 0,\]
and after re-arranging and integrating out the $u_i$s this implies that
\[f(z) = \sum_{e\subseteq [k],\,e\neq \emptyset}(-1)^{|e|-1}\Big(\int_{U_1}\!\!\cdots\!\!\int_{U_k}f\Big(z - \sum_{i\in e}u_i\Big)\,\d u_1\cdots \d u_k\Big).\]
For each $e$, the corresponding integral on the right-hand side here is invariant under $U_e$, hence under at least one of the subgroups $U_i$.  The problem is that one needs the integrability of $f$ to know that the right-hand side here is well-defined for a.e. $z$.  In fact, this `almost-proof' that all {\PDE}-solutions are degenerate is very reminiscent of the integration used in the proof of~\cite[Theorem A]{AusMoo--cohomcty}, but in that proof it had to be preceded by a `regularizing' argument, showing that cocycles could be assumed to be integrable.  The setting of Corollaries A$''$ and B$''$ seems to inherit the same difficulty.

\section{Rudimentary quantitative results}\label{sec:first-quant}

\subsection{Repairing approximate solutions}

We next prove Theorem C.  We first prove a version in which the error tolerances may depend on the underlying groups $Z$ and $U_i$, and will then remove that extra dependence by a compactness argument.

\begin{lem}[Weak form of Theorem C]\label{lem:wk-near-solns}
Fix $Z$ and a subgroup-tuple $\bfU = (U_i)_{i=1}^k$.  Let $M$ be the associated module of {\PDE} solutions. For every $\eps > 0$ there is a $\delta > 0$ such that if $f\in \F(Z)$ satisfies
\[d_0(0,d^{U_1}\cdots d^{U_k}f) < \delta \quad \hbox{in}\ \F(U_1\times \cdots \times U_k\times Z),\]
then there is some $g \in M$ for which $d_0(f,g) < \eps$.
\end{lem}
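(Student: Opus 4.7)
The plan is to reformulate the lemma as the assertion that the continuous homomorphism $D\colon\F(Z)\to \F(U_1\times\cdots\times U_k\times Z)$ defined by $D(f):=d^{U_1}\cdots d^{U_k}f$ has closed image. Since $\ker D=M$, closedness makes $D$ factor through a topological embedding $\F(Z)/M\hookrightarrow \F(U_1\times\cdots\times U_k\times Z)$ by Theorem~\ref{thm:open-mors}, and the resulting continuity of the inverse at $0$ translates directly (by contrapositive) into the required $\delta$-$\eps$ statement.  I plan to prove the closedness of $D$ by induction on $k$, letting the ambient group and subgroup-tuple vary freely from one step of the induction to the next.

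For $k=1$, any Borel section of $Z\to Z/U_1$ identifies $\F(Z)$ with $\F(U_1,\F(Z/U_1))$ as a $U_1$-module under translation, so Lemma~\ref{lem:vanish} gives $\rmH^p_\m(U_1,\F(Z))=0$ for every $p\ge 1$; in particular $d^{U_1}$ is a closed surjection onto the closed submodule $\Z^1(U_1,\F(Z))$.  For the inductive step with $k\ge 2$, I factor $D=E\circ d^{U_k}$, where $E\colon\F(U_k\times Z)\to \F(U_1\times\cdots\times U_k\times Z)$ applies $d^{U_1}\cdots d^{U_{k-1}}$ fiberwise in the $Z$-variable.  The map $E$ is exactly the $(k-1)$-level version of $D$ for the ambient group $U_k\times Z$ with subgroups $\{0\}\times U_i$, $i<k$, so by the inductive hypothesis $E$ has closed image with kernel $\F(U_k, M_{[k-1]})$ (the $U_k$-parametrized families of $(U_1,\ldots,U_{k-1})$-{\PDE}-solutions on $Z$).

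Given a null sequence $D(f_n)=E(d^{U_k}f_n)\to 0$, closedness of $E$ yields a null sequence $k_n\in\F(U_k\times Z)$ with $\phi_n:=d^{U_k}f_n - k_n \in \F(U_k,M_{[k-1]})$.  Since $d^{U_k}f_n$ is a genuine $1$-cocycle in $u_k$, the continuous coboundary map $\delta^1\colon\F(U_k,M_{[k-1]})\to \F(U_k^2,M_{[k-1]})$ sends $\phi_n$ to a null sequence.  Theorem~\ref{thm:Hpgood}, applied to the restricted solution $\P$-module $\scrM\uhr_{[k-1]}$ (which is $1$-almost modest by Theorem~A) with $W=U_k$ and $p=2$, ensures that $\rmH^2_\m(U_k,M_{[k-1]})$ is Polish and hence that $\delta^1$ is closed; another appeal to Theorem~\ref{thm:open-mors} corrects $\phi_n$ by a null sequence to an exact cocycle $\phi'_n\in\Z^1(U_k,M_{[k-1]})$ still null-close to $d^{U_k}f_n$.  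Because $\rmH^1_\m(U_k,\F(Z))=0$, we can write $\phi'_n=d^{U_k}g_n$ with $g_n\in\F(Z)$, and the condition $d^{U_k}g_n\in\Z^1(U_k,M_{[k-1]})$ forces $g_n\in M_{[k]}=M$.  Finally $d^{U_k}(f_n - g_n)\to 0$, so the $k=1$ case produces null $\ell_n$ with $(f_n-g_n)-\ell_n\in \F(Z)^{U_k}\subseteq M$, placing $f_n$ within a null distance of $M$.  The main obstacle is precisely the cocycle-correction step in the middle: turning the approximate $M_{[k-1]}$-valued cocycle $\phi_n$ into an exact one relies on the Hausdorffness of $\rmH^2_\m(U_k,M_{[k-1]})$, and I see no way to establish this without the $\P$-module machinery developed in Section~\ref{sec:take-cohom}.
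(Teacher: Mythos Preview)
Your proof is correct and follows essentially the same strategy as the paper: induction on $k$, with the crux being the Hausdorffness of $\rmH^2_\m(U_k,M_{[k-1]})$ (via Theorem~A and Theorem~\ref{thm:Hpgood}) to correct an approximate $M_{[k-1]}$-valued $1$-cocycle to an exact one, followed by $\rmH^1_\m(U_k,\F(Z))=0$ to lift it back to a function in $M$. Your packaging via the factorization $D=E\circ d^{U_k}$ and the change of ambient group to $U_k\times Z$ is a slight streamlining of the paper's argument, which instead works by contradiction and applies the inductive hypothesis pointwise in $u\in U_k$ through a Fubini/Borel--Cantelli step plus measurable selection; the substance is the same.
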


\begin{proof}
This is proved by induction on $k$.  When $k=1$ it is an easy exercise, so suppose $k\geq 2$ and that the result is known for any {\PDE} of order $k-1$. Let $M$ be the module of solutions to the {\PDE} associated to $\bfU$, and let $M'$ the module of solutions to the {\PDE} associated to $\bfU' := (U_1,\ldots,U_{k-1})$.

We will deduce the next case by contradiction, so suppose that $\eps > 0$ and $(f_n)_n$ is a sequence in $\F(Z)$ such that
\[d_0(0,d^{U_1}\cdots d^{U_k}f_n) \leq 2^{-n} \quad \hbox{in}\ \F(U_1\times \cdots \times U_k\times Z),\]
but $d_0(f_n,g) \geq \eps$ for all $n$ and all $g \in M$.

From this, Fubini's Theorem and Chebyshev's Inequality imply that
\[m_{U_k}\big\{u\,\big|\ d_0(0,d^{U_1}\cdots d^{U_{k-1}}(d_uf_n)) \leq 2^{-n/2}\ \hbox{in}\ \F(U_1\times \cdots \times U_{k-1}\times Z)\big\} \geq 1 - 2^{-n/2},\]
and so the Borel-Cantelli Lemma implies that for a.e. $u \in U_k$ one has
\[d^{U_1}\cdots d^{U_{k-1}}(d_uf_n) \to 0 \quad \hbox{in}\ \F(U_1\times \cdots \times U_{k-1}\times Z).\]
Given this, the inductive hypothesis on $k$ provides a null sequence $(f_{n,u})_n$ in $\F(Z)$ and a sequence $(g_{n,u})_n$ in $M'$ such that $d_uf_n = f_{n,u} + g_{n,u}$.  By a simple measurable selection we may assume that each $f_{n,u}(z)$ and $g_{n,u}(z)$ is jointly measurable as a function of $(u,z)$.

Regarding each mapping $u\mapsto f_{n,u}$ and $u \mapsto g_{n,u}$ as an element of $\C^1(U_k,\F(Z))$, we have
\[d^{U_k}(f_{n,\bullet} + g_{n,\bullet}) = d^{U_k}d^{U_k}f_n = 0.\]
It follows that
\[\s_n := d^{U_k}f_{n,\bullet} = - d^{U_k}g_{n,\bullet}\]
is both a null sequence, by the first expression, and an $M'$-valued $2$-coboundary, by the second expression.

The proof of Theorem A showed that $M'$ is the top module of a $1$-almost modest $(Z,0,\bfU')$-$\P$-module, and so Theorem~\ref{thm:Hpgood} implies that $\rmH^2_\m(U_k,M')$ is Hausdorff, and hence that $\B^2(U_k,M')$ is closed. Therefore Theorem~\ref{thm:open-mors} gives a null sequence $g'_{n,\bullet} \in \C^1(U_k,M')$ such that $\s_n = -d^{U_k}g'_{n,\bullet}$.

We now make two uses of this equation:
\begin{itemize}
\item Let $g''_{n,\bullet} := g_{n,\bullet} - g'_{n,\bullet}$.  This gives that $d^{U_k}g''_{n,\bullet} = 0$.  Since $\rmH^1_\m(U_k,\F(Z)) = 0$, the latter implies that $g''_{n,\bullet} = d^{U_k}g''_n$ for some $g''_n \in \F(Z)$, and now since $g''_{n,\bullet}$ takes values in $M'$, these $g''_n$ all lie in $M$.

\item On the other hand, we have that $(f_{n,\bullet})_n$ and $(g'_{n,\bullet})_n$ are both null sequences, and
\[d^{U_k}(f_{n,\bullet} + g'_{n,\bullet}) = \s_n - \s_n = 0,\]
so, using again that $\rmH^1_\m(U_k,\F(Z)) = 0$ (hence certainly Hausdorff), there is a null sequence $f'_n \in \F(Z)$ such that $f_{n,\bullet} + g'_{n,\bullet} = d^{U_k}f'_n$.
\end{itemize}

Finally, re-tracing our steps now yields
\[d_uf_n = (f_{n,u} + g'_{n,u}) + (g_{n,u} - g'_{n,u}) = d_u(f'_n + g'_n).\]
Therefore $f_n = f'_n + g'_n + h_n$ for some $h_n \in \F(Z)^{U_k}$.  Since $(f'_n)_n$ is null and $g'_n + h_n$ lies in $M + \F(Z)^{U_k} = M$, this contradicts our assumptions on $(f_n)_n$, and so completes the next step of the induction.
\end{proof}

\begin{proof}[Proof of Theorem C]
This proof is by compactness and contradiction.  Suppose the result is false, and let $\eps > 0$ be such that one can find a sequence
\[(Z_n,U_{1,n},\ldots,U_{k,n},f_n)\]
of data of the relevant kind such that
\[d_0(0,d^{U_{1,n}}\cdots d^{U_{k,n}}f_n) \leq 2^{-n} \quad \hbox{in}\ \F(U_{1,n}\times \cdots \times U_{k,n}\times Z)\]
but
\[\min\{d_0(f_n,g)\,|\,g \in M_n\} \geq \eps \quad \forall n,\]
where $M_n$ is the solution module of the {\PDE} defined by $\bfU_n$.

Let
\[\ol{Z} := \prod_{n\geq 1}Z_n,\]
\[\ol{U}_i := \prod_{n\geq 1}U_{i,n}, \quad i=1,2,\ldots,k,\]
and $\ol{\bf{U}} := (\ol{U}_i)_{i=1}^k$.  Also let $\ol{M}$ be the module of solutions to the {\PDE} associated to $\ol{\bf{U}}$, and define $\ol{f}_n \in \F(\ol{Z})$ by
\[\ol{f}_n(\ol{z}) := f_n(z_n), \quad \hbox{where}\ \ol{z} = (z_1,z_2,\ldots).\]
These now satisfy
\[d_{\ol{u}_1}\cdots d_{\ol{u}_k}\ol{f}_n(\ol{z}) = d_{u_{1,n}}\cdots d_{u_{k,n}}f_n(z_n) \quad \forall \ol{u}_1,\ldots,\ol{u}_k,\ol{z},\]
and hence
\[d_0(0,d^{\ol{U}_1}\cdots d^{\ol{U}_k}\ol{f}_n) \to 0 \quad \hbox{in}\ \F(\ol{U}_1\times \cdots \times \ol{U}_k\times \ol{Z}).\]

Therefore Lemma~\ref{lem:wk-near-solns} gives a sequence $\ol{g}_n \in \ol{M}$ such that $\ol{f}_n - \ol{g}_n$ is null.  However, the fact that $\ol{g}_n \in \ol{M}$ implies that for Haar-almost every choice of
\[(z^\circ_1,z^\circ_2,\ldots,z^\circ_{n-1},z^\circ_{n+1},\ldots) \in \prod_{n'\geq 1,\,n'\neq n}Z_{n'},\]
the restriction
\[g_n(z_n) := \ol{g}_n(z^\circ_1,\ldots,z^\circ_{n-1},z_n,z^\circ_{n+1},\ldots)\]
is a member of $M_n$.  On the other hand, the fact that $\ol{f}_n - \ol{g}_n$ is null implies that, on average over such choices of $(z^\circ_1,z^\circ_2,\ldots,z^\circ_{n-1},z^\circ_{n+1},\ldots)$, the quantity $d_0(f_n,g_n)$ tends to zero.  Therefore a suitable sequence of restrictions $g_n$ gives $g_n \in M$ and $d_0(f_n,g_n) \to 0$, contradicting our assumptions.
\end{proof}

\begin{proof}[Proof of Corollary C$'$]
If $f:Z\to\bbD$ and $\|f\|_{\rm{U}(\bfU)}$ is close enough to $1$, then this implies that $|f(z)|$ is close to $1$ for all $z \in Z$ outside a set of small measure.  We may therefore find a function $f_1:Z\to\rm{S}^1$ which is very close to $f$ in probability.  If it is close enough, and if $\|f\|_{\rm{U}(\bfU)}$ is close enough to $1$, then $\|f_1\|_{\rm{U}(\bfU)}$ will also be very close to $1$.  This now implies that $f_1$ satisfies the conditions of Theorem C, written multiplicatively.  One can therfore find an exact solution $g:Z\to \rm{S}^1$ close to $f_1$, and hence close to $f$.
\end{proof}

\begin{rmk}
Corollary C$'$ is still far from suggesting a conjecture for the inverse problem for the directional Gowers norms $\|\cdot\|_{\rm{U}(\bfU)}$.  As described in Subsection~\ref{subs:Gowers-inverse}, this problem supposes that $f:Z\to\bbD$ has $\|f\|_{\rm{U}(\bfU)} > \delta$ for some fixed $\delta > 0$, and asks for some structural conclusion about $f$.  However, the r\^ole of cohomology in all of the above does suggest that the following related inverse problem may be important:

\begin{ques}
Suppose that $f \in \C^p(Z,\bbT)$ is a $p$-cochain such that
\[\int_{Z^{p+1}} \exp\big(2\pi \rm{i}\cdot df(z_1,\ldots,z_{p+1})\big)\,\d z_1\cdots \d z_{p+1} > \delta\]
for some $\delta > 0$.  What does this imply about the structure of $f$? \fin
\end{ques}
\end{rmk}

\subsection{Independence from the underlying groups}

We will now prove Theorem A$'$, asserting that $\eps$ may be taken to depend only on $k$ in Theorem A.  Theorem B$'$, asserting the analogous independence in Theorem B, has an exactly similar proof to Theorem A$'$, so we omit it.  Theorem C is a crucial tool for this purpose.

\begin{proof}[Proof of Theorem A$'$]
Suppose that $Z_n$ and $\bfU_n := (U_{1,n},\ldots,U_{k,n})$ are sequences of ambient groups and subgroup-tuples; let $\scrM_n$ be the {\PDE}-solution $\P$-module associated to $\bfU_n$; and suppose that for some $\ell \in \{2,\ldots,k\}$ one can find a sequence of elements
\[f_n \in \ker\partial^{\scrM_n}_{\ell+1}\setminus \img\,\partial^{\scrM_n}_\ell\]
with $d_0(0,f_n) \to 0$ (noting that $d_0$ here refers to the different modules
\[\bigoplus_{|e| = \ell}M_{n,e} \leq \bigoplus_{|e|=\ell}\F(Z_n)\]
for each $n$).

Now construct $\ol{Z}$, $\ol{\bf{U}}$ and $\ol{f}_n$ as in the proof of Theorem C.  Let $\ol{\scrM}$ be the solution $\P$-module for the {\PDE} associated to $\ol{\bfU}$.  Then $d_0(0,\ol{f}_n)\to 0$, so the existence of some $\eps > 0$ in Theorem A for this limiting {\PDE} implies that $\ol{f}_n \in \partial^{\ol{\scrM}}_\ell(\ol{M}^{(\ell-1)})$ for all sufficiently large $n$, allowing us to write
\[f_n(z_n) = \ol{f}_n(\ol{z}) = \partial^{\ol{\scrM}}_\ell\ol{g}_n(\ol{z}) \quad \hbox{for some}\ \ol{g}_n \in \ol{M}^{(\ell-1)} = \bigoplus_{|a| =\ell-1}\ol{M}_a.\]
Similarly to the previous proof, this implies that for a.e. choice of $(z^\circ_1,\ldots,z^\circ_{n-1},z^\circ_{n+1},\ldots)$, the restricted function defined by
\[g_n(z_n) := \ol{g}_n(z^\circ_1,\ldots,z^\circ_{n-1},z_n,z^\circ_{n+1},\ldots)\]
is an element of the smaller module $M_n^{(\ell-1)} = \bigoplus_{|a|=\ell-1}M_{n,a}$, and satisfies $f_n = \partial^{\scrM_n}_\ell g_n$.  Hence $f_n \in \partial^{\scrM_n}_\ell(M_n^{(\ell-1)})$ for all sufficiently large $n$, contradicting our assumptions.
\end{proof}

\subsection{Basic solutions are finite-dimensional}

The following is an easy consequence of Theorem C, and may be of interest in its own right.  It is a relative of~\cite[Theorem B]{AusMoo--cohomcty}, which asserts that all cohomology classes of general compact groups into discrete modules are inflated from finite-dimensional quotient groups.  We formulate the result for $\bbT$-valued {\PDE}s, and leave the obvious analogs for other target modules and for zero-sum tuples to the reader.

\begin{thm}\label{thm:fin-dim-approx}
Let $Z$ be an ambient group, $\bfU = (U_i)_{i=1}^k$ a tuple of subgroups such that $U_{[k]} = Z$, and let $\scrM = (M_e)_e$ be the solution $\P$-module of the associated $\bbT$-valued {\PDE}.

For any $f \in M_{[k]}$, there are
\begin{itemize}
\item a finite-dimensional quotient $q:Z\onto Z' \leq \bbT^d$,
\item and a solution $f'$ to the $\bbT$-valued {\PDE} on $Z'$ associated to $\bfU' = (q(U_i))_{i=1}^k$
\end{itemize}
such that
\begin{eqnarray}\label{eq:fin-dim-approx}
f \in f'\circ q + \partial_k(M^{(k-1)}).
\end{eqnarray}
\end{thm}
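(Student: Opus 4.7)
The plan is to approximate $f$ by {\PDE}-solutions which factor through Lie quotients of $Z$, and then invoke Theorem~A to upgrade this approximation to exact equality modulo $\partial_k(M^{(k-1)})$. Since $Z$ is compact metrizable Abelian, there is a decreasing sequence of closed subgroups $K_n \leq Z$ with $\bigcap_n K_n = \{0\}$ and each quotient $Z_n := Z/K_n$ an Abelian Lie group, hence a closed subgroup of some $\bbT^{d_n}$. Let $q_n:Z\to Z_n$ be the quotient map, let $\bfU_n := (q_n(U_i))_{i=1}^k$, and let $M_n$ denote the module of $\bbT$-valued solutions to the {\PDE} on $Z_n$ associated to $\bfU_n$.

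First I would approximate $f$ in probability by $K_n$-invariant functions: by standard density (approximate by a continuous function, then use uniform continuity together with the shrinking of $K_n$ to $\{0\}$), one obtains a sequence $h_n \in \F(Z)$ of the form $h_n = \tilde{h}_n\circ q_n$ with $\tilde{h}_n\in \F(Z_n)$ such that $d_0(f,h_n)\to 0$. Since iterated differencing is continuous, $d^{U_1}\cdots d^{U_k}h_n\to 0$ in $\F(U_1\times\cdots\times U_k\times Z)$. Using the identity $d_{u_1}\cdots d_{u_k}(\tilde{h}_n\circ q_n) = (d_{q_n(u_1)}\cdots d_{q_n(u_k)}\tilde{h}_n)\circ q_n$ together with the fact that $q_n$ pushes Haar measure forward to Haar measure, this descends to
\[d^{q_n(U_1)}\cdots d^{q_n(U_k)}\tilde{h}_n \to 0 \quad \hbox{in}\ \F(q_n(U_1)\times \cdots\times q_n(U_k)\times Z_n).\]
Now Theorem~C, applied on $Z_n$ --- crucially using that its $\delta$ depends only on $k$ and the target tolerance, not on the ambient group --- produces $\tilde{f}_n\in M_n$ with $d_0(\tilde{h}_n,\tilde{f}_n)\to 0$. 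Pulling back, the composition identity above shows $\tilde{f}_n\circ q_n \in M_{[k]}$, and the triangle inequality gives $d_0(f,\tilde{f}_n\circ q_n)\to 0$.

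Finally, Theorem~A furnishes an $\eps>0$ such that distinct cosets of $\partial_k(M^{(k-1)})$ in $M_{[k]}$ are separated by at least $\eps$ in $d_0$. Since $f-\tilde{f}_n\circ q_n\in M_{[k]}$ and $d_0(f-\tilde{f}_n\circ q_n,0)\to 0$, we must have $f - \tilde{f}_n\circ q_n\in \partial_k(M^{(k-1)})$ for all sufficiently large $n$; choosing such an $n$, the data $(Z',q,f') := (Z_n,q_n,\tilde{f}_n)$ satisfies~(\ref{eq:fin-dim-approx}). The main ingredient that makes this argument work is the uniformity in Theorem~C: without it, one could not produce the approximations $\tilde{f}_n$ simultaneously across the varying ambient groups $Z_n$. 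The only slightly delicate auxiliary point is the initial density argument for $\bbT$-valued (rather than $\bbR$-valued) functions, but this is standard and requires no further $\P$-module machinery.
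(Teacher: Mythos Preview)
Your argument is correct and follows essentially the same route as the paper: approximate $f$ by a function factoring through a Lie quotient, use the uniformity in Theorem~C to replace that approximant by a genuine {\PDE}-solution on the quotient, and then invoke the discreteness of $M_{[k]}/\partial_k(M^{(k-1)})$ to upgrade closeness to membership. The only cosmetic differences are that you phrase the approximation via a fixed sequence of shrinking subgroups rather than a single quotient chosen for each tolerance, and that you cite Theorem~A (which indeed suffices, since $Z$ is fixed) where the paper cites Theorem~A$'$.
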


\begin{proof}
By standard measure theory, for every $\eps > 0$ there are a finite-dimensional quotient $q:Z \onto Z' \leq \bbT^d$ and some $g \in \F(Z')$ such that
\[d_0(f,g\circ q) < \eps.\]
Letting $U'_i := q(U_i)$ for each $i$, this now implies that
\begin{multline*}
d_0\big(0,\,d^{U'_1}\cdots d^{U'_k}g\big) = d_0\big(0,\,(d^{U'_1}\cdots d^{U'_k}g)\circ q^{\times (k+1)}\big)\\ = d_0\big((d^{U'_1}\cdots d^{U'_k}g)\circ q^{\times (k+1)},\ d^{U_1}\cdots d^{U_k}f\big) < 2^k\eps
\end{multline*}
(where the first use of $d_0$ here is for $\F(U_1'\times\cdots \times U_k'\times Z')$, and the others are for $\F(U_1\times \cdots \times U_k\times Z)$).

By Theorem C (where the dependences do not involve the ambient group or subgroup-tuple), for any $\eta > 0$ there is some choice of $\eps$ such that the above implies \[d_0(g,M'_{[k]}) < \eta,\]
where $\scrM'$ is the solution $\P$-module for $Z'$ and $\bfU'$.  Choosing $f' \in M'_{[k]}$ close enough to $g$, this now translates into
\[d_0(0,f - f'\circ q) = d_0(f,f'\circ q) < \eps + \eta.\]
Since $f'\circ q \in M_{[k]}$, so is $f - f'\circ q$.

Finally, if $\eta$ and then $\eps$ were chosen small enough, Theorem A$'$ now implies that
\[f - f'\circ q \in \partial_k(M^{(k-1)}).\]
\end{proof}

\begin{rmk}
The above argument actually gives the following marginally stronger result: for each $k\geq 1$ there is some $\eps > 0$ such that, if $f \in M_{[k]}$ may be approximated in $d_0$ to within distance $\eps$ by a function lifted from a $d$-dimensional quotient group, then it decomposes as in~(\ref{eq:fin-dim-approx}) using that same quotient group. \fin
\end{rmk}

Theorem~\ref{thm:fin-dim-approx} begs the following question, which seems to lie beyond our current methods.

\begin{ques}
Is it true that for each $k\geq 1$ there is a fixed $d\geq 1$, depending only on $k$, such that for any $f \in M_{[k]}$ one has a decomposition
\[f \in f_1\circ q_1 + \cdots + f_m\circ q_m + \partial_k(M^{(k-1)}),\]
where each $f_i\circ q_i$ is a solution lifted from some quotient $q_i:Z\onto Z_i'$ for which $Z_i'$ is a subgroup of $\bbT^d$?
\end{ques}

\section{Analysis of some concrete examples}\label{sec:calc-egs}

This section turns to a different aspect of the techniques developed above.  Much of the work of analyzing modules of {\PDE} solutions or zero-sum tuples amounts to decomposing those modules into simpler pieces, for example in the sense of the short exact sequence of Lemma~\ref{lem:s-e-s-for-A}.  Repeating this kind of decomposition eventually leads to modules that can be understood quite explicitly, usually cohomology groups with coefficients in some fixed Lie group.

Most of our arguments above concerned one way or another in which the original, `large' modules can be reconstructed from these `small' pieces.  However, one may also regard the classes in the resulting cohomology groups as obstructions to a certain kind of structure. In particular, this gives a way to prove that some {\PDE} solutions are non-degenerate.

This section revisits Examples~\ref{ex:Shkredov} and~\ref{ex:not-Shkredov}, and offers a few more examples.  For most of them, the main focus will be proving that certain solutions, such as the $\bbZ$-valued solution of Example~\ref{ex:not-Shkredov}, are non-degenerate.  In some cases we will actually compute generators and relations for the whole group of solutions modulo degenerate solutions.

In these examples, we will not describe carefully all of the different cohomological invariants that can be obtained from them.  Also, in many cases one finds that to understand the full module of {\PDE} solutions or zero-sum tuples, one does not need to compute the exact structure complex of the associated $\P$-module at every position.  Given a particular solution, one can often find an obstruction showing that it is non-degenerate with much less work.  This is because in many cases one can foresee by inspection some vanishing or collapsing among the constituents of the given $\P$-module, and this then justifies using a simplified presentation of the modules of degenerate solutions.

\subsection{{\PDE} for linearly independent subgroups}\label{subs:lin-ind}

As in Example~\ref{ex:Shkredov}, the subgoup tuple $\bfU = (U_i)_{i=1}^k$ is \textbf{linearly independent} if
\[\forall (u_i)_i \in \prod_iU_i: \quad\quad \sum_iu_i = 0 \quad \Longrightarrow \quad u_1 = u_2 = \ldots = u_k = 0.\]
This is equivalent to the sum homomorphism $\prod_{i=1}^kU_i\to U_{[k]}$ being injective, hence an isomorphism.

In this case, the associated {\PDE} may be solved using a generalization of the trick in Example~\ref{ex:Shkredov}.  We explain this under the additional assumption that $Z = U_{[k]}$ for simplicity. Written out in full, a function $f \in \F(Z,A)$ satisfies that {\PDE} if
\[\sum_{\eta \in \{0,1\}^k}(-1)^{|\eta|}f(z_1 + \eta_1u_1,z_2 + \eta_2u_2,\ldots,z_k + \eta_ku_k) \equiv 0,\]
where $|\eta| := |\{i\leq k\,|\ \eta_i =1\}|$, and we write elements of $\F(Z,A)$ as functions of the separate coordinates in $\prod_{i=1}^kU_i$.

By Fubini's Theorem, we may fix a tuple $(z_i)_i \in \prod_{i=1}^kU_i$ such that the above holds for a.e. $(u_i)_i \in \prod_{i=1}^kU_i$.  Let $z^0_i := z_i$  and $z_i^1 := z_i^0 + u_i$ for each $i$.  Using these new variables, the above can be re-arranged to give
\[f(z^1_1,\ldots,z_k^1) = \sum_{\eta \in \{0,1\}^k,\,|\eta| \leq k-1}(-1)^{k - |\eta|}f(z_1^{\eta_1},\ldots,z_k^{\eta_k}).\]
Fixing $(z_1^0,\dots,z_k^0)$ and regarding this as a function of only $(z_1^1,\ldots,z_k^1)$, the right-hand side is manifestly an element of $\sum_{i=1}^k\F(Z,A)^{U_i}$, since every term on the right depends on $z^0_i$ for at least one $i$.  Therefore all {\PDE} solutions are degenerate in this case.

The zero-sum problem is also completely tractable in the case of linearly-independent subgroups, but its analysis is more interesting.  Let us now return to allowing $Z$ strictly larger than $U_{[k]}$.  To solve the zero-sum problem --- that is, describe the $\P$-module of its solutions --- we will next set up a whole commutative diagram of $\P$-modules.

For each $a \subseteq [k]$ and $e \subseteq a$, let
\[P^{(a)}_e := \left\{\begin{array}{ll}0 & \quad \hbox{if}\ e \subsetneqq a\\ \F(Z,A)^{U_a} & \quad \hbox{if}\ e = a.\end{array}\right.\]
Fixing $a$, putting these together as the family $(P^{(a)}_e)_{e \subseteq a}$, and endowing them with the trivial structure morphisms and derivation-lifts, one checks easily that they define a $(Z,0,\bfU\uhr_a)$-$\P$-module $\scrP^{(a)}$.  Its structural homology is $0$ in all positions $(e,\ell)$ with $\ell < |a|$, and equals $\F(Z,A)^{U_a} = \Cnd_{U_a}^ZA$ in position $(a,|a|)$.  Therefore $\scrP^{(a)}$ is $|a|$-almost modest.

For each $j \leq k$ we can combine some of the above by letting
\[\scrP^{(j)} := \bigoplus_{a \in \binom{[k]}{j}}\Ag_a^{[k]}\scrP^{(a)},\]
with constituent modules $(P^{(j)}_e)_e$.

Clearly $\scrP^{(0)}$ simply equals the constant $\P$-module $\F(Z,A)$, and $P^{(j)}_e = 0$ whenever $|e| < j$.  The $\P$-module $\scrP^{(1)}$ is the same as in Example~\ref{ex:pre-zero-sum}, expressed as in equation~(\ref{eq:pre-zero-sum-sum}).

Whenever $a \supseteq b$, the obvious inclusions $\F(Z,A)^{U_a} \leq \F(Z,A)^{U_b}$ combine to define a family of morphisms $P^{(a)}_{e\cap a} \to P^{(b)}_{e\cap b}$ for all $e \subseteq [k]$, and another easy check shows that these give a $\P$-morphism
\[\psi_{a,b}:\Ag_a^{[k]}\scrP^{(a)} \to \Ag_b^{[k]}\scrP^{(b)}.\]
We have therefore obtained a structure rather similar to that of a $\P$-module, except that (i) here the individual entries are themselves the $\P$-modules $\scrP^{(a)}$; (ii) our indexing is such that the morphisms $\psi_{a,b}$ are defined when $a \supseteq b$, not $a \subseteq b$; and (iii) there are no derivation-lifts.  These differences notwithstanding, we may now combine the $\P$-morphisms $\psi_{a,b}$ into analogs of the boundary morphisms~(\ref{eq:s-c-bdry}): for each $j \in \{0,1,\ldots,k-1\}$ let $\hat{\partial}_j = (\hat{\partial}_{j,e})_{e \subseteq [k]}$ be the $\P$-morphism $\scrP^{(j+1)}\to \scrP^{(j)}$ defined by
\[\big(\hat{\partial}_{j,e}((p_a)_{a \in \binom{e}{j+1}})\big)_b := \sum_{a \in \binom{e}{j+1},\,a \supseteq b}\rm{sgn}(a:b)p_a\]
whenever $b \in \binom{e}{j}$.

Then just as for the boundary morphisms of~(\ref{eq:s-c-bdry}), a quick check shows that $\hat{\partial}_{j-1}\circ \hat{\partial}_j = 0$.  Also, $\hat{\partial}_0$ is precisely the sum-over-inclusions $\P$-morphism $\Psi$ that defines the zero-sum $\P$-module in Example~\ref{ex:zero-sum-delta}.

The construction of the new $\P$-modules and $\P$-morphisms above is for completely general $Z$ and $\bfU$.  Their importance in the case of linear independence results from the following.

\begin{lem}\label{lem:exact-delta-seq}
If $\bfU$ is linearly independent then the $\P$-morphism sequence
\[0 \to \scrP^{(k)} \stackrel{\hat{\partial}_{k-1}}{\to} \scrP^{(k-1)} \stackrel{\hat{\partial}_{k-2}}{\to} \cdots \stackrel{\hat{\partial}_0}{\to} \scrP^{(0)}\]
is exact.
\end{lem}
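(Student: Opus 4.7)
\medskip

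My plan is to reduce the exactness of the $\P$-morphism sequence to a family of exactness statements for concrete function-space complexes, one for each $e\subseteq [k]$, and then prove these by an inductive Fubini-style argument.  By the definition of an exact sequence of $\P$-morphisms (Definition~\ref{dfn:delta-mors}), it suffices to show, for each fixed $e\subseteq [k]$, that the complex of constituent modules
\[
0 \to \F(Z,A)^{U_e} \to \bigoplus_{a\in\binom{e}{|e|-1}} \F(Z,A)^{U_a} \to \cdots \to \bigoplus_{i\in e}\F(Z,A)^{U_i} \to \F(Z,A)
\]
is exact at every position except possibly the last (note that summands indexed by $a\subseteq [k]$ with $a\not\subseteq e$ automatically vanish at this constituent, since $(\Ag_a^{[k]}\scrP^{(a)})_e = 0$ unless $a\subseteq e$).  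The maps are the signed inclusions $\F(Z,A)^{U_a}\into \F(Z,A)^{U_b}$ for $a\supset b$, $|a|=|b|+1$, with signs $\sgn(a:b)$.

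My next step would be to reduce to the case $Z = U_{[k]}$.  Using $\F(Z,A)^{U_c} = \Cnd_{U_c}^Z A = \Cnd_{U_{[k]}}^Z \Cnd_{U_c}^{U_{[k]}} A$ from~(\ref{eq:compose-coind}), and the exactness of the functor $\Cnd_{U_{[k]}}^Z(-)$ (which follows from~(\ref{eq:cnd-of-quot}) and the general fact that co-induction preserves short exact sequences), the exactness of the constituent complex over $Z$ is equivalent to that of the analogous complex over $U_{[k]}$.  Under linear independence the sum map $\prod_{i}U_i \to U_{[k]}$ is a continuous isomorphism of compact Abelian groups, and hence a homeomorphism, so we may identify $U_{[k]} = \prod_{i=1}^k U_i$ and $U_{[k]}/U_c = \prod_{i\notin c}U_i$ for each $c$.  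After this identification, $\F(U_{[k]},A)^{U_c}$ is the subspace of functions depending only on the coordinates $(x_i)_{i\notin c}$, and the boundary maps become signed inclusions between these explicit function spaces.

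The core of the argument is then to prove exactness of this concrete multivariate function-space complex for each fixed $e$; I would proceed by induction on $|e|$.  The cases $|e|\le 1$ are trivial, and the $|e|=2$ case is precisely the algebraic observation that a function which is simultaneously $U_{i_1}$- and $U_{i_2}$-invariant is $U_{\{i_1,i_2\}}$-invariant.  For the inductive step, fix $i_0\in e$ and consider a cycle $m = (m_a)_{|a|=j} \in \bigoplus_{a\in\binom{e}{j}}V_a$ with $\partial m = 0$ and $1\le j\le |e|$.  The relation $\partial m = 0$ is a system of pointwise identities among the $m_a$; fixing (by Fubini, for $m_Z$-a.e.\ choice) a value for the $U_{i_0}$-coordinate and restricting each $m_a$ with $i_0\notin a$ to this slice yields, on the one hand, an explicit decomposition of $m_a$ as a sum of functions with strictly more invariance (lying in $V_{a\cup\{i_0\}}$ plus a ``remainder'' lying in a union of $V_{a'}$ with $a'\subsetneq a\cup\{i_0\}$), and on the other, a cycle for the smaller complex indexed by $e\setminus\{i_0\}$ which is a boundary by the inductive hypothesis.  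The explicit slicing formula used in the $|e|=3$ case --- write $f_1 = H(x_2) + K(x_3)$ by exploiting that $f_1+f_2+f_3=0$ forces $\partial_{x_3}f_1$ to be $U_1$-invariant --- generalizes to arbitrary $|e|$ and produces the required preimage $m'$ with $\partial m' = m$.

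The main obstacle is organising this slicing argument uniformly in $j$ and $|e|$ without drowning in indices: the signs $\sgn(a:b)$ must be tracked carefully, and the ``two kinds of terms'' (those indexed by $a\ni i_0$, which embed unchanged into the complex for $e\setminus\{i_0\}$ with coefficients in $\F(U_{i_0},A)$, and those indexed by $a\not\ni i_0$, whose slicing supplies the correction) must be combined so that their boundaries cancel correctly.  This parallels the sign calculation behind Lemma~\ref{lem:homot} and the solvability computation of Subsection~\ref{subs:lin-ind}, but here one is doing the analogous manipulation at every intermediate position of the complex rather than only at the top.  Once the inductive step is in hand the result follows; no deeper machinery from the earlier sections is needed, and in particular the derivation-lifts play no role here because every $P^{(a)}_b$ is either $0$ or a space of fully $U_a$-invariant functions on which $d^{U_i}$ already vanishes for $i\in a$.
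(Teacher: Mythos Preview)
Your approach is correct and takes a genuinely different route from the paper's.  The paper proves the lemma by induction on $k$: in Step~1 it uses linear independence to show that each $\F(Z,A)^{U_a}$ with $a\subseteq [k-1]$ has vanishing $\rmH^p_\m(U_k,-)$ for $p\ge 1$, and then a long-exact-sequence induction along the already-known resolution at level $[k-1]$ gives $\rmH^1_\m(U_k,\ker\hat\partial_{j,[k-1]}) = 0$.  In Step~2, given a cycle $m$ at $[k]$, one applies $d^{U_k}$, invokes Step~1 to write $d^{U_k}m = d^{U_k}m'$ with $m'$ already in the image, reduces to $m$ being $U_k$-invariant, and finishes with a Lemma~\ref{lem:homot}-style splitting.

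Your argument replaces the cohomological Step~1 by an explicit construction: slicing at $x_{i_0}=x_{i_0}^0$ directly produces the coboundary that the paper obtains abstractly, and the remainder lands in the constituent complex at $e\setminus\{i_0\}$, where the inductive hypothesis on $|e|$ applies.  This is more elementary --- no long exact sequences, no appeal to Lemma~\ref{lem:vanish} --- and works because the product structure $Z\cong\prod_i U_i$ makes the slicing well-behaved.  The paper's approach, on the other hand, yields the vanishing $\rmH^p_\m(U_k,\ker\hat\partial_{j,[k-1]})=0$ for all $p\ge 1$ as a by-product, and this is reused in the discussion immediately following the lemma; your proof would need to establish that separately if one wanted it.

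One point your sketch leaves implicit is worth flagging: after you form $m'_c = \sgn(c:c\setminus\{i_0\})\,m^0_{c\setminus\{i_0\}}$ for $i_0\in c$ and subtract $\hat\partial m'$, you must verify not only that the components indexed by $b\ni i_0$ vanish (this uses the original cycle relation at $b\setminus\{i_0\}$, evaluated on the slice) but also that the surviving components $(m_a - m_a^0)_{a\subseteq e\setminus\{i_0\}}$ satisfy the cycle condition for the complex at $e\setminus\{i_0\}$.  This second check again uses the original cycle condition restricted to the slice, together with the fact that $m_{b'\cup\{i_0\}}$ is unchanged by slicing; it is routine but is the place where the sign identity~(\ref{eq:sgn-vanish}) enters, so it is the bookkeeping you warned yourself about.
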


\begin{proof}
The proof is by induction on $k$.  When $k=0$ there is nothing to prove, to suppose that $k\geq 1$ and that the result is known in all cases below $k$.

\vspace{7pt}

\emph{Step 1.}\quad Our inductive hypothesis includes the exactness of the sequence
\begin{eqnarray}\label{eq:another-exact-seq}
0 = P_{[k-1]}^{(k)} \ \ \stackrel{\hat{\partial}_{k-1,[k-1]}}{\to} \ \ P_{[k-1]}^{(k-1)} \ \ \stackrel{\hat{\partial}_{k-2,[k-1]}}{\to} \ \ \cdots \ \ \stackrel{\hat{\partial}_{0,[k-1]}}{\to} \ \ P_{[k-1]}^{(0)},
\end{eqnarray}
because of the vanishing of the left-most module.  We will need to know the result of applying $\rmH^1_\m(U_k,-)$ to this.  Owing to the linear independence of the elements of $\bfU$, one sees that if $a \not\ni k$ then, as a $U_k$-module, $\F(Z,A)^{U_a}$ is of the form $\Cnd_0^{U_k}M$ for some Polish Abelian group $M$.  Combined with Lemma~\ref{lem:cohom-of-coind}, this implies that
\[\rmH^p_\m(U_k,\F(Z,A)^{U_a}) = 0\]
for all such $a$ and all $p\geq 1$.

Using this and the long exact sequence for $\rmH^\ast_\m(U_k,-)$, a simple induction along the exact sequence~(\ref{eq:another-exact-seq}) from left to right shows that also
\[\rmH^p_\m(U_k,\ker \hat{\partial}_{j,[k-1]}) = 0\]
for all $j\leq k-1$ and $p\geq 1$.  In particular, $\rmH^1_\m(U_k,\ker\hat{\partial}_{j,[k-1]}) = 0$.

\vspace{7pt}

\emph{Step 2.}\quad Now suppose that $j \in \{0,1,\ldots,k-1\}$, that $e \subseteq [k]$, and that
\[m = (m_a)_{a \in \binom{e}{j+1}} \in P^{(j+1)}_e \quad  \hbox{is such that} \quad \hat{\partial}_{j,e}((m_a)_a) = 0.\]
We must show that $m \in \hat{\partial}_{j+1,e}(P^{(j+2)}_e)$, or that $m=0$ if $j = k-1$. If $e\subsetneqq [k]$, then this is effectively the case with $k$ replaced by $|e|$, which is covered by the inductive hypothesis.  We may therefore assume $e = [k]$.

Given this, our assumption on $m$ is that
\[\sum_{a \in \binom{[k]}{j+1},\,a \supseteq b}\sgn(a:b)m_a = 0 \quad \forall b \in \binom{[k]}{j}.\]

Applying the operator $d^{U_k}$, this gives that $d^{U_k}m$ is a $1$-cocycle from $U_k$ to $\ker \hat{\partial}_{j,[k-1]}$.  By the conclusion of Step 1, it follows that $d^{U_k}m = d^{U_k}m'$ for some $m' \in \ker \hat{\partial}_{j,[k-1]} = \img\, \hat{\partial}_{j+1,[k-1]}$.  Since $\img\,\hat{\partial}_{j+1,[k-1]}$ agrees with the image under $\hat{\partial}_{j+1,[k]}$ of the direct summand
\[\bigoplus_{c \in \binom{[k-1]}{j+2}}\F(Z,A)^{U_c} \leq \bigoplus_{c \in \binom{[k]}{j+2}}\F(Z,A)^{U_c},\]
we may replace $m$ by $m - m'$ without disrupting our desired conclusion, and hence assume that $m$ is $U_k$-invariant.

Having made this assumption, $m = (m_a)_a$ can be identified with an element of
\[\bigoplus_{a \in \binom{[k-1]}{j+1}}\F(Z,A)^{U_{a\cup \{k\}}} \oplus \bigoplus_{a' \in \binom{[k-1]}{j}}\F(Z,A)^{U_{a' \cup \{k\}}}.\]
Now an easy check shows that the same construction as in Lemma~\ref{lem:homot} gives a splitting homomorphism, giving
\[(\ker \hat{\partial}_{j,[k]})^{U_k} = \img(\hat{\partial}_{j+1,[k]}\,|\,(P^{(j+2)}_{[k]})^{U_k}).\]
\end{proof}

The value of Lemma~\ref{lem:exact-delta-seq} is that, for linearly independent $\bfU$, we may evaluate this $\P$-morphism sequence at position $[k]$ to obtain an alternative left resolution of the zero-sum module
\[\ker\big(\hat{\partial}_{0,[k]}:P^{(1)}_{[k]} \to P^{(0)}_{[k]}\big).\]
This left resolution does not come from the structure complex of a $\P$-module: remember that there are no derivation-lifts to reverse the arrows in Lemma~\ref{lem:exact-delta-seq}.  However, it still gives a fairly complete description of the zero-sum tuples (and, as seen in Step 1 above, it can still be used to compute cohomology).  In particular, it shows that if $k\geq 2$ then $\ker \hat{\partial}_{0,[k]}$ consists of sums over doubletons $\{i < j\} \in \binom{[k]}{2}$ of tuples of the form
\[(0,\ldots,0,f,0,\ldots,0,-f,0,\ldots,0)\]
for some $f \in \F(Z,A)^{U_{\{i,j\}}}$, where the non-zero entries are in positions $i$ and $j$.  If $k\geq 3$, it follows that all zero-sum tuples for linearly independent subgroups are degenerate.

One can generalize this result by combining Lemma~\ref{lem:exact-delta-seq} with the construction of long exact sequences to compute the full structural homology of the kernel $\P$-modules $\ker\hat{\partial}_j$ for $j\leq k-1$.  One finds that the zero-sum $\P$-module for linearly independent subgroups is not only $2$-almost modest, but is exact in all positions $(\ell,e)$ with $\ell > 2$.  We will not give this calculation in full here.

It is also useful to note that this description of the zero-sum module, combined with the proof above that all {\PDE} solutions are degenerate, gives a resolution of the {\PDE}-solution module for linearly independent $\bfU$ when $|e| \geq 2$. If $M$ is that {\PDE}-solution module, then we obtain an exact sequence
\begin{multline}\label{eq:another-res}
0 \to \F(Z,A)^{U_{[k]}} \ \ \stackrel{\hat{\partial}_{k-1,[k]}}{\to} \ \ \bigoplus_{|a| = k-1}\F(Z,A)^{U_a} \ \ \stackrel{\hat{\partial}_{k-2,[k]}}{\to}\\
 \cdots \stackrel{\hat{\partial}_{1,[k]}}{\to} \ \ \bigoplus_{i=1}^k\F(Z,A)^{U_i} \onto M.
\end{multline}
This resolution will be used in the next subsection.

For general tuples $\bfU$ the $\P$-morphism sequence in Lemma~\ref{lem:exact-delta-seq} can fail to be exact; this will not be demonstrated carefully here, but may easily be checked in many of the non-linearly-independent examples below.  It would be interesting to try to turn the property of almost modesty for zero-sum $\P$-modules into a qualitative description of the homology $\P$-modules of this sequence, but I have not explored this idea very far.

This solution of the linearly independent case also suggests a different viewpoint on the case of general $\bfU$.  Suppose that $\bfU$ is not linearly independent, but assume again that $Z = U_{[k]}$ for simplicity.  Let $Z' := \prod_{i=1}^k U_i$ and let $U_i' \leq Z'$ be the coordinate-copy of $U_i$ for each $i$.  Then one still has a continuous epimorphism $\phi:Z'\onto Z:(u_1,\ldots,u_k)\mapsto \sum_i u_i$. Let $K := \ker\phi$. If $f\in \F(Z,A)$ is a solution of the {\PDE} associated to $\bfU$ on $Z$, then $f\circ \phi$ is a solution of the {\PDE} associated to $\bfU' := (U'_1,\ldots,U'_k)$ on $Z'$, and similarly for zero-sum tuples.  In $Z'$, the new subgroups $U'_i$ are linearly independent by construction, so we know that $f\circ \phi$ is degenerate, and we may write $f\circ \phi = f'_1 + \ldots + f'_k$ for some functions $f'_i \in \F(Z',A)^{U'_i}$.

This does not solve the original {\PDE}, because these new functions $f'_i$ may not factorize through $\phi$: equivalently, they may not be $K$-invariant.  However, we know that their sum is $K$-invariant, and so the tuple $(f_1',\ldots,f'_k)$ has the property that
\[d^K(f_1',\ldots,f_k') \in \Z^1(K,N'),\]
where $N' \leq \bigoplus_{i=1}^k\F(Z',A)^{U'_i}$ is the module of zero-sum solutions associated to $Z'$ and $\bfU'$.  This essentially converts the problem of finding solutions $f$ into the problem of calculating $\rmH^1_\m(K,N')$. In view of the linear independence of $\bfU'$, Lemma~\ref{lem:exact-delta-seq} gives a left resolution of $N'$ along the lines explained above, which could in principle be used to obtain a description of $\rmH^1_\m(K,N')$ by induction along that resolution.

As far as I can see, this approach to describing $\rmH^1_\m(K,N')$ from the resolution of $N'$ is not substantially simpler than our earlier work on describing the {\PDE}-solution or zero-sum $\P$-modules for general $Z$ and $\bfU$, and seems to require much the same tools.  However, in specific cases it may transform the original {\PDE}-problem into something more tractable (perhaps when $K$ itself is fairly simple).  It would be interesting to understand such cases better.

Before leaving this subsection, let us describe another setting in which some `partial' linear independence simplifies the problem of finding solutions.

\begin{ex}\label{ex:one-extra-indep}
Suppose that $(U_1,\ldots,U_k)$ is a tuple of subgroups of $Z_1$, that $U_{k+1}$ is another compact Abelian group, and let $Z := Z_1\times U_{k+1}$.  Each $U_i$ for $1 \leq i \leq k+1$ may be interpreted as a subgroup of $Z$ in the obvious way, either as a subgroup in the first coordinate, or, in the case of $U_{k+1}$, as the second coordinate subgroup itself.  Let $\bfU = (U_1,\ldots,U_{k+1})$ with this interpretation.  The key property of this tuple is that $U_{k+1}$ is linearly independent from $U_{[k]} := U_1 + \ldots + U_k$.

Let $M \leq \F(Z,A)$ be the module of solutions to the $A$-valued {\PDE} associated to $\bfU$, and let $M_1 \leq \F(Z_1,A)$ be the module of solutions on $Z_1$ to the {\PDE} associated to $(U_1,\ldots,U_k)$.

If $f \in \F(Z)$, then we may write $f$ as a function of coordinates $(z_1,w)$ with $z_1 \in Z_1$ and $w \in U_{k+1}$.  If $f$ solves the {\PDE} associated to $\bfU$, then we have
\[d^{U_{k+1}}f \in \Z^1(U_{k+1},\Cnd_{Z_1}^Z M_1).\]
As a $U_{k+1}$-module, $\Cnd_{Z_1}^Z M_1$ is simply $\F(U_{k+1},M_1)$, where $M_1$ has the trivial $U_{k+1}$-action.  It therefore has trivial cohomology, by Lemma~\ref{lem:cohom-of-coind}, and hence $d^{U_{k+1}}f \in \B^1(U_{k+1},\Cnd_{Z_1}^ZM_1)$.  That is, there is some $f_1 \in M_1$ such that $d^{U_{k+1}}f = d^{U_{k+1}}f_1$, and so $f \in M_1 + \F(Z,A)^{U_{k+1}}$. All solutions in $M$ are degenerate. \fin
\end{ex}

\subsection{Almost linearly independent subgroups}\label{subs:almost-lin-ind}

We now return to Examples~\ref{ex:not-Shkredov} and~\ref{ex:not-Shkredov-2}.

First consider Example~\ref{ex:not-Shkredov}.  It had ambient group $\bbT^2$, but in Subsection~\ref{subs:Gowers-inverse} we discussed its close relative on $(\bbZ/N\bbZ)^2$. In order to treat these together, fix a compact Abelian group $Z_0$, and let $Z := Z_0^2$ and
\[U_1 := (1,0)\cdot Z_0, \quad U_2 := (0,1)\cdot Z_0 \quad \hbox{and} \quad U_3:= (1,1)\cdot Z_0.\]
Let $\scrM$ be the solution $\P$-module of the associated {\PDE} for $A$-valued functions.

Since any two of $U_1$, $U_2$ and $U_3$ are linearly independent, all nontrivial restrictions of $\scrM$ may be described as in the previous subsection.

Now suppose that $f \in M_{[3]}$.  Then $d^{U_3}f$ is a $1$-cocycle $U_3\to M_{[2]}$.

If $f \in M_{[2]}$, then $d^{U_3}f$ is an $M_{[2]}$-valued coboundary by definition.  Also, if $f \in M_{\{1,3\}}$, then by the linearly independent case we have $f = f_1 + f_3$ with $f_i \in \F(Z,A)^{U_i}$, which still gives $d^{U_3}f = d^{U_3}f_1 \in \B^1(U_3,M_{[2]})$, and similarly if $f \in M_{\{2,3\}}$.

On the other hand, if we assume that $d^{U_3}f \in \B^1(U_3,M_{[2]})$, then $f$ must lie in $\F(Z,A)^{U_3} + M_{[2]} = \partial_3(M^{(2)})$.  This proves that
\[\partial_3(M^{(2)}) = \ker\big(M_{[3]} \stackrel{d^{U_3}}{\to} \Z^1(U_3,M_{[2]}) \onto \rmH^1_\m(U_3,M_{[2]})\big).\]

So our next step is to compute this degree-$1$ cohomology group.  The result from the linearly independent case and a quick inspection show that the following sequence is exact:
\[0 \to A\ \ \stackrel{a\mapsto (a,-a)}{\to}\ \  \F(Z,A)^{U_1} \oplus \F(Z,A)^{U_2} \to M_{[2]}\to 0.\]
From this, a piece of the resulting long exact sequence for $\rmH^\ast_\m(U_3,-)$ gives
\begin{multline*}
\cdots \to \rmH^1_\m(U_3,\F(Z,A)^{U_1})\oplus \rmH^1_\m(U_3,\F(Z,A)^{U_2}) \to \rmH^1_\m(U_3,M_{[2]})\\
\to \rmH^2_\m(U_3,A)\to \rmH^2_\m(U_3,\F(Z,A)^{U_1})\oplus \rmH^2_\m(U_3,\F(Z,A)^{U_2}) \to \cdots.
\end{multline*}

Now we observe that, since $U_1$ and $U_3$ are linearly independent and span $Z$, we have that $\F(Z,A)^{U_1}$ is isomorphic \emph{as a $U_3$-module} to $\F(U_3,A)$, and similarly for $\F(Z,A)^{U_2}$.  Therefore Lemma~\ref{lem:vanish} gives
\begin{multline*}
\rmH^1_\m(U_3,\F(Z,A)^{U_1})\oplus \rmH^1_\m(U_3,\F(Z,A)^{U_2})\\ = \rmH^2_\m(U_3,\F(Z,A)^{U_1})\oplus \rmH^2_\m(U_3,\F(Z,A)^{U_2}) = 0,
\end{multline*}
and so the above long exact sequence collapses to give an isomorphism
\[\rmH^1_\m(U_2,M_{[2]}) \cong \rmH^2_\m(U_3,A)\cong \rmH^2_\m(Z_0,A).\]

Therefore, if $\rmH^2_\m(Z_0,A) = 0$, then there are only degenerate solutions to our {\PDE}.  This is the case if $Z_0$ is either $\bbT$ or $\bbZ/N\bbZ$ and $A = \bbT$.  In the second case this follows from Lemma~\ref{lem:cyclic-calc}, and in the first from Lemma~\ref{lem:tori-calc-2}.

On the other hand, Lemma~\ref{lem:tori-calc-1} gives $\rmH^2_\m(\bbT,\bbZ)\cong \bbZ$, and digging into the proof of that lemma (details omitted), one obtains the generating cocycle
\[\s(z_1,z_2) := \lfl \{z_1\} + \{z_2\}\rfl.\]
One may try to reconstruct a function in $M_{[3]}$ that gives rise to this $\s$, and one finds that the function $f$ discussed in Example~\ref{ex:not-Shkredov} has that property.  This proves that
\[M_{[3]}/\partial_3(M^{(2)}) \cong \bbZ,\]
and that the function $f$ from Example~\ref{ex:not-Shkredov} generates this quotient.

A similar analysis can be made for Example~\ref{ex:not-Shkredov-2}.  Generalizing as above, we now take $Z = Z_0^3$ and the acting subgroups
\begin{multline}\label{eq:newUs}
U_1 := (1,0,0)\cdot Z_0, \quad U_2 := (1,-1,0)\cdot Z_0,\\ U_3:= (0,1,-1)\cdot Z_0 \quad \hbox{and} \quad U_4 := (0,0,1)\cdot Z_0.
\end{multline}
Beware that these are not the direct analogs of the subgroups that were labelled `$U_i$' in Example~\ref{ex:not-Shkredov-2}.  However, if one applies differencing operators to equation~(\ref{eq:not-Shkredov-2}), then the function $\s:\bbT^3\to \bbT$ constructed there is annihilated by the partial differencing operator associated to the subgroups in~(\ref{eq:newUs}) when $Z_0 = \bbT$.

Thus, Example~\ref{ex:not-Shkredov-2} leads to the {\PDE}
\[d^{U_1}d^{U_2}d^{U_3}d^{U_4}\s = 0\]
for $\s \in \F(Z,A)$.

Any three of the subgroups in~(\ref{eq:newUs}) are linearly independent.  Therefore, if $\scrM$ is the solution $\P_{[4]}$-module for these acting subgroups, then Subsection~\ref{subs:lin-ind} gives that
\[M_e = \sum_{i \in e}\F(Z,A)^{U_i}\]
whenever $|e| \leq 3$, and that the complex
\begin{eqnarray}\label{eq:newcomplex}
0 \to A \stackrel{\partial_1}{\to} \bigoplus_{a \in \binom{e}{2}}\F(Z,A)^{U_a} \stackrel{\partial_2}{\to} \bigoplus_{i \in e}\F(Z,A)^{U_i} \stackrel{\partial_3}{\to} M_e
\end{eqnarray}
is exact.  Note this is not literally the structure complex of $\scrM$ at $e$.  It is an alternative resolution of $M_e$, obtained by inspection, which is simpler and will serve the same purpose.

Now, since any three of the subgroups~(\ref{eq:newUs}) are linearly independent, Lemma~\ref{lem:vanish} implies that
\[\rmH^p_\m(U_4,\F(Z,A)^{U_i}) = \rmH^p_\m(U_4,\F(Z,A)^{U_i + U_j}) = 0\]
whenever $i,j \in [3]$ and $p\geq 1$.  Thus, given any short exact sequence that features either of the middle modules in~(\ref{eq:newcomplex}) for $e = [3]$, the resulting long exact sequence for $\rmH^\ast_\m(U_4,-)$ collapses to a sequence of isomorphisms.  Using this and the exactness of the whole sequence in~(\ref{eq:newcomplex}), one now obtains that
\begin{multline*}
\rmH^1_\m(U_4,M_{[3]}) \cong \rmH^2_\m(U_4,\ker \partial_3) = \rmH^2_\m(U_4,\img\,\partial_2)\\ \cong \rmH^3_\m(U_4,\ker\partial_2) = \rmH^3_\m(U_4,\img\,\partial_1),
\end{multline*}
and this is isomorphic to $\rmH^3_\m(Z_0,A)$.

If $\s \in M_{\{i,j,4\}}$ for any $i,j \in [3]$, then a simple check using~(\ref{eq:newcomplex}) for $e = \{i,j,4\}$ shows that $d^{U_4}\s \in \B^1(U_4,M_{[3]})$.  Therefore, if $\s \in M_{[4]}$ is such that $d^{U_4}\s$ leads to a nonzero class in $\rmH^3_\m(Z_0,A)$ under the above isomorphisms, then we know it is a non-degenerate solution.

Very similar reasoning to the case of Example~\ref{ex:not-Shkredov} gives that in fact
\[M_4/\partial_4(M^{(3)}) \cong \rmH_\m^3(Z_0,A),\]
and that this isomorphism in natural in $Z_0$ and $A$, meaning that it transforms correctly under homomorphisms of these.

This is exactly the situation for the function $\s$ given in Example~\ref{ex:not-Shkredov-2}: in that case we have $\rmH^3_\m(Z_0,A) = \rmH^3_\m(\bbT,\bbT) \cong \bbZ$ (Lemma~\ref{lem:tori-calc-2}), and the $\s$ that we chose becomes a generator of that cohomology group (we omit the proof of this).

Examples~\ref{ex:not-Shkredov} and~\ref{ex:not-Shkredov-2} generalize to the following result.

\begin{lem}\label{lem:almost-lin-ind}
Fix $Z_0$ and $A$ as above, and also an integer $d\geq 2$, and let $Z := Z_0^d$ and
\begin{multline*}
U_1 := (1,0,\ldots,0)\cdot Z_0, \quad \ldots, \quad  U_d := (0,0,\ldots,1)\cdot Z_0,\\ \hbox{and} \quad U_{d+1} := (1,1,\ldots,1)\cdot Z_0.
\end{multline*}
Let $\bfU := (U_1,\ldots,U_{d+1})$.  These are clearly almost linearly independent: indeed, any $d$ out of these $d+1$ subgroups provide a new isomorphism $Z \cong Z_0^d$.

Now let $\scrM$ be the solution $\P$-module for the {\PDE} associated to $\bfU$.  Then
\[M_{[d+1]}/\partial_{d+1}(M^{(d)}) \cong \rmH^d_\m(Z_0,A).\]
\end{lem}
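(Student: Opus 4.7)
\textbf{Proof proposal for Lemma~\ref{lem:almost-lin-ind}.} The plan is to mimic, in $d$ dimensions, the strategy already used for $d=2$ and $d=3$ in this subsection. The key observation is that although the full tuple $\bfU$ is not linearly independent, any proper sub-tuple of size $d$ is, so all restrictions $\scrM\uhr_e$ with $|e|\leq d$ are controlled by the resolution produced in Subsection~\ref{subs:lin-ind}.

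First I would set up an alternative left resolution of $M_{[d]}$. By linear independence of $(U_1,\dots,U_d)$, the proof of Subsection~\ref{subs:lin-ind} (see~(\ref{eq:another-res})) yields an exact sequence
\[
0\to A\to \bigoplus_{|a|=d-1,\,a\subseteq [d]}\F(Z,A)^{U_a}\to\cdots\to \bigoplus_{i=1}^d\F(Z,A)^{U_i}\to M_{[d]}\to 0,
\]
where the left-most term uses $\F(Z,A)^{U_{[d]}}=A$ (the $Z$-action on $A$ is trivial). Next, for every $a\subsetneq [d]$ the tuple $(U_i)_{i\in a}\cup(U_{d+1})$ has size $|a|+1\leq d$, hence is linearly independent; in particular $U_a\cap U_{d+1}=0$. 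Writing $\F(Z,A)^{U_a}=\Cnd_{U_a}^Z A$, Lemma~\ref{lem:coind-and-coind} (after first co-inducing up to $U_a+U_{d+1}$) and Proposition~\ref{prop:nice-cohom-gps} give $\rmH^p_\m(U_{d+1},\F(Z,A)^{U_a})=0$ for all $p\geq 1$ and all $a\subsetneq [d]$.

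Splitting the resolution above into short exact sequences and applying $\rmH^\ast_\m(U_{d+1},-)$, each of the $d-1$ intermediate short exact sequences yields a switchback isomorphism that shifts cohomological degree by one, because the middle terms have vanishing positive-degree cohomology. Iterating, one obtains
\[
\rmH^1_\m(U_{d+1},M_{[d]})\ \cong\ \rmH^d_\m(U_{d+1},A)\ =\ \rmH^d_\m(Z_0,A).
\]
To connect this with $M_{[d+1]}/\partial_{d+1}(M^{(d)})$, I would use the short exact sequence $0\to M_{[d]}\to\F(Z,A)\to \F(Z,A)/M_{[d]}\to 0$. As a $U_{d+1}$-module, $\F(Z,A)$ is co-induced from the trivial subgroup (Lemma~\ref{lem:coind-and-coind} applied with $Y=0$, $W=U_{d+1}$), so Lemma~\ref{lem:vanish} gives $\rmH^p_\m(U_{d+1},\F(Z,A))=0$ for $p\geq 1$. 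The corresponding long exact sequence shows that the switchback
\[
\big(\F(Z,A)/M_{[d]}\big)^{U_{d+1}}\ \twoheadrightarrow\ \rmH^1_\m(U_{d+1},M_{[d]})
\]
is surjective, and unwinding the definitions identifies the left-hand side canonically with $M_{[d+1]}/M_{[d]}$ via $[f]\mapsto [d^{U_{d+1}}f]$.

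It remains to identify the kernel of the composition $M_{[d+1]}\to \rmH^1_\m(U_{d+1},M_{[d]})$ with $\partial_{d+1}(M^{(d)})$. Tracing through the switchback, $f\in M_{[d+1]}$ maps to zero exactly when $d^{U_{d+1}}f$ is an $M_{[d]}$-valued coboundary, i.e.\ when $f\in M_{[d]}+\F(Z,A)^{U_{d+1}}$. Now linear independence of any $d$ of the $U_i$'s, applied via Subsection~\ref{subs:lin-ind}, gives $M_{[d+1]\setminus\{i\}}=\sum_{j\neq i}\F(Z,A)^{U_j}$ for each $i\in[d+1]$, and therefore
\[
\partial_{d+1}(M^{(d)})=\sum_{i=1}^{d+1}M_{[d+1]\setminus\{i\}}=\sum_{j=1}^{d+1}\F(Z,A)^{U_j}=M_{[d]}+\F(Z,A)^{U_{d+1}},
\]
which matches the kernel exactly. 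Combining everything yields the claimed isomorphism $M_{[d+1]}/\partial_{d+1}(M^{(d)})\cong\rmH^d_\m(Z_0,A)$.

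The main technical obstacle is verifying cleanly the cohomological vanishing $\rmH^p_\m(U_{d+1},\F(Z,A)^{U_a})=0$ for $a\subsetneq [d]$, $p\geq 1$, when $U_a+U_{d+1}$ is a proper subgroup of $Z$ (which happens as soon as $|a|<d-1$). Here one must either first co-induce up to $U_a+U_{d+1}$ within $Z$ and then apply Shapiro and Lemma~\ref{lem:coind-and-coind}, or appeal directly to Corollary~\ref{cor:little-Hpgood}; the rest of the argument is bookkeeping of switchbacks and an application of linear independence to compute the degenerate submodule.
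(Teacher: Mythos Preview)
Your proof is correct and follows essentially the same line as the paper's: identify $M_{[d+1]}/\partial_{d+1}(M^{(d)})$ with $\rmH^1_\m(U_{d+1},M_{[d]})$ via $f\mapsto[d^{U_{d+1}}f]$, then climb the resolution~(\ref{eq:another-res}) using the vanishing $\rmH^p_\m(U_{d+1},\F(Z,A)^{U_a})=0$ for $a\subsetneq[d]$ to reach $\rmH^d_\m(Z_0,A)$. One nitpick on your citations: neither Proposition~\ref{prop:nice-cohom-gps} nor Corollary~\ref{cor:little-Hpgood} asserts vanishing (only discreteness); the vanishing comes, as you correctly say in your final paragraph, from Shapiro combined with $U_a\cap U_{d+1}=0$, or equivalently from Lemma~\ref{lem:vanish} after recognizing $\F(Z,A)^{U_a}$ as co-induced from the trivial subgroup of $U_{d+1}$.
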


\begin{proof}
The proof is a simple generalization of the above calculations, so we give only a terse sketch.

If $m \in M_{[d+1]}$, then $d^{U_{d+1}}m$ defines a $1$-cocycle from $U_{d+1}$ to $M_{[d]}$.

Since any proper sub-tuple of $\bfU$ is linearly independent, the results of the previous subsection show that all elements of $M_e$ are degenerate when $|e| = d$.  Using this, one proves easily that $d^{U_{d+1}}m$ is an element of $\B^1(U_{d+1},M_{[d]})$ if and only if $m$ is degenerate.

On the other hand, if $f \in \F(Z,A)$ is such that $d^{U_{d+1}}f$ takes values in $M_{[d]}$, then this implies that
\[d^{U_{d+1}}d^{U_d}\cdots d^{U_1}f = 0,\]
and so $f \in M_{[d+1]}$ by definition.  Since $\rmH^1_\m(U_{d+1},\F(Z,A)) = 0$ (Lemma~\ref{lem:vanish}), any element of $\Z^1(U_{d+1},M_{[d]})$ is the boundary of some element of $\F(Z,A)$, and hence of $M_{[d+1]}$.  Therefore the map $d^{U_{d+1}}:M_{[d+1]}\to \Z^1(U_{d+1},M_{[d]})$ defines an isomorphism
\[M_{[d+1]}/\partial_{d+1}(M^{(d)}) \to \rmH^1_\m(U_{d+1},M_{[d]}).\]
Finally, the cohomology group on the right here may be computed using the resolution in~(\ref{eq:another-res}) for $k=d$ and long exact sequences.  Since $\bfU\uhr_{e \cup \{d+1\}}$ is linearly independent whenever $|e| \leq d-1$, Lemma~\ref{lem:cohom-of-coind} gives that
\[\rmH^p_\m(U_{d+1},\F(Z,A)^{U_e}) = 0 \quad \forall e \in \binom{[d]}{\leq d-1},\ p\geq 1.\]
This implies that each of the long exact sequences obtained from~(\ref{eq:another-res}) collapses into a family of isomorphisms, and now a simple induction on the position in~(\ref{eq:another-res}) gives
\begin{multline*}
\rmH^1_\m(U_{d+1},M_{[d]}) = \rmH^2_\m(U_{d+1},\ker\hat{\partial}_{0,[d]}) = \rmH^3_\m(U_{d+1},\ker \hat{\partial}_{1,[d]}) = \\ \ldots = \rmH^d_\m(U_{d+1},\ker\hat{\partial}_{d-2,[d]}) = \rmH^d_\m(U_{d+1},\img\,\hat{\partial}_{d-1,[d]})\\ = \rmH^d_\m(U_{d+1},\F(Z,A)^{U_{[d]}}).
\end{multline*}
Since $U_{d+1} \cong Z_0$ and $\F(Z,A)^{U_{[d]}} = \F(Z,A)^Z \cong A$, this completes the proof.
\end{proof}

As promised after Example~\ref{ex:not-Shkredov-2}, this allows one to turn any non-vanishing cohomology groups, such as $\rmH^p_\m(\bbT,\bbT)$ for odd $p$, into families of non-degenerate {\PDE}-solutions.  It also shows that the problem of computing cohomology groups can be presented as a special case of the problem of solving {\PDE}s.  This suggests that any approach to solving {\PDE}s in general must either rely on measurable group cohomology theory, or be powerful enough that measurable group cohomology can be reconstructed from it.

Another ready consequence is the following limitation to the finite-generation part of Theorem A.

\begin{cor}
For any $L \in \bbN$, there are $U_1,U_2,U_3 \leq Z \leq \bbT^3$, such that if $\scrM$ is the solution $\P$-module of the $\bbT$-valued {\PDE} associated to $Z$ and $(U_1, U_2, U_3)$, then $M_{[3]}/\partial_3(M^{(2)})$ has rank greater than $L$.
\end{cor}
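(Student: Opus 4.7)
The plan is to place ourselves in a non-lean situation so that co-induction from a small lean ambient subgroup amplifies the rank of the top homology across a large finite quotient, bypassing the obstacle that the lean case of Lemma~\ref{lem:almost-lin-ind} with $d=2$ forces $Z_0$ to be cyclic and hence gives trivial $\rmH^2_\m(Z_0,\bbT)$.

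Given $L \in \bbN$, fix the odd prime $p = 3$ and choose an integer $m > 1$ with $6m^3 > L$. Set $N := 3m$, take $Z := (\bbZ/N)^3$, and let $U_1 = U_2 = U_3 := m Z \cong (\bbZ/p)^3$. Then $Z$ is a finite Abelian group of rank exactly $3$, embedding coordinate-wise in $\bbT^3$, and $U_{[3]} = mZ$ with $Z/U_{[3]} \cong (\bbZ/m)^3$ of size $m^3$. Because the three chosen subgroups all coincide with $U_{[3]}$, every $U_e$ for nonempty $e\subseteq[3]$ equals $U_{[3]}$. The measurable-selection picture discussed in Subsection~\ref{subs:mod-soln} (made concrete in the example following Lemma~\ref{lem:small-selection}) combined with the exactness of co-induction~(\ref{eq:cnd-of-quot}) gives
\[
M_{[3]}/\partial_3(M^{(2)}) \;\cong\; \Cnd_{U_{[3]}}^Z\bigl(M_{[3]}^{\mathrm{lean}}/\partial_3^{\mathrm{lean}}(M^{(2),\mathrm{lean}})\bigr),
\]
where the $\mathrm{lean}$ versions are the corresponding solution modules on $U_{[3]}$. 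Since $Z/U_{[3]}$ is finite of size $m^3$, the right-hand side is an $m^3$-fold direct sum (as an Abelian group) of the lean quotient.

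Next I would compute the lean quotient on $U_{[3]}\cong(\bbZ/p)^3$, where all three subgroups equal the ambient group and the {\PDE} becomes the iterated total-difference case of Example~\ref{ex:Uisequal}. Because $p = 3 > k-1 = 2$, the ``subtleties in the meaning of degree'' mentioned there do not intervene, and the $\bbT$-valued solutions are exactly the classical polynomials of degree at most two on $(\bbZ/p)^3$: sums $\sum_I a_I z^I$ over monomials $z^I$ of total degree $\le 2$, with the constant coefficient free in $\bbT$ and each non-constant coefficient restricted to $\bbT[p] = \bbZ/p$ for well-definedness on $\bbZ/p$. Thus $M_{[3]}^{\mathrm{lean}} \cong \bbT \oplus (\bbZ/p)^9$, and the same analysis with $k=2$ identifies each $M_{\{i,j\}}^{\mathrm{lean}}$ with the affine submodule $\bbT \oplus (\bbZ/p)^3$. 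A direct inspection shows $\partial_3^{\mathrm{lean}}(M^{(2),\mathrm{lean}})$ is precisely this affine submodule, so
\[
M_{[3]}^{\mathrm{lean}}/\partial_3^{\mathrm{lean}}(M^{(2),\mathrm{lean}}) \;\cong\; (\bbZ/p)^6,
\]
generated by the six degree-two monomials $z_i^2$ and $z_iz_j$ for $1\le i<j\le 3$. Combining the two steps gives $M_{[3]}/\partial_3(M^{(2)}) \cong ((\bbZ/p)^6)^{m^3}$, a finite Abelian group of rank $6m^3 > L$.

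The main technical obstacle is the explicit identification in the lean case of the {\PDE}-solutions with classical polynomials of degree $\le 2$, which Example~\ref{ex:Uisequal} defers to the work of Tao and Ziegler; however, once $p$ strictly exceeds the order $k$ of the {\PDE}, the Frobenius-induced degree ambiguities are absent and the identification reduces to a routine finite-difference computation. A secondary point is the commutation of co-induction with the formation of the top structure complex and its quotient, but this is immediate from the exactness of $\Cnd_{U_{[3]}}^Z(-)$ together with formula~(\ref{eq:cnd-of-quot}).
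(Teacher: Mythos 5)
Your construction proves the statement as literally written, and it is correct, but it goes by a genuinely different route from the paper's and ends up establishing something weaker than the corollary is meant to illustrate. The paper's proof stays in the lean setting $Z = U_{[k]}$: it takes the almost-linearly-independent configuration of Lemma~\ref{lem:almost-lin-ind} over a base group $Z_0$ and identifies the quotient of solutions by degenerate solutions with a measurable cohomology group $\rmH^d_\m(Z_0,\bbT)\cong\rmH^{d+1}(Z_0,\bbZ)$, whose rank is then made large by choosing $Z_0$ suitably; the point is to exhibit unbounded rank in exactly the setting where the finite-generation clause of Theorem A applies. Your example instead makes $[Z:U_{[3]}]=m^3$ large while the lean quotient on each coset of $U_{[3]}$ has rank exactly $6$ (the quadratic monomials on $\bbF_3^3$ modulo the affine functions), so the entire rank growth comes from co-induction over the cosets.

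The individual steps are sound: degree-$2$ non-classical polynomials on $\bbF_3^n$ are classical because the degree $k-1=2$ is less than $p=3$ (note that your closing phrase ``once $p$ strictly exceeds the order $k$'' is the wrong condition --- it fails here, since $p=k=3$ --- but the condition $p>k-1$ that you state earlier is the correct one and is what you actually use); the degenerate solutions on a coset are exactly the affine functions, since all three $M_{\{i,j\}}$ coincide with the affine module; and the compatibility of $\Cnd_{U_{[3]}}^Z(-)$ with the top structure complex and its quotient follows from~(\ref{eq:cnd-of-quot}) together with Lemma~\ref{lem:small-selection}. Since the corollary imposes no leanness hypothesis and asks only about the global quotient $M_{[3]}/\partial_3(M^{(2)})$, your argument is a valid proof of the stated claim, and it has the virtue of being fully explicit and self-contained, avoiding the external computations of integral cohomology of finite Abelian groups on which the paper relies. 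Be aware, however, that as a ``limitation to the finite-generation part of Theorem A'' the relevant quantity is the rank of the quotient when $Z=U_{[k]}$ (equivalently, per coset of $U_{[k]}$), and in your example that rank is $6$ independently of $L$; the paper's construction is designed to make \emph{that} rank large, which your choice of subgroups does not do.
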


\begin{proof}
Let $Z_0 := \bbZ/m\bbZ$, regarded as a subgroup of $\bbT$, and consider the example constructed in Lemma~\ref{lem:almost-lin-ind} with $d= 3$.  This gives $Z = Z_0^3 \leq \bbT^3$ and 
\[M_{[3]}/\partial_3(M^{(2)}) \cong \rmH^3(Z_0,\bbT) \cong \rmH^4(Z_0,\bbZ),\]
where the second isomorphism follows from the presentation $\bbZ\into \bbR\onto \bbT$ and the resulting long exact sequence.  The cohomology $\rmH^\ast(Z_0,\bbZ)$ is the subject of classical calculations: as a sequence of abstract Abelian groups, it is obtained completely in the conjunction of~\cite{Cha82} and~\cite{TowKul--thesis}.  In particular, those works show that if $m = p_1\cdots p_r$ is a product of $r$ distinct primes, then $\rm{rank}\,\rmH^4(Z_0,\bbZ)\to \infty$ as $r\to\infty$.
\end{proof}

\subsection{Some miscellaneous further examples}\label{subs:extra-egs}

The examples analyzed previously have all been either truly polynomial on all cosets of some relevant subgroup of $Z$, or obtained from the solutions to a cocycle equation. It seems worth giving some more complicated examples to broaden this picture.

\begin{ex}\label{ex:int-part-of-three}
Let $Z = \bbT^3$ with coordinates $(z_1,z_2,z_3)$, and let
\[U_1 = \{z_1 = 0\},\ U_2 = \{z_2 = 0\},\ U_3 = \{z_3 = 0\}\ \&\ U_4 = \{z_1 + z_2 + z_3 = 0\}.\]
These data will be interesting for the following reason. Consider any three of these subgroups, say $U_1$, $U_2$ and $U_3$.  Then the three intersections $U_1 \cap U_2$, $U_1\cap U_3$ and $U_2\cap U_3$ together generate the whole of $U_1 + U_2 + U_3 = Z$.  If $q:Z\to Z'$ is any surjective homomorphism, then one must also have that the intersections $q(U_1)\cap q(U_2)$, $q(U_2)\cap q(U_3)$ and $q(U_1)\cap q(U_3)$ generate $q(U_1) + q(U_2) + q(U_3) = Z'$.  The same goes for any other three of the $U_i$s.

This now implies that for any other four-tuple of subgroups $(U'_i)_{i=1}^4$ in $Z'$, if $q(U_i) \leq U'_i$ for each $i$ then the $U'_i$s also have the feature that the pairwise intersections of any three of them span $Z'$.  Therefore there is no non-trivial homomorphism mapping $(Z,\bfU)$ to a linearly independent or almost linear independent subgroup-tuple, and therefore non-degenerate solutions to the {\PDE} associated to $\bfU$ cannot simply have been `pulled back' from cocycle-examples under such a homomorphism.

One finds such a nontrivial solution with target $\bbZ$: define $f:\bbT^3\to \bbZ$ by
\begin{eqnarray}\label{eq:first-misc}
f(\theta_1,\theta_2,\theta_3) = \lfl \{\theta_1\} + \{\theta_2\} + \{\theta_3\}\rfl.
\end{eqnarray}
Then this function satisfies the equation
\[d^{U_1}d^{U_2}d^{U_3}d^{U_4}f = 0,\]
because among \emph{real}-valued functions it equals
\[\{\theta_1\} + \{\theta_2\} + \{\theta_3\} - \{\theta_1 + \theta_2 + \theta_3\}.\]

Let us use cohomological data to sketch a proof of the following.

\begin{lem}
The function $f$ above is a non-degenerate solution to this {\PDE}.
\end{lem}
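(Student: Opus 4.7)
The plan is to adapt the strategy of Lemma~\ref{lem:almost-lin-ind}: apply $d^{U_4}$ to $f$ to obtain a $1$-cocycle $\sigma := d^{U_4}f \in \Z^1(U_4, M_{[3]})$, where $M_{[3]}$ is the solution module of the $3$-fold {\PDE} associated to $(U_1, U_2, U_3)$, and analyze its class in $\rmH^1_\m(U_4, M_{[3]})$. Using the real-valued decomposition $f = \{\theta_1\}+\{\theta_2\}+\{\theta_3\} - \{\theta_1+\theta_2+\theta_3\}$, in which the last term is $U_4$-invariant, one computes
\[
\sigma(u;\theta) = \sum_{i=1}^3 \bigl(\{\theta_i - u_i\} - \{\theta_i\}\bigr) \quad\text{for } u = (u_1,u_2,u_3) \in U_4,
\]
a sum of three $U_i$-invariant pieces. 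If $f = f_1+f_2+f_3+f_4$ were a degenerate decomposition with $f_i \in M_{[4]\setminus i}$, then $f_4$ (being $U_4$-invariant) would contribute nothing to $\sigma$, while each $d^{U_4}f_i$ for $i \in [3]$ would land in the submodule $\Z^1(U_4, M_{[3]\setminus i}) \leq \Z^1(U_4, M_{[3]})$. It therefore suffices to show that $[\sigma]$ does not lie in $\sum_{i=1}^3 \img\bigl[\rmH^1_\m(U_4, M_{[3]\setminus i}) \to \rmH^1_\m(U_4, M_{[3]})\bigr]$.

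To pin this obstruction down, I would exploit restriction to $U_3 \cong \bbT^2$ via the inclusion $\iota:(a,b) \mapsto (a,b,0)$. One checks that $\iota^{-1}(U_1) = V_1 := \{0\}\times\bbT$, $\iota^{-1}(U_2) = V_2 := \bbT\times\{0\}$, $\iota^{-1}(U_4) = V_4 := \{(a,-a) : a \in \bbT\}$, while $\iota^{-1}(U_3) = \bbT^2$ itself. A short verification shows that any {\PDE} on $\bbT^2$ containing the factor $d^{\bbT^2}$ collapses to the reduced equation with this factor deleted: if $d^{V_{i_1}}\cdots d^{V_{i_m}}d^{\bbT^2}g \equiv 0$, then $d^{V_{i_1}}\cdots d^{V_{i_m}}g$ is independent of its last argument and hence identically zero (by evaluation at the origin). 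Consequently, for $i \in \{1,2,4\}$, the pullback $\iota^* f_i$ lies in a $2$-fold {\PDE} solution module on $\bbT^2$, which by Example~\ref{ex:Shkredov} equals $\F(\bbT^2,\bbZ)^{V_j} + \F(\bbT^2,\bbZ)^{V_k}$ and is therefore degenerate in the sense of Example~\ref{ex:not-Shkredov}. Meanwhile $\iota^* f_3$ solves Example~\ref{ex:not-Shkredov}'s full $3$-fold {\PDE}, and $\iota^* f = \lfl\{a\}+\{b\}\rfl$ is precisely the generating non-degenerate solution identified there. Hence $\iota^* f_3$ would have to represent the generator of the quotient $M/\partial_3(M^{(2)}) \cong \bbZ$ for that example.

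Applying the same argument with inclusions $\iota_j: U_j \hookrightarrow \bbT^3$ for each $j \in [4]$ yields four analogous constraints on any putative degenerate decomposition. To finish, I would compute $M_{[4]}/\partial_4(M^{(3)})$ directly via the machinery of Section~\ref{sec:take-cohom}: the short exact sequence of coefficients $\bbZ \into \bbR \onto \bbT$ induces a long exact sequence relating the $\bbZ$-, $\bbR$- and $\bbT$-valued {\PDE}-solution $\P$-modules. By Corollary A$''$ all $\bbR$-valued solutions are degenerate, so the obstruction for the $\bbZ$-valued $f$ is captured by the $\bbT$-valued zero-sum tuple $(\theta_1,\theta_2,\theta_3,-\theta_1-\theta_2-\theta_3)$ of characters, obtained by reducing the real lift modulo $1$. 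The main obstacle will be that the tuple $\bfU$ is very far from linearly independent --- any two of its members intersect in a $1$-dimensional subtorus --- which prevents the clean resolution-based analysis of Subsection~\ref{subs:lin-ind} and forces one to iterate long exact sequences across the several submodules $M_{[3]\setminus i}$ in parallel. I expect the non-vanishing class to trace, via a chain of switchbacks analogous to those in the proof of Lemma~\ref{lem:almost-lin-ind}, to a generator of some $\rmH^p_\m(\bbT,\bbZ)$ with $p \in \{2,3\}$, thereby confirming that $[\sigma]$ is nonzero modulo the three images above and so that $f$ is non-degenerate.
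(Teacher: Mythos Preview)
Your opening reduction to studying the class $[\sigma]\in\rmH^1_\m(U_4,M_{[3]})$ is the right move, but note a slip: $f_4\in M_{[4]\setminus 4}=M_{\{1,2,3\}}$ is \emph{not} $U_4$-invariant. What is true is that $f_4$ already lies in $M_{[3]}$, so $d^{U_4}f_4$ is a coboundary and its class vanishes; the conclusion survives but the reason is different.

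The substantive gap is that your restriction argument stops one step short of a contradiction. You correctly deduce that $\iota^*f_1,\iota^*f_2,\iota^*f_4$ are degenerate on $\bbT^2$ and hence that $\iota^*f_3$ would have to represent the generator of $\bbZ$ in Example~\ref{ex:not-Shkredov}. But this is not yet absurd: nothing you have said forbids $\iota^*f_3$ from being non-degenerate. The missing ingredient is the structure of the $3$-fold modules themselves: for every $e\in\binom{[4]}{3}$ one has $M_e=\sum_{i\in e}\F(Z,\bbZ)^{U_i}$. The paper establishes this first, by observing that any such triple $(U_i)_{i\in e}$ is isomorphic to the three coordinate $2$-planes in $\bbT^3$ and that a relative of the change-of-variables trick from Subsection~\ref{subs:lin-ind} then applies. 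Once you know $M_{\{1,2,4\}}=\F^{U_1}+\F^{U_2}+\F^{U_4}$, your restriction finishes instantly: $f_3=g_1+g_2+g_4$ with $g_\ell\in\F^{U_\ell}$, so $\iota^*f_3\in\F^{V_1}+\F^{V_2}+\F^{V_4}$ is degenerate on $\bbT^2$, contradicting what you just derived.

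With that repair your restriction route is actually shorter than the paper's own argument, which instead computes $\rmH^1_\m(U_4,M_{[3]})$ in full. The paper uses the short exact sequence
\[
0\to\G\to\bigoplus_{i\in[3]}\F(Z,\bbZ)^{U_i}\to M_{[3]}\to 0,\qquad \G=\{(m,n,p)\in\bbZ^3:m+n+p=0\}\cong\bbZ^2,
\]
and the resulting long exact sequence. Since $U_4\cap U_i\cong\bbT$, Shapiro and Lemma~\ref{lem:tori-calc-1} give $\rmH^1_\m(U_4,\F^{U_i})=0$ and $\rmH^2_\m(U_4,\F^{U_i})\cong\widehat{U_4\cap U_i}$, so the long exact sequence collapses to identify $\rmH^1_\m(U_4,M_{[3]})$ with
\[
\ker\Bigl(\hat{U_4}\otimes\G\to\bigoplus_{i=1}^3\widehat{U_4\cap U_i}\Bigr)\cong\bbZ,
\]
and a direct check shows $f$ hits a generator. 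Your second, $\bbZ\hookrightarrow\bbR\twoheadrightarrow\bbT$ outline is too vague to assess and is not needed once the first approach is completed.
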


\begin{proof}
As remarked above, for any $e \in \binom{[4]}{3}$, the corresponding tuple $\bfU\uhr_e$ has the property that the pairwise intersections span the whole of $Z$, but the triple intersection is trivial.  More concretely, for each $e$ one may easily construct an isomorphism $(Z,\bfU\uhr_e) \cong (\bbT^3,\bfV)$, where $\bfV$ is the collection of two-dimensional coordinate-subgroups of $\bbT^3$.

A simple relative of the argument in Subsection~\ref{subs:lin-ind} now shows that any {\PDE}-solution associated to $\bfU\uhr_e$ is an element of $\sum_{i \in e}\F(Z,\bbZ)^{U_i}$.  In case $e = [3]$, this sum of modules fits into the short exact sequence
\[\G \into \bigoplus_{i \in e}\F(Z,\bbZ)^{U_i} \onto \sum_{i \in e}\F(Z,\bbZ)^{U_i},\]
where
\[\G := \{(m,n,p) \in \bbZ^3\,|\ m+n+p = 0\} \cong \bbZ^2,\]
and similarly for the other $e \in \binom{[4]}{3}$.

\vspace{7pt}

\emph{Step 1.}\quad We first use this presentation for $e = [3]$.  If $f \in M_{[4]}$, then $d^{U_4}f$ is an element of $\Z^1(U_4,M_{[3]})$.  If it lies in $\B^1(U_4,M_{[3]})$, then one has $f = g + h$ for some $g \in M_{[3]}$ and $h \in \F(Z,\bbZ)^{U_4}$, so this would be a degenerate solution.

Similarly, if $f \in M_{\{i,j,4\}}$ for some distinct $i,j \in [3]$, then $f = f_i + f_j + f_4$ for some $U_i$-invariant functions $f_i$, and so $d^{U_4}f = d^{U_4}f_i + d^{U_4}f_j$, which is still an element of $\B^1(U_4,M_{[3]})$.  The non-degenerate solutions $f$ are therefore precisely those for which $d^{U_4}f$ lies in a nontrivial class in $\rmH^1_\m(U_4,M_{[3]})$.

\vspace{7pt}

\emph{Step 2.}\quad To compute this cohomology group, we may use the above presentation to obtain the following piece of the resulting long exact sequence:
\begin{multline*}
\cdots \to \rmH^1_\m(U_4,\G)\to \rmH^1_\m\Big(U_4,\bigoplus_{i \in e}\F(Z,\bbZ)^{U_i}\Big) \to \rmH^1_\m(U_4,M_{[3]})\\
\stackrel{\rm{switchback}}{\to} \rmH^2_\m(U_4,\G) \to \rmH^2_\m\Big(U_4,\bigoplus_{i \in e}\F(Z,\bbZ)^{U_i}\Big) \to \cdots.
\end{multline*}
Now, for each $i = 1,2,3$ one has
\begin{multline*}
\rmH^p_\m(U_4,\F(Z,\bbZ)^{U_i}) = \rmH^p_\m(U_4,\Cnd_{U_i}^Z\bbZ)\\
= \rmH^p_\m(U_4,\Cnd_{U_4 \cap U_i}^{U_4}\bbZ) \stackrel{\rm{Shapiro}}{\cong} \rmH^p_\m(U_4\cap U_i,\bbZ).
\end{multline*}
Since $U_4\cap U_i \cong \bbT$, Lemma~\ref{lem:tori-calc-1} gives that this last cohomology group is $0$ when $p$ is odd, and is naturally isomorphic to $\hat{U_4\cap U_i}$ when $p$ is even.  Lemma~\ref{lem:tori-calc-1} also gives a natural isomorphism $\rmH^2_\m(U_4,\G) \cong \hat{U_4}\otimes \G$ (which is $\cong \bbZ^4$, though not naturally).  Putting these calculations together, the above long exact sequence collapses to give
\[\rmH^1_\m(U_4,M_{[3]}) \cong \ker\Big(\hat{U_4}\otimes \G \to \bigoplus_{i=1}^3\hat{U_4\cap U_i}\Big).\]

Here, $\hat{U_4}\otimes \G$ is the group of zero-sum triples $(\chi_1,\chi_2,\chi_3)$ in $\hat{U_4}$.  Such a triple lies in the kernel of the above homomorphism if and only if $\chi_i|(U_4\cap U_i) = 0$ for $i=1,2,3$.

Now a simple exercise in linear algebra shows that the subgroup of zero-sum triples satisfying this condition is a copy of $\bbZ$ generated by $(\chi_1,\chi_2,\chi_3)$, where
\[\chi_i(\theta_1,\theta_2,\theta_3) = \theta_i \quad \forall (\theta_1,\theta_2,\theta_3) \in U_4.\]

\vspace{7pt}

\emph{Step 3.}\quad Finally, an easy check shows that the $f$ in~(\ref{eq:first-misc}) is a function on $Z$ which gives rise to this generator under the above sequence of reductions.
\end{proof}

One might notice that the function $f$ in~(\ref{eq:first-misc}) is still `close' to cohomological solutions, in that one has the identity
\[f(\theta_1,\theta_2,\theta_3) = \lfl \{\theta_1\} + \{\theta_2\}\rfl + \lfl\{\theta_1+\theta_2\} + \{\theta_3\}\rfl\]
(as well as several similar identities for this same $f$), which seems to return us to Example~\ref{ex:not-Shkredov}.  However, neither of the two terms on the right-hand side here is annihilated by $d^{U_1}d^{U_2}d^{U_3}d^{U_4}$, so this is not really a decomposition into solutions of simpler equations.

Using the function $f$ above, one could make a $\bbT$-valued example on $\bbT^4$ by forming the function
\[g:(\theta_1,\theta_2,\theta_3,\theta_4)\mapsto f(\theta_1,\theta_2,\theta_3)\cdot \theta_4,\]
which is annihilated by $d^{V_1}d^{V_2}d^{V_3}d^{V_4}d^{V_5}$, where $V_1$, \ldots, $V_4$ are the pre-images of $U_1$, \ldots, $U_4$ under the projection $\bbT^4\to\bbT^3$ onto the first three coordinates, and $V_5 = \{(0,0,0)\}\times \bbT$.  I expect that this example is also non-degenerate and not pulled back from a `cohomological' solution, but have not undertaken that analysis in full. \fin
\end{ex}

%
%
%

Finally, we offer an example having some richer geometric meaning.  Its description will take a little longer, and will depend on some understanding of nilrotations on nilmanifolds.

\begin{ex}\label{ex:C-L}
Let
\[G = \left(\scriptsize{\begin{array}{ccc}1 & \bbR & \bbR\\ & 1 & \bbR\\ && 1\end{array}}\right) \quad \hbox{and} \quad \G = \left(\scriptsize{\begin{array}{ccc}1 & \bbZ & \bbZ\\ & 1 & \bbZ\\ && 1\end{array}}\right)\]
be the continuous Heisenberg group and its obvious lattice, respectively.  The quotient $G/\G$ is a compact nilmanifold which is a circle bundle over $G^{\rm{ab}}/\G^{\rm{ab}} \cong \bbT^2$, where $G^{\rm{ab}}$ and $\G^{\rm{ab}}$ are the Abelianizations of $G$ and $\G$.

The group $G$ acts on $G/\G$ by left-multiplication on cosets.  Considered as a measurable dynamical $G$-system preserving the Haar measure $m_{G/\G}$, it is a skew-product circle-extension of the action of $G$ on
\[G/\langle [G,G]\cup \G\rangle \cong G^{\rm{ab}}/\G^{\rm{ab}}\cong \bbT^2,\]
which is simply an action by commuting torus-rotations.  For $g \in G$, let $T_g \actson G/\G$ be this measure-preserving transformation, and let $R_g\actson G^{\rm{ab}}/\G^{\rm{ab}} \cong \bbT^2$ be the torus-rotation that it extends.  We shall see that a concrete description of this action in coordinates involves some functions that form a zero-sum tuple.

To coordinatize $G/\G$, let us use the fractional parts $\{\cdot\}$ to identify $\bbT^2\times \bbT$ with $[0,1)^3$, and hence with the following fundamental domain for $\G$ in $G$:
\[\Big\{\left(\scriptsize{\begin{array}{ccc}1 & x & z\\ & 1 & y\\ && 1\end{array}}\right)\ \Big|\ (x,y,z) \in [0,1)^3\Big\}.\]
Then any element of $G$ decomposes as
\begin{multline*}
\left(\scriptsize{\begin{array}{ccc}1 & a & c\\ & 1 & b\\ && 1\end{array}}\right) = \left(\scriptsize{\begin{array}{ccc}1 & \{a\} & \{c - \lfl b\rfl\{a\}\}\\ & 1 & \{b\}\\ && 1\end{array}}\right)\left(\scriptsize{\begin{array}{ccc}1 & \lfl a\rfl & \lfl c - \lfl b\rfl\{a\}\rfl\\ & 1 & \lfl b\rfl\\ && 1\end{array}}\right)\\
\in \left(\scriptsize{\begin{array}{ccc}1 & \{a\} & \{c - \lfl b\rfl\{a\}\}\\ & 1 & \{b\}\\ && 1\end{array}}\right)\G.
\end{multline*}

For each $s = (s_1,s_2) \in \bbT^2$, let
\[g_s := \left(\scriptsize{\begin{array}{ccc}1 & \{s_1\} & 0\\ & 1 & \{s_2\}\\ && 1\end{array}}\right).\]
This has the property that $R_{g_s}$ is the rotation of $\bbT^2$ by $s$.  In terms of the coordinates $(x,z) = (x_1,x_2,z) \in \bbT^2\times \bbT$ introduced above, one may now compute that $T_{g_s}$ acts by
\[T_{g_s}(x,z) = (x + s,z + \s(s,x))\]
with the skew-rotating function
\[\s(s,x) = \{s_1\}\{x_2\} - \lfl \{x_2\} + \{s_2\}\rfl\{x_1 + s_1\} \mod 1.\]
In terms of this function, one calculates that
\[T_{g_t}\circ T_{g_s}(x,z) = (x + s + t,z + \s(s,x) + \s(t,x+s)),\]
and similarly for longer compositions.

Since $G$ is $1$-step nilpotent, the commutator $[g_s,g_t] := g_s^{-1}g_t^{-1}g_sg_t$ must lie in the centre
\[G_1 := \left(\scriptsize{\begin{array}{ccc}1 & 0 & \bbR\\ & 1 & 0\\ && 1\end{array}}\right).\]
This means that $[T_{g_s},T_{g_t}] = T_{[g_s,g_t]}$ is the transformation of $\bbT^2\times \bbT$ that corresponds to this central element, which must simply be a constant rotation of the last coordinate.  An explicit calculation now shows that this rotation is by
\[c(s,t) = \{s_1\}\{t_2\} - \{t_1\}\{s_2\} \mod 1.\]

Finally, if one writes out this commutation relation in terms of the skew-rotating function above, it reads
\begin{eqnarray}\label{eq:CL}
\s(t,x) + \s(s,x+t) = \s(s,x) + \s(t,x+s) + c(s,t).
\end{eqnarray}
Moving all terms here to the left, one obtains a zero-sum quintuple on the group $Z = \bbT^2\times \bbT^2\times \bbT^2$ written in terms of the homomorphisms
\begin{multline*}
M_1(s,t,x) = (t,x),\ M_2(s,t,x) = (s,x+t),\ M_3(s,t,x) = (s,x),\\ M_4(s,t,x) = (t,x+s),\ M_5(s,t,x) = (s,t).
\end{multline*}

When written multiplicatively (that is, for $\rm{S}^1$-valued functions), equation~(\ref{eq:CL}) is the `Conze-Lesigne equation'.  It first arose in the work of Conze and Lesigne on describing exact `characteristic factors' for various multiple recurrence phenomena: see~\cite{ConLes84,ConLes88.1,ConLes88.2}, and also the more recent works~\cite{Rud93,FurWei96,HosKra01,HosKra05,Zie07}.  In those works, the key point was that functions satisfying~(\ref{eq:CL}) emerged from some more abstract considerations, and these could then be used to reconstruct an action of a two-step nilpotent Lie group on a nilmanifold.

As explained in the Introduction, we can obtain a {\PDE} of order 4 from~(\ref{eq:CL}) by choosing one of its terms and applying differencing operators that annihilate the others.  Five different {\PDE}s can be obtained this way.  Let us focus on the {\PDE} which results for the function $c$.  For this, we difference along the subgroups $U_i = (\ker M_i + \ker M_5)/\ker M_5$ for $i=1,2,3,4$ (since $c$ is already $(\ker M_5)$-invariant), leading to the {\PDE} over $\bbT^2\times \bbT^2$ associated to
\[U_1 = U_2 = (1,0)\cdot \bbT^2 =: V \quad \hbox{and} \quad U_3 = U_4 = (0,1)\cdot \bbT^2 =: W.\]
(The reader may verify directly that differencing~(\ref{eq:CL}) along the subgroups $M_5^{-1}(U_i)$ for these $U_i$s leads to $d^{U_1}d^{U_2}d^{U_3}d^{U_4}c = 0$.) This example has the interesting feature that more than one subgroup of $Z = \bbT^2\times \bbT^2$ is relevant, but those relevant subgroups also appear with multiplicity greater than one.

\begin{lem}
The above function $c$ is a non-degenerate solution to this {\PDE}.
\end{lem}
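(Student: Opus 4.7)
The strategy follows the pattern of Examples~\ref{ex:not-Shkredov} and~\ref{ex:not-Shkredov-2}: reduce the non-degeneracy of $c$ to the non-triviality of a cohomology class obtained by iterated differencing, and identify that class using the Conze-Lesigne equation.

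First I would verify $c \in M_{[4]}$. Lifting to the $\bbR$-valued function $\widetilde c(s,t) = \{s_1\}\{t_2\} - \{s_2\}\{t_1\}$, the second differences $d^V d^V \{s_i\}$ and $d^W d^W \{t_j\}$ are integer-valued (being second jumps of the fractional part), so $d^V d^V d^W d^W \widetilde c$ is an integer-valued linear combination of products of such integers, hence vanishes modulo $1$. So $c$ is a solution to the PDE.

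For non-degeneracy, suppose toward contradiction that $c = f + g$ with $f\in M_{234}$ and $g\in M_{123}$. Applying $d^W$ yields $d^W c = d^W f + d^W g$ in $\Z^1(W, M_{123})$ (each summand takes values in $M_{123}$, as is checked by the same commutativity arguments used in the proofs of Theorems A and B). Since $g\in M_{123}$, $d^W g$ is a coboundary in $\B^1(W, M_{123})$, so $[d^W c] = [d^W f]$ in $\rmH^1_\m(W, M_{123})$. The remainder of the proof shows that $[d^W c]$ cannot be written as $[d^W f]$ for any $f \in M_{234}$.

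The next step is to describe $\rmH^1_\m(W, M_{123})$ through a chain of short exact sequences. The key structural fact is that $h \in M_{123}$ iff $d^W h$ is $V$-affine, so $M_{123}$ is an extension of $M_{12} + \F(Z,\bbT)^W$ by a module recording the character component of $d^W h$ along $V$. The submodule $M_{12} + \F(Z,\bbT)^W$ in turn admits the Shkredov-type presentation
\[
0 \to \A(V) \to M_{12}\oplus \F(Z,\bbT)^W \to M_{12}+\F(Z,\bbT)^W \to 0,
\]
where $\A(V) = \bbT \oplus \hat V$ is the group of affine functions on $V$. Because $V$ and $W$ are complementary subgroups of $Z$ (i.e.\ $V+W=Z$ and $V\cap W=0$), Lemma~\ref{lem:coind-and-coind} identifies $\F(Z,\bbT)^V$ with $\Cnd_0^W \F(V,\bbT)$ as a $W$-module, and so Lemma~\ref{lem:vanish} plus the Shapiro Isomorphism (Theorem~\ref{thm:Shapiro}) force $\rmH^p_\m(W, \F(Z,\bbT)^V) = 0$ for every $p\geq 1$. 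This causes the long exact sequences for $\rmH^*_\m(W,-)$ associated with the presentations above to collapse into a sequence of switchback isomorphisms, reducing $\rmH^1_\m(W, M_{123})$ to a computable extension built from $\rmH^2_\m(W, \bbT)$, $\rmH^2_\m(W, \hat V)$, and similar groups.

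Then I would trace $[d^W c]$ through these reductions. The Conze-Lesigne equation~(\ref{eq:CL}) exhibits $c$ as the commutator of the coboundary of $\s$ (regarded as a $V$-cochain valued in $\F(W, \bbT)$), and under iterated switchbacks this commutator structure converts directly into the obstruction class. An explicit computation identifies this class with a generator of a $\bbZ$-summand in the target cohomology, associated with the antisymmetric bilinear pairing $(s,t)\mapsto s_1 t_2 - s_2 t_1$. An entirely analogous reduction applies to $[d^W f]$ for any $f \in M_{234}$: because $M_{234}$ is defined by the $V$$\leftrightarrow$$W$-swapped condition (differencing twice along $W$ and once along $V$, rather than the reverse), its contribution lies in a complementary, \emph{symmetric} subgroup of the same extended cohomology. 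Hence $[d^W c]\neq [d^W f]$, yielding the required contradiction.

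\textbf{Main obstacle.} The hardest step is the cohomological bookkeeping of the third paragraph. Unlike $M_{13}$ in Example~\ref{ex:not-Shkredov} (which is just a sum of two invariant submodules), $M_{123}$ has an intrinsic ``$V$-affine'' layer which introduces an extra extension by $\hat V$. One must therefore follow the Conze-Lesigne class through several nested switchbacks simultaneously, and verify that it lands in precisely the summand that no $f\in M_{234}$ can reach. The separation rests on the antisymmetry of $c$ under the natural identification $V\cong W$---an algebraic shadow of the Heisenberg commutator from which $c$ arose---and extracting this antisymmetry cleanly from the iterated long exact sequences is the crux of the calculation.
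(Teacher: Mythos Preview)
Your overall framework is the right one and matches the paper: apply $d^W$ to land in $\Z^1(W,M_{[3]})$, present $M_{[3]} = M_{123}$ as an extension, and use the resulting long exact sequence to detect $[d^Wc]$. Your short exact sequence $0\to\A(V)\to M_{12}\oplus\F(Z,\bbT)^W\to M_{123}\to 0$ is exactly the paper's presentation (Step 2), and the relevant vanishing is $\rmH^p_\m(W,M_{12})=0$ (not $\rmH^p_\m(W,\F(Z,\bbT)^V)$, though the mechanism is the same). Two corrections at the structural level: first, $M_{123}$ is \emph{equal} to $M_{12}+\F(Z,\bbT)^W$, not a nontrivial extension of it --- this is precisely Example~\ref{ex:one-extra-indep}, since $W$ is linearly independent from $V=U_{[2]}$. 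Second, and more importantly, the paper uses this same example for $M_{234}$ to get the concrete identification $\partial_4(M^{(3)}) = \Cnd_V^Z\A(V) + \Cnd_W^Z\A(W)$ (Step 1), which is what makes the endgame clean.

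The genuine gap is your final step. The claim that $[d^Wf]$ for $f\in M_{234}$ lands in a ``symmetric subgroup'' of $\hat W\otimes\hat V$, complementary to the antisymmetric class of $c$, is not correct. The long exact sequence the paper obtains (Step 3) reads
\[
\Cnd_W^Z\hat W \ \stackrel{\beta}{\to}\ \rmH^1_\m(W,M_{[3]}) \ \to\ \hat W\otimes\hat V \ \to\ 0,
\]
and the point is that the degenerate contributions map to \emph{zero} in $\hat W\otimes\hat V$, not to a symmetric part. Concretely: if $f=f_1+f_2$ with $f_1\in\Cnd_V^Z\A(V)\subseteq M_{[3]}$ and $f_2\in\Cnd_W^Z\A(W)$, then $d^Wf_1\in\B^1(W,M_{[3]})$ trivially, while $d_wf_2(s,t)=\chi_s(w)$ depends only on $s$, so $d^Wf_2\in\Z^1(W,\F(Z,\bbT)^W)$ and hence $[d^Wf]\in\img\,\beta$. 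Thus the obstruction is the image of $[d^Wc]$ under the switchback to $\hat W\otimes\hat V$; the paper computes this to be $\chi_1\otimes\theta_2-\chi_2\otimes\theta_1\neq 0$. The antisymmetric shape of this element is incidental --- what matters is that it is nonzero, whereas degenerate solutions contribute zero. Your symmetry/antisymmetry heuristic does not supply this separation: $\hat W\otimes\hat V$ has no intrinsic symmetric/antisymmetric decomposition, and there is no reason degenerate $f$ should produce ``symmetric'' classes rather than simply vanishing ones.
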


\begin{proof}
\emph{Step 1.}\quad We start by examining all the simpler {\PDE}s obtained by omitting one of the subgroups.  In this case these are all clearly equivalent, so we discuss only the omission of $U_4$.  This leaves $U_1 = U_2 = V$ and $U_3 = W$, and this is now of the form treated in Example~\ref{ex:one-extra-indep}.  The analysis of that example gives
\[M_{[3]} = \F(Z)^W + M_{[2]},\]
and
\[M_{[2]} = \{f\,|\ d^Vd^Vf = 0\} = \Cnd_V^Z\A(V),\]
where $\A(V) = \bbT + \hat{V}$ is the group of affine functions on $V$, much as in Example~\ref{ex:Uisequal}.

This analysis, together with its analogs for the omission of the other $U_i$s, gives
\begin{eqnarray}\label{eq:struct-of-degen}
\partial_4(M^{(3)}) = \Cnd_V^Z\A(V) + \Cnd_W^Z\A(W)
\end{eqnarray}
for the submodule of degenerate solutions.

\vspace{7pt}

\emph{Step 2.}\quad Now suppose that $(f,g) \in \F(Z)^W\oplus M_{[2]}$ and $f + g = 0$.  This is equivalent to
\[f = -g \in M_{[2]}^W \cong \A(Z)^W.\]
Therefore we have a presentation
\[0 \to \A(Z)^W \ \ \stackrel{a \mapsto (a,-a)}{\to}\ \  \F(Z)^W\oplus M_{[2]} \to M_{[3]}\to 0,\]
and similarly for the other subgroup-triples obtained from $(U_1,U_2,U_3,U_4)$.  This now gives the following long exact sequence for $\rmH^\ast_\m(U_4,-) = \rmH^\ast_\m(W,-)$:
\begin{multline*}
\cdots \to \rmH^1_\m(W,\A(Z)^W) \to \rmH^1_\m(W,\F(Z)^W)\oplus \rmH^1_\m(W,M_{[2]}) \to \rmH^1_\m(W,M_{[3]})\\ \to \rmH^2_\m(W,\A(Z)^W) \to \rmH^2_\m(W,\F(Z)^W)\oplus \rmH^2_\m(W,M_{[2]}) \to \cdots
\end{multline*}

\vspace{7pt}

\emph{Step 3.}\quad To make use of this, we must next compute some of these cohomology groups.  First observe that Lemma~\ref{lem:vanish} gives
\[\rmH^p_\m(W,M_{[2]}) = \rmH^p_\m(W,\Cnd_V^Z\A(V)) = 0\]
for all $p\geq 1$, because $\Cnd_V^Z\A(V)$ is isomorphic to $\Cnd_0^W\A(V)$ as a $W$-module.  Focusing on the terms around $\rmH^1_\m(W,M_{[3]})$ in the long exact sequence, it therefore simplifies to
\begin{multline*}
\cdots \to \rmH^1_\m(W,\A(Z)^W)\to \rmH^1_\m(W,\F(Z)^W) \to \rmH^1_\m(W,M_{[3]})\\
\to \rmH^2_\m(W,\A(Z)^W) \to \rmH^2_\m(W,\F(Z)^W)\to \cdots
\end{multline*}

We next need to know the result of applying the functors $\rmH^1_\m(W,-)$ and $\rmH^2_\m(W,-)$ to the inclusion morphism $\A(Z)^W\to \F(Z)^W$.  To that end, we compute the following:
\begin{itemize}
\item on the one hand,
\[\rmH^p_\m(W,\F(Z)^W) = \rmH^p_\m(W,\Cnd_W^Z\bbT) \cong \Cnd_W^Z\rmH^p_\m(W,\bbT),\]
which may be obtained from Lemma~\ref{lem:tori-calc-2}: it is $\Cnd_W^Z\hat{W}$ (where $\hat{W}$ has the trivial $W$-action) when $p=1$, and $0$ when $p=2$;
\item on the other hand, $\A(Z)^W \cong \A(V) \cong \bbT\oplus \hat{V}$ with trivial $W$-action in the category $\PMod(W)$, and hence Lemmas~\ref{lem:tori-calc-1} and~\ref{lem:tori-calc-2} together give that the obvious morphisms
\[\hat{W} \cong \rmH^1_\m(W,\bbT) \to \rmH^1_\m(W,\A(V))\]
and
\[\hat{W}\otimes \hat{V} \cong \rmH^2_\m(W,\hat{V})\to \rmH^2_\m(W,\A(V))\]
are both natural isomorphisms.
\end{itemize}

Inserting the above facts into the relevant piece of our long exact sequence, it becomes
\begin{eqnarray}\label{eq:presM3}
\cdots \to \hat{W} \stackrel{\a}{\to} \Cnd_W^Z\hat{W} \stackrel{\b}{\to} \rmH^1_\m(W,M_{[3]})
\to \hat{W}\otimes \hat{V} \to 0.
\end{eqnarray}
Among these morphisms, $\a$ is just the obvious inclusion-as-constant-functions, and $\b$ corresponds to the morphism
\[\Cnd_W^Z\hat{W} = \rmH^1_\m(W,\Cnd_W^Z\bbT) = \rmH^1_\m(W,\F(Z)^W) \to \rmH^1_\m(W,M_{[3]})\]
arising from the inclusion $\F(Z)^W\into M_{[3]}$ under the functor $\rmH^1_\m(W,-)$.

\vspace{7pt}

\emph{Step 4.}\quad Now consider the function of interest $c$.  It is an element of $M_{[4]}$, so $d^{U_4}c = d^Wc \in \Z^1(W,M_{[3]})$.  If $c$ were an element of $\partial_4(M^{(3)})$, then~(\ref{eq:struct-of-degen}) would give
\begin{eqnarray}\label{eq:wheredc}
d^Wc \in d^W\big(\Cnd_V^Z\A(V)\big) +d^W\big(\Cnd_W^Z\A(W)\big).
\end{eqnarray}
The first of these right-hand modules is contained in $\B^1(W,M_{[3]})$, because $\Cnd_V^Z\A(V) \leq M_{[3]}$.  The second is equal to $\Cnd_W^Zd^W(\A(W))$, which in turn equals
\[\Cnd_W^Z d^W(\hat{W}) = \Cnd_W^Z \Z^1(W,\bbT) = \Z^1(W,\F(Z)^W),\]
because if $\theta + \chi \in \bbT + \hat{W}$, then $d^W(\theta + \chi)$ equals the $1$-cocycle
\[w\mapsto \chi(w):W \to \bbT \subset_{\rm{constants}} \F(W).\]
Combining these facts, we see that the right-hand side of~(\ref{eq:wheredc}) is precisely the image of $\b$ modulo $\B^1(W,M_{[3]})$.

Therefore, to prove that $c$ is a non-degenerate solution, it remains to show that the cohomology class $[d^W c] \in \rmH^1_\m(W,M_{[3]})$ does not lie in $\img\,\b$.  This is now another routine calculation.  Skipping the details, one finds that in the presentation~(\ref{eq:presM3}) of $\rmH^1_\m(W,M_{[3]})$, the class of $[d^Wc]$ maps to the nonzero element
\[\chi_1\otimes \theta_2 - \chi_2\otimes \theta_1 \in \hat{W}\otimes \hat{V}\]
(unsurprisingly, given the form of $c$), 
where $(\chi_1,\chi_2):W\to W$ and $(\theta_1,\theta_2):V\to V$ are the identity homomorphisms.
\end{proof}

Similarly to the previous example, another interesting feature here is that, because of the duplication of the subgroups $V$ and $W$, the {\PDE} associated to this quadruple $(U_1,U_2,U_3,U_4)$ does not map nontrivially onto any `cohomological example', nor onto any of the other previously-studied examples. \fin
\end{ex}

\section{Further questions}\label{sec:further-ques}

\subsection{Continuous functions}

All of our results have been about measurable functions $Z\to A$.  If one insists on continuous functions, then I do not know how to complete a similar analysis (unless $A$ is a Euclidean space, for which easy arguments can then be made using Fourier analysis).

The problem is that our approach rests on reducing various calculations to cohomology, usually using long exact sequences.  There is a cohomology theory for compact groups built using continuous cochains, $\rmH^\ast_{\rm{cts}}$, but it does not satisfy this axiom unless the ambient group $Z$ is pro-finite.  I do not know how to get around this difficulty; indeed, as an important consequence, I do not even know how to compute some quite simple instances of $\rmH^\ast_{\rm{cts}}$.  Amazingly, the following seems to be open:

\begin{ques}
Is $\rmH^3_{\rm{cts}}(\bbT,\bbT)$ non-zero?
\end{ques}

See~\cite{AusMoo--cohomcty} for more discussion of the defects of $\rmH^\ast_{\rm{cts}}$.  Similar issues arise if one tries to work with smooth functions or other forms of regularity.

\subsection{Non-Abelian groups}

One can formulate the zero-sum problem for any tuple of closed subgroups in a compact group, say $H_1$, \ldots, $H_k \leq G$.  One can also formulate the {\PDE}, but for this it now matters in what order one applies the differencing operators $d^{H_i}$, unless the subgroups $H_i$ all normalize each other.  However, even for normal subgroups this generalization runs into difficulties if one tries to follow the approach of this paper.  The first serious problem is that if $M \in\PMod(G)$, $H \leq G$, and $H$ is not contained in the centre of $G$, then one cannot give $\rmH^p_\m(H,M)$ the structure of a $G$-module as described in Subsection~\ref{subs:cohoms-as-mods}.  One may need to set up a more general class of objects to account for this before any of the theory of $\P$-modules can be recovered.

\subsection{Non-compact groups}

A different direction for generalization runs from compact to arbitrary locally compact groups.  For simplicity, we discuss this possibility only for l.c.s.c. Abelian groups.

For these, the algebraic part of the theory of $\P$-modules should still work nicely, but in some cases new analytic pathologies can appear. A simple example is the following.

Let $\a$ be irrational, and consider $U_1 = \bbZ$  and $U_2 = \bbZ\a$ both as subgroups of $Z = \bbR$.  Suppose that $f \in \F(\bbR)$ solves the {\PDE} associated to $(U_1,U_2)$.  This means that $d_\a f$ is $U_1$-invariant, because $\a \in U_2$, and hence it may be identified with an element of $\F(\bbR/U_1) = \F(\bbT)$.  On the other hand, since $\rmH^1_\m(U_2,\F(\bbR)) = 0$, any element of $\F(\bbR)$ lies in the image of $d_\a$, so $d_\a$ defines a closed morphism from the module $M$ of solutions to this {\PDE} onto
\[\{f \in \F(\bbR)\,|\ d_\a f\ \hbox{is}\ U_1\hbox{-invariant}\} \cong \F(\bbT).\]
On the other hand, $f \in \ker d_\a$ if and only if $f \in \F(\bbR)^{U_2}$, and so we have produced a presentation
\[\F(\bbR)^{U_2} \into M\onto \F(\bbR)^{U_1}.\]

This suggests that $M$ is a reasonably well-behaved Polish module.  However, a problem arises in comparing it with the submodule $M_0$ of degenerate solutions.  One has
\[M_0 = \F(\bbR)^{U_1} + \F(\bbR)^{U_2} = \{f_1 + f_2\,|\ d_1f_1 = d_\a f_2 = 0\},\]
and from this it follows that $f \in M_0$ if and only if $d_\a f$, as an element of $\F(\bbT)$, agrees with $d_\a g$ for some $g \in \F(\bbT)$.  As is well-known to ergodic theorists, the set of such coboundaries is a dense-but-meagre subgroup of $\F(\bbT)$ (this is an easy consequence of Rokhlin's Lemma), and this implies similarly that $M_0$ is a dense-but-meagre subgroup of $M$.  So the analog of Theorem A certainly fails here: the quotient $M/M_0$ is not even Hausdorff, notwithstanding that $M$ itself still has a fairly simple structure.

In this example, the `bad' structure of $M/M_0$ resulted from a similarly bad structure for $\rmH^1_\m(U_2,\F(\bbT))$, which in turn was a consequence of the fact that $U_1 + U_2$ is a dense, non-closed subgroup of $\bbR$.  This suggests the following question.

\begin{ques}
Do natural analogs of Theorems A or B hold in case $Z$ is not compact, but the subgroups $U_i$ are such that every $U_e$, $e\subseteq [k]$, is closed?
\end{ques}

A simple special case in which these problems disappear is when $Z$ itself is discrete.  For that case I suspect that the algebraic ideas of the present paper do give a method of computing the modules of solutions to {\PDE}s or zero-sum problems.  However, that case can also be approached using much more classical commutative algebra for the group ring $\bbZ[Z]$, so I have not pursued it very far.

Another case in which a more satisfactory theory may be obtained is when $Z$ and $U_1$, \ldots, $U_k$ are arbitrary l.c.s.c. Abelian (or even non-Abelian) groups, but the target $A$ is taken to be $\bbR$.  Once again, I have not worked out any of this theory in detail.

\subsection{An ergodic-theoretic generalization}

Another generalization of the {\PDE} problem which is still, in a sense, `Abelian' is the following. Consider a probability-preserving $\bbZ^d$-system $(X,\mu,T)$, let $A$ be an Abelian Lie group, and let $\F(\mu,A)$ denote the Polish Abelian group of $\mu$-equivalence classes of measurable functions $X\to A$.  For each $\bf{n} \in \bbZ^d$ one may define a resulting differencing operator on $\F(\mu,A)$ by
\[d^T_{\bf{n}}f(x) := f(T^\bf{n}x) - f(x),\]
and using these one may formulate obvious analogs of the {\PDE} and zero-sum problems for a tuple of subgroups $\G_1$,\ldots, $\G_k \leq \bbZ^d$.

Recall that $(X,\mu,T)$ is a \textbf{Kronecker system} if $(X,\mu)$ is a compact metrizable Abelian group with its Haar probability measure and there is a homomorphism $\phi:\bbZ^d\to X$ with dense image such that $T^\bf{n}x = x + \phi(\bf{n})$.  In this case, using the continuity of the rotation action $X\actson \F(\mu,A) = \F(X,A)$, the {\PDE} and zero-sum problems for this $\bbZ^d$-action and the subgroups $\G_1$, \ldots, $\G_k$ are precisely the {\PDE} and zero-sum problems for the group $X$ itself (in the sense of the rest of this paper) with the subgroup tuple $U_i := \ol{\phi(\G_i)}$, $i=1,2,\ldots,k$.

Therefore the ergodic-theoretic versions of our problems subsume the compact-Abelian-group versions.  There are many $\bbZ^d$-systems that behave very differently from Kronecker systems, and it would be interesting to know how much of the theory of the present paper can be extended to them.  In a sense, this question lies between the cases of functions on a compact Abelian group and of functions on $\bbZ^d$: although it is much more general than the former, it does retain the extra structure of an invariant measure, and I think this may prevent it from having the same freedom as for arbitrary functions on $\bbZ^d$.

A first example illustrating this theory arises in Host and Kra's work~\cite{HosKra05} on characteristic factors for certain non-conventional averages.  Their structure theory (see also Ziegler~\cite{Zie07}) relies on a family of generalizations of the Conze-Lesigne equation (as in Example~\ref{ex:C-L}) to the equations
\begin{eqnarray}\label{eq:HK}
F\circ T^{[k]} - F = \sum_{\eta \in \{0,1\}^k}(-1)^{|\eta|}f\circ \pi^{[k]}_\eta, \quad k\geq 1,
\end{eqnarray}
where:
\begin{itemize}
 \item $(X,\mu,T)$ is an inverse limit of rotations on $k$-step nilmanifolds,
\item $(X^{[k]},\mu^{[k]},T^{[k]})$ is a certain $2^k$-fold self-joining of $(X,\mu,T)$, constructed in that paper, and $\pi^{[k]}_\eta:X^{[k]}\to X$ are the coordinate projections,
\item $f \in \F(\mu,\bbT)$ and $F \in \F(\mu^{[k]},\bbT)$ are measurable functions.
\end{itemize}
The construction of $(X^{[k]},\mu^{[k]})$ from $(X,\mu)$ also gives rise to a much larger discrete Abelian group of $\mu^{[k]}$-preserving transformations on $X^{[k]}$, and the coordinate projections $\pi_\eta$ can all be described as the projections onto the $\L_\eta$-invariant factors of subsets of $(X^{[k]},\mu^{[k]})$ for different subgroups $\L_\eta$ of this larger Abelian group.  This implies that the functions of the form on the right of~(\ref{eq:HK}) comprise a submodule $M$ of $\F(\mu^{[k]},\bbT)$ all of whose elements $G$ satisfy the {\PDE}
\[\big(\prod_{\eta \in \{0,1\}^k}d^{\L_\eta}\big)G = 0.\]
With this in mind, equation~(\ref{eq:HK}) asserts that $F \in \F(\mu^{[k]},\bbT)$ is relatively $T^{[k]}$-invariant over the subgroup of solutions to that {\PDE}, and so classifying such $F$ becomes a problem very much in the spirit of the present paper.  We will not examine this further here, but it would be interesting to see a closer comparison between the results of Host and Kra in~\cite{HosKra05} about this equation and the methods we have developed above.

Another direction in which one might seek to generalize the theory of {\PDE}s is for functions on compact Gelfand pairs (also called `commutative spaces'; see~\cite{Wol07}).  The greater structure of these spaces might afford extra tools, but I am not sufficiently familiar with them to make a more concrete proposal.

\subsection{A related question about sheaves}

Now assume that $Z$ is a torus and that $U_1$, \ldots, $U_k$ are sub-tori (that is, all these groups are finite-dimensional and connected).  Let $S \subseteq Z$ be open, and let $\F(S)$ be the Polish group of a.e. equivalence classes of measurable functions $S\to \bbT$.  For $f \in \F(S)$ and $z \in Z$, $u_1 \in U_1$, \ldots, $u_k \in U_k$, the expression
\[d_{u_1}\cdots d_{u_k}f(z)\]
makes sense provided
\begin{eqnarray}\label{eq:parallel-in-S}
z + \eta_1u_1 + \cdots + \eta_ku_k \in S \quad \forall (\eta_1,\ldots,\eta_k)\in \{0,1\}^k.
\end{eqnarray}
In light of this we may define
\[M(S) := \{f \in \F(S)\,|\ d_{u_1}\cdots d_{u_k}f(z) = 0\ \hbox{whenever~(\ref{eq:parallel-in-S}) is satisfied}\}.\]

Then $M(Z)$ is the module of solutions to the associated {\PDE}, and there are obvious restriction maps $M(S) \to M(T)$ whenever $T \subseteq S$.  It is now a routine exercise to prove that this has defined a presheaf of Polish Abelian groups on the torus $Z$.  Let us call it the \textbf{presheaf of local solutions} to this {\PDE}.

It might be possible to develop a theory of the structure of $M(Z)$ in terms of the structure of this presheaf.  The module $M(Z)$ is the image of this presheaf under the global-section functor, and so one could try to deduce some kind of presentation for $M(Z)$ from calculations of the cohomology of the presheaf.  This idea is bolstered by the observation that $Z$ may be covered by small open sets $S$ that are homeomorphic to balls in $\bbR^{\dim Z}$, and for these the solution of the local {\PDE}-problem may be simpler.  However, this is still far short of computing the cohomology of this presheaf, and I do not at present see how to push this approach much further.  This idea is somewhat reminiscent of the basic set-up of analytic $\cal{D}$-modules~(\cite{Bjo79}), so one might try to adapt ideas from that theory.

 \appendix

\section{Some explicit calculations in group cohomology}\label{app:exp-calc}
 
In addition to the general overview of Subsection~\ref{subs:cohom-overview}, this appendix collects some explicit calculations in group cohomology that were used during our analysis of the examples.

First, for any discrete group $G$, the restriction to measurable cochains in the definition of $\rmH^\ast_\m(G,-)$ is irrelevant, and so this theory simply agrees with classical group cohomology.  That classical theory comes with a large arsenal of techniques for actually computing cohomology groups.  These mostly stem from the ability to switch to any choice of injective resolution for a module of interest.  (This does not generalize to the measurable theory for non-discrete $G$, because there are not enough injectives in $\PMod(G)$.)

One of the most classical calculations is that for cyclic groups.

\begin{lem}\label{lem:cyclic-calc}
For any $N \geq 1$ and any $(\bbZ/N\bbZ)$-module $M$, say with action $R:\bbZ/N\bbZ\actson M$, one has
\[\rmH^p(\bbZ/N\bbZ,M) = \left\{\begin{array}{ll} M^{\bbZ/N\bbZ} &\quad \hbox{if}\ p = 0\\ M^{\bbZ/N\bbZ}/TM &\quad \hbox{if}\ p \ \hbox{even and}\ \geq 2\\
\{m\in M\,|\,Tm = 0\}/\langle d_1 m\,|\,m \in M\rangle &\quad \hbox{if}\ p \ \hbox{odd,}\\
\end{array}\right.\]
where $d_1 = R_1 - \rm{id}$ as usual, and $T \in \End_{\bbZ/N\bbZ}(M)$ is the element
\[T = R_0 + R_1 + \ldots + R_{N-1}.\]
\qed
\end{lem}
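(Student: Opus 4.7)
The plan is to reduce this to a classical computation via a periodic free resolution. Since $\bbZ/N\bbZ$ is discrete, the opening remark of the appendix tells us that $\rmH^p_\m(\bbZ/N\bbZ,M) = \rmH^p(\bbZ/N\bbZ,M)$ agrees with ordinary group cohomology, so we may freely pass to injective or projective resolutions.

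First I would write down the standard periodic resolution of the trivial module $\bbZ$ by free $\bbZ[\bbZ/N\bbZ]$-modules. Identifying the group ring with $\bbZ[t]/(t^N-1)$, and setting $D := t-1$ and $T := 1+t+\cdots+t^{N-1}$, one has $DT = TD = t^N-1 = 0$ in the group ring, so the sequence
\[\cdots \xrightarrow{T} \bbZ[\bbZ/N\bbZ] \xrightarrow{D} \bbZ[\bbZ/N\bbZ] \xrightarrow{T} \bbZ[\bbZ/N\bbZ] \xrightarrow{D} \bbZ[\bbZ/N\bbZ] \xrightarrow{\eps} \bbZ \to 0\]
is a chain complex, where $\eps$ is the augmentation. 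Checking exactness is an elementary exercise: a polynomial $\sum a_i t^i$ lies in $\ker D$ iff all $a_i$ are equal iff it lies in the image of $T$, and it lies in $\ker T$ iff $\sum a_i = 0$ iff it lies in the image of $D$ (by long division by $t-1$ modulo $t^N-1$).

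Next I would apply the functor $\Hom_{\bbZ/N\bbZ}(-,M)$ to this resolution (dropping the augmented term). Under the canonical identification $\Hom_{\bbZ/N\bbZ}(\bbZ[\bbZ/N\bbZ],M) \cong M$, the dual of $D$ becomes the map $m \mapsto (t\cdot m - m) = d_1 m$ (using the notation of the statement, $d_1 = R_1 - \rm{id}$), and the dual of $T$ becomes multiplication by $T = \sum_{i=0}^{N-1} R_i$. Hence $\rmH^p(\bbZ/N\bbZ,M)$ is computed as the cohomology of the two-periodic complex
\[M \xrightarrow{d_1} M \xrightarrow{T} M \xrightarrow{d_1} M \xrightarrow{T} \cdots\]

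From here the three cases of the lemma follow by bookkeeping. In degree zero one reads off $\ker d_1 = M^{\bbZ/N\bbZ}$. In odd degrees $p \geq 1$ one gets $\ker T / \img d_1$, which is precisely the description in the statement. In even degrees $p \geq 2$ one gets $\ker d_1 / \img T = M^{\bbZ/N\bbZ}/TM$. I do not anticipate any real obstacle here: the only mild subtlety is remembering that the degree-$0$ and even-positive-degree formulas disagree because $\img T$ need not equal $M^{\bbZ/N\bbZ}$ (indeed the quotient is Tate cohomology $\hat{\rmH}^0$), and being careful that the two-periodic pattern starts with $d_1$ rather than $T$ so that even positive degrees correspond to $\ker d_1 / \img T$ and odd degrees to $\ker T/\img d_1$.
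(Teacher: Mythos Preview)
Your proposal is correct and matches precisely the standard argument that the paper references (Brown, Section II.3), which computes via the periodic free resolution of $\bbZ$ over $\bbZ[\bbZ/N\bbZ]$. The paper gives no further details beyond that citation (and its mention of ``injective resolutions'' appears to be a slip for ``free resolutions''), so your write-up is in fact more explicit than the paper's own treatment.
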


This is usually proved by switching to some very simple injective resolutions that are available for cyclic groups: see, for instance, Section II.3 in Brown~\cite{Bro82}.

For groups that are not finite, fewer calculational methods are available.  However, one theorem from classical group cohomology does pass through: the isomorphism
\[\rmH^\ast_\m(Z,\bbZ) \cong \rmH_{\scriptsize{\hbox{\v{C}ech}}}^\ast(B_Z,\bbZ),\]
where $B_Z$ is a choice of classifying space for $Z$.

For Lie groups $Z$ this is proved in~\cite{Wig73}, and it is extended to general locally compact, second-countable groups in~\cite[Theorem E]{AusMoo--cohomcty}.  This isomorphism to classifying space cohomology is proved by showing that both sides are isomorphic to a third cohomology theory, which may (in most cases) be taken to be that introduced by Segal in~\cite{Seg70}.  Finally, the \v{C}ech cohomology of $B_Z$ can be accessed via a range of tools from more classical algebraic topology: this is explored in detail in Hofmann and Mostert~\cite{HofMos73}.

This relation to classifying spaces is the real workhorse for making explicit calculations in $\rmH^\ast_\m(Z,-)$.  In many quite simple cases I do not know how to compute $\rmH^p_\m(Z,\bbZ)$ without passing through this isomorphism, implicitly invoking some quite sophisticated homological algebra.

For $Z =\bbT$, a suitable choice of classifying space is given by the infinite-dimensional complex projective space $\bb{C}\rm{P}^\infty$; see, for instance, the sections on classifying spaces in~\cite{DavKir01}. For higher-dimensional tori one obtains a similar picture in terms of infinite-dimensional Stiefel manifolds.  Using this, standard tools from algebraic topology give the following:

\begin{lem}\label{lem:tori-calc-1}
If $Z$ is a compact connected Abelian group (such as a torus) and $\G$ is a discrete $Z$-module with trivial action, then as graded Abelian groups one has
\[\rmH^\ast_\m(Z,\G) \cong \rmH^\ast_{\scriptsize{\hbox{\v{C}ech}}}(B_Z,\bbZ)\otimes \G\cong \Big(\bbZ \oplus \{0\}\oplus \hat{Z}\oplus \{0\} \oplus (\hat{Z}\odot \hat{Z})\oplus \cdots \Big)\otimes \G,\]
and this isomorphism is natural in $Z$ and $\G$ (that is, both sides transform correctly under morphisms of either).  Here `$\odot$' denotes the symmetric product.

More explicitly, this gives
\[\rmH^p_\m(Z,\G) \cong \left\{\begin{array}{ll} \hat{Z}^{\odot p/2}\otimes \G & \quad \hbox{if $p$ even}\\
0 &\quad \hbox{if $p$ odd},\end{array}\right.\]
(where $\hat{Z}^{\odot 0} := \bbZ$). \qed
\end{lem}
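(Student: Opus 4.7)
The plan is to reduce the computation to the case $Z = \bbT^d$ with $\Gamma = \bbZ$, and then invoke the classifying-space description of measurable cohomology from~\cite{AusMoo--cohomcty}, using naturality and continuity to extend to general $(Z,\Gamma)$.

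First I would handle the coefficient reduction. Since $Z$ acts trivially on $\Gamma$, the cited result~\cite[Theorem E]{AusMoo--cohomcty} identifies $\rmH^p_\m(Z,\Gamma)$ with $\rmH^p_{\smash{\scriptsize{\hbox{\v{C}ech}}}}(B_Z,\Gamma)$. Because $\Gamma$ is the filtered colimit of its finitely generated subgroups, and both cohomology theories respect these colimits in the coefficient variable, it suffices to treat finitely generated $\Gamma$. For finitely generated $\Gamma$ and a space whose integral cohomology is torsion-free and finitely generated in each degree (as will be the case here), the universal coefficient formula reduces the computation to $\Gamma = \bbZ$ after tensoring. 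Naturality in $\Gamma$ is automatic from these reductions.

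Next I would handle the torus case $Z = \bbT^d$. The classifying space $B_{\bbT^d}$ has the homotopy type of $(\bbC\rm{P}^\infty)^d$, whose integral cohomology ring is a polynomial algebra $\bbZ[x_1,\ldots,x_d]$ with generators $x_i$ of degree $2$. Via the K\"unneth formula, the degree-$2k$ part is precisely the $k$-fold symmetric product of the degree-$2$ part $\hat{\bbT^d}=\bbZ^d$, and the odd degrees vanish. The identification of $x_i$ with the dual generators $\chi_i\in\hat{\bbT^d}$ is functorial under homomorphisms $\bbT^d\to\bbT^{d'}$, because such a homomorphism induces a map of classifying spaces whose action on degree-$2$ cohomology is the transpose of the induced map on character groups. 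This yields the formula naturally for all tori.

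Finally I would pass from tori to an arbitrary compact connected Abelian $Z$. Every such $Z$ is an inverse limit of finite-dimensional quotient tori $Z_\alpha$, with Pontrjagin dual $\hat Z=\varinjlim \hat{Z_\alpha}$. For discrete coefficients, measurable cohomology is continuous with respect to such inverse limits (this is standard; see~\cite[Section 3]{Moo76(gr-cohomIII)}), so $\rmH^p_\m(Z,\Gamma)=\varinjlim_\alpha \rmH^p_\m(Z_\alpha,\Gamma)$. Taking the colimit of the torus formula and noting that the symmetric-product functor commutes with filtered colimits yields $\hat Z^{\odot p/2}\otimes \Gamma$ in even degrees and $0$ in odd degrees, naturally in $Z$.

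The main obstacle is checking the continuity statement for measurable cohomology under the inverse limit $Z=\varprojlim Z_\alpha$ with discrete coefficients, together with the compatibility of the natural identifications at each stage; once this is in hand the rest is essentially a bookkeeping exercise built on top of the classifying-space isomorphism of~\cite{AusMoo--cohomcty}.
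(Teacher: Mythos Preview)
Your proposal is correct and follows essentially the same route as the paper, which in fact gives no detailed proof at all: the lemma is stated with a \qed, and the surrounding discussion simply invokes the classifying-space isomorphism $\rmH^\ast_\m(Z,\bbZ)\cong\rmH^\ast_{\scriptsize{\hbox{\v{C}ech}}}(B_Z,\bbZ)$ (via Wigner and \cite[Theorem E]{AusMoo--cohomcty}), notes that $B_{\bbT}\simeq\bbC\rm{P}^\infty$, and defers the rest to ``standard tools from algebraic topology'' and Hofmann--Mostert~\cite{HofMos73}. Your write-up makes explicit the steps the paper leaves implicit: the universal-coefficient reduction to $\Gamma=\bbZ$, the K\"unneth computation for $(\bbC\rm{P}^\infty)^d$, and the passage to arbitrary compact connected Abelian $Z$ via an inverse limit of tori together with continuity of $\rmH^\ast_\m$ in discrete coefficients.
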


\begin{rmk}
One can also recover $\rmH^\ast_\m(\bbT^d,-)$ using the presentation $\bbZ^d\into \bbR^d\onto \bbT^d$ and the Lyndon-Hochschild-Serre spectral sequence.  However, one still needs to use the fact that $\rmH^p_\m(\bbR^d,\bbZ) = 0$ for all $p\geq 1$, and this is effectively proved by using the classifying-space argument for $\bbR^d$.  Since this argument works only for discrete modules, it begs the following elementary question for the measurable-cochains theory, which I believe is still open:
\begin{ques}
Is it true that $\rmH^p_\m(\bbR,-) = 0$ on the whole of $\PMod(\bbR)$ for all $p\geq 1$?
\end{ques}

This is known to hold for the intermediate cohomology theory $\rmH^\ast_{\rm{Seg}}$ of Segal mentioned above.  That theory does not apply to all Polish modules, but it has a generalization, denoted $\rmH^\ast_{\rm{ss}}$, which does.  However, it is not known whether $\rmH^\ast_\m(\bbR,-) \cong \rmH^\ast_{\rm{ss}}(\bbR,-)$ on the whole of $\PMod(\bbR)$.  Once again, more details can be found in~\cite{AusMoo--cohomcty}. \fin
\end{rmk}

Finally,~\cite[Theorem A]{AusMoo--cohomcty} shows that $\rmH^\ast_\m(Z,M) =  0$ whenever $Z$ is compact and $M$ is a Fr\'echet space.  Therefore, if we compute the long exact sequence in $\rmH^\ast_\m(Z,-)$ arising from the presentation $\bbZ\into \bbR \onto \bbT$, all regarded as $Z$-modules, it collapses to a sequence of isomorphisms
\[\rmH^p_\m(Z,\bbT) \cong \rmH^{p+1}_\m(Z,\bbZ) \quad \forall p \geq 1.\]

Combining this with Lemma~\ref{lem:tori-calc-1} gives the following.

\begin{lem}\label{lem:tori-calc-2}
If $Z$ is a compact connected Abelian group and $\bbT$ is given the trivial $Z$-action, then
\[\rmH^p_\m(Z,\bbT) = \left\{\begin{array}{ll}\bbT & \quad\hbox{if}\ p = 0\\
\hat{Z}^{\odot (p+1)/2} & \quad \hbox{if $p$ odd}\\
0 &\quad \hbox{if $p > 0$ even}.\end{array}\right.\]
\qed
\end{lem}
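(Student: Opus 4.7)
The plan is to combine the vanishing result $\rmH^p_\m(Z,\bbR) = 0$ for all $p\geq 1$, which the excerpt attributes to \cite[Theorem A]{AusMoo--cohomcty}, with the explicit calculation of $\rmH^\ast_\m(Z,\bbZ)$ provided by Lemma~\ref{lem:tori-calc-1}, using the long exact sequence arising from the short exact sequence of trivial $Z$-modules
\[
0 \to \bbZ \to \bbR \to \bbT \to 0.
\]
This is exactly the reduction already sketched immediately before the lemma statement, so the proof reduces to stringing the pieces together and handling the low-degree end carefully.

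First, I would deal with degree zero directly: since $Z$ acts trivially on $\bbT$, one has $\rmH^0_\m(Z,\bbT) = \bbT^Z = \bbT$. Next, for $p\geq 1$, I would apply the long exact sequence of cohomology functors associated to the short exact sequence above. Since $\rmH^p_\m(Z,\bbR) = 0$ for every $p\geq 1$, each three-term block of the long exact sequence involving $\rmH^\ast_\m(Z,\bbR)$ collapses to give continuous isomorphisms
\[
\rmH^p_\m(Z,\bbT) \;\cong\; \rmH^{p+1}_\m(Z,\bbZ) \quad \text{for every } p\geq 1,
\]
which is the degree-shift displayed in the excerpt.

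Finally, I would substitute the values of $\rmH^{p+1}_\m(Z,\bbZ)$ supplied by Lemma~\ref{lem:tori-calc-1}. When $p$ is odd, $p+1$ is even, so the right-hand side equals $\hat{Z}^{\odot (p+1)/2}$; when $p\geq 2$ is even, $p+1$ is odd, so the right-hand side is $0$. Combining with the degree-zero calculation gives the three cases in the statement.

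Since every ingredient is quoted verbatim from earlier in the excerpt, there is no genuine obstacle; the only point requiring minor care is checking that the degree-zero case is not inadvertently subsumed into the degree-shift isomorphism, since the shift only applies for $p\geq 1$. A quick inspection of the initial segment
\[
0 \to \bbZ \to \bbR \to \bbT \to \rmH^1_\m(Z,\bbZ) \to 0
\]
(using $\rmH^1_\m(Z,\bbR) = 0$) confirms compatibility with the computed value $\rmH^1_\m(Z,\bbT) = \hat{Z}$, since $\rmH^1_\m(Z,\bbZ) = 0$ by Lemma~\ref{lem:tori-calc-1} and $\bbT = \bbR/\bbZ$ recovers the expected cokernel.
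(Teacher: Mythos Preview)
Your proposal is correct and follows exactly the approach the paper takes: the paragraph immediately preceding the lemma already supplies the degree-shift isomorphism $\rmH^p_\m(Z,\bbT)\cong\rmH^{p+1}_\m(Z,\bbZ)$ for $p\geq 1$ via the long exact sequence of $\bbZ\into\bbR\onto\bbT$ and the vanishing of $\rmH^\ast_\m(Z,\bbR)$, and then reads off the values from Lemma~\ref{lem:tori-calc-1}. Your final paragraph slightly conflates two pieces of the long exact sequence (the displayed initial segment checks consistency at $\rmH^0$, not at $\rmH^1_\m(Z,\bbT)$), but this is a cosmetic slip and does not affect the argument.
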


\bibliographystyle{abbrv}
\bibliography{bibfile}

\end{document}